\documentclass[12pt]{article}
\usepackage{amsthm, amsmath, amssymb} 
\usepackage{graphicx}
\usepackage{enumerate}
\usepackage{natbib}
\usepackage{url} 
\usepackage{float}
\usepackage[colorlinks=true, allcolors=blue]{hyperref}
\usepackage{multirow}
\usepackage{makecell}
\usepackage{rotating}
\usepackage{subcaption} 
\usepackage{enumitem} 
\usepackage{xcolor} 
\usepackage{caption}

\usepackage{bibunits}

\defaultbibliographystyle{agsm} 
\defaultbibliography{reference} 

\newcommand{\blind}{0}
\newcommand{\TPS}{\textcolor{black}}
\usepackage[margin=1in]{geometry}
\numberwithin{equation}{section}
\newtheorem{theorem}{Theorem}[section]
\newtheorem{proposition}[theorem]{Proposition}
\newtheorem{lemma}[theorem]{Lemma}
\newtheorem{corollary}[theorem]{Corollary}
\newtheorem{definition}{Definition}[section]
\newtheorem{example}{Example}[section]
\newtheorem{remark}{Remark}[section]

\makeatletter
\renewenvironment{proof}[1][\proofname]{\par
  \pushQED{\qed}%
  \normalfont \topsep6\p@\@plus6\p@\relax
  \trivlist
  \item[\hskip\labelsep
        \itshape
    #1\@addpunct{.}]%
}{%
  \popQED\endtrivlist\@endpefalse
}
\makeatother

\usepackage{algorithm}
\usepackage[compatible]{algpseudocode} 
\makeatletter
\newenvironment{breakablealgorithm}
{
	\begin{center}
		\refstepcounter{algorithm}
		\hrule height0.8pt depth0pt \kern0pt
		\renewcommand{\caption}[2][\relax]{
			{\raggedright\textbf{\ALG@name~\thealgorithm} ##2\par}%
			\ifx\relax##1\relax 
			\addcontentsline{loa}{algorithm}{\protect\numberline{\thealgorithm}##2}%
			\else 
			\addcontentsline{loa}{algorithm}{\protect\numberline{\thealgorithm}##1}%
			\fi
			\kern2pt\hrule\kern2pt
		}
	}{
		\kern0pt\hrule\relax
	\end{center}
}
\makeatother

\begin{document}

\def\spacingset#1{\renewcommand{\baselinestretch}%
{#1}\small\normalsize} \spacingset{1}


\if0\blind
{
  \title{\bf Simultaneous Inference for Nonlinear Time Series, a Sieve M-regression Approach}
  
  \author{Tianpai Luo\footnotemark[1]\thanks{E-mail address: ltp21@mails.tsinghua.edu.cn;} \qquad Zhou Zhou\thanks{E-mail address: zhou.zhou@utoronto.ca. Zhou Zhou is the corresponding author and acknowledges Funding number 489079 from NSERC of Canada.}
  \vspace{0.2cm}\\ 
  \footnotemark[1] Department of Statistics and Data Science, Tsinghua University
  \vspace{0.2cm}\\ 
  \footnotemark[2] Department of Statistical Sciences, University of Toronto}
  
  \date{}
  \maketitle
} \fi

\if1\blind
{
  \bigskip
  \bigskip
  \bigskip
  \begin{center}
    {\LARGE\bf Simultaneous Inference for Nonlinear Time Series, a Sieve M-regression Approach}
\end{center}
  \medskip
} \fi

\bigskip
\begin{abstract}
This paper studies simultaneous inference of conditional distributions in nonlinear time series from a sieve M-regression perspective. 
Existing literature on sieve M-regression has primarily focused on pointwise asymptotics, leaving the development of uncertainty quantification over the entire predictor space unexplored. We address this gap by establishing a uniform Bahadur representation for the sieve M-estimator, accommodating dependent data and a growing number of sieve basis functions. A novel high-dimensional empirical process theory is developed for temporally dependent data, and a specifically designed M-decomposition method is utilized to control high-dimensional complexities. Building on this representation, we develop a convex Gaussian approximation to characterize the asymptotic behavior of the estimator and construct valid simultaneous confidence regions (SCRs). To facilitate practical implementation, we introduce a self-convolved bootstrap algorithm that accurately approximates the distribution of the maximal deviation. Our inferential framework is supported by rigorous error bounds and validated through numerical simulations and real data applications. 
\end{abstract}

\noindent%
{\it Keywords:} Bahadur representation, empirical process, self-convolved bootstrap, method of sieves, convex Gaussian approximation 
\vfill

\newpage

\begin{bibunit}
\newcommand{\mf}{\mathbf}
\newcommand{\mr}{\mathrm}
\newcommand{\mb}{\mathbb}

\section{Introduction}
\label{sec: introduction}
Given a random variable $Y$ and a set of explanatory random variables ${\bf X}\in\mathbb{R}^d$, studying the conditional distribution of $Y$ given $\mf X$, along with conducting valid inference, is a fundamental problem in modern statistics and machine learning. In this paper,  we consider stationary time series $(\mf{X}_i,Y_i),i=1,\dots,n$, where the primary object of interest is a  conditional quantity $Q(Y|\mf X)$, which characterizes certain aspects of the conditional distribution of the response $Y$ given the explanatory variables $\mf X$. Typically, $Q(Y|\mf X)$ can be estimated via M-regression with appropriate choices of loss functions $\rho(\cdot)$.
For example, the least squares loss is used when  $Q(Y|\mf X)=\mb E(Y|\mf X)$ representing the conditional mean;  see \cite{fan2008nonlinear}, \cite{liu2010simultaneous}, and \cite{vogt2012nonparametric}. For quantile regression in which the least absolute deviation or the check function loss is used, $Q(Y|\mf X)$ can be the $\tau$-th quantile of $Y$ given $\mf X$, $Q(Y|\mf X)=\inf\{s:\mr P(Y\leq s|\mf X)>\tau\}$; see \cite{cai2002regression}, \cite{zhou2010Non}, and \cite{tu2022quantile}. Moreover, certain $Q(Y|\mf X)$ can be estimated by robust or other types of M-regression, as discussed in \cite{kong2010uniform}, \cite{hardle2010confidence}, and \cite{wu2018gradient}. After specifying the quantity $Q(Y|\mf X)$ of interest, the relationship between ${\bf X}$ and $Y$ is written in the following nonlinear time series M-regression form
\begin{equation}
    Y_i=Q_0(\mf X_i)+\varepsilon_i,\quad i=1,\dots,n,\label{eq:basic model}
\end{equation}
where $\{\varepsilon_i\}$ is a stationary error process which may be dependent on $\{\bf X_i\}$ and 
\begin{align}
   \hat{Q}&=: \arg\min\limits_{Q\in\mathfrak{Q}_n} \sum_{i=1}^n \rho(Y_i-Q(\mf X_i)),\label{eq:loss function}
\end{align}
with $\mathfrak{Q}_n$ denoting a nonparametric function space which expands as $n$ increases. Here we abbreviate $Q(Y|\mf X)$ as $Q_0(\mf X)$ to simplify notation. We consider a broad family of loss functions $\rho(\cdot)$ with left derivative $\psi(\cdot)$, including quantile regression with $\rho(x)=\tau x^{+}+(1-$ $\tau)(-x)^{+}, 0<\tau<1$, where $x^{+}=\max (x, 0)$; Huber's loss with $\rho(x)=\left(x^2 \mathbf{1}_{|x| \leq c}\right) / 2+\left(c|x|-c^2 / 2\right) \mathbf{1}_{|x|>c}, c>0$; and the $\mathcal{L}^q$ regression loss with $\rho(x)=|x|^q, 1 < q \leq 2$, among many others. This paper concerns statistical inference of $Q_0(\mf x)$ based on $\hat Q(\mf x)$ uniformly for all $\mf x$ in a given region $\mathcal{X}\subset \mb R^d$.



\subsection{Overview of our results and novelties}

The first and foremost contribution of this paper is the development of an empirical process framework for nonlinear time series sieve M-estimation, in which high dimensionality serves as the main challenge. The sieve approach necessarily introduces a growing number of basis functions as the sample size increases, leading to moderately high-dimensional parameter spaces. 
For time series linear M-regression $y_i=\theta^\top\mf x_i+\epsilon_i$ with fixed-dimensional parameter $\theta$, \cite{wu2007m} developed a martingale decomposition technique for empirical processes under weak dependence. \cite{wu2007m} obtained a nearly optimal rate for the Bahadur representation of the regression estimator. This martingale decomposition method has since become a fundamental tool in the time series literature and has been applied in various settings, including \cite{lee2019martingale}, \cite{zhang2021high}, and \cite{liu2024self}.
However, the martingale decomposition technique of \cite{wu2007m} (see Lemmas 4 and 5 therein for details) includes a Taylor expansion step that generates a number of terms growing exponentially with the dimension of $\theta$. When the dimension diverges, this leads to a severe curse of dimensionality. 
Consequently, the empirical process theory developed in \cite{wu2007m} is not applicable to our high-dimensional setting. To address this difficulty, we extend the empirical process theory for independent data developed in \cite{van1996weak} and \cite{gine2016mathematical}
by introducing a specifically designed M-dependent decomposition technique for the time series $(Y_i,\mf X_i), i=1,\dots,n$. This new approach yields sharp maximal inequalities for a broad class of time series regression models with a diverging number of parameters. Moreover, the high-dimensional empirical process framework established in this paper is expected to be applicable to a substantially wider range of time series problems.

As a direct consequence, we obtain the first uniform Bahadur representation for sieve estimators in nonlinear time series, thereby overcoming the technical barrier that has long hindered statistical inference on $Q_0(\mf x)$. The sharpness of our Bahadur representation can be assessed relative to the best known i.i.d. benchmark for M-estimation. In particular, for a representative non-smooth M-estimation, namely quantile regression, our Bahadur representation yields the order $(K/n)^{3/4}$ up to logarithmic factors under sufficiently weak temporal dependence and regular conditions, where $K\to \infty$ is the number of sieve bases used in the regression. This matches the best known rate for partitioning-based sieve M-estimators under i.i.d. data as established in \cite{cattaneo2025uniform}.
Based on the Bahadur representation, we further develop a time series convex Gaussian approximation theory to capture the uniform stochastic behavior of the moderately high-dimensional sieve M-estimators, which finally provides the foundation for constructing simultaneous confidence regions (SCRs) and conducting uniform statistical inference of $\hat Q(\mf x)$ at an asymptotically correct significance level.

The second major contribution of this paper is the implementation of a self-convolved bootstrap algorithm for simultaneous inference on $Q_0(\mf x)$. Developing fully data-driven resampling methods for nonparametric inference in nonlinear time series regression has long been challenging. Existing approaches typically rely on closed-form limiting distributions and are therefore not directly driven by the observed data; see, for example, the simulation-based procedure in \cite{liu2010simultaneous}. As a result, such methods may be slow in approximating the finite-sample behavior of the statistics of interest and often require the estimation of additional nuisance parameters, which can substantially affect inferential accuracy in finite samples. 

To the best of our knowledge, this is the first work to develop a general data-driven inferential framework for nonlinear time series under a nonparametric M-regression setting. The self-convolved bootstrap was originally proposed by \cite{liu2024self} for time-varying coefficient linear models. However, the technical development in the present paper is fundamentally different for two main reasons. First, the bootstrap theory in \cite{liu2024self} relies on fixed-dimensional empirical process theory for nonstationary time series, whereas our bootstrap procedure is established under a high-dimensional time series empirical process framework based on substantially different techniques. Second, \cite{liu2024self} focuses on inference for the cumulative regression function, obtained by integrating the M-estimators over time. It remains unclear whether their bootstrap can be directly applied to the M-estimators themselves without integration or extended to nonlinear regression settings. In this paper, we show that the self-convolved bootstrap can be successfully adapted to conduct direct inference on nonparametric M-estimators in nonlinear time series regression.

The self-convolved bootstrap offers several notable advantages. First, it requires only the convolution of local M-estimators with i.i.d. auxiliary standard normal variables, thereby avoiding the need to estimate additional nuisance parameters, such as conditional density functions. Second, it involves the selection of only a single tuning parameter, which reduces the burden and uncertainty in statistical inference associated with multiple tuning choices. Third, it fully leverages our nonasymptotic approximation rates derived from Bahadur representations and Gaussian approximations and is shown to be relatively fast and accurate in approximating the uniform probabilistic behavior of the M-estimators. 
Consequently, the self-convolved bootstrap provides a parsimonious, theoretically grounded, and broadly applicable tool for statistical inference.

\subsection{Overview of related literature}

The literature on nonparametric estimation and inference for time series M-regression has primarily focused on kernel-based methods. Fundamental results, including Bahadur representations and asymptotic normality for kernel nonparametric M-regression, have been established in \cite{kong2010uniform}, \cite{zhao2014asymptotics}, \cite{wu2018gradient}, \cite{karmakar2022simultaneous}, and \cite{liu2024self}, among others. In particular, \cite{wu2018gradient}, \cite{karmakar2022simultaneous}, and \cite{liu2024self} investigated simultaneous inference for kernel M-estimators under dependent observations; however, their frameworks are restricted to time-varying coefficient models of the form $Y_i=\mu(\mf X_i,\theta(t_i))+\varepsilon_i$, $i=1,2,\cdots, n$, where the function $\mu(\cdot,\cdot)$ is known, $\theta(t_i)$ is an unknown time-varying parameter, and $t_i=i/n$ are fixed design points. For model \eqref{eq:basic model} with random design, \cite{kong2010uniform} derived a uniform Bahadur representation for the kernel M-estimator, while \cite{alman2022expectile} established pointwise asymptotic normality for kernel expectile regression. To the best of our knowledge, however, simultaneous nonparametric inference for a general class of nonlinear time series M-regression models has not yet been fully developed.

On the other hand, the sieve method \citep{chen2007large} is a widely used basis approximation approach that optimizes an empirical criterion over a sequence of approximating spaces formed by linear or nonlinear combinations of basis functions. Compared with the kernel method, sieve estimation of \eqref{eq:basic model} offers a significant computational advantage, as it requires fitting the model only once, whereas the kernel method requires refitting the model for every $\mf x \in \mathbb{R}^d$. The construction of the sieve estimator will be described in detail in Section \ref{sec:Preliminaries}. 
The literature on sieve-based nonparametric estimation and inference for time series M-regression remains relatively limited. Among the existing works, \cite{CHEN2015uniformsieve} established uniform convergence rates and asymptotic normality for certain functionals of least squares sieve estimators under weak dependence. Under the least squares loss, \cite{ding2021simultaneous} constructed simultaneous confidence regions for sieve estimators in time-inhomogeneous additive time series models.
For completeness, we also note some related developments in the i.i.d. setting. 
Specifically, \cite{chen2014seiveM} developed an asymptotic theory for inference on irregular functionals of sieve M-estimators, while \cite{SU2016sieveIV} studied sieve instrumental variable quantile regression and proposed a specification test. Recently, \cite{cattaneo2025uniform} established uniform Bahadur representations and inference theory for partitioning-based sieve M-estimators under i.i.d. data. 
Empirical process theory constitutes an essential tool for analyzing the asymptotic behavior of M-estimators. In the classical i.i.d. setting, the systematic treatment of empirical process theory in \cite{van1996weak}, together with earlier developments in \cite{pollard1990empirical,pollard1991asymptotics}, shows that local convexity and stochastic equicontinuity lead to a local linearization of the empirical criterion,
thereby yielding Bahadur representation and asymptotic normality even under nonsmooth loss functions. Extensions of empirical process techniques have been investigated in various directions, yet the joint treatment of dependence and high dimensionality remains largely underexplored. For dependent data, empirical process results and maximal inequalities have been developed \citep{wu2005bahadur,doukhan2007probability,dehling2009new,phandoidaen2022empirical}; 
however, these studies focus primarily on empirical processes indexed by function classes of fixed complexity, and therefore are not directly applicable to sieve M-estimation.
In high-dimensional settings, empirical process methods have been investigated mainly for M-estimators for i.i.d. data \citep{welsh1989m,he2000parameters,Wang2012Ultra-high-quantreg,sun2020adaptive,cattaneo2025uniform}. 
To date, however, a unified empirical process framework that simultaneously accommodates temporal dependence and diverging dimensionality has not been established for sieve M-estimators.

The rest of the paper is organized as follows. Section \ref{sec:Preliminaries} introduces the sieve M-estimator and the nonlinear time series model. Section \ref{sec:simultaneous inference} develops a simultaneous inference framework for the sieve M-estimator, establishing key theoretical results including the extended empirical process theory, uniform Bahadur representation, and Gaussian approximation tailored for dependent data and high dimensionality. Based on these theoretical results, Section \ref{sec:SCR construction} discusses the construction of asymptotically correct SCRs, where the self-convolved bootstrap algorithm and tuning parameter selection are discussed. The theoretical validation of the SCR construction, including the scale of the critical value and asymptotic correctness, is presented in Section \ref{sec:theorems for asymptotically correct SCRs}. Section \ref{sec:numerical results} reports simulation studies and real data analysis. All technical assumptions are deferred to Section \ref{sec:def and assump} for clarity.

\section{Preliminaries}
\label{sec:Preliminaries}
We first introduce some notation. For a vector $\mathbf{v}=:\left(v_1, v_2,\dots, v_p\right) \in \mathbb{R}^p$, let $|\mathbf{v}|=\left(\sum_{i=1}^p v_i^2\right)^{1 / 2}$. For a random vector $\mathbf{V}$ and probability measure $\mr P$, denote $\|\mathbf{V}\|_{\mr P, q}=:\left[\mb{E}_{\mr P}\left(|\mathbf{V}|^q\right)\right]^{1 / q}$, $q>0$ where $\mb E_{\mr P}(\cdot)$ is the expectation with respect to probability $\mr P$. For simplicity, we shall use $\mb E(\cdot)$, $\|\cdot\|_q$, $\|\cdot\|$ instead of $\mb E_{\mr P}(\cdot)$, $\|\cdot\|_{\mr P,q}$, $\|\cdot\|_{\mr P,2}$, respectively if no confusion arises. For a matrix $\mf A$, its determinant is denoted as $\operatorname{det}(\mf A)$. If the matrix $\mf A$ is real and symmetric, we use $\lambda_{min}(\mf A)$ to denote the smallest eigenvalue of $\mf A$. For any two positive real sequences $a_n$ and $b_n$, write $a_n \asymp b_n$ if there exists $0<c<C<\infty$ such that $c \leq \liminf _{n \rightarrow \infty} a_n / b_n \leq$ $\limsup _{n \rightarrow \infty} a_n / b_n \leq C$. We write $a_n \lesssim$ $b_n\left(a_n \gtrsim b_n\right)$ to mean that there exists a universal constant $C>0$ such that $a_n \leq C b_n\left(C a_n \geq\right.$ $\left.b_n\right)$.


To obtain the nonparametric estimator in \eqref{eq:loss function}, we adopt the sieve M-regression approach. For the collection of $K=K_n\rightarrow\infty$ basis functions $\mf b(\mf x)=(b_1(\mf x),b_2(\mf x),\dots,b_K(\mf x))^\top$, we propose to approximate the unknown function $Q_0(\mf x)$ based on a linear combination of basis functions, 
\begin{equation}
    Q_{0,n}(\mf x)=: \theta_{0,n}^\top \mf b_\omega(\mf x),\quad \theta_{0,n}=: \operatorname{arg}\min_{\theta\in\Theta}\mb E\left[\rho(Y_i-\theta^\top\mf b_\omega(\mf X_i))\right],\label{eq:sieve basis}
\end{equation}
where $\Theta\subset \mb R^K$ is the feasible set of the optimization problem and $\omega_n(\mf x)=\mathbf{1}_{\mf x\in\mathcal{D}_n}$. The sets $\mathcal{D}_n$ are compact, convex subsets satisfying $\mathcal{D}_n\subset\mathcal{D}_{n+1}\subset\dots\subset\mathcal{X}$ with nonempty interior where $\mathcal{X}\subset \mb R^d$ denotes the support of $\mf X_i$. 
The approximation error $\sup_{\mf x}|Q_{0,n}(\mf x)-Q_{0}(\mf x)|$ is determined by factors such as $K$ and the smoothness of $Q_0$. We defer detailed discussions to Assumption \hyperref[(A5)]{(A5)} in Section \ref{sec:def and assump} and Section \ref{sec:supp-verify A5} in the supplement.

Based on the approximation \eqref{eq:sieve basis}, M-estimation in \eqref{eq:loss function} is performed as follows
\begin{equation}
    \hat{Q}(\mf x)=:\hat{\theta}^\top \mf b_\omega(\mf x),\quad \hat{\theta}=:\arg\min_{\theta\in\Theta}\sum_{i=1}^n\rho(Y_i-\theta^\top \mf b_{\omega}(\mf X_i)),\label{eq:sieve estimator}
\end{equation}
where $\hat{\theta}$ in \eqref{eq:sieve estimator} is a $K$-dimensional random vector with $K \to \infty$, introducing the high-dimensionality challenge in our subsequent simultaneous inference framework. The sets $\mathcal{D}_n$ can be considered as an expanding sequence of compact sets, which avoids estimating $Q_0(\mf x)$ at places far away from the observed data region. 

\subsection{Time series models}
Suppose the stationary time series $\{\mf X_i\},\{\varepsilon_i\}$ in \eqref{eq:basic model} can be represented as
\begin{equation}
    \mf X_i=\mf H(\mathcal{H}_i),\quad \varepsilon_i=G(\mathcal{H}_i,\mathcal{G}_i),\label{eq:causal representation}
\end{equation}
where $\mathcal{H}_i=(\dots,\zeta_{i-1},\zeta_i)$, $\mathcal{G}_i=(\dots,\eta_{i-1},\eta_i)$, and $\{\zeta_i\}_{i\in\mb Z},\{\eta_i\}_{i\in\mb Z}$ are i.i.d. random elements taking values in $\mathcal{S}_1$, $\mathcal{S}_2$, respectively. The functions $\mf H:\mathcal{S}_1^{\mb Z}\rightarrow \mb R^d$, and $G:\mathcal{S}_1^{\mb Z}\times \mathcal{S}_2^{\mb Z}\rightarrow \mb R$ are measurable functions. Representation \eqref{eq:causal representation} covers a wide range of stochastic processes and is widely adopted in time series studies \citep{wu2005nonlinear,dahlhaus2019towards}. Observe that $\varepsilon_i$ can be probabilistically dependent on $\mf X_i$ since both are generated by ${\cal H}_i$. Meanwhile, ${\cal G}_i$ represents exogenous factors which influence the error process. Let $\{\zeta_i^{\prime}\}$ be an i.i.d. copy of $\{\zeta_i\}$, denote $\mathcal{H}_{i,k}=(\mathcal{H}_{i-k-1},\zeta_{i-k}^\prime,\zeta_{i-k+1},\dots,\zeta_i)$. Similarly, denote $\mathcal{G}_{i,k}=(\mathcal{G}_{i-k-1},\eta_{i-k}^\prime,\eta_{i-k+1},\dots,\eta_i)$ where $\{\eta_i^{\prime}\}$ is an i.i.d. copy of $\{\eta_i\}$. 
For $q>0$, we define dependence measures of $\{\mf X_i\}$ and $\{\varepsilon_i\}$ as \citep{wu2005nonlinear} :
\begin{equation}
    \delta_{\mf H}(k,q) =: \|\mf H(\mathcal{H}_i)-\mf H(\mathcal{H}_{i,k})\|_q,\label{eq:phd of x}
\end{equation}
\begin{equation}
    \delta_{G}(k,q) =: \| G(\mathcal{H}_i,\mathcal{G}_i)- G(\mathcal{H}_{i,k},\mathcal{G}_{i,k})\|_q.\label{eq:phd of e}
\end{equation}
The $ \delta_{\mf H}(k,q)$ and $\delta_{G}(k,q)$ are invariant with respect to $i$ in \eqref{eq:phd of x} and \eqref{eq:phd of e} since $\mf H(\mathcal{H}_i)$ and $G(\mathcal{H}_i,\mathcal{G}_i)$ are both stationary. To help understand the generality of our representation in \eqref{eq:causal representation} and the dependence measure, we list commonly used stationary time series models in Section \ref{sec:example of time series models} of the supplementary material. All technical assumptions related to the time series \eqref{eq:causal representation} and sieve method \eqref{eq:sieve basis} are deferred to Section \ref{sec:def and assump} for clarity.

\section{Investigating the Simultaneous Behavior of the Sieve M-estimators}
\label{sec:simultaneous inference}
In this section, we establish the theoretical foundation for simultaneous inference 
in sieve M-estimation for nonlinear time series. The main challenge arises from the joint presence of temporal dependence and a diverging sieve dimension, which renders classical empirical process tools for i.i.d. or fixed-dimensional settings inapplicable.

To address this difficulty, our analysis proceeds in three steps.
First, we establish a high-dimensional empirical process bound for dependent data, which provides uniform stochastic control of the empirical criterion in local neighborhoods of the true parameter.
Second, leveraging this control and the convexity of the loss function, we derive a uniform Bahadur representation for the sieve M-estimator with an explicit remainder rate depending on the sieve dimension.
Third, we develop a convex Gaussian approximation for the resulting high-dimensional statistic, which enables valid simultaneous inference.

\subsection{Empirical process framework under dependence}
We first establish an empirical process framework suitable for temporally dependent data in sieve M-estimation where high dimensionality is involved. Define
\begin{equation}
    L_i(\beta)=:\rho\left( \bar\varepsilon_i - \beta^\top \mf b_\omega(\mf X_i) \right) - \rho(\bar \varepsilon_i),\quad \bar{\varepsilon}_i=:Y_i-Q_{0,n}(\mf X_i), \label{eq:loss L}
\end{equation}
where $\beta=\theta-\theta_{0,n}$ and $\theta_{0,n}$ is the population sieve target in \eqref{eq:sieve basis}. Then \eqref{eq:sieve estimator} implies that the estimator $\hat{\beta}=\hat{\theta}-\theta_{0,n}$ satisfies
\begin{equation}
    \hat{\beta}=\arg\min_{\beta\in\Theta^\Delta}\sum_{i=1}^n\mb P_n(\beta)\label{eq:hat_beta},\quad \mb P_n(\beta)=:\frac{1}{n}\sum_{i=1}^n L_i(\beta).
\end{equation}
where $\Theta^\Delta=\{\theta-\theta_{0,n}:\theta\in\Theta\}$ and $\Theta^\Delta$ is bounded by Assumption \hyperref[(A3)]{(A3)}. In other words, $\mb P_n(\hat{\beta})=\min_{\beta\in\Theta^\Delta} \mb P_n(\beta)$ and $\mb P_n(\hat{\beta})\leq 0$ since $\mb P_n(0)=0$. Suppose $(\bar\varepsilon_i^\prime,\mf X_i^\prime)$ is a copy of $(\bar\varepsilon_i,\mf X_i)$, we further define
\begin{equation}
    \mb P(\beta)=:\mb E\left\{\frac{1}{n}\sum_{i=1}^n \left[\rho(\bar\varepsilon_i^\prime-\beta^\top \mf b_\omega(\mf X_i^\prime))-\rho(\bar\varepsilon_i^\prime)\right] \right\}.\label{eq:def P(beta)}
\end{equation}
$\mb P_n(\beta) - \mb P(\beta)$ is the key quantity of our interest.
Unlike the i.i.d.\ setting in classical empirical process theory,
the sequence $\{L_i(\beta)\}_{i=1}^n$ defined in \eqref{eq:loss L} exhibits both dependence
and high-dimensional complexity due to the growing sieve basis ($\beta\in\mb R^K,K\to \infty$).
To control such dependence, we introduce a specifically designed
M-decomposition technique that allows us to obtain a maximal inequality for $\mb P_n(\beta) - \mb P(\beta)$. In the present setting, the empirical process is indexed by a sieve parameter whose dimension increases with the sample size under temporal dependence, which prevents a direct application of standard empirical process arguments and motivates the development of new probabilistic tools.
\begin{theorem}
    \label{thm:maximal inequality}
    If Assumptions \hyperref[(A1)]{(A1)}-\hyperref[(A4)]{(A4)} and \hyperref[(B1)]{(B1)}-\hyperref[(B3)]{(B3)} in Section \ref{sec:def and assump} hold, for positive real-valued sequence $\{r_n\}$, $r_n\gtrsim n^{-1/2}$ and  $r_n\xi_{K,n}=o(1)$, there exists a universal constant $C>0$ such that 
        \begin{equation}
        \mb E\sup_{|\beta|\leq r_n} | \mb P_n(\beta)-\mb P(\beta) |\leq Cr_n (\xi_{K,n}\vee \sqrt{K}) \sqrt{\frac{\log n}{n}}.    \label{eq:maximal ineq for Pn-P}
        \end{equation}
\end{theorem}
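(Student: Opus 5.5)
We prove the bound with an $M$-dependent decomposition combined with a standard i.i.d.-type maximal inequality on blocks. Fix $m=m_n\asymp \log n$ (or a small power of $n$, chosen by the dependence decay in (B1)--(B3)). Let $\mathcal{H}_{i}^{(m)}=(\zeta'_{\dots},\zeta_{i-m+1},\dots,\zeta_i)$ denote the input sequence with all innovations older than lag $m$ replaced by independent copies, or equivalently conditioned away. Define $m$-dependent surrogates $\tilde{\mf X}_i=\mb E[\mf X_i\mid \zeta_{i-m+1},\dots,\zeta_i]$ style couplings, and analogously $\tilde\varepsilon_i$, and set $\tilde L_i(\beta)$ accordingly. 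We split
\[
\mb P_n(\beta)-\mb P(\beta)=\frac1n\sum_{i=1}^n\bigl[L_i(\beta)-\tilde L_i(\beta)-\mb E(L_i(\beta)-\tilde L_i(\beta))\bigr]+\frac1n\sum_{i=1}^n\bigl[\tilde L_i(\beta)-\mb E\tilde L_i(\beta)\bigr].
\]

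\emph{Approximation term.} We use the Lipschitz/left-derivative property of $\rho$, that is $|\psi|$ bounded or controlled by (A2), together with the structure of $L_i$. The increment $L_i(\beta)-\tilde L_i(\beta)$ is bounded by terms like $|\beta^\top(\mf b_\omega(\mf X_i)-\mf b_\omega(\tilde{\mf X}_i))|$ plus $\mathbf 1$-type terms involving $|\bar\varepsilon_i-\tilde\varepsilon_i|$ on the event that the residual lies within $|\beta^\top\mf b|\le r_n\xi_{K,n}$. For the first piece we use the Lipschitz condition on the basis (B-assumptions, a gradient bound on $\mf b$) and $|\beta|\le r_n$. For the second piece we use the bounded conditional density of $\varepsilon$ (A-assumptions). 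Summing the tail dependence measures $\sum_{k>m}\delta_{\mf H}(k,q)$ and $\sum_{k>m}\delta_G(k,q)$, which decay geometrically or polynomially by (A1), shows that this term is uniformly $o(r_n\sqrt{K/n})$ for the chosen $m$. The supremum is handled by pulling it inside with $|\beta|\le r_n$ via Cauchy--Schwarz, so no entropy is needed here.

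\emph{$m$-dependent term.} We split the indices into $2$ interleaved families of blocks of length $m$. Within each family the block sums are independent, with $\asymp n/m$ blocks. Apply the i.i.d. maximal inequality (Gin\'e--Nickl / van der Vaart--Wellner, Talagrand--Bernstein form) to the class $\{\sum_{i\in B}\tilde L_i(\beta):|\beta|\le r_n\}$. This class has an envelope bounded by $m\,r_n\xi_{K,n}$, since $|\tilde L_i(\beta)|\le C|\beta^\top\mf b_\omega|\le Cr_n\xi_{K,n}$ with $\xi_{K,n}=\sup_{\mf x}|\mf b_\omega(\mf x)|$. Its block variance is bounded by $m\,r_n^2\lambda_{\max}(\mb E\mf b\mf b^\top)\lesssim m r_n^2$, using the eigenvalue bound in (A4) and short-range covariance within blocks. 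Because $\beta\mapsto L_i(\beta)$ is Lipschitz in $\beta$ with constant $\xi_{K,n}$, the class is VC-type with covering numbers $(C r_n\xi_{K,n}/\epsilon)^K$. The resulting bound on $\mb E\sup$ of the normalized process is
\[
\frac1n\Bigl[\sqrt{\tfrac nm}\sqrt{m}\,r_n\sqrt{K\log n}+m r_n\xi_{K,n}K\log n\Bigr],
\]
where the second (envelope) term uses $\log$ of the covering number. Combining gives $r_n\sqrt{K\log n/n}+r_n\xi_{K,n}mK\log n/n$. Under the rate conditions in (B2)--(B3), for example $\xi_{K,n}^2K\log^3 n/n\to0$, the second term is $\lesssim r_n\xi_{K,n}\sqrt{\log n/n}$. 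Together these yield $Cr_n(\xi_{K,n}\vee\sqrt K)\sqrt{\log n/n}$. The condition $r_n\gtrsim n^{-1/2}$ ensures the entropy integral's lower cutoff only costs a $\log n$.

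\emph{Main obstacle.} The delicate part is the approximation term for nonsmooth $\rho$, such as the check loss. There $L_i-\tilde L_i$ is not Lipschitz in the error coupling, so the supremum over $\beta$ sits inside an indicator. I would handle this with the identity
\[
\rho(u-v)-\rho(u)=-v\psi(u)+\int_0^v(\mathbf 1\{u\le s\}-\mathbf 1\{u\le 0\})\,ds
\]
(Knight's identity). The linear part is handled via $|\beta|\le r_n$, and the integral part is bounded by $|v|\,\mathbf 1\{|u|\le |v|\}$. Its coupling difference is controlled through the conditional density of $\varepsilon_i$ given the past, which gives an error $\lesssim r_n\xi_{K,n}\cdot(\text{tail dependence})^{1/2}$. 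Choosing $m$ so that this tail dependence is at most $n^{-1}$ keeps the term negligible.
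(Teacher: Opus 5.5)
Your architecture ($m$-dependent surrogate with $m\asymp\log n$, independent interleaved pieces, then an i.i.d.\ maximal inequality) parallels the paper's. The gap is in the step that controls the supremum over $|\beta|\le r_n$: it does not deliver the stated bound under the stated hypotheses.

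Feeding block sums into a Talagrand/Bernstein-type inequality for a VC class of index $K$ produces the envelope remainder $r_n\xi_{K,n}mK\log n/n$. This is below $r_n\xi_{K,n}\sqrt{\log n/n}$ only if $K^2\log^3 n\lesssim n$. The condition you cite for this (``$\xi_{K,n}^2K\log^3 n/n\to0$'' from (B2)--(B3)) is not among the hypotheses. The theorem assumes only $r_n\gtrsim n^{-1/2}$ and $r_n\xi_{K,n}=o(1)$, and since $\xi_{K,n}^2\ge\operatorname{tr}(\mathbf B_K)\gtrsim K$, this gives only $K=o(n)$.

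Two further steps are unsupported. First, the deterministic envelope $mr_n\xi_{K,n}$, and the Lipschitz-in-$\beta$ covering bound, need bounded $\psi$. The setting includes least squares, expectile and $L_q$ losses, where only $|\psi(x)-\psi(y)|\le M_1+M_2|x-y|$ and four moments of $\psi(\bar\varepsilon_i)$ are available. The envelope is then random and the remainder gets worse. Second, the uniform block variance $\lesssim mr_n^2$ is asserted rather than derived. The crude bound $m^2r_n^2$ costs a factor $\sqrt m$ and leaves you at $r_n\sqrt K\log n/\sqrt n$.

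The missing idea is that $\beta$ enters only through the scalar $\beta^\top\mathbf b_\omega(\mathbf X_i)$, so no entropy is needed. The paper proceeds as follows:
\begin{itemize}
\item It splits into the $m$ subsequences $\{i\equiv j \bmod m\}$, each i.i.d.\ after the $m$-dependent approximation, and symmetrizes each.
\item It uses $\rho(u-v)-\rho(u)=-v\int_0^1\psi(u-tv)\,dt$ to reduce to a part linear in $\beta$ plus a quadratic part.
\item It then uses $\sup_{|\beta|\le r_n}|\beta^\top\sum_i\epsilon_i\mathbf v_i|=r_n|\sum_i\epsilon_i\mathbf v_i|$ together with $\mathbb E|\sum_i\epsilon_i\mathbf v_i|\le(\sum_i\mathbb E|\mathbf v_i|^2)^{1/2}$.
\end{itemize}
This gives $O(r_n\xi_{K,n}\sqrt{n/m})$ per subsequence and $O(r_n(\xi_{K,n}\vee\sqrt K)\sqrt{m/n})$ in total. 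The $\sqrt{\log n}$ is just $\sqrt m$, with no remainder and no restriction on $K$.

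Your coupling surrogate (independent copies of the old innovations) is well suited to making this reduction rigorous. Unlike a conditional expectation, it keeps the form $\tilde L_i(\beta)=\phi_i(\beta^\top\mathbf b_\omega(\tilde{\mathbf X}_i))$. Split $\phi_i(t)=-\psi(\tilde u_i)t+g_i(t)$ and bound the linear part as above. Then apply the Ledoux--Talagrand contraction principle to $g_i$, which vanishes at $0$ and is $(M_1+M_2r_n\xi_{K,n})$-Lipschitz on $|t|\le r_n\xi_{K,n}$. This gives $\lesssim r_n\sqrt{NK}$ on a subsequence of length $N$.

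Separately, your approximation-term argument relies on a bounded conditional density and on decay of $\delta_G$, neither of which is assumed. The paper instead bounds $\mathbb E\sup_{|\beta|\le r_n}|L_i(\beta)-L_{i,k}(\beta)|$ through $\delta_\psi(k,4)$ from (B2) and $\delta_{\mathbf b}(k,q)$ (from (B1) together with $\Delta_{K,n}$ in (A1)). The condition $r_n\gtrsim n^{-1/2}$ is used to absorb the $r_n$-free $O(\chi^m)=O(n^{-1})$ part of that error, not to control an entropy cutoff.
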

The $\xi_{K,n}$ is specified in Assumption \hyperref[(A1)]{(A1)} and typical $\xi_{K,n}\lesssim \sqrt{K}$ for most commonly used sieve bases.
Theorem \ref{thm:maximal inequality} forms the foundation of our empirical process framework, providing uniform stochastic control of $\mb P_n(\beta)-\mb P(\beta)$ over local neighborhoods of the true parameter $\beta=0$ ($\theta=\theta_{0,n}$). This result extends Pollard’s classical empirical-process approach for M-estimation \citep{pollard1990empirical,van1996weak} from the i.i.d. setting to dependent data, accommodating the sieve structure where the basis dimension $K$ grows with the sample size. The obtained uniform bound implies the stochastic equicontinuity condition required for local linearization based on convexity and directional differentiability, ensuring that the empirical criterion admits a valid expansion around the true parameter and that its minimizer can be expressed as an asymptotically linear functional of the empirical process. Consequently, Theorem \ref{thm:maximal inequality} serves as the key probabilistic ingredient for deriving the Bahadur representation of the sieve M-estimator under dependence and high dimensionality. When 
$K$ is fixed, our inequality reduces to $O(r_nn^{-1/2}\sqrt{\log n}) $, coinciding with the classical maximal inequality in \citet{phandoidaen2022empirical} for the finite-dimensional parameter case with dependent data.

\subsection{Bahadur representation}
In this subsection we develop a new Bahadur representation whose remainder rate depends jointly on the sample size $n$ and sieve dimension $K$. 
Classical results on Bahadur representations for M-estimators typically assume fixed dimension or independence; see, for example, \cite{wu2007m}, \cite{kong2010uniform}, and \cite{wu2018gradient}. 
In contrast, our setting allows $K\to\infty$ and dependent observations. To proceed from probabilistic control to inference, we express the sieve M-estimator through a uniform Bahadur representation with an explicit asymptotic linear form and a controlled remainder.
\begin{proposition}
\label{prop:Bahadur representation} Write $\bar{Q}_n=: \mb E\left( \bar \Xi_n^{(1)}(0|\mf X_i)\mf b_\omega(\mf X_i)\mf b_\omega(\mf X_i)^\top \right)$ where $\overline\Xi_n^{(1)}(0|\mf X_i)$ is defined in \eqref{eq:bar Xi}. 
If Assumptions \hyperref[(A1)]{(A1)}-\hyperref[(A4)]{(A4)} and \hyperref[(B1)]{(B1)}-\hyperref[(B3)]{(B3)} hold, suppose $\xi_{K,n} K^{\eta/4}n^{-\eta/4}\log^2 n=o(1)$, then 
        \begin{equation} 
        \label{eq:bahadur beta}
        \left|\hat\beta -\frac{\bar Q_n^{-1}\mf Z_n}{\sqrt{n}} \right|= O_p\left( (K\vee \xi_{K,n}^2)^{(\eta+2)/4}n^{-(\eta+2)/4}\log^{\eta+2} n\right),
         \end{equation}
    where
    \begin{equation}
    \mf Z_n=:\frac{1}{\sqrt{n}}\sum_{i=1}^n \mf z_i,\quad \mf z_i=:\psi(\bar \varepsilon_i)\mf b_\omega(\mf X_i).\label{eq:Z_n}
\end{equation}
\end{proposition}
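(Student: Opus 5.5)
We follow the classical Pollard-type route: use Theorem \ref{thm:maximal inequality} and convexity of $\rho$ to localize $\hat\beta$, then expand the criterion around $0$. Here $\eta\in(0,1]$ is presumably the smoothness exponent in (B1)--(B3) governing how $\mb E[\psi(\bar\varepsilon_i-t)-\psi(\bar\varepsilon_i)\mid \mf X_i]$ behaves in $t$ (e.g.\ $\eta=1$ for smooth losses, $\eta=1/2$ for the quantile check function). Write $\Psi_n(\beta)=n^{-1}\sum_i \psi(\bar\varepsilon_i-\beta^\top\mf b_\omega(\mf X_i))\mf b_\omega(\mf X_i)$ and $\Psi(\beta)$ for its population counterpart. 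Throughout, the key input is the maximal inequality of Theorem \ref{thm:maximal inequality} together with its gradient analogue.

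\textbf{Step 1: consistency rate.} We first show $|\hat\beta|=O_p(r_n)$ with $r_n\asymp \sqrt{(K\vee\xi_{K,n}^2)\log n/n}$ up to log factors.
\begin{itemize}
\item By (B1)--(B3), $\mb P(\beta)\ge c|\beta|^2$ for $|\beta|\lesssim 1/\xi_{K,n}$, using $\lambda_{min}(\bar Q_n)$ bounded away from zero and a Taylor expansion of the conditional expected loss.
\item Since $\mb P_n(\hat\beta)\le 0$ and $\mb P_n$ is convex, a standard convexity argument applies. If $|\hat\beta|>r$, then on the sphere $|\beta|=r$ we must have $\inf \mb P_n\le 0$. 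This forces $cr^2\le \sup_{|\beta|\le r}|\mb P_n-\mb P|$.
\item By Theorem \ref{thm:maximal inequality} and Markov's inequality, the right side is $O_p\bigl(r(\xi_{K,n}\vee\sqrt K)\sqrt{\log n/n}\bigr)$, giving the claimed $r_n$.
\end{itemize}
The condition $r_n\xi_{K,n}=o(1)$ follows from the assumed $\xi_{K,n}K^{\eta/4}n^{-\eta/4}\log^2n=o(1)$.

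\textbf{Step 2: first-order condition.} By convexity, $\hat\beta$ satisfies a subgradient condition, so $|\Psi_n(\hat\beta)|$ is negligible. For nonsmooth $\rho$ this needs an argument counting ties; I expect it to give $O_p(K\xi_{K,n}/n)$, using the continuity of the conditional density of $\bar\varepsilon_i$ in (B-assumptions).

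\textbf{Step 3: linearization.} Decompose
\[
\Psi_n(\hat\beta)=\Psi_n(0)+\Psi(\hat\beta)-\Psi(0)+R_n(\hat\beta),
\]
with $R_n(\beta)=[\Psi_n(\beta)-\Psi(\beta)]-[\Psi_n(0)-\Psi(0)]$. Several pieces are routine:
\begin{itemize}
\item $\Psi_n(0)=-\mf Z_n/\sqrt n$ up to sign conventions.
\item $\Psi(0)$ is zero by the definition of $\theta_{0,n}$ as the population minimizer, which is an interior first-order condition.
\item $\Psi(\beta)-\Psi(0)=-\bar Q_n\beta+O(\xi_{K,n}|\beta|^2)$ by a Taylor expansion of $\overline\Xi_n$.
\end{itemize}
The bias part is then $O(\xi_{K,n}r_n^2)$, which is of smaller order.

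\textbf{Main obstacle: the gradient remainder.} The hard step is controlling $\sup_{|\beta|\le r_n}|R_n(\beta)|$. I would repeat the M-dependent decomposition used for Theorem \ref{thm:maximal inequality}, now applied to the vector-valued class $\{\psi(\bar\varepsilon_i-\beta^\top\mf b_\omega)-\psi(\bar\varepsilon_i)\}\mf b_\omega$ indexed by $|\beta|\le r_n$.
\begin{itemize}
\item Approximate each summand by an $m$-dependent version with $m\asymp\log n$ (or a polynomial of $n$), using the dependence measures in (B-assumptions); the approximation error is negligible.
\item Split into blocks and apply Bernstein-type maximal inequalities over a covering of the $K$-ball.
\item The variance envelope is $\mb E|\psi(\bar\varepsilon-t)-\psi(\bar\varepsilon)|^2\lesssim |t|^{\eta}$, and $|t|\le \xi_{K,n}r_n$.
\end{itemize}
Together these give
\[
\sup|R_n|=O_p\Bigl(\sqrt{K\log n/n}\,(\xi_{K,n}r_n)^{\eta/2}\log n\Bigr).
\]
Substituting $r_n$ yields the rate $(K\vee\xi_{K,n}^2)^{(\eta+2)/4}n^{-(\eta+2)/4}$ times logs.

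\textbf{Conclusion.} Invert $\bar Q_n$, whose inverse has bounded operator norm, to obtain
\[
\hat\beta-\bar Q_n^{-1}\mf Z_n/\sqrt n=\bar Q_n^{-1}\bigl[R_n(\hat\beta)+O(\xi_{K,n}r_n^2)+\Psi_n(\hat\beta)\bigr].
\]
All three terms are bounded by the rate in \eqref{eq:bahadur beta}.
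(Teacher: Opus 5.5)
\textbf{The gap: the sup-norm localization is too coarse.} Your architecture (consistency from convexity plus the maximal inequality, then linearization with a localized remainder) is reasonable. The problem is the sup-norm localization you feed into the remainder, which cannot give the stated rate. You pass from $|\beta|\le r_n$ to $t(\beta)=\sup_{\mathbf x}|\beta^\top\mathbf b_\omega(\mathbf x)|\le\xi_{K,n}|\beta|\lesssim\xi_{K,n}r_n$.

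Carrying out the substitution you describe actually gives $\sqrt{K/n}\,(\xi_{K,n}(\xi_{K,n}\vee\sqrt K))^{\eta/2}n^{-\eta/4}$ up to logarithms. This exceeds the target $(K\vee\xi_{K,n}^2)^{(\eta+2)/4}n^{-(\eta+2)/4}$ by the factor $K^{1/2}\xi_{K,n}^{\eta/2}/(\xi_{K,n}\vee\sqrt K)$, i.e.\ by $K^{\eta/4}$ when $\xi_{K,n}\asymp\sqrt K$. For quantile regression you would get $Kn^{-3/4}$ instead of the advertised $K^{3/4}n^{-3/4}$.

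The bias term has the same defect. $\xi_{K,n}r_n^2\asymp K^{3/2}n^{-1}\log n$ is \emph{not} of smaller order than the target $Kn^{-1}\log^4 n$ when $\eta=2$. The missing fact is that the true sup-norm size of $\hat\beta$ is of order $\bar r_n=(\xi_{K,n}\vee\sqrt K)n^{-1/2}\log n$. This is smaller, by a factor of order $\xi_{K,n}$ up to logs, than the crude bound $\xi_{K,n}|\hat\beta|$, and it has to be proved separately.

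\textbf{How the paper obtains it.} The paper expands around the linear term rather than around $0$. It writes $\hat\beta=\beta_{\mathbf Z}+e_n$ with $\beta_{\mathbf Z}=\bar Q_n^{-1}\mathbf Z_n/\sqrt n$, and exploits $\mathbb P_n(\hat\beta)\le\mathbb P_n(\beta_{\mathbf Z})$ through the increment $\widetilde R_i(\theta,h)=L_i(\theta+h)-L_i(\theta)+\psi(\bar\varepsilon_i)h^\top\mathbf b_\omega(\mathbf X_i)$. Two ingredients do the work.

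First, $t(\beta_{\mathbf Z})=O_p(\xi_{K,n}n^{-1/2}\log n)$ is proved directly. The paper treats $\mathbf x\mapsto\mathbf b_\omega(\mathbf x)^\top\mathbf Z_n$ as an empirical process indexed by $\mathbf x\in\mathcal D_n$ (a $d$-dimensional, low-entropy class with variance proxy $\lesssim\xi_{K,n}^2$), not by the $K$-ball.

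Second, both the stochastic and the deterministic remainders are localized by sup-norm constraints $t(\theta)\le a_\theta$, $t(h)\le a_h$. The centered remainder divided by $|h|$ is $O_p(K^{1/2}n^{-1/2}(a_\theta+a_h)^{\eta/2}\log^2 n)$. The bias is $O(t(\theta)|\theta||h|+t(h)|h|^2+t(\theta)|h|^2)$, which at $(\theta,h)=(\beta_{\mathbf Z},e_n)$ contributes only $\bar r_n^2$ to $|e_n|$.

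Since $t(e_n)\le\xi_{K,n}|e_n|$ and, after a first pass, $|e_n|$ is much smaller than $|\hat\beta|$, a few rounds of self-refinement shrink $a_h$ to $O(\bar r_n)$. This is where the hypothesis $\xi_{K,n}K^{\eta/4}n^{-\eta/4}\log^2 n=o(1)$ is consumed. In your proposal it only serves to give $\xi_{K,n}r_n=o(1)$, which signals that this step is missing.

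\textbf{Smaller points.} Comparing criterion values makes your Step 2 unnecessary. That matters because your tie-counting bound needs a general-position condition and an interior-solution argument for the constraint set $\Theta$, neither of which is in (A1)--(B3). Also, the paper's $\eta$ lies in $[1,2]$ ($\eta=1$ for quantile, $\eta=2$ for Huber and least squares). This agrees with your variance envelope $|t|^\eta$ but not with your parenthetical description of $\eta$.
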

The constant $\eta$ in \eqref{eq:bahadur beta} is defined through Assumption \hyperref[(B3)]{(B3)} and depends on the choice of loss function $\rho(\cdot)$. 
Specifically, $\eta=1$ for the quantile loss, while $\eta=2$ for the Huber, least-squares,
and expectile losses.
For the $L_q$ loss with $1<q\le 2$, it holds that $\eta\le 2$ when $3/2<q\le 2$ and
$\eta=2q-1$ when $1<q\le 3/2$; the derivations are deferred to
Section~\ref{sec:def and assump}.
Proposition \ref{prop:Bahadur representation} shows that $\eta$ governs the order of the
remainder term in the Bahadur representation. To illustrate the implied dimensional requirements, consider the typical $\xi_{K,n}\lesssim \sqrt{K}$. For quantile regression, under standard regularity conditions ensuring $\eta=1$,
the remainder in \eqref{eq:bahadur beta} is of order
$K^{3/4} n^{-3/4}\log^3 n$. Ignoring logarithmic factors, this coincides with the best known rate for sieve M-estimators under independent data, as established in \cite{cattaneo2025uniform}.
For the Huber, $L_2$, and expectile regressions with typical $\xi_{K,n}\lesssim \sqrt{K}$, we have $\eta=2$ by certain regular conditions (see \cite{wu2018gradient}). Consequently, \eqref{eq:bahadur beta} achieves an order of $Kn^{-1}\log^4 n$,  which implies $K$ cannot increase faster than $n^{1/2}/\log^4 n$ when \eqref{eq:bahadur beta} is simplified as $o_p(n^{-1/2})$.


\subsection{Gaussian approximation}
\label{sec:convex Gaussian}
Based on the Bahadur representation in Proposition \ref{prop:Bahadur representation}, we shall establish a convex Gaussian approximation theory for approximating the distribution behavior of $\hat \beta$ over Euclidean convex sets in $\mb R^K$. 
The convex Gaussian approximation theory developed in this subsection is motivated by the geometry
of the simultaneous confidence regions (SCRs) in Section~\ref{sec:SCR construction}.
In particular, the acceptance region of the SCR can be written as a set of the form
$\hat\beta \in A(C)$, where $A(C)$ is specified through uniform linear inequalities
$|\beta^\top \mf b_\omega(\mf x)| \le C n^{-1/2} h(\mf x)$ over $\mf x \in \mathcal D_n$;
the precise definition of $A(C)$ is given in \eqref{eq: set A(C)}.
Such a set is an intersection of uncountably many slabs in $\mb R^K$ and hence convex.
As a result, controlling coverage probabilities for the SCR naturally reduces to
approximating probabilities of convex events for the distribution of $\hat\beta$.

Introduce the following convex variational distance to measure the distributional difference between $K$-dimensional random vectors $\mf X$ and $\mf Y$,
\begin{equation}
    \mathcal{K}(\mf X,\mf Y)=:\sup_{A\in\mathcal{A}}|\mr P(\mf X\in A)-\mr P(\mf Y\in A)|,\label{eq:K-dist def}
\end{equation}
where  $\mathcal{A}$ is the collection of all the convex sets in $\mb R^K$. The following Theorem \ref{thm:Gaussian approximation} shows that there exists a $K$-dimensional Gaussian random vector $\mf G_{n}$ which preserves the same covariance structure of $\mf Z_n$, such that any probability of $\hat{\beta}$ over a convex set can be approximated by that of the Gaussian random vector $\mf G_n$. 
\begin{theorem}
            \label{thm:Gaussian approximation} 
        Under Assumptions \hyperref[(A1)]{(A1)}-\hyperref[(A4)]{(A4)} and \hyperref[(B1)]{(B1)}-\hyperref[(B3)]{(B3)}, suppose $\Sigma_{\mf Z}$ is the covariance matrix of $\mf Z_n$ in \eqref{eq:Z_n} and the smallest eigenvalue of covariance matrix $\Sigma_{\mf Z}$ is bounded away from $0$ and for some $q>4$, $\|\mf z_i\|_q=O(\xi_{K,n})$. Then we have
        \begin{itemize}
            \item[(i).] There exists a Gaussian random vector $\mf G_n\sim N(\mf 0,\Sigma_{\mf Z})$ such that
            \begin{align}
                \mathcal{K}\left(\mf Z_n,\mf G_n\right)&=O\left(\gamma_{n,q,K}\log^3 n\right),\label{eq:GA bound Z}\\
                \gamma_{n,q,K}&=:\min\left\{K^{\frac{q-1}{4(q+5)}}\xi_{K,n}^{\frac{3q}{q+5}}n^{\frac{4-q}{2(q+5)}}, K^{\frac{1}{2}+\frac{4}{3q}(\frac{1}{q}-1)}\xi^{\frac{2}{3}}_{K,n}n^{-\frac{1}{3}+\frac{2}{q}(1-\frac{1}{3q})}\right\}.\notag
            \end{align}
            \item[(ii).] 
           Under the event 
           $$
           \mathcal{B}_n^{\epsilon}=\left\{
        \left|\hat\beta -\frac{\bar Q_n^{-1}\mf Z_n}{\sqrt{n}} \right|\leq g_nK^{(\eta+2)/4}n^{-(\eta+2)/4}\log^{\eta+2} n\right\},
           $$
           where $g_n$ diverges to infinity at an arbitrarily slow rate, we further have
    \begin{equation}
        \sup_{A\in\mathcal{A}} \left| \mr P(\sqrt{n}\hat{\beta}\in A| \mathcal{B}_n^\epsilon )-\mr P\left(\bar{Q}_n^{-1}\mf G_n\in A\right)\right|=O\left(\left(K^{\frac{\eta+3}{6}}n^{-\frac{\eta}{6}}+\gamma_{n,q,K}\right)\log^3 n\right).\label{eq:GA bound}
    \end{equation}
        \end{itemize}
\end{theorem}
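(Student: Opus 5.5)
Part (i) will be proved by an $m$-dependent approximation followed by a blocking argument and a high-dimensional Gaussian approximation over convex sets for sums of independent vectors. First truncate the dependence. Using the representation \eqref{eq:causal representation}, set $\tilde{\mf z}_i=\mb E[\mf z_i\mid \zeta_{i-m},\dots,\zeta_i,\eta_{i-m},\dots,\eta_i]$, which is $m$-dependent. Under (A2)--(A4) and (B1)--(B2), the dependence measures of $\psi(\bar\varepsilon_i)\mf b_\omega(\mf X_i)$ decay geometrically, or at least polynomially fast enough. Burkholder's inequality for martingale differences (the projection argument of \cite{wu2005nonlinear}) then gives $\|\,|\mf Z_n-\tilde{\mf Z}_n|\,\|_2\lesssim \xi_{K,n}\sqrt K\sum_{k>m}\delta(k)$. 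The perturbation is transferred to convex probabilities with Nazarov's inequality for Gaussian convex sets, $\mr P(\mf G_n\in A^{\epsilon}\setminus A)\lesssim K^{1/4}\epsilon$, which is valid because $\lambda_{\min}(\Sigma_{\mf Z})$ is bounded below. Second, split $\{1,\dots,n\}$ into big blocks of length $M$ separated by small blocks of length $m$. The big-block sums $\mf Y_j$, $j\le n/(M+m)$, are then independent, and the small-block contribution is again controlled in $\mathcal L^2$ and absorbed through the same Gaussian anti-concentration.

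To the independent block sums I will apply two known convex-set Gaussian approximation bounds, and $\gamma_{n,q,K}$ is the minimum of the two resulting rates. The first is the Bentkus-type bound $\mathcal K\lesssim K^{1/4}\sum_j\mb E|\Sigma^{-1/2}\mf Y_j|^3$. I will apply it after truncating $\mf Y_j$ at a level and controlling the truncated part through the $q$-th moment $\|\mf z_i\|_q=O(\xi_{K,n})$ with Rosenthal's inequality. The second is the Fang--Koike type bound, which is better in moderate dimensions. The block length $M$, the gap $m$ and the truncation level are then optimized jointly. Balancing the dependence error $K^{1/4}\xi_{K,n}\sqrt{K}\,m^{-\alpha}$, the small-block error, and the Bentkus error $K^{1/4}(M/n)^{1/2}\xi_{K,n}^3(\cdot)$ yields the two exponent expressions, up to a $\log^3 n$ factor that arises because $m$ and $M$ must be chosen with $\log n$ factors under geometric decay. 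I will also need the block covariance to be close to $\Sigma_{\mf Z}$. This follows from summability of autocovariances, using a Gaussian-to-Gaussian comparison over convex sets (Devroye--Mehrabian--Reddad type, $\mathcal K\lesssim \|\Sigma_1^{-1/2}\Sigma_2\Sigma_1^{-1/2}-I\|_F$).

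For part (ii), note that $\sqrt n\hat\beta=\bar Q_n^{-1}\mf Z_n+\mf R_n$, and that on $\mathcal B_n^\epsilon$ we have $|\mf R_n|\le \epsilon_n:=g_nK^{(\eta+2)/4}n^{-\eta/4}\log^{\eta+2}n$. Hence, for convex $A$, the event in question is sandwiched between $\{\bar Q_n^{-1}\mf Z_n\in A^{-\epsilon_n}\}$ and $\{\bar Q_n^{-1}\mf Z_n\in A^{\epsilon_n}\}$, where $A^{\pm\epsilon_n}$ are the inner and outer parallel sets, which are again convex. Since linear images of convex sets are convex, part (i) applies to $\bar Q_n^{-1}\mf Z_n$ versus $\bar Q_n^{-1}\mf G_n$. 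Nazarov's bound for $\bar Q_n^{-1}\mf G_n$, whose covariance has bounded condition number by (B1)--(B3), gives a boundary-layer error $\lesssim K^{1/4}\epsilon_n$. Conditioning on $\mathcal B_n^\epsilon$ changes probabilities by at most $\mr P((\mathcal B_n^\epsilon)^c)/\mr P(\mathcal B_n^\epsilon)$, and this quantity can be made small by Proposition \ref{prop:Bahadur representation}. To reach the exact rate $K^{(\eta+3)/6}n^{-\eta/6}$, I will optimize between the boundary term and a sharpened tail bound for the remainder. Concretely, I will use the moment version of the Bahadur bound from the proof of Proposition \ref{prop:Bahadur representation}, combined with a Markov-type inequality at a threshold $t$, and balance $K^{1/4}t$ against the probability that the remainder exceeds $t$.

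The main obstacle is the bookkeeping in part (i): tracking how $K$, $\xi_{K,n}$ and $q$ enter through the $m$-dependent approximation and the blocking, so that the optimization reproduces exactly the exponents in $\gamma_{n,q,K}$. I would first fix geometric-type decay, so that $m\asymp\log n$, and then optimize $M$ separately under each of the two Gaussian approximation inequalities.
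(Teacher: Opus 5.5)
Your $m$-dependent approximation and the Gaussian--Gaussian covariance comparison are fine. The core of part (i), however, is Bernstein big-block/small-block splitting followed by a moment-type bound for the independent block sums, and this cannot reach $\gamma_{n,q,K}$; the claim that balancing ``yields the two exponent expressions'' does not hold. Take $\xi_{K,n}\asymp\sqrt K$ and $q$ large (e.g.\ bounded $\psi$, as for quantile or Huber loss). Then $\gamma_{n,q,K}$ is essentially $\min\{K^{7/4}n^{-1/2},K^{5/6}n^{-1/3}\}$. In your scheme the two error sources are:
\begin{itemize}
\item Bentkus' bound on the $n/M$ independent block sums is $K^{1/4}\sum_j\mathbb E|Y_j|^3\asymp K^{7/4}(M/n)^{1/2}$. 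Any Lyapunov-type (third- or fourth-moment) bound for $n/M$ block sums scales like $(M/n)^{1/2}$, because the block sums themselves grow with $M$.
\item Discarding the small blocks perturbs the sum by $\asymp (Km/M)^{1/2}$ in $\mathcal L^2$. Your Chebyshev-plus-$K^{1/4}\epsilon$ transfer turns this into $\asymp K^{1/2}(m/M)^{1/3}$.
\end{itemize}
Optimizing over $M$ gives about $Km^{1/5}n^{-1/5}$. Even an optimal higher-moment transfer only reaches order $(m/n)^{1/4}$ in $n$. For fixed or slowly growing $K$ this is far from the $n^{-1/2}$ and $n^{-1/3}$ terms you need, whatever truncation level you choose.

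What is missing is a way to handle the $m$-dependence without discarding data. The paper uses two such devices and takes the minimum of the resulting rates via Lemma~\ref{lem:liu2025}.
\begin{itemize}
\item First route. It truncates $\mathbf z_i$ at level $\pi_n$ and conditions on the last $m\asymp\log n$ innovations. It then applies Fang's Stein-method bound for bounded, locally dependent summands (Lemma~\ref{lem:Fang2016}, with dependency neighbourhoods of sizes $m,2m,3m$ and $\kappa=n^{-1/2}\pi_n$) directly to the whole $m$-dependent sum. This sits inside the smoothed-indicator decomposition \eqref{eq:decomp of Kolmogorov dist}, together with the truncation, $m$-approximation and covariance-comparison errors. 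Optimizing $\pi_n$ and $\epsilon_1$ gives the first term of $\gamma_{n,q,K}$.
\item Second route. It splits the $m$-dependent sum into the $m+1$ residue classes modulo $m+1$, each a sum of independent vectors. It couples each class with a Gaussian via the Mies--Steland strong approximation (Lemma~\ref{lem:strong GA in mies}) and adds the $\mathcal L^2$ errors using H\"older. It then converts to $\mathcal K$ by Chebyshev plus the $K^{1/4}\epsilon$ bound. This is where the $(K/n)^{1/2-1/q}$ structure and the $1/q^2$ exponents come from.
\end{itemize}

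For part (ii), your sandwich on $\mathcal B_n^\epsilon$ already gives $K^{1/4}\epsilon_n$. For slowly growing $g_n$, this is below $K^{(\eta+3)/6}n^{-\eta/6}\log^3 n$ whenever the latter is $o(1)$, so no further optimization is needed. The paper reaches its weaker exponent differently: it bounds the conditional second moment of the remainder by the event's radius inside Lemma~\ref{lem:basic_smoothing} and optimizes $\epsilon_0$. Your planned control of $\mathrm P((\mathcal B_n^\epsilon)^c)$ through a ``moment version'' of Proposition~\ref{prop:Bahadur representation} has nothing to rest on, since that proposition is only an $O_p$ statement. The paper's argument works under the conditional law and does not quantify this term either.
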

The condition on the smallest eigenvalue of $\Sigma_{\mf Z}$ ensures $\sum_{i=1}^n \psi(\bar\varepsilon_i)\mf b_\omega(\mf X_i)/\sqrt{n}$ is not degenerate, which is a mild requirement. $\|\mf z_i\|_q=O(\xi_{K,n})$ can be easily checked for most regressions and sieve basis families. For the Huber and quantile regressions, $\|\mf z_i\|_q=O(\xi_{K,n})$ for any given $q>0$ since $\psi(x)$ is bounded. For expectile and robust $L_q$ regressions, $\|\mf z_i\|_q=O(\xi_{K,n})$ is reduced to the moment condition $\|\psi(\varepsilon_i)\|_q<\infty$. 

Consider typical $\xi_{K,n}\lesssim \sqrt{K}$ for tensor-products of univariate polynomial spline, trigonometric polynomial, or wavelet bases, the probability $\mr P(\mathcal{B}_n^\epsilon)=1-o(1)$ by Proposition \ref{prop:Bahadur representation}, and the rate in \eqref{eq:GA bound} can 
yield valid convex Gaussian approximation towards $\hat{\beta}$ when $K=O(n^{\min\{2/5,\eta/(\eta+3)\}-\alpha})$ for some small $\alpha>0$ when $q$ is sufficiently large. More specifically, among quantile, expectile, least squares, the Huber, and robust $L_q$ regressions, the upper bound of $K$ for practically useful convex Gaussian approximation \eqref{eq:GA bound} is in the range from $n^{1/4-\alpha}$ to $n^{2/5-\alpha}$ with some sufficiently small $\alpha>0$.


\section{SCR construction}
\label{sec:SCR construction}
The high dimensional empirical process framework, together with the convex Gaussian approximation scheme can handle many simultaneous inference problems in time series sieve M-estimation. A representative example is the construction of simultaneous confidence regions (SCRs) for $Q_{0,n}(\mf x)=\theta_{0,n}^\top \mf b_\omega(\mf x)$. 
Construction of an asymptotically correct SCR at significance level $1-\alpha$ for $Q_{0,n}(\mf x)$ can be formulated as finding the critical value $C_{\alpha,h}$ (which might depend on $n$) such that the probability 
\begin{equation}
    \label{eq:SCR on Q}
    \lim_{n\rightarrow\infty}\mr P\left(\hat{Q}(\mf x)- \frac{C_{\alpha,h} }{\sqrt{n}} h(\mf x)\leq Q_{0,n}(\mf x)\leq \hat{Q}(\mf x)+ \frac{C_{\alpha,h} }{\sqrt{n}} h(\mf x),\forall \mf x\in\mathcal{D}_n \right)= 1-\alpha,
\end{equation}
where $h(\mf x)$ serves as a localized scaling factor. 
It is worth emphasizing that our convex Gaussian approximation framework 
accommodates a broad choice of positive scaling functions $h(\mf x)$ 
to construct valid SCRs. 
For example, the convenient choice $h(\mf x)=1$ yields a constant-width 
SCR that directly controls the unscaled maximal deviation 
over the entire predictor space. Alternatively, the standardized choice 
$h(\mf x)=\sqrt{\mf b_\omega(\mf x)^\top\bar{Q}_n^{-1}\Sigma_{\mf Z}\bar{Q}_n^{-1}\mf b_\omega(\mf x)}$ 
is proportional to the standard deviation of $\hat Q(\mf x)$ and provides a direct bridge between global and pointwise inference, showing that the width of our SCR differs from a pointwise confidence interval only by the scale of the global critical value $C_{\alpha,h}$. As we demonstrate in Proposition \ref{prop:critical value}, 
the cost of expanding from pointwise to simultaneous coverage 
is merely an additional logarithmic factor $C_{\alpha,h}\asymp \sqrt{\log n}$.

Define the set $A(C)$ with critical value $C$ as
 \begin{equation}
    A(C)=:\left\{ \mf S\in\mathbb{R}^{K}: \left|\mf S^\top \mf b_\omega(\mf x)\right|\leq \frac{C}{\sqrt{n}} h(\mf x),\forall \mf x\in\mathcal{D}_n \right\},\quad C\in \mb R,\label{eq: set A(C)}
\end{equation}
then by \eqref{eq:sieve estimator}, \eqref{eq:SCR on Q} can be rewritten as $\lim_{n\rightarrow\infty}\mr P\left(\hat{\beta}\in A(C_{\alpha,h})\right)= 1-\alpha.$
Recall that $A(C)$ is a convex subset of $\mb R^K$ as discussed in
Section~\ref{sec:convex Gaussian}. By applying Theorem \ref{thm:Gaussian approximation}, we can find $C_{\alpha,h}$ in \eqref{eq:SCR on Q} from the $1-\alpha$ quantile of a rescaled inner product between a standard Gaussian random vector and the sieve basis $\mf b_\omega(\mf x)$, as displayed in the following Corollary.
\begin{corollary}
    \label{cor:SCR Gaussian approx}
    Under the same conditions in Theorem \ref{thm:Gaussian approximation}, if $(K^{\frac{\eta+3}{6}}n^{-\frac{\eta}{6}}+\gamma_{n,q,K})\log^3 n=o(1)$ then 
            \begin{equation}
            \lim_{n\to\infty}\mr P\left(\frac{\sqrt{n}|\hat{Q}(\mf x)-Q_{0,n}(\mf x)|}{h(\mf x)}\leq C_{\alpha,h} ,\forall \mf x\in\mathcal{D}_n\right)=1-\alpha,\label{eq:SCR approximation}
        \end{equation}
        where the critical value $C_{\alpha,h}$ satisfies
    \begin{equation}
        \mr P\left( \frac{\left|\mf b_\omega(\mf x )^\top\bar Q_n^{-1}\mf G_n\right|}{h(\mf x)} \leq C_{\alpha,h}  ,\forall \mf x\in\mathcal{D}_n\right)=1-\alpha,\label{eq:target critical value}
    \end{equation}
    for a Gaussian random vector $\mf G_n\sim N_K(0,\Sigma_{\mf Z})$.
\end{corollary}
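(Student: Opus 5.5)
The plan is to reduce the coverage event to a convex event for $\sqrt n\hat\beta$ and then apply Theorem \ref{thm:Gaussian approximation}(ii). Since $\hat Q(\mf x)-Q_{0,n}(\mf x)=\hat\beta^\top\mf b_\omega(\mf x)$, the event in \eqref{eq:SCR approximation} is exactly $\{\hat\beta\in A(C_{\alpha,h})\}$, or equivalently $\{\sqrt n\hat\beta\in \sqrt n A(C_{\alpha,h})\}$. The set
\[
\sqrt n A(C)=\{\mf S:|\mf S^\top\mf b_\omega(\mf x)|\le C h(\mf x),\ \forall \mf x\in\mathcal D_n\}
\]
is an intersection of slabs, so it is closed and convex; call it $\tilde A(C)$. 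In the same way, \eqref{eq:target critical value} says that $\mr P(\bar Q_n^{-1}\mf G_n\in\tilde A(C_{\alpha,h}))=1-\alpha$. A $C_{\alpha,h}$ achieving equality exists because $C\mapsto \mr P(\bar Q_n^{-1}\mf G_n\in \tilde A(C))$ is continuous. This holds since the supremum of a nondegenerate Gaussian process has an atomless distribution away from $0$, or more simply because each boundary $\partial\tilde A(C)$ has Gaussian measure zero. Hence the only task is to show $\mr P(\sqrt n\hat\beta\in\tilde A(C_{\alpha,h}))-\mr P(\bar Q_n^{-1}\mf G_n\in\tilde A(C_{\alpha,h}))\to0$.

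To do this, split on the event $\mathcal B_n^\epsilon$:
\[
\mr P(\sqrt n\hat\beta\in\tilde A)=\mr P(\sqrt n\hat\beta\in\tilde A\mid\mathcal B_n^\epsilon)\,\mr P(\mathcal B_n^\epsilon)+\mr P(\{\sqrt n\hat\beta\in\tilde A\}\cap(\mathcal B_n^\epsilon)^c).
\]
Proposition \ref{prop:Bahadur representation} bounds the Bahadur remainder at order $K^{(\eta+2)/4}n^{-(\eta+2)/4}\log^{\eta+2}n$ (taking $\xi_{K,n}\lesssim\sqrt K$ as in the definition of $\mathcal B_n^\epsilon$). Because $g_n\to\infty$, this gives $\mr P(\mathcal B_n^\epsilon)=1-o(1)$, so the last term is $o(1)$ and the factor $\mr P(\mathcal B_n^\epsilon)$ can be replaced by $1$ at cost $o(1)$. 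Theorem \ref{thm:Gaussian approximation}(ii) then gives, uniformly over convex $A$,
\[
\mr P(\sqrt n\hat\beta\in A\mid\mathcal B_n^\epsilon)=\mr P(\bar Q_n^{-1}\mf G_n\in A)+O\bigl((K^{\frac{\eta+3}{6}}n^{-\frac\eta6}+\gamma_{n,q,K})\log^3n\bigr).
\]
By the corollary's hypothesis this error is $o(1)$. Plugging in $A=\tilde A(C_{\alpha,h})$ yields $\mr P(\sqrt n\hat\beta\in\tilde A(C_{\alpha,h}))=1-\alpha+o(1)$, which is the claim.

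The main obstacle is the bookkeeping needed to justify $\mr P(\mathcal B_n^\epsilon)\to1$ under exactly the stated conditions. The corollary assumes "the same conditions as Theorem \ref{thm:Gaussian approximation}", so I have to check that the extra rate requirement of Proposition \ref{prop:Bahadur representation}, namely $\xi_{K,n}K^{\eta/4}n^{-\eta/4}\log^2 n=o(1)$, is implied. With $\xi_{K,n}\lesssim\sqrt K$ this reads $K^{(\eta+2)/4}n^{-\eta/4}\log^2n\to0$. I would derive it from $K^{(\eta+3)/6}n^{-\eta/6}\log^3n=o(1)$ by comparing exponents. Indeed, raising the latter to the power $3/2$ gives $K^{(\eta+3)/4}n^{-\eta/4}\log^{4.5}n=o(1)$, and this dominates $K^{(\eta+2)/4}n^{-\eta/4}\log^2n$. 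So the condition is implied, and if needed I would simply note that Theorem \ref{thm:Gaussian approximation}(ii) already presupposes it.

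A secondary point is that $C_{\alpha,h}$ may depend on $n$. This causes no difficulty, because the Gaussian approximation bound is uniform over all convex sets.
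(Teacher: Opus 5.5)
Your proof is correct and follows the same route as the paper: you rewrite the coverage event as $\{\sqrt n\hat\beta\in A\}$ for the convex set $A=\{\mathbf S:|\mathbf S^\top\mathbf b_\omega(\mathbf x)|\le C_{\alpha,h}h(\mathbf x),\ \forall\mathbf x\in\mathcal D_n\}$ and then apply the uniform convex-set bound of Theorem~\ref{thm:Gaussian approximation}. Two of your steps are left implicit in the paper's proof: removing the conditioning on $\mathcal B_n^\epsilon$ via Proposition~\ref{prop:Bahadur representation} (which, like the paper's remark after Theorem~\ref{thm:Gaussian approximation}, needs $\xi_{K,n}\lesssim\sqrt K$), and checking that the Bahadur rate condition follows from the corollary's hypothesis.
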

Corollary \ref{cor:SCR Gaussian approx} indicates that we can well approximate the simultaneous distributional behavior of $\sqrt{n}(\hat{Q}(\mf x)- Q_{0,n}(\mf x))$ on ${\cal D}_n$ by that of the Gaussian process $\mf b_\omega(\mf x)^\top\bar{Q}_n^{-1} \mf G_n$. 
Critical values of the latter Gaussian process can be evaluated via bootstrap which will be discussed in detail in the next subsection. Hence Corollary \ref{cor:SCR Gaussian approx} serves as a theoretical foundation for the construction of the SCR. 

\subsection{Self-convolved bootstrap}
\label{sec:bootstrap}
Identifying the target critical value $C_{\alpha,h}$ in \eqref{eq:target critical value} for the asymptotically correct SCR is challenging. On the one hand, the approximation convergence rate of $C_{\alpha,h}$ is very slow \citep{Sun&Loader}. On the other hand, $\bar Q_n^{-1}\mf G_n$ involves unknown term $\bar{Q}_n^{-1}$. The $\bar{Q}_n^{-1}$ is complex and hard to estimate because it is determined by the expectation of $\bar\Xi_n^{(1)}(0|\mf X_i)\mf b_\omega(\mf X_i)\mf b_\omega(\mf X_i)^\top$.

To overcome these difficulties in a fully data-driven way and exploit the theoretical results developed in Sections \ref{sec:simultaneous inference}–\ref{sec:SCR construction}, we propose a self-convolved bootstrap method. This method leverages the uniform Bahadur representation and the Gaussian approximation of the linearized sieve estimator to mimic the distribution of $\hat{\beta}$ via block sums of the estimated coefficients. Consequently, it provides a feasible and asymptotically correct SCR without requiring the direct estimation of $\bar Q_n^{-1}$ or slow analytic approximations of $C_{\alpha,h}$.


\begin{breakablealgorithm}
	\caption{Self-convolved bootstrap for SCR}
	\label{alg:self-convolved bootstrap}
    \begin{algorithmic}[0]
    \setlength{\baselineskip}{0.7\baselineskip}
        \STATE{\textbf{Input:}} Data $\{(\mf X_i,Y_i)\}_{i=1}^n$, sieve basis $\mf b_\omega(\mf x)$, and associated $\hat{\theta}$ in \eqref{eq:sieve estimator}.
        \STATE{\textbf{Step 1:}} Select window size $M$ satisfying $M\rightarrow\infty$ but $M=o(n)$. (The selection strategy is introduced in Section \ref{sec:tuning parameter selection}.)
        \STATE{\textbf{Step 2:}} Partition the data into $n-M+1$ blocks of length $M$ and compute the estimated linear coefficient $\hat{\theta}(k)$ using each block of data $\left\{\left(\mf X_i,Y_i\right)\right\}_{i=k}^{k+M-1}$, i.e. 
\begin{equation}
\hat{\theta}(k)=: \arg \min _{\theta \in \Theta} \frac{1}{M} \sum_{i=k}^{k+M-1} \rho\left(Y_i- \theta^\top\mf b_\omega\left(\mf X_i\right)\right),\quad k=1,2,\dots,n-M+1.\label{eq:block theta}    
\end{equation}
        \STATE{\textbf{Step 3:}} Generate i.i.d. standard Gaussian random variables $\{ V_j\}_{j=1}^{n-M+1}$ and compute
        \begin{equation}
    \Phi_M =:\sqrt{\frac{M}{n-M+1}}\sum_{j=1}^{n-M+1} \left(  \hat{\theta}(j)-\hat{\theta}   \right)V_j,\label{eq:bootstrap sample}
\end{equation}
        \STATE{\textbf{Step 4:}} Repeat Step 3 for $B$ times and document the outcomes
        $\Phi_M^{(b)}$, $b=1,\dots,B$.
        \STATE{\textbf{Step 5:}} For a given level $\alpha\in(0,1)$ and a chosen positive function $h(\mf x)$ on $\mathcal{D}_n$, denote $\hat{C}_{\alpha,h}$ as the $(1-\alpha)$-th sample quantile of 
        $$
        \left\{ \sup_{\mf x\in\mathcal{D}_n}\frac{|\mf b_\omega(\mf x)^\top\Phi_M^{(b)}|}{h(\mf x)}\right\}_{b=1}^B
        $$
	    \STATE{\textbf{Output:}} $1-\alpha$-level SCR 
        \begin{equation}
            \hat{Q}(\mf x)\pm \hat{C}_{\alpha,h}\frac{h(\mf x)}{\sqrt{n}}.\label{eq:SCR boot}
        \end{equation}  
 \end{algorithmic}
\end{breakablealgorithm}

The implementation of Algorithm \ref{alg:self-convolved bootstrap} only requires the choice of one tuning parameter $M$ and the convolution of block sieve M-estimators and i.i.d. standard Gaussian random variables. The intuition behind Algorithm \ref{alg:self-convolved bootstrap} is that, under stationarity, each block estimator behaves approximately like a replicate of the full-sample estimator. Building on this insight, the Bahadur representation implies that the centered block estimates $\hat{\theta}(j)-\hat{\theta}$ naturally encode both the intricate matrix $\bar{Q}_n^{-1}$ and the local temporal dependence of the sequence $\{\mf z_i\}$. By further convolving these data-driven block proxies with Gaussian variables $V_j$, we obtain the simulated $\Phi_M^{(b)}$ that asymptotically reproduces the covariance structure of the target Gaussian vector $\bar{Q}_n^{-1}\mf G_n$ given by the convex Gaussian approximation.

The tuning parameter $M$ can be automatically selected using the Flat-Top Lag-Windows strategy described in Section \ref{sec:tuning parameter selection}. Theoretical validation in Theorem \ref{thm:bootstrap} confirms that Algorithm \ref{alg:self-convolved bootstrap} produces asymptotically correct SCRs.


\subsection{Tuning parameter selection}
\label{sec:tuning parameter selection}
To implement our sieve estimation and obtain its simultaneous inference, here we discuss how to choose the appropriate sieve number $K$ for the M-estimation and block size $M$ for the bootstrap.

To select the sieve dimension \(K\), we choose a validation length \(\ell_n\), taken as 
\(\ell_n=\lfloor 3\log_2 n\rfloor\) in our implementation, and divide the time series into two parts: the training part \(\{(\mf X_i,Y_i)\}_{i=1}^{n-\ell_n}\) and the validation part 
\(\{(\mf X_i,Y_i)\}_{i=n-\ell_n+1}^{n}\). For a sequence of choices $K^{(i)},i=1,2,\dots,L$, we estimate $\hat{Q}^{(i)}(\mf x)$ by sieve number $K^{(i)}$ on training data. Then using the fitted model, we forecast the time series in the validation part of the time series. Let $\widehat{Y}_{n-l+1}^{(i)}, \cdots, \widehat{Y}_{n}^{(i)}$ be the forecast of $Y_{n-l+1}, \ldots, Y_n$, respectively using the estimator $\hat{Q}^{(i)}(\mf x)$. Then we choose $K^{(i)}$ with the minimum empirical risk of the forecast, i.e.
$$
K^*= \arg\min_{K^{(i)}:1\leq i\leq L} \frac{1}{l}\sum_{j=n-l+1}^n\rho(Y_j-\hat{Y}_j^{(i)}).
$$
For the product tensor sieve basis, the method can be specified in the choice of sieve number for each tensor. Consider the tensor product linear sieve space $\mathcal{B}_n$, which is constructed as a tensor product space of some commonly used univariate linear approximating spaces $\mathcal{B}_{n1},\dots, \mathcal{B}_{nd}$. Suppose the  orthonormal bases of $\mathcal{B}_{nj}$ is $\mf b^{(j)}(x)=:(b_{1}^{(j)}(x),\dots,b_{k_j}^{(j)}(x))^\top$ where the $k_j$ is the dimension of $\mathcal{B}_{nj}$ and $\prod_{j=1}^d k_j=K$. The sieve basis $\mf b(\mf x)=(b_1(\mf x),\dots,b_K(\mf x))$ for $\mf x=(x_1,\dots,x_d)^\top$ is aligned as the product of spaces $\mathcal{B}_{n1},\dots, \mathcal{B}_{nd}$, i.e. 
\begin{equation}
    \mf b(\mf x)=: \mf b^{(1)}(x_1)\otimes \dots \otimes \mf b^{(d)}(x_d),\label{eq:tensor product}
\end{equation}
where `$\otimes$' is the Kronecker product and each $b_j(\mf x)$ can be expressed as the product $b_j(\mf x)=\prod_{i=1}^{d}b_{j_i}^{(i)}(x_i)$ with $1\leq j_i\leq k_j$. In this circumstance, we can choose a sieve number vector $\mf k = (k_1,\dots,k_d)^\top$. For a sequence of candidate vector $\mf k^{(i)}=(k_1^{(i)},\dots,k_d^{(i)}),i=1,\dots,L$, we estimate $\hat{Q}^{(i)}(x)$ by the product of sieve spaces $\mathcal{B}_{n1}^{(i)},\dots,\mathcal{B}_{nd}^{(i)}$ where $\operatorname{dim}(\mathcal{B}_{nj}^{(i)})=k_j^{(i)}$. Similarly, we obtain $\hat{Y}_{j}^{(i)}=\hat{Q}^{(i)}(\mf X_j)$ and choose the vector $\mf k^{*}$ such that
$\mf k^*= \arg\min_{\mf k^{(i)}:1\leq i\leq L} \frac{1}{l}\sum_{j=n-l+1}^n\rho(Y_j-\hat{Y}_j^{(i)}).$

Secondly, to choose the block size $M$ for our self-convolved bootstrap, we introduce the block selection via Flat-Top Lag-Windows in \cite{politis2004automatic}. Denote residuals $\hat{\varepsilon}_i=Y_i-\hat{Q}(\mf X_i)$. For each $j=1,\dots,K$, we first estimate 
\begin{equation}
        \label{eq:correlogram} \hat{R}_j(k)=n^{-1}\sum_{i=1}^{n-|k|}\left(\hat{z}_{i,j} -\sum_{i=1}^n\hat{z}_{i,j}/n \right)\left(\hat{z}_{i+|k|,j}-\sum_{i=1}^n\hat{z}_{i,j}/n\right),
\end{equation}
where $\hat{z}_{i,j}= \psi(\hat \varepsilon_i) b_j(\mf X_i)\omega(\mf X_i)$. Then we apply the following steps for each $j=1,\dots,K$ and finally choose $\hat{M}=\sum_{j=1}^{K}\hat{M}_j/K$ for our moving block bootstrap.
\begin{breakablealgorithm}
	\caption{Block selection via Flat-Top Lag-Windows}
	\label{alg:block selection}
    \begin{algorithmic}[0]
    \setlength{\baselineskip}{0.7\baselineskip}
        \STATE{\textbf{Input:}} $\hat{R}_j(k)$ defined in \eqref{eq:correlogram}.
        \STATE{\textbf{Step 1:}} Identify the smallest integer, say $\bar{m}_j$, after which the correlogram appears negligible, i.e., $|\widehat{R}_j(k)|\leq \epsilon$ for some sufficiently small $\epsilon>0$ when $k>\bar{m}_j$.
        \STATE{\textbf{Step 2:}} Using the value $\bar{M}=2 \bar{m}_j$, obtain following estimator
    \begin{equation*}
        \hat{G}_j=\sum_{k=-\bar{M}}^{\bar{M}} \lambda(k/\bar{M})|k|\hat{R}_j(k),\quad\hat{D}_j=\frac{4}{3}\left(\sum_{k=-\bar{M}}^{\bar{M}} \lambda(k/\bar{M})\hat{R}_j(k)\right)^2,
    \end{equation*}
    where $\lambda(t)=\mathbf{1}_{0<|t|\leq 1/2}+2(1-|t|)\mathbf{1}_{1/2<|t|\leq 1}$.
        \STATE{\textbf{Output:}} The optimal block size $\hat{M}_j=(2\hat{G}_j^2/\hat{D}_j)^{1/3}n^{1/3}$.
	\end{algorithmic}
\end{breakablealgorithm}
In practice, Step 1 in Algorithm \ref{alg:block selection} is intuitive and feasible by visually inspecting the correlogram plot $\hat{R}_j(k)$ vs. $k$.  However, in the multi-dimensional case, drawing and inspecting correlogram plots for each $j=1,\dots,K$ becomes increasingly complex and inefficient. To determine the appropriate $\bar{m}_j$ automatically, we introduce a precise formulation given in \cite{politis2001taper}. Let $\rho_j(k)=R_j(k) / R_j(0), \hat{\rho}_j(k)=\widehat{R}_j(k) / \widehat{R}_j(0)$, and choose the smallest positive integer $\bar{m}_j$ such that $|\hat{\rho}_j(\bar{m}_j+k)|<c \sqrt{\log n / n}$, for $k=1, \ldots, L_n$, where $c>0$ is a fixed constant, and $L_n$ is a positive, nondecreasing integer-valued function of $n$ such that $L_n=$ $o(\log n)$. As discussed in \cite{politis2001taper} and \cite{politis2004automatic}, $c=2$ and $L_n=\max (5, \sqrt{\log n})$ are recommended. Equipped with the procedure introduced in \cite{politis2001taper}, the block selection algorithm is fully automatic and can be implemented easily.

\section{Theoretical Justification of the SCRs}
\label{sec:theorems for asymptotically correct SCRs}
This section establishes the theoretical validity of our proposed SCR for the sieve M-estimator in Section \ref{sec:SCR construction}. 
Building upon the Gaussian approximation in Theorem~\ref{thm:Gaussian approximation}, we first derive the scale of $C_{\alpha,h}$, which determines the scale of the SCR. 
We then prove that the SCR constructed by the self-convolved bootstrap algorithm is asymptotically correct under general dependence and growing sieve dimension. 

\subsection{Asymptotic critical value}
The maximum deviation of the Gaussian process $|\mf b_\omega(\mf x)^\top\bar{Q}_n^{-1}\mf G_n|/h(\mf x)$ is characterized by the supremum of functional linear combinations of Gaussian variables. 
The volume-of-tube formula (c.f. \cite{Sun&Loader}) enables us to approximate the corresponding critical value $C_{\alpha,h}$ that defines the asymptotic confidence level in~\eqref{eq:SCR on Q}. 
The following Proposition specifies the asymptotic order of $C_{\alpha,h}$ under mild regularity conditions on the geometry of the sieve basis and the predictor domain $\mathcal{X}$.
\begin{proposition}
    \label{prop:critical value}  Under the same conditions in Corollary \ref{cor:SCR Gaussian approx}, denote matrix $\mf M_j(\mf x)=:\frac{\partial }{\partial x_j}\left( \frac{\mf b(\mf x)}{|\mf b(\mf x)|}\right)$ and $\mf M(\mf x)=:( \mf M_1(\mf x),\dots, \mf M_d(\mf x))$. Suppose space $\mathcal{D}_n=\mathcal{X}$ is compact and there exist constants $c_0, c_1, c_2, \underline{c} \geq 0$ such that
\begin{equation}
  \sup_{\mf x\in\mathcal{X}}|\nabla\mf b(\mf x)|\lesssim n^{c_1},\sup_{\mf x\in\mathcal{X}}|\nabla^2\mf b(\mf x)|\lesssim n^{c_2},\inf_{\mf x\in\mathcal{X}}|\mf b(\mf x)|\gtrsim n^{c_0},\int_{\mathcal{X}}\lambda_{min}(\mf M(\mf x)^\top\mf M(\mf x))\mr d\mf x\gtrsim n^{\underline{c}}.\label{eq:critical value condition}
\end{equation}
Let the target critical value $C_{\alpha,h}$ according to flexible weight $h(\mf x)>0$ satisfy
    \begin{equation}
        \lim_{n\to \infty}\mr P\left\{\sup_{\mf x\in\mathcal{X}}\frac{\sqrt{n}|\hat Q(\mf x)-Q_{0,n}(\mf x) |}{h(\mf x)}\leq C_{\alpha,h} \right\}=1-\alpha,\label{eq:critical value rate}
    \end{equation}
    where $\alpha$ is the given significance level and $\alpha\in(0,1)$. Then for $\sigma_n^2(\mf x)=:\mf b(\mf x)^\top\bar{Q}_n^{-1}\Sigma_{\mf Z}\bar{Q}_n^{-1}\mf b(\mf x)$,
    $$
        \inf_{\mf x\in \mathcal{X}}\frac{\sigma_n(\mf x)}{h(\mf x)}\sqrt{\log n} \lesssim C_{\alpha,h} \lesssim\sup_{\mf x\in \mathcal{X}}\frac{\sigma_n(\mf x)}{h(\mf x)}\sqrt{\log n}.
    $$
    In particular, $C_{\alpha,h}\asymp \sqrt{\log n}$ if choosing $h(\mf x)=\sigma_n(\mf x)$.
\end{proposition}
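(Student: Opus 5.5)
By Corollary \ref{cor:SCR Gaussian approx}, the critical value $C_{\alpha,h}$ in \eqref{eq:critical value rate} can be taken, up to $o(1)$ in probability level, as the $(1-\alpha)$-quantile of $\sup_{\mf x\in\mathcal X}|W(\mf x)|/h(\mf x)$. Here $W(\mf x)=\mf b(\mf x)^\top\bar Q_n^{-1}\mf G_n$ is a centered Gaussian process with variance $\sigma_n^2(\mf x)$. The plan is to sandwich this quantile between the corresponding quantiles for the standardized process $U(\mf x)=W(\mf x)/\sigma_n(\mf x)$, using
$$\inf_{\mf x}\frac{\sigma_n(\mf x)}{h(\mf x)}\sup_{\mf x}|U(\mf x)|\le \sup_{\mf x}\frac{|W(\mf x)|}{h(\mf x)}\le \sup_{\mf x}\frac{\sigma_n(\mf x)}{h(\mf x)}\sup_{\mf x}|U(\mf x)|.$$
This reduces the claim to showing that the $(1-\alpha)$-quantile $c_\alpha$ of $\sup_{\mf x}|U(\mf x)|$ satisfies $c_\alpha\asymp\sqrt{\log n}$. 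The case $h=\sigma_n$ then follows immediately.

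For the upper bound on $c_\alpha$, I would write $U(\mf x)=\langle \mathbf{u}(\mf x),\mf N\rangle$ with $\mf N\sim N_K(0,I)$ and $\mathbf{u}(\mf x)=\Sigma^{1/2}\bar Q_n^{-1}\mf b(\mf x)/\sigma_n(\mf x)$, where $\Sigma=\Sigma_{\mf Z}$. The map $\mathbf{u}$ takes values in the unit sphere $S^{K-1}$, so $\{\mathbf{u}(\mf x):\mf x\in\mathcal X\}$ is a $d$-dimensional tube manifold. The volume-of-tube formula of \cite{Sun&Loader} gives
$$\mr P\Big(\sup_{\mf x}|U(\mf x)|>c\Big)=\frac{\kappa_0}{\pi^{(d+1)/2}}\Gamma\Big(\frac{d+1}{2}\Big)\mr P(\chi^2_{d+1}>c^2)+\dots+o(\cdot),$$
where $\kappa_0$ is the volume of the manifold. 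Alternatively, for the upper bound alone, Dudley's entropy integral together with Borell--TIS works.

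To bound $\kappa_0=\int_{\mathcal X}\det(\nabla\mathbf{u}^\top\nabla\mathbf{u})^{1/2}d\mf x$ from above, I would use the derivative bounds in \eqref{eq:critical value condition}. The bound $|\nabla \mf b|\lesssim n^{c_1}$, the bound $|\mf b|\gtrsim n^{c_0}$, and the eigenvalue bounds on $\bar Q_n$ and $\Sigma_{\mf Z}$ (bounded away from $0$ and $\infty$ by Assumptions (A1)--(A4) and Theorem \ref{thm:Gaussian approximation}) give $|\nabla\mathbf{u}|\lesssim n^{C}$. Hence $\kappa_0\lesssim n^{C'}$ and the covering number satisfies $N(\epsilon)\lesssim (n^{C}/\epsilon)^d$. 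Dudley then yields $\mb E\sup|U|\lesssim\sqrt{\log n}$, and Borell--TIS gives $c_\alpha\le \mb E\sup|U|+\sqrt{2\log(1/\alpha)}\lesssim\sqrt{\log n}$. The second-derivative bound $n^{c_2}$ is used to control the curvature terms $\kappa_1,\dots$ and the remainder in the tube expansion.

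For the lower bound, I would show $\kappa_0\gtrsim n^{c}$ for some $c>0$. Then solving $\kappa_0 c^{d}e^{-c^2/2}\asymp\alpha$ forces $c_\alpha\gtrsim\sqrt{\log n}$. Since $\bar Q_n^{-1}\Sigma^{1/2}$ is well conditioned, $\det(\nabla\mathbf{u}^\top\nabla\mathbf{u})$ is comparable, up to constants and curvature-independent factors, to $\lambda_{\min}(\mf M^\top\mf M)^{d/2}$ built from $\mf b/|\mf b|$. The integral assumption $\int\lambda_{\min}(\mf M^\top\mf M)\gtrsim n^{\underline c}$ should then give the growth of $\kappa_0$, with the power adjusted via Hölder. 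A more robust alternative is Sudakov minoration: $\lambda_{\min}$ growing means that many well-separated points $\mf x_1,\dots,\mf x_N$, with $N\gtrsim n^{c}$, have $|\mathbf u(\mf x_j)-\mathbf u(\mf x_k)|\ge\delta$. Sudakov then gives $\mb E\sup|U|\gtrsim\delta\sqrt{\log N}\gtrsim\sqrt{\log n}$. Gaussian concentration, with unit variances, converts this expectation bound into one on the quantile, $c_\alpha\ge\mb E\sup|U|-C_\alpha$.

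The main obstacle is this lower bound: turning the integrated eigenvalue condition into a packing of the sphere curve. Two points need care. First, the comparison between $\mf b/|\mf b|$ and $\mathbf{u}$ must survive the linear transform. Second, a large $\int\lambda_{\min}$ could in principle be concentrated on a small set. I would handle both by lower-bounding the length of the image of a segment via $\int|\nabla\mathbf u\,v|$, and use $n^{c_2}$ to keep $\nabla\mathbf u$ nearly constant on small cubes, so that separated points exist.
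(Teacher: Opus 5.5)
Your reduction to the standardized field $U(\mathbf x)=\langle\mathbf u(\mathbf x),\mathbf N\rangle$ is the same sandwich the paper uses. Unlike the paper's $\mathbf T(\mathbf x)$, your $\mathbf u$ correctly keeps $\bar Q_n^{-1}$.

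Your upper bound takes a different and cleaner route than the paper. The paper bounds all the tube constants $\kappa_0,\zeta_0,\kappa_2,\zeta_1,m_0$ by powers of $n$ and reads off $C_\alpha=O(\sqrt{\log n})$ from the volume-of-tube expansion. Dudley plus Borell--TIS needs only a polynomial Lipschitz constant for $\mathbf u$: no second derivatives, no curvature or boundary terms, and no control of the expansion's remainder. That part is sound, and it needs only polynomial eigenvalue bounds, e.g.\ $\lambda_{\max}(\Sigma_{\mathbf Z})\le\mathbb E|\mathbf Z_n|^2$.

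The lower bound has a genuine gap, and neither of your routes closes it. Both rest on \emph{local} stretching of $\mathbf u$ (large $\kappa_0$ or large $\lambda_{\min}$). Sudakov, however, needs $\gtrsim n^{c}$ points of the image separated at a \emph{fixed} scale $\delta$. Local stretching does not give this, because the image can wind many times around a small region.

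For instance, take $d=1$, fixed $r\in(0,1)$, and $\mathbf u(x)=(\sqrt{1-r^2},\,r\cos(n^{c}x),\,r\sin(n^{c}x),0,\dots,0)$. Then $\int_0^1\lambda_{\min}=r^2n^{2c}$ and $\kappa_0=rn^{c}\to\infty$. Yet $\sup_x|U(x)|\le|N_1|+r(N_2^2+N_3^2)^{1/2}$, so $c_\alpha=O(1)$.

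Your proposed repair does not help:
\begin{itemize}
\item A long image of a segment can wind around a small circle, as above.
\item Since $\langle\mathbf u,\partial_j^2\mathbf u\rangle=-|\partial_j\mathbf u|^2$, a cube on which $\nabla\mathbf u$ is nearly constant has side $O(|\partial_j\mathbf u|^{-1})$. Its image therefore has diameter $O(1)$ and contributes only $O(1)$ points that are $\delta$-separated. Nothing in the hypotheses keeps the images of different cubes apart.
\item The tube route fails on the same example. The expansion is valid, with controlled remainder, only above a level set by the critical radius of the manifold, and here that radius shrinks with $n$.
\item For $d=1$ the H\"older step runs the wrong way: $\int\lambda_{\min}=\int|\mathbf u'|^2$ being large does not make $\kappa_0=\int|\mathbf u'|$ large.
\end{itemize}
The paper's proof makes the same leap from $\kappa_0\gtrsim n^{\underline c}$ to $C_\alpha\gtrsim\sqrt{\log n}$, so it offers no fallback here.

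What does work is a \emph{global} spread argument based on (A2) rather than on the eigenvalue integral. Let $L=\Sigma_{\mathbf Z}^{1/2}\bar Q_n^{-1}$ and $F=\mathbb E[\,|L\mathbf b(\mathbf X)|^2\,\mathbf u(\mathbf X)\mathbf u(\mathbf X)^\top]=L\mathbf B_KL^\top$.
\begin{enumerate}
\item Suppose the image of $\mathbf u$ is covered by $N$ sets of diameter at most $1$. Within each set $\langle\mathbf u(\mathbf x),\mathbf u(\mathbf y)\rangle^2\ge 1/4$, so $\operatorname{tr}(F^2)\ge(\operatorname{tr}F)^2/(4N)$.
\item Combined with $\operatorname{tr}(F^2)\le\lambda_{\max}(F)\operatorname{tr}F$, this gives $N\ge\operatorname{tr}F/(4\lambda_{\max}(F))\gtrsim K$ when $F$ is well conditioned. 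This needs $\bar Q_n$ and $\Sigma_{\mathbf Z}$ well conditioned, as you assume; note that an upper eigenvalue bound on $\Sigma_{\mathbf Z}$ is not among the stated hypotheses.
\item A maximal $1/2$-separated subset of the image therefore has size $\gtrsim K$, and Sudakov gives $\mathbb E\sup U\gtrsim\sqrt{\log K}$.
\item The lower tail of Borell--TIS then gives $c_\alpha\ge\mathbb E\sup U-\sqrt{2\log(1/(1-\alpha))}$.
\end{enumerate}
This yields $\sqrt{\log n}$ only when $K\gtrsim n^{c}$ for some $c>0$. Some such condition is unavoidable: $\sup_{\mathbf x}|U(\mathbf x)|\le|\mathbf N|$ forces $c_\alpha\lesssim\sqrt K$, however large $\int\lambda_{\min}$ is. For example, a slowly growing basis that contains the pair $\cos(n^{c}x),\sin(n^{c}x)$ satisfies the integral condition.
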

Conditions in \eqref{eq:critical value condition} are simplified requirements on the sieve basis which will yield a polynomial rate $n^c$ ($c\geq 0$) for the geometric quantities, including volume, curvature, and boundary of the manifold $\{\mf b(\mf x)/|\mf b(\mf x)|: \mf x\in\mathcal{X}\}$. The rates $n^{c_1},n^{c_2},n^{c_0}$ in condition \eqref{eq:critical value condition} are mild assumptions which have been frequently used in the literature of sieve nonparametric estimation and inference; see Assumption 4 of \cite{CHEN2015uniformsieve} and Example 1-2 in \cite{quan2024JASA} for more details. The rate $n^{\underline{c}}$ in condition \eqref{eq:critical value condition} can be directly calculated in practice given basis $\mf b(\mf x)$, which will be verified in Section \ref{sec:examples of sieve basis} of the supplementary material with commonly used sieve bases. 
\subsection{Asymptotic validity of the bootstrap SCR}
We now show that the self-convolved bootstrap algorithm proposed in Section~\ref{sec:bootstrap} produces an asymptotically correct SCR. 
The following theorem provides a bound on the bootstrap approximation error and guarantees the asymptotic correctness of the resulting SCRs.
\begin{theorem}
    \label{thm:bootstrap}  Under the same conditions in Theorem \ref{thm:Gaussian approximation}, suppose $\xi_{K,n}\lesssim \sqrt{K}$ and $\max\{K^{\frac{\eta+3}{4}} M^{-\frac{\eta}{4}},K^{\frac{3}{4}}(M^{\frac{1}{2}}n^{-\frac{1}{2}}+M^{-1})\}=O(n^{-\kappa})$ for some $\kappa>0$.
    Then the SCR obtained in \eqref{eq:SCR boot} is asymptotically correct, i.e.,
\begin{equation}
    \lim_{n\rightarrow\infty}\lim_{B\rightarrow\infty}\mr P\left( |\hat{Q}(\mf x)-Q_{0,n}(\mf x)|\leq \hat{C}_{\alpha,h}\frac{h(\mf x)}{\sqrt{n}},\forall \mf x\in\mathcal{D}_n \right)=1-\alpha.\label{eq:valid boot strap}
\end{equation}
\end{theorem}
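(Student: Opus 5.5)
The plan is to show that, conditionally on the data, $\Phi_M$ is a Gaussian vector whose covariance $\hat\Sigma_M:=\frac{M}{n-M+1}\sum_{j}(\hat\theta(j)-\hat\theta)(\hat\theta(j)-\hat\theta)^\top$ is close to the target covariance $\Sigma^*:=\bar Q_n^{-1}\Sigma_{\mf Z}\bar Q_n^{-1}$. A Gaussian-to-Gaussian comparison over convex sets then transfers this to the bootstrap quantile. The final step combines this with Corollary \ref{cor:SCR Gaussian approx}. Since $\mf b_\omega(\mf x)^\top\Phi_M$ and $\mf b_\omega(\mf x)^\top\bar Q_n^{-1}\mf G_n$ are evaluated on the same convex sets $A(C)$ of \eqref{eq: set A(C)}, it suffices to bound $\sup_{A\in\mathcal A}|\mr P^*(\Phi_M\in A)-\mr P(\bar Q_n^{-1}\mf G_n\in A)|=o_p(1)$. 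Letting $B\to\infty$ removes the Monte Carlo error in the sample quantile.

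\textbf{Step 1: block Bahadur representation.} Apply Proposition \ref{prop:Bahadur representation} with sample size $M$ to each block. By stationarity, the block estimator $\hat\theta(j)$ satisfies
$$\hat\theta(j)-\theta_{0,n}=\bar Q_n^{-1}M^{-1}\sum_{i=j}^{j+M-1}\mf z_i+R_j .$$
We need $\max_j|R_j|$ rather than a single remainder. We would obtain this by rerunning the proof of Proposition \ref{prop:Bahadur representation} with moment bounds from Theorem \ref{thm:maximal inequality} instead of $O_p$ statements, then applying a union bound over $n$ blocks at the cost of a logarithm. Here $\xi_{K,n}\lesssim\sqrt K$. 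This gives $\sqrt M\max_j|R_j|=O_p(K^{(\eta+2)/4}M^{-\eta/4}\log^{c}n)$. Combining this with Proposition \ref{prop:Bahadur representation} for $\hat\theta$ itself, we get
$$\sqrt M(\hat\theta(j)-\hat\theta)=\bar Q_n^{-1}\Big(M^{-1/2}\sum_{i=j}^{j+M-1}\mf z_i-\sqrt{M/n}\,\mf Z_n\Big)+\tilde R_j .$$

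\textbf{Step 2: covariance consistency.} Write $\mf S_j=M^{-1/2}\sum_{i=j}^{j+M-1}\mf z_i$. The leading term of $\hat\Sigma_M$ is $\bar Q_n^{-1}\big(\frac{1}{n-M+1}\sum_j \mf S_j\mf S_j^\top\big)\bar Q_n^{-1}$. Its expectation differs from $\Sigma^*$ by a bias of order $O(KM^{-1})$ in suitable norm, using the summable dependence measures in (B1)--(B3). Its fluctuation is $O_p(K\sqrt{M/n})$, since the overlapping block sums form a weakly dependent array; we would bound this through $m$-dependent approximation, as in the M-decomposition behind Theorem \ref{thm:maximal inequality}. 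The centering term $\sqrt{M/n}\mf Z_n$ contributes $O_p(KM/n)$. The cross terms with $\tilde R_j$ are handled by Cauchy--Schwarz. Since $\lambda_{\min}(\Sigma^*)$ is bounded below, these combine into a normalized bound
$$\|\Sigma^{*-1/2}\hat\Sigma_M\Sigma^{*-1/2}-I\|_F = O_p\big(\sqrt K\cdot\max\{K^{(\eta+3)/4}M^{-\eta/4},K^{3/4}(M^{1/2}n^{-1/2}+M^{-1})\}\big)$$
up to logarithms. Matching this exactly to the rate in the hypothesis is the point I expect to be delicate.

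\textbf{Step 3: Gaussian comparison and quantile transfer.} For Gaussians, the convex-set distance is bounded by a constant times the Frobenius distance of the normalized covariances, via the total variation bound of Devroye--Mehrabian--Reddad. More conservatively, we can use the operator-norm bound times $\sqrt K$. Either way, Step 2 together with the hypothesis $O(n^{-\kappa})$ makes the distance $o_p(1)$. We then invoke Corollary \ref{cor:SCR Gaussian approx}. The anti-concentration implicit in $\mathcal K$-closeness means that $\sup_{\mf x}|\mf b^\top\bar Q_n^{-1}\mf G_n|/h$ has a continuous distribution function near $C_{\alpha,h}$, so quantile closeness follows. The distribution function is evaluated at $\hat C_{\alpha,h}$ by a standard sandwich: on an event of probability $1-o(1)$, we have $C_{\alpha+\delta_n,h}\le\hat C_{\alpha,h}\le C_{\alpha-\delta_n,h}$ with $\delta_n\to0$.

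\textbf{Main obstacle.} The main obstacle is Step 1: obtaining a Bahadur remainder uniform over all $n-M+1$ blocks of size $M\ll n$. Proposition \ref{prop:Bahadur representation} is only an $O_p$ statement, so it must be upgraded to an exponential or high-moment tail bound to survive the union bound. The rate must also be kept fine enough that the block remainders, scaled by $\sqrt M$, remain small after entering the covariance at the $\sqrt K$-inflated Frobenius level. If this fails, I would bound $\frac{1}{n}\sum_j|\tilde R_j|^2$ directly in expectation instead of through the maximum, which requires only $L^2$ control of each block remainder.
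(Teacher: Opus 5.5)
Your architecture matches the paper's proof. It consists of four pieces:
\begin{itemize}
\item a block Bahadur representation that is uniform in $j$ (the paper's Lemmas \ref{lem:max beta-j} and \ref{lem:uniform Bahadur});
\item a bias/fluctuation split of the block-sum covariance (Lemmas \ref{lem: comparison between Pcov and cov} and \ref{lem: cov comparison-element}, which with $\xi_{K,n}^2\lesssim K$ give your $K/M$ and $K\sqrt{M/n}$ up to a factor $\Delta_{K,n}^{\alpha}$);
\item a Gaussian comparison over convex sets, applied to the sets $A(C)$;
\item a quantile sandwich.
\end{itemize}
The gap is the last sentence of Step 3. Your Step 2 bound is $\sqrt{K}\cdot O(n^{-\kappa})$, and since $K$ grows polynomially in $n$ this need not tend to zero. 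Your componentwise bounds do not rescue it either. These are $K/M$, $K\sqrt{M/n}$, and, for the remainder cross term, $(\frac{1}{n-M+1}\sum_j|\mathbf S_j|^2)^{1/2}\cdot K^{(\eta+2)/4}M^{-\eta/4}\asymp K^{(\eta+4)/4}M^{-\eta/4}$. Each is $K^{1/4}$ times a term of the hypothesis; only your centering term $KM/n$ is actually controlled by it. So along your route the stated condition does not yield $o_p(1)$.

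For the Bahadur remainders, the missing idea is to keep them out of the covariance. The paper couples $\Phi_M$ with $\tilde\Phi_M=(n-M+1)^{-1/2}\sum_j\mathbf W(j)V_j$, where $\mathbf W(j)=M^{-1/2}\sum_{i=j}^{j+M-1}\bar Q_n^{-1}\mathbf z_i$, through the \emph{same} multipliers $V_j$. Conditionally on the data, the expected distance $|\Phi_M-\tilde\Phi_M|$ is then bounded by the root-mean-square of $\sqrt M R_j$ plus $\sqrt M|\hat\beta|$. There is no factor $|\mathbf S_j|\asymp\sqrt K$ in this bound. The smoothing inequality (Lemma \ref{lem:Fang2024}) turns it into $\sqrt{K^{1/4}\cdot K^{(\eta+2)/4}M^{-\eta/4}}+\sqrt{K^{1/4}\cdot K^{1/2}M^{1/2}n^{-1/2}}$. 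This is exactly where the terms $K^{(\eta+3)/4}M^{-\eta/4}$ and $K^{3/4}M^{1/2}n^{-1/2}$ of the hypothesis come from. It also means your fallback, an average rather than a maximum of block remainders, is all that is really needed.

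For $\tilde\Phi_M$ versus $\bar Q_n^{-1}\mathbf G_n$, your Devroye--Mehrabian--Reddad step is worth keeping. It gives a convex distance of order $K(\sqrt{M/n}+M^{-1})$, which is better than the paper's $\sqrt{K^{1/4}\cdot K(\sqrt{M/n}+M^{-1})}$. However, the displayed hypothesis only controls $K^{3/4}(\sqrt{M/n}+M^{-1})$. You must therefore additionally require $K(\sqrt{M/n}+M^{-1})\to 0$, up to $n^{\epsilon}$ factors. The paper's own rate \eqref{eq:final rate of K-dist boot} has the same, in fact a worse, mismatch: it contains $K^{5/4}(\sqrt{M/n}+M^{-1})$ under the square root. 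So the difficulty you flagged in Step 2 is real and is not resolved by the stated condition alone.
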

Theorem \ref{thm:bootstrap} guarantees the asymptotic validity of the self-convolved bootstrap under a generic scaling function $h(x)$. In practice, setting $h(x) = 1$ provides a computationally efficient SCR. Alternatively, we also suggest a data-driven scaling factor $h(x) = \sqrt{\mf b_\omega(\mf x)^\top \hat{\Sigma}_{\mf Z} \mf b_\omega(\mf x)}$ where $\hat{\Sigma}_{\mf Z}$ is estimated via block sums
$$
\hat{\Sigma}_{\mf Z} = \frac{1}{n-M+1}\sum_{j=1}^{n-M+1} \hat W(j)\hat W(j)^\top,\quad \hat W(j) = \frac{1}{\sqrt{M}}\sum_{i=j}^{j+M-1} (\hat{\mf z}_i-\bar{\mf z}),
$$
with $\hat{\mf z}_i=\psi(\hat\varepsilon_i)\mf b_\omega(\mf X_i)$, $\hat\varepsilon_i=Y_i-\hat{\theta}^\top \mf b_\omega(\mf X_i)$, and $\bar{\mf z}=\sum_{i=1}^n \hat{\mf z}_i/n$. In this way, the resulting SCR reflects local uncertainty with varying width.

\subsection{Inference for the exact target \(Q_0(\mf x)\)}
Finally, we extend the simultaneous inference from \(Q_{0,n}(\mf x)\) to the true function \(Q_0(\mf x)\). Proposition \ref{prop:undersmoothing}
establishes that the same SCR, \(\widehat Q(\mf x)\pm C_{\alpha,h}h(\mf x)/\sqrt{n}\),
which is valid for \(Q_{0,n}(\mf x)\), also yields asymptotically correct inference for \(Q_0(\mf x)\) under a suitable undersmoothing condition.
\begin{proposition}
    \label{prop:undersmoothing}
Suppose Assumption \hyperref[(A5)]{(A5)} and the same conditions in Proposition \ref{prop:critical value} hold. If
\begin{equation}
\label{eq:undersmoothing condition}
K^{-\varsigma}\,\sqrt{n\log n}\,
\frac{
\sup_{\mf x\in\mathcal D_n} h(\mf x)^{-1}
}{
\inf_{\mf x\in\mathcal D_n}\sigma_n(\mf x)/h(\mf x)
}
=o(1),
\end{equation}
where $\varsigma$ is defined in Assumption \hyperref[(A5)]{(A5)}, then the same critical value \(C_{\alpha,h}\) satisfying \eqref{eq:SCR on Q}
also ensures
\begin{equation}
\label{eq:SCR on Q0}
\lim_{n\rightarrow\infty}
\mr P\left(
\widehat Q(\mf x)- \frac{C_{\alpha,h}}{\sqrt{n}}h(\mf x)
\leq Q_{0}(\mf x)\leq
\widehat Q(\mf x)+ \frac{C_{\alpha,h}}{\sqrt{n}}h(\mf x),
\ \forall \mf x\in\mathcal{D}_n
\right)
= 1-\alpha.
\end{equation}
\end{proposition}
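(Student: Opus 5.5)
The plan is to treat the sieve approximation error $Q_{0,n}-Q_0$ as a deterministic perturbation of the SCR and show that it is negligible relative to the band half-width $C_{\alpha,h}h(\mf x)/\sqrt n$. Assumption (A5) presumably gives $\sup_{\mf x\in\mathcal D_n}|Q_{0,n}(\mf x)-Q_0(\mf x)|\lesssim K^{-\varsigma}$. Define the maximal deviation statistics
\[
T_n=:\sup_{\mf x\in\mathcal D_n}\frac{\sqrt n|\hat Q(\mf x)-Q_{0,n}(\mf x)|}{h(\mf x)},\qquad T_n^0=:\sup_{\mf x\in\mathcal D_n}\frac{\sqrt n|\hat Q(\mf x)-Q_{0}(\mf x)|}{h(\mf x)}.
\]
By the triangle inequality, $|T_n-T_n^0|\le \Delta_n$ with
\[
\Delta_n=:\sqrt n\,K^{-\varsigma}\sup_{\mf x\in\mathcal D_n}h(\mf x)^{-1},
\]
a deterministic quantity. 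The event in \eqref{eq:SCR on Q0} is exactly $\{T_n^0\le C_{\alpha,h}\}$. It is therefore sandwiched as
\[
\mr P(T_n\le C_{\alpha,h}-\Delta_n)\le \mr P(T_n^0\le C_{\alpha,h})\le \mr P(T_n\le C_{\alpha,h}+\Delta_n).
\]

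The key step is to show that shifting the critical value by $\Delta_n$ costs only $o(1)$ probability, i.e. an anti-concentration bound for $T_n$. I would first pass from $T_n$ to its Gaussian counterpart. The sets $\{T_n\le C\pm\Delta_n\}$ are events of the form $\{\hat\beta\in A(C\pm\Delta_n)\}$ with $A(\cdot)$ the convex set in \eqref{eq: set A(C)}. By Theorem \ref{thm:Gaussian approximation}(ii), together with $\mr P(\mathcal B_n^\epsilon)\to1$ from Proposition \ref{prop:Bahadur representation} and the rate condition of Corollary \ref{cor:SCR Gaussian approx}, this gives
\[
\mr P(T_n\le c)=\mr P(T^G\le c)+o(1)
\]
uniformly in $c$. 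Here $T^G=\sup_{\mf x}|\mf b_\omega(\mf x)^\top\bar Q_n^{-1}\mf G_n|/h(\mf x)$. It thus remains to show $\mr P(|T^G-C_{\alpha,h}|\le\Delta_n)=o(1)$.

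For this I would rescale. Write
\[
T^G=\sup_{\mf x}\frac{\sigma_n(\mf x)}{h(\mf x)}\,\frac{|\mf b_\omega(\mf x)^\top\bar Q_n^{-1}\mf G_n|}{\sigma_n(\mf x)},
\]
which is the supremum of a Gaussian process whose standard deviations are $\sigma_n(\mf x)/h(\mf x)\ge \underline\sigma=:\inf_{\mf x}\sigma_n/h$. I would then apply the Gaussian anti-concentration inequality of Chernozhukov–Chetverikov–Kato for suprema of Gaussian processes with variances bounded below. This gives
\[
\sup_c\mr P(|T^G-c|\le\Delta_n)\lesssim \frac{\Delta_n}{\underline\sigma}\Big(\mb E\,\frac{T^G}{\underline\sigma}+1\Big).
\]
Proposition \ref{prop:critical value} and its volume-of-tube argument, under the geometric conditions \eqref{eq:critical value condition}, give
\[
\mb E\sup_{\mf x}\frac{|\mf b^\top\bar Q_n^{-1}\mf G_n|}{\sigma_n}\lesssim\sqrt{\log n},
\]
so that $\mb E T^G/\underline\sigma\lesssim (\sup\sigma_n/h)\sqrt{\log n}/\underline\sigma$. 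The bound then becomes of order $\Delta_n\sqrt{\log n}/\underline\sigma$ up to the ratio $\sup(\sigma_n/h)/\inf(\sigma_n/h)$. This is exactly the quantity in \eqref{eq:undersmoothing condition}, which is assumed to be $o(1)$. If the ratio factor is not absorbed by the assumption, I would instead apply anti-concentration directly to the normalized process $\sup_{\mf x}|\mf b^\top\bar Q_n^{-1}\mf G_n|/\sigma_n$, after bounding the shift in that scale. Combining this with the sandwich and with $\mr P(T^G\le C_{\alpha,h})=1-\alpha$ gives \eqref{eq:SCR on Q0}.

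The main obstacle I expect is this anti-concentration step: matching the exact form of \eqref{eq:undersmoothing condition}, with only $\inf\sigma_n/h$ in the denominator, to a usable Nazarov/CCK-type bound when $h$ is not proportional to $\sigma_n$. The first thing I would try is a Nazarov-type bound on the finite-dimensional $K$-vector through the convex-set structure of $A(C)$. The second is the tube-formula density bound implicit in the proof of Proposition \ref{prop:critical value}, which controls the density of $T^G$ near $C_{\alpha,h}$ by order $\sqrt{\log n}/\underline\sigma$.
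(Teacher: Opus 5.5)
Your proposal matches the paper's proof almost step for step. It uses the (A5) sandwich $\{T_n\le C_{\alpha,h}-\Delta_n\}\subset\{T_n^0\le C_{\alpha,h}\}\subset\{T_n\le C_{\alpha,h}+\Delta_n\}$, a uniform Gaussian approximation over the convex sets $A(\cdot)$, and a Nazarov-type anti-concentration bound fed by the $\sqrt{\log n}$ bound on the expected normalized supremum, which comes from the tube-formula argument of Proposition \ref{prop:critical value}. The ratio factor $\sup(\sigma_n/h)/\inf(\sigma_n/h)$ that worries you is an artifact of writing the complexity term as $\mathbb E\, T^G/\underline\sigma$. In the form the paper uses (Lemma \ref{lem:gaussian-anti-concentration}), $\underline\sigma$ sits in the denominator and the complexity term is $\mathbb E\sup_{\mathbf x}Y_{\mathbf x}/\sigma_{\mathbf x}$, i.e. the expected supremum of the unit-variance process $|\mathbf b_\omega(\mathbf x)^\top\bar Q_n^{-1}\mathbf G_n|/\sigma_n(\mathbf x)$. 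That is exactly your fallback, and it gives directly $\Delta_n(\sqrt{\log n}+1)/\inf_{\mathbf x}(\sigma_n/h)$, which is the quantity assumed to be $o(1)$.
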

Consequently, in conjunction with Proposition \ref{prop:undersmoothing}, the SCR validity statements for \(Q_{0,n}\) in \eqref{eq:target critical value}, \eqref{eq:critical value rate}, and \eqref{eq:valid boot strap} carry over to the true target \(Q_0\) under condition \eqref{eq:undersmoothing condition}.
\begin{remark}
Condition \eqref{eq:undersmoothing condition} can be verified once the
smoothness of \(Q_0\) and the scaling \(h(\mf x)\) are specified.
For instance, when \(Q_0\) belongs to a \(p\)-smooth H\"older class, it can
be shown that one may take \(\varsigma=p/d\); see Section
\ref{sec:supp-verify A5} in the supplement. Together with the choice \(h(\mf x)=\sigma_n(\mf x)\) (or $h(\mf x)=1$) and typical $\xi_{K,n}\lesssim \sqrt{K}$, condition
\eqref{eq:undersmoothing condition} reduces to $K \gg (n\log n)^{d/(2p+d)}$. This indicates that
\eqref{eq:undersmoothing condition} is essentially an undersmoothing
condition, requiring the sieve dimension \(K\) to be sufficiently large so
that the deterministic approximation is negligible relative to the
simultaneous stochastic scale.
Moreover, such a lower bound on \(K\) is compatible with Theorem \ref{thm:Gaussian approximation}. For example, when \(d=1\) and \(p=2\),
one may choose \(K=n^{\bar\kappa}\) with
$1/5<\bar\kappa<\min\left\{2/5,\eta/(\eta+3)\right\}$,
which satisfies both the undersmoothing requirement and the conditions in our limiting theorems.
\end{remark}

\section{Numerical results}
\label{sec:numerical results}
In this section, we conduct extensive numerical simulations to illustrate the effectiveness of our results in moderate samples. 
Using the selected tuning parameters and the proposed theorems, we conduct simulation studies and a real-data analysis of the S\&P 500 daily returns to show the merits of our SCR for the sieve M-estimator. 

\subsection{Simulation study}
Our simulation studies are focused on nonlinear time series models \eqref{eq:basic model} with dependent error $\{\varepsilon_i\}$ and multi-dimensional $\{\mf X_i\}$ for M-estimation including quantile, $L_q$, and Huber robust estimates. The simulation setup considers model \eqref{eq:basic model} with $d=2$,
\begin{equation}
    Y_i=Q(X_{i,1},X_{i,2})+\varepsilon_i,\label{eq:simul model}
\end{equation}
where $\mf X_i=(X_{i,1},X_{i,2})^\top$ is a 2-dimensional random vector. We consider the $Q(x_1,x_2)$ in \eqref{eq:simul model} taking the following forms: $Q_1(x_1,x_2)=\sin(2\pi x_1)+x_2$, $Q_2(x_1,x_2)=(\sin(2\pi x_1) + 1)\exp(-x_2/2)$. The covariate process $\{(X_{i,1},X_{i,2})\}$ is generated by an MA model $\mf X_i=0.5 \mf U_i + 0.3 \mf U_{i-1} + 0.2 \mf U_{i-2}$ 
where $\mf U_i=(U_{i,1},U_{i,2})^\top$ and $\{\mf U_i\}$ are i.i.d. from a bivariate uniform distribution on $[0,1]^2$ with $\operatorname{cov}(U_{i,1},U_{i,2})=0.2876$. The error process $\{\varepsilon_i\}$ is generated by an AR model $\varepsilon_i=0.5 \varepsilon_{i-1}+\sigma\eta_i$ where $\{\eta_i\}$ are i.i.d. $N(0,1)$ distribution and $\sigma=1/2$. 
We consider the following loss functions for our sieve estimators: the quantile loss $\rho(x)=\tau x^{+}+(1-$ $\tau)(-x)^{+}$ with $\tau=0.02,0.5,0.98$; $L_q$ loss $\rho(x)=|x|^q$ with $q=2$; Huber loss $\rho(x)=\left(x^2 \mathbf{1}_{|x| \leq c}\right) / 2+\left(c|x|-c^2 / 2\right) \mathbf{1}_{|x|>c}$ with $c=1.5$. 

For model \eqref{eq:simul model}, our sieve estimator is based on the tensor product sieve basis introduced in \eqref{eq:tensor product} where the sieve basis $\mf b(x_1,x_2)=\mf b^{(1)}(x_1) \otimes \mf b^{(2)}(x_2).$
For simplicity, we choose $\mf b^{(1)}(x_1)$ and $\mf b^{(2)}(x_2)$ from the same type of sieve basis family, including the Fourier series, the Legendre polynomials, and the Daubechies wavelet. The detailed formulations of the sieve basis are listed in Section \ref{sec:examples of sieve basis} of the supplement materials. The dimensions of $\mf b^{(1)}(x_1)$ and $\mf b^{(2)}(x_2)$ are determined by the method in Section \ref{sec:tuning parameter selection}. All simulation results are based on 1000 simulation runs with sample size $n = 500, 800$ and bootstrap size $B = 2000$.

Table~\ref{tab:coverage} presents the simulated coverage probabilities for model \eqref{eq:simul model} with two types of target functions and five regression cases. Our simulation results conclude that the performance of joint SCRs generated by Algorithm \ref{alg:self-convolved bootstrap} is reasonably accurate, and the coverage result becomes more robust as sample size $n$ grows. We compare our sieve simultaneous inference results with the SCRs built on kernel estimators. For the multivariate model \eqref{eq:basic model} with i.i.d. observations, \cite{chao2017confidence} proposed the construction of SCRs on kernel-based estimators for quantile and expectile regressions. For the mean ($L_2$) regression with random design, \cite{chen2021multinon} also considered the construction of SCRs for the kernel method in nonstationary time series. However, the predictor in \cite{chen2021multinon} is univariate; therefore, we here take the SCRs in \cite{chao2017confidence} for quantile and $L_2$ regressions as comparison.  
To our knowledge, the SCRs for Huber's multivariate regression model based on kernel estimator have not been proposed yet. Both the bandwidth selection strategy and the associated bootstrap approach align with \cite{chao2017confidence}. 

\begin{table}[h]
\centering
  \setlength{\belowcaptionskip}{-0.5cm}
  \captionsetup{font={small}, skip=-3pt}
  \caption{Simulated coverage probabilities of SCRs.}
  \scalebox{0.8}
  {\renewcommand{\arraystretch}{0.5}
    \begin{tabular}{lrrrrrrrrr}
    \hline
          & \multicolumn{4}{c}{90\% SCR}     &       & \multicolumn{4}{c}{95\% SCR} \\
\cline{2-5}\cline{7-10}          & \multicolumn{2}{c}{$n=500$} & \multicolumn{2}{c}{$n=800$} &       & \multicolumn{2}{c}{$n=500$} & \multicolumn{2}{c}{$n=800$} \\
          & \multicolumn{1}{c}{$Q_1$} & \multicolumn{1}{c}{$Q_2$} & \multicolumn{1}{c}{$Q_1$} & \multicolumn{1}{c}{$Q_2$} &       & \multicolumn{1}{c}{$Q_1$} & \multicolumn{1}{c}{$Q_2$} & \multicolumn{1}{c}{$Q_1$} & \multicolumn{1}{c}{$Q_2$} \\
\cline{2-5}\cline{7-10}    \multicolumn{4}{l}{Sieve estimator (Fourier basis)}  & &     & & & & \\
    \hline
    Quantile (2\%)   & \TPS{0.929} & \TPS{0.935} & \TPS{0.905} & \TPS{0.914} &       & \TPS{0.952} & \TPS{0.951} & \TPS{0.929} & \TPS{0.946} \\
    Quantile (50\%)   & \TPS{0.874} & \TPS{0.918}  & \TPS{0.911} & \TPS{0.910} &       & \TPS{0.893} & \TPS{0.936} & \TPS{0.914} & \TPS{0.933} \\
    Quantile (98\%)   &  0.889     & 0.924 &  0.909  & 0.913 &       & 0.916 & 0.940 & 0.925 & 0.939 \\
    $L_q$ ($q=2$)    & \TPS{0.921} & 0.935 & \TPS{0.920} & 0.908 &       & \TPS{0.926} & 0.959 & \TPS{0.927} & 0.924 \\
    Huber & 0.870 & 0.889 &  0.916  & 0.914 &       & 0.888 & 0.920 & 0.922 & 0.926 \\
    \hline
       \multicolumn{4}{l}{Sieve estimator (Legendre basis)}  & &     & & & & \\
    \hline
    Quantile (2\%)   & 0.913  & 0.909 & 0.904 &  \TPS{0.913}  &       & 0.921  & 0.921 &  0.923 & \TPS{0.930} \\
    Quantile (50\%)   & 0.919 & 0.917 & 0.916 & \TPS{0.921} &       & 0.927 & 0.930 & 0.929 & \TPS{0.930} \\
    Quantile (98\%)   & 0.863 & 0.905 & \TPS{0.929} & 0.914 &       & 0.886  & 0.914 & \TPS{0.940} & 0.930 \\
    $L_q$ ($q=2$)    & \TPS{0.932}  & \TPS{0.911} &  \TPS{0.930}  & \TPS{0.906} &       &   \TPS{0.941} & \TPS{0.918} & \TPS{0.942} & \TPS{0.918} \\
    Huber & \TPS{0.892} & \TPS{0.910} & \TPS{0.906} & \TPS{0.912} &  & \TPS{0.901} & \TPS{0.919} & \TPS{0.917} & \TPS{0.922} \\
    \hline
     \multicolumn{4}{l}{Sieve estimator (Wavelet basis)}  & &     & & & & \\
    \hline
    Quantile (2\%)  & 0.911 & 0.908 & 0.937 & 0.929 &       & 0.932 & 0.930  & 0.955 & 0.960 \\
    Quantile (50\%) & 0.909 & 0.892 & 0.924 & 0.906 &       &    0.934 & 0.922 & 0.940  & 0.931 \\
    Quantile (98\%) & 0.888 & 0.925 & 0.892 & 0.927 &       & 0.911 & 0.954 &  0.920 & 0.954 \\
    $L_q$ ($q=2$)   & 0.901 & 0.888 & 0.909  & 0.903 &       & 0.924 & 0.920 & 0.923 & 0.925 \\
    Huber & 0.928 & 0.930  & 0.919 & 0.928 &       & 0.950  & 0.954 & 0.951 & 0.960 \\
    \hline
    \multicolumn{3}{l}{Kernel estimator}  & & &     & & & & \\
    \hline
    Quantile (2\%)   & 0.669 & 0.297 & 0.645 & 0.169 &       & 0.750 & 0.395&   0.708 & 0.267 \\
    Quantile (50\%)   & 0.852  & 0.838 & 0.841 &  0.822 &       & 0.944 & 0.928 & 0.947 & 0.929 \\
    Quantile (98\%)   &  0.670   & 0.377 & 0.658 &  0.313  &       & 0.751 &  0.478   & 0.728 & 0.427 \\
    $L_q$ ($q=2$)  & 0.604  & 0.724 & 0.550 & 0.760 &       & 0.639 & 0.750 & 0.578 & 0.780 \\
    \hline
    \end{tabular}}%
    \label{tab:coverage}%
\end{table}%

We conclude that our SCRs achieve reasonably high accuracy and outperform the kernel estimators for all the commonly used sieve basis functions. 
The undercoverage of the kernel-based method \citep{chao2017confidence} is likely attributable to multivariate boundary effects inherent in kernel estimators \citep{liu2010simultaneous,chapman2000short}, as well as to its standard wild bootstrap implementation, which ignores temporal dependence. In contrast, our proposed sieve M-regression, combined with the self-convolved bootstrap, explicitly accommodates dependence and mitigates boundary issues, highlighting our non-trivial contribution in establishing SCRs for nonlinear time series models. We note that \citep{chao2017confidence} is designed for kernel-based inference with i.i.d. data in the interior region; thus, it is not surprising that their methodology does not directly extend to time series settings with boundary considerations. We emphasize this point to avoid any misunderstanding regarding the scope of their contributions.

 Additionally, sieve estimation possesses significant advantages in terms of computational efficiency. As discussed in \cite{chao2017confidence}, the kernel M-estimation is obtained by
\begin{equation}
    \hat{Q}(\mf{x})=\underset{Q \in \mathbb{R}}{\arg \min } \frac{1}{n} \sum_{i=1}^n K_h\left(\mf x-\mf X_i\right) \rho\left(Y_i-Q\right),\label{eq:kernel M-estimation}
\end{equation}
where $K_h(\mf x)$ is a multivariate kernel density function with bandwidth parameter $h$. \eqref{eq:kernel M-estimation} indicates that every new kernel estimation or prediction made on $\mf x$ requires solving the nonlinear optimization \eqref{eq:kernel M-estimation} one more time. On the other hand, the sieve estimation only requires one more inner product computation between the obtained $\hat{\theta}$ and the known basis function $\mf b(\mf x)$. Moreover, SCRs for the kernel method proposed in \cite{chao2017confidence} and \cite{chen2021multinon} rely on a wild bootstrap procedure generating new bootstrap observations $(\mf X_i^*,Y_i^*)$. 
This implies that the nonlinear optimization problem \eqref {eq:kernel M-estimation} needs to be solved repeatedly for $B$ bootstrap replicates, which introduces a heavy computational burden in practice. In contrast, our self-convolved bootstrap only requires repeating sampling Gaussian random variables for $B$ times, which relaxes the burden of computation.

\subsection{Real data application}
In this section, we analyze the daily S\&P500 return data based on the quantile regression model and apply our simultaneous inference framework. The quantile regression yields an entire conditional distribution relationship between variables, which has been increasingly used in economic studies. The estimation $Q_\tau(Y|\mathbf{X})=\inf\{ s:\mr P(Y\leq s|\mf X)>\tau\}$ for extreme $\tau$, say $\tau=0.98$ or $\tau=0.02$, represents the value at risk, or VaR, an important risk measure in finance and insurance. We consider the log-returns of the S\&P 500 index daily data from July 2, 2012, to January 5, 2015. The data can be downloaded from https://seekingalpha.com/symbol/SP500/historical-price-quotes. Let $Y_i=\log(S_{i+1})-\log(S_i)$, where the $S_i, i=1,\dots,n$ are the records of S\&P 500 daily index. As displayed in Figure \ref{fig:sp500}, the log-return series $Y_i$ appears to be stable and stationary during the aforementioned period. 
\begin{figure}[h]
    \centering
    \includegraphics[width=0.7\linewidth,height=3cm]{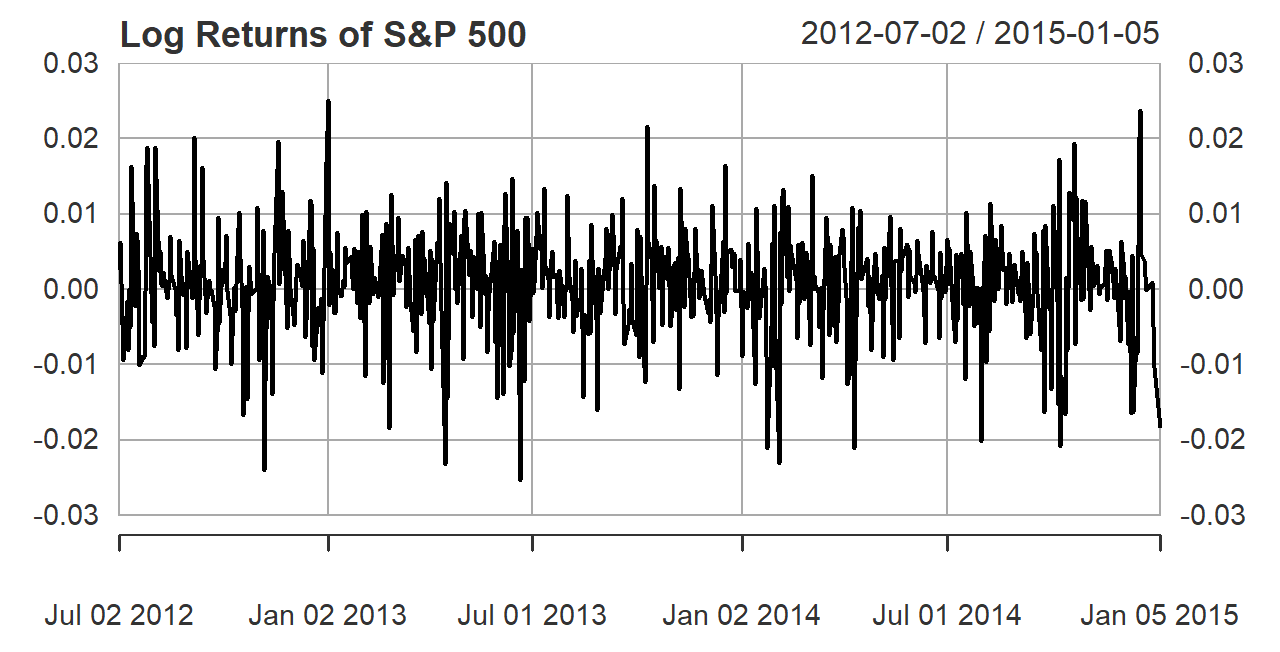}
    \captionsetup{font={small, stretch=1},skip=-0cm}
    \setlength{\belowcaptionskip}{-20pt} 
    \caption{The daily log returns of S\&P 500 index data.}
    \label{fig:sp500}
\end{figure}

In financial time series analysis, the quantile autoregression (QAR) model is widely used for modelling $Q_\tau(Y|\mathbf{X})$ \citep{xiao2004uqar,baur2012stock,baur2019quantile}. 
The standard linear QAR specification \citep{koenker2006quantile} can be written as
\begin{equation}
   Q_\tau(Y_i|Y_{i-1},\dots,Y_{i-d})=\theta_0(\tau)+\theta_1(\tau)Y_{i-1}+\dots+\theta_d(\tau)Y_{i-d},\label{eq:qar(d)}
\end{equation}
where $Y_i$ denotes the observed time series, $d$ is the lag order, and $\theta_j(\tau)$ is the autoregressive coefficient.
Our first application is not to develop a parametric model, but to use our SCR as a diagnostic tool to assess whether such a standard linear specification is adequate.
According to the BIC criterion of \cite{schwarz1978estimating} and \cite{rissanen1978modeling}, the optimal lag length $d$ in model \eqref{eq:qar(d)} is selected to be $d=1$ for different quantiles $\tau=0.02,0.5,0.98$. Based on the selected QAR(1) model, we conduct the linearity test for the QAR(1) hypothesis
\begin{equation}
    \label{eq:h0 QAR(1)}
    H_0:Q_{\tau}\left(Y_i|Y_{i-1}\right)=\theta_0(\tau)+\theta_1(\tau) Y_{i-1},
\end{equation}
against the alternative hypothesis that $Q_{\tau}(Y_i|Y_{i-1})$ is nonlinear in $Y_{i-1}$. 
As described in Section \ref{sec: introduction}, we can view $Q_\tau(Y_i|Y_{i-1})$ as a function of $Y_{i-1}$, namely $Q(x)=\inf\{s:\mr P(Y_i\leq s|Y_{i-1}=x)\}$, and estimate it by \eqref{eq:loss function} using $\mf X_i = Y_{i-1}$ and loss function $\rho(x) = \tau x^+ + (1 - \tau)(-x)^+$ via the sieve method. Following Algorithm \ref{alg:self-convolved bootstrap} with bootstrap sample size $B=2000$, we construct $(1 - \alpha)$-level SCR \eqref{eq:SCR boot} for the quantile $Q_\tau(Y_i|Y_{i-1})$. For the null hypothesis \eqref{eq:h0 QAR(1)}, we accept $H_0$ at significance level $\alpha$ if the fitted line $\hat\theta_0(\tau)+\hat\theta_1(\tau) Y_{i-1}$ is fully covered by the $(1 - \alpha)$-level SCR obtained by Algorithm \ref{alg:self-convolved bootstrap}. 
 The results show that the null hypothesis \eqref{eq:h0 QAR(1)} is accepted with p-values $0.167,0.317,0.384$ for $\tau=0.98,0.5,0.02$, respectively. Thus the linear QAR(1) model provides a statistically reasonable specification for modelling the conditional quantile relationship between $Y_i$ and $Y_{i-1}$ across tested quantiles.

Furthermore, beyond validating the QAR(1) specification, our simultaneous inference framework can examine whether an additional lag $Y_{i-2}$ contributes incremental explanatory power for the conditional quantiles of $Y_i$, possibly in a nonlinear way. Specifically, first we test 
$H_0: Q_\tau(Y_i \mid Y_{i-1}, Y_{i-2}) = Q_\tau(Y_i \mid Y_{i-1})$, corresponding to the case where $Y_{i-2}$ is redundant.
We construct the SCR for $Q_\tau(Y_i \mid Y_{i-1}, Y_{i-2})$ and assess whether the fitted univariate estimator $\hat Q_\tau(Y_i \mid Y_{i-1})$ is covered by it. 
The resulting p-values are $0.165$, $0.670$, and $0.030$ for $\tau=0.02, 0.5$, and $0.98$, respectively. 
Thus, the null cannot be rejected at the lower and median quantiles, but is rejected at $\tau=0.98$, indicating that $Y_{i-2}$ is relevant for modeling upper tail risk. To further examine the bivariate relationship, we test the linear QAR(2) specification for $\tau=0.98$, i.e., $H_0:Q_{0.98}(Y_{i}|Y_{i-1},Y_{i-2})=\theta_0+\theta_1Y_{i-1}+\theta_2Y_{i-2}$. The test yields a p-value of $0.018$, decisively rejecting the linear null hypothesis. These findings demonstrate that nonlinear modeling becomes essential when considering the multivariate target $Q_\tau(Y_i|Y_{i-1},Y_{i-2})$ for the upper tail risk. 
In addition to the testings, the SCR itself provides a statistical uncertainty quantification for the conditional VaR estimation, which is an important aspect of risk management (displayed in Figure \ref{fig:SCR 2-dim}). 
\begin{figure}[h]
\vspace{-0.4cm} 
    \centering
    \includegraphics[width=0.8\linewidth,height=5.5cm]{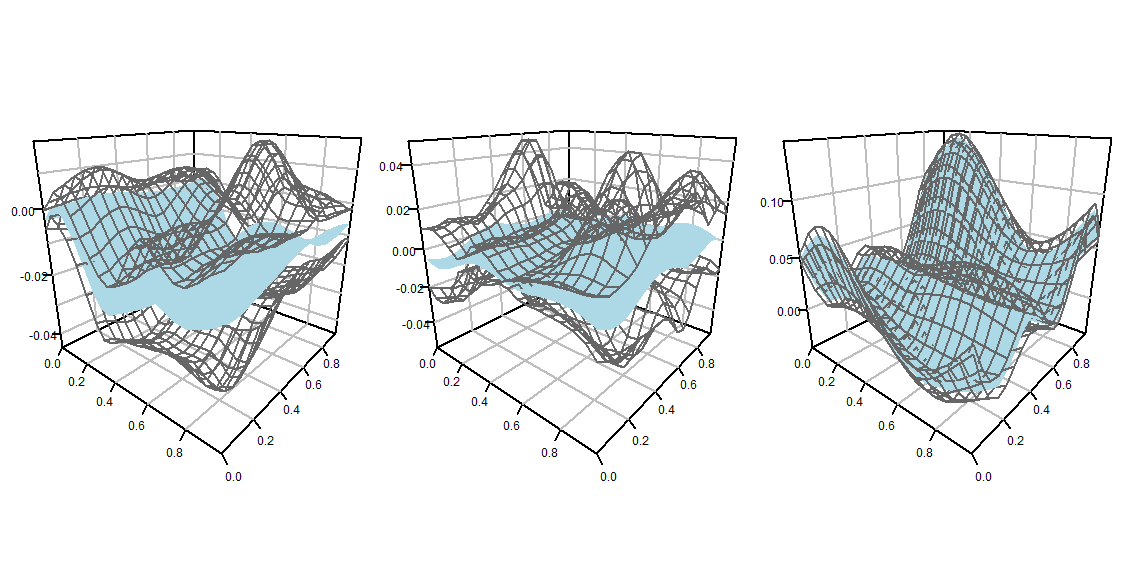}
    \captionsetup{font={small, stretch=1},skip=-0.7cm}
    \setlength{\belowcaptionskip}{-0.5pt} 
    \caption{95\% SCRs based on sieve estimator $\hat{Q}_{\tau}(Y_i|Y_{i-1},Y_{i-2})$ for quantile $\tau=0.02$ (left), $\tau=0.5$ (middle), and $\tau=0.98$ (right). The transparent surfaces represent the SCRs, and the opaque blue surface is the estimation $\hat{Q}_{\tau}(Y_i|Y_{i-1},Y_{i-2})$.}
    \label{fig:SCR 2-dim}
\end{figure}

\section{Definitions and assumptions}
\label{sec:def and assump}
We now introduce the necessary definitions and regularity assumptions. These assumptions are standard in nonlinear time series models and sieve-based M-estimation. 
According to the sieve estimator in \eqref{eq:sieve estimator} with sieve bases $\mf b_\omega(\mf x)$, we introduce the following mild regularity conditions that are satisfied by most linear sieve basis:
\begin{itemize}[itemsep=0pt,parsep=0pt,topsep=0pt,partopsep=0pt]
\setlength{\baselineskip}{0.7\baselineskip}
    \item[(A1)] \phantomsection\label{(A1)} Define $\xi_{K,n}=:\sup_{\mf x\in\mathcal{X}}|\mf b_\omega(\mf x)|$ and $\Delta_{K,n}=:\sup_{x \in \mathcal{D}_n}\left|\nabla \mf b_\omega(\mf x)\right|$. Suppose there exists $\omega_0,\omega_1,\omega_1^\prime\geq 0$ s.t. $K\lesssim n^{\omega_0}$, $\xi_{K,n}\lesssim n^{\omega_1}$, and $\Delta_{K,n}\lesssim n^{\omega_1'}$.
\item[(A2)] \phantomsection\label{(A2)} Define $\mf B_{K}=:\mb E(\mf b_\omega(\mf X_i) \mf b_\omega(\mf X_i)^\top )$. We denote the smallest and largest eigenvalues of $\mf B_{K}$ as $\lambda_{min}(\mf B_{K} ),\lambda_{max}(\mf B_{K} )$.  Assume there exist constants $c,C>0$ such that $c\leq\lambda_{min}(\mf B_{K} )\leq\lambda_{max}(\mf B_{K} )\leq C$ for any $K$.
    \item[(A3)] \phantomsection\label{(A3)} Assume the sieve coefficient $\theta$ in \eqref{eq:sieve estimator} lies in 
    $\Theta=:\left\{\theta\in\mb R^K: \sup_{\mf x\in\mathcal{X}}|\theta^\top \mf b_\omega(\mf x)|\leq C\right\}$, for some constant $C>0$.
    \item[(A4)] \phantomsection\label{(A4)} Define the function family
    $\mathcal{F}_\rho=:\{ \rho(\varepsilon-\theta^\top\mf b_\omega(\mf x)): \theta\in \mb R^K\}.$ For any probability measure $\mr P$, any $r>1$, and $\varepsilon\in(0,1)$, we assume the covering number of $\mathcal{F}_\rho$ satisfies $N(\varepsilon,\mathcal{F}_\rho,\|\cdot\|_{\mr P,r})\leq C K \left(A/\varepsilon\right)^{rK},$
    with universal constant $C,A>0$.
    \item[(A5)] \phantomsection\label{(A5)} There exists a constant \(\varsigma>0\) such that $\sup_{\mf x\in\mathcal D_n}
|Q_{0,n}(\mf x)-Q_0(\mf x)|
=O(K^{-\varsigma}).$
\end{itemize}
Assumption \hyperref[(A1)]{(A1)} is a mild regularity condition on the sieve basis functions and can be checked by calculation on the specific basis functions. If $\mathcal{X}$ is compact and rectangular, Assumption \hyperref[(A1)]{(A1)} is satisfied by commonly used sieve basis with typical $\xi_{K,n}\lesssim \sqrt{K}$; see \cite{newey1997convergence}, \cite{quan2024JASA}. 
Assumptions \hyperref[(A2)]{(A2)} and \hyperref[(A3)]{(A3)} trivially nest typical regression models and the H\"older function space, which aligns with many nonparametric sieve estimates such as \cite{CHEN2015uniformsieve} and \cite{quan2024JASA}. For orthonormal basis $\mf b(\mf x)$ on $\mf x\in\mathcal{D}_n$ in the sense that $\int_{\mathcal{D}_n} b_i(\mf x)b_j(\mf x)\mr d \mf x=0$ and $\int_{\mathcal{D}_n} b_i^2(\mf x)\mr d \mf x=1$, the $\lambda_{max}(\mf B_{K} )\leq C$ in Assumption \hyperref[(A2)]{(A2)} easily holds by $\beta^\top \mf B_K\beta=\|\beta^\top\mf b_\omega(\mf X_i)\|^2\leq  C\int_{\mathcal{D}_n}|\beta^\top \mf b(\mf x)|^2\mr d \mf x=C|\beta|^2.$
Similarly, the $\lambda_{min}(\mf B_{K,n} )>0$ in Assumption \hyperref[(A2)]{(A2)} holds if the density function of $\mf X_i$ has a positive lower bound $c>0$ on $\mathcal{D}_n$ such that $\beta^\top \mf B_K\beta\geq c\int_{\mathcal{D}_n}|\beta^\top \mf b(\mf x)|^2\mr d\mf x=c|\beta|^2.$
The boundedness condition \hyperref[(A3)]{(A3)} is essential in practice, considering the optimization algorithm for \eqref{eq:sieve estimator}. \hyperref[(A3)]{(A3)} is a mild condition requiring that the approximation sieve space does not deviate too far from $Q_0(\mf x)$. 
Section \ref{sec:example of loss} in the supplement verifies Assumption \hyperref[(A4)]{(A4)} for various loss functions $\rho(\cdot)$, including Huber's, expectile, and $\mathcal{L}^q$ regressions. Assumption \hyperref[(A5)]{(A5)} can be verified by imposing smoothness conditions on \(Q_0(\cdot)\). For example, a \(p\)-smooth H\"older class  takes \(\varsigma=p/d\); see detailed verification in Section \ref{sec:supp-verify A5} in the supplement.

Our assumptions on the time series are connected with the loss function in the M-estimation problem. For the loss function in \eqref{eq:sieve estimator}, we require
\begin{equation}
|\psi(x)-\psi(y)| \leq M_1+M_2|x-y|,\label{eq:condition psi}
\end{equation}
for all $x, y \in \mathbb{R}$ and some positive constants $M_1$ and $M_2$. The condition \eqref{eq:condition psi} holds for various loss functions of quantile, expectile, $\mathcal{L}_q$ for $1<q<2$, least squares, and the Huber regressions. For integer $q\geq0$, write
\begin{equation}
    \overline{\Xi}_n^{(q)}\left(x \mid \mf X_i\right)=:\frac{\partial^q}{\partial x^q}\mb E\left[ \psi(\bar\varepsilon_i+x)-\psi(\bar\varepsilon_i) \mid \mf X_i\right] ,\label{eq:bar Xi}
\end{equation}
where the superscript $(q)$ is omitted if $q=0$. We assume the time series $\{\mf X_i\}$ and $\{\bar\varepsilon_i\}$ are written as $\mf X_i=\mf H(\mathcal{H}_i)$, $\bar\varepsilon_i=\bar G(\mathcal{H}_i,\mathcal{G}_i)$ and satisfy the following mild conditions:
\begin{itemize}[itemsep=0pt,parsep=0pt,topsep=0pt,partopsep=0pt]
\setlength{\baselineskip}{0.7\baselineskip}
    \item[(B1)] \phantomsection\label{(B1)} $\delta_{\mf H}(k,1)= O(\chi^{k})$ and 
     $\mathbb{E}\left(\exp \left(t_0\left|\mathbf{X}_i\right|\right)\right) < \infty$ for some $\chi\in(0,1)$ and $t_0>0$.
    \item[(B2)] \phantomsection\label{(B2)} Define $\delta_\psi(k,q) =:\|\psi(\bar G(\mathcal{H}_i,\mathcal{G}_i))-\psi(\bar G(\mathcal{H}_{i,k},\mathcal{G}_{i,k}))\|_q$.
    Assume $\delta_\psi(k,4)=O(\chi^k)$. 
     \item[(B3)] \phantomsection\label{(B3)} For $t(\theta)=\sup_\mf x|\theta^\top \mf b_\omega(\mf x)|$ and the modulus function $\omega(\cdot)$ satisfying $\omega(u)\to 0,u\to 0$, assume there exists a constant $\delta_0>0$ such that for any $\theta,h\in \mb R^K$ and $|\theta|,|h|\leq \delta_0$,
     \begin{align} \text{(i). }&\mb E\left[ \big(\overline{\Xi}_n(\theta^\top \mf b_\omega(\mf X_i)|\mf X_i)-\overline{\Xi}_n^{(1)}(0|\mf X_i)\theta^\top\mf b_\omega(\mf X_i) \big)h^\top \mf b_\omega(\mf X_i) \right]=O(t(\theta) |\theta||h| ),\notag \\ &\mb E\left[ \overline{\Xi}_n( (\theta+h)^\top \mf b_\omega(\mf X_i) |\mf X_i)-\overline{\Xi}_n( \theta^\top \mf b_\omega(\mf X_i) |\mf X_i)-\overline{\Xi}_n^{(1)}( \theta^\top \mf b_\omega(\mf X_i)|\mf X_i)h^\top \mf b_\omega(\mf X_i) \right]=O(h^2),\notag\\ &\mb E\left[ \big(\overline{\Xi}_n^{(1)}( \theta^\top \mf b_\omega(\mf X_i)|\mf X_i)-\overline{\Xi}_n^{(1)}( 0|\mf X_i) \big)(h^\top \mf b_\omega(\mf X_i))^2\right]=O(\omega(t(\theta)) h^2).\notag \end{align}
     (ii). For $\overline{\Xi}_n^{(1)}(0\mid \mathcal{H}_i)$, we require that $\overline{\Xi}_n^{(1)}\left(0 \mid \mathcal{H}_i\right)|> 0$ a.s.\\
     (iii). Define 
        $\nu(\delta)=\mb E\left\{[\psi(\bar\varepsilon_i+\delta^\top\mf b_\omega(\mf X_i))-\psi(\bar\varepsilon_i-\delta^\top \mf b_\omega(\mf X_i))]^2|\delta^\top \mf b_\omega(\mf X_i)|^2/|\delta|^2\right\}$. Assume that $\nu(\delta)$ is continuous at $\delta=0$ and $\nu(\delta)=O(|t(\delta)|^\eta)$ for some $\eta>0$.   
\end{itemize}

 Assumptions \hyperref[(B1)]{(B1)}, \hyperref[(B2)]{(B2)}, and \hyperref[(B3)]{(B3)} are in line with those in many classic M-estimation literature such as \cite{wu2007m}, \cite{wu2018gradient}. 
 Assumptions \hyperref[(B1)]{(B1)} and \hyperref[(B2)]{(B2)} require $\{\mf X_i\}$ and $\{\psi(\bar\varepsilon_i)\}$ enjoy a geometrically decaying dependence measure, 
 which can be satisfied in most commonly used time series models. We refer to \cite{wu2007m} for representative examples showing Assumptions \hyperref[(B1)]{(B1)} and \hyperref[(B2)]{(B2)}.
 In the supplement, we also verify this using the examples \eqref{eq:VARMA} and \eqref{eq:GARCH}. The geometric decay $O(\chi^k)$ can be relaxed to a polynomial decay. However, this requires more complicated conditions and substantially more intricate mathematical arguments. For simplicity, we stick to the geometric decay assumption. 

 Assumption \hyperref[(B3)]{(B3)} is essential for obtaining a consistent M-estimator and a Bahadur-type representation. 
 This condition is in the same spirit as, and can be implied by, a twice continuously differentiable condition in Assumption D(iv) of \cite{cattaneo2025uniform}.
 Section \ref{sec:supp-verify B3} in the supplement verifies (i) in Assumption \hyperref[(B3)]{(B3)} for several commonly used loss functions. (ii) gives boundedness conditions at $x=0$ so that a stochastic expansion can be applied. 
 The condition on $\nu(\delta)$ in (iii) 
 plays an important role in determining the convergence rate of our Bahadur representation and can be satisfied by various loss functions. For quantile regression, we have $\eta=1$ if mild conditions are assumed such that
 the conditional density function of $\varepsilon_i$ is uniformly bounded away from 0 and $\infty$ in a neighborhood around $0$ \citep{Wang2012Ultra-high-quantreg, Wu2017quantile}. For the Huber, least squares, and expectile regressions, as in the discussion of Example 1 of \cite{wu2007m}, we have $\eta=2$. For $L_q$ regression with $1< q\leq 2$, it can be shown that $\eta=2$ if $3/2<q\leq 2$ and $\eta=2q-1$ if $1<q\leq 3/2$; 
 see \cite{wu2018gradient}.
\bigskip
\begin{center}
{\large\bf SUPPLEMENTARY MATERIAL}
\end{center}
In the supplement, we provide
\begin{itemize}[itemsep=0pt,parsep=0pt,topsep=0pt,partopsep=0pt]
\setlength{\baselineskip}{0.7\baselineskip}
    \item[A.] Detailed proofs of the theorems with auxiliary lemmas.
    \item[B.] Assumption verifications and specific examples of the time series models, loss functions, and sieve spaces including Fourier series, Legendre polynomials, and wavelets.
\end{itemize}



    \linespread{0.949}\selectfont 
    \setlength{\bibsep}{5pt}     
    \putbib
\end{bibunit}

\clearpage
\appendix
\begin{center}
{\large\bf SUPPLEMENTARY MATERIAL for ``Simultaneous Inference for Nonlinear Time Series, a Sieve M-regression Approach"}
\end{center}

\begin{bibunit}
\newcommand{\mf}{\mathbf}
\newcommand{\mr}{\mathrm}
\newcommand{\mb}{\mathbb}
\renewcommand{\thesection}{\Alph{section}}
\renewcommand{\thetable}{\thesection.\arabic{table}}
\renewcommand{\thefigure}{\thesection.\arabic{figure}}
\setcounter{section}{0} 
This supplementary material will provide detailed proofs in Section \ref{sec:proof} and a further appendix in Section \ref{sec:appendix}. Section \ref{sec:proof} contains proofs of the theorems and propositions in the main paper with essential lemmas and auxiliary results. Section \ref{sec:appendix} displays specific examples and associated verification to show the generality of our methodology framework. 

In particular, Section \ref{sec:example of time series models} introduces a series of time series models in the formulation \eqref{eq:causal representation}, including AR, MA, and GARCH, and verifies their dependence structure. Sections \ref{sec:supp-verify B3}, \ref{sec:example of loss}, and \ref{sec:supp-verify A5} verify Assumption \hyperref[(B3)]{(B3)}, \hyperref[(A4)]{(A4)}, and \hyperref[(A5)]{(A5)} for different loss functions, including quantile, expectile, $L_q$, and Huber's. Commonly used sieve basis functions and verifications of their geometric quantities in Proposition \ref{prop:critical value} can be seen in Section \ref{sec:examples of sieve basis}.

\section{Proofs}
\label{sec:proof}
\subsection{Proof of Theorem \ref{thm:maximal inequality}}
\label{sec:proof of thm:maximal inequality}
\begin{proof}
Notice that our data is not independent, we firstly decompose the process $\mb P_n(\beta)-\mr P(\beta) $ into $m$-independent blocks. Recall $\mathcal{H}_i$ and $\mathcal{G}_i$ in \eqref{eq:causal representation}, we denote $\mathcal{H}_i^{(m)}=:(\zeta_{i-m+1},\dots,\zeta_{i-1},\zeta_i)$, $\mathcal{G}_i^{(m)}=:(\eta_{i-m+1},\dots,\eta_{i-1},\eta_i)$, and
$$
L_i^{(m)}(\beta)=:\mb E\left(L_i(\beta)|\mathcal{H}_i^{(m)},\mathcal{G}_i^{(m)}\right).
$$
Without loss of generality, suppose $n/m\in\mb Z$ then $\{L_{m(i-1)+j}^{(m)}(\beta)\}_{i=1}^{n/m}$ is i.i.d. process for each $j=1,\dots,m$, on which maximal inequality in Empirical process can be applied. Following arguments show that the error for such m-dependent decomposition can be bounded. For simplicity, we call $\Upsilon_i=(\dots,\gamma_0,\gamma_1,\dots,\gamma_i)$ with $\gamma_i=(\zeta_i,\eta_i)$ so that $L_i^{(m)}(\beta)=\mb E\left(L_i(\beta)|\Upsilon_i^{(m)}\right)$. 

Denote $\{\gamma_i^\prime\}$ as an i.i.d. copy of $\{\gamma_i\}$ and $\Upsilon_i^\prime = (\dots,\gamma_{i-1}^\prime,\gamma_i^\prime)$, then we have
    \begin{align}
        |L_i(\beta)-L_i^{(m)}(\beta)|& = \left|\mb E\left[ L_i(\beta)-\mb E\left( L_i(\beta)|\Upsilon_{i-m}^\prime,\gamma_{i-m+1},\dots,\gamma_i \right)\Big|\Upsilon_i \right]\right|\notag\\
        &\leq \sum_{k\geq m}^\infty \left|\mb E\left[ \mb E\left(L_i(\beta)|\Upsilon_{i-k-1}^\prime,\gamma_{i-k},\dots,\gamma_{i}  \right) -\mb E\left( L_i(\beta) |\Upsilon_{i-k}^\prime,
        \gamma_{i-k+1},\dots,\gamma_{i}\right) |\Upsilon_i\right] \right |\notag.
    \end{align}
    Thus for $\mathcal{H}_{i,k}=(\mathcal{H}_{i-k-1},\zeta_{i-k}^\prime,\zeta_{i-k+1},\dots,\zeta_i)$ and $\mathcal{G}_{i,k}=(\mathcal{G}_{i-k-1},\eta_{i-k}^\prime,\eta_{i-k+1},\dots,\eta_i)$,
    \begin{equation}
        \mb E \sup_{|\beta|\leq r_n}|L_i(\beta)-L_i^{(m)}(\beta)| \leq \sum_{k\geq m}^\infty \mb E\sup_{|\beta|\leq r_n}|L_i(\beta)-L_{i,k}(\beta)|  , \label{eq:sum up over m}    
    \end{equation}
    where $\mf X_{i,k}=\mf H(\mathcal{H}_{i,k})$, $\bar\varepsilon_{i,k}=a_n(\mf X_{i,k})+G(\mathcal{H}_{i,k},\mathcal{G}_{i,k})$, and
   $L_{i,k}(\beta)=:\rho(\bar\varepsilon_{i,k}- \beta^\top\mf b_\omega(\mf X_{i,k}))-\rho(\bar\varepsilon_{i,k})$.
    By Assumptions \hyperref[(A3)]{(A3)} and Lemma \ref{lem:phd on rho}, using the triangular inequality, there exists constant $\chi\in(0,1)$, for any given $\alpha\in(0,1)$,
    \begin{align}
    \mb E\sup_{|\beta|\leq r_n}|L_i(\beta)-L_{i,k}(\beta)|&=O\left(\chi^k+r_n\xi_{K,n}\Delta_{K,n}^\alpha\chi^{\alpha k}\right).\label{eq:bound L}
    \end{align} 
    Combining \eqref{eq:bound L} and \eqref{eq:sum up over m}, then we have
\begin{equation}
        \mb E\left(\sup_{|\beta|\leq r_n} |L_i(\beta)-L_i^{(m)}(\beta)|\right) =O(\chi^m+r_n\xi_{K,n}\Delta_{K,n}^{\alpha} \chi^{\alpha m}).\label{eq:loss error for M-decomp}
\end{equation}

Note that $\sup_{|\beta|\leq r_n}\sup_{\mf x\in\mathcal{D}_n}|\beta^\top \mf b_{\omega}(\mf x)|\leq r_n\xi_{K,n}$, we consider function family
\begin{align}
    \mathcal{R}_L&=:\left\{f_\beta(\varepsilon,\mf x)= \rho\left(\varepsilon-\beta^\top\mf b_\omega(\mf x)\right)-\rho(\varepsilon): |\beta|\leq r_n\right\}.\label{eq: R2 function family}
\end{align}
Note that the sequence $\{L_{m(i-1)+j}^{(m)}(\beta)\}_{i=1}^{n/m}$ is i.i.d., by the standard symmetrization \citep{van1996weak}, we have
\begin{align}
    &\mb E\sup_{|\beta|\leq r_n}\left|\frac{1}{\sqrt{n/m} }\sum_{i=1}^{n/m}(L_{m(i-1)+j}^{(m)}-\mb EL_{m(i-1)+j}^{(m)})\right| \notag\\
    &\leq 2\mb E\sup_{|\beta|\leq r_n}\left|\frac{1}{\sqrt{n/m} }\sum_{i=1}^{n/m}\epsilon_iL_{m(i-1)+j}^{(m)}(\beta)\right|, \label{eq:symmetrization}
\end{align}
where $\{\epsilon_i\}$ are i.i.d. Rademacher variables.

By condition \eqref{eq:condition psi} and the fact $\rho(\nu-u)-\rho(u)=-\nu \int \psi(u-t\nu)\mr dt$, there exists constant $c_0,c_1>0$ such that for any $f_\beta\in\mathcal{R}_L$,
\begin{align}
    \left| f_\beta(\varepsilon,\mf X) \right|&\leq \left| \beta^\top\mf b_\omega(\mf X) \right|\int_0^1|\psi(\varepsilon-t\beta^\top\mf b_\omega(\mf X))\mr dt|,\notag\\
    &\leq |\beta^\top\mf b_\omega(\mf X)| (c_0+\psi(\varepsilon) )+c_1|\beta^\top \mf b_\omega(\mf X)|^2.\label{eq:multiplier reduction}
\end{align}
Hence, the symmetrized supremum is bounded by a sum of two terms
\begin{equation}
    \mb E\sup_{|\beta|\leq r_n}\left|\frac{1}{\sqrt{n/m} }\sum_{i=1}^{n/m}\epsilon_iL_{m(i-1)+j}^{(m)}(\beta)\right|\leq T_{1,j}+T_{2,j},
\end{equation}
where
\begin{align}
    T_{1,j}&=:\mb E\sup_{|\beta|\leq r_n}\left| \frac{1}{\sqrt{n/m} }\sum_{i=1}^{n/m} \epsilon_i\mb E\left[\beta^\top \mf b_\omega(\mf X_{i(m-1)+j}))(c_0+\psi(\bar\varepsilon_{m(i-1)+j}) )\mid \mathcal{H}_{i(m-1)+j}^{(m)},\mathcal{G}_{i(m-1)+j}^{(m)}\right] \right|,\notag\\
    T_{2,j}&=:c_1\mb E\sup_{|\beta|\leq r_n}\left| \frac{1}{\sqrt{n/m} }\sum_{i=1}^{n/m} \epsilon_i\mb E\left[(\beta^\top \mf b_\omega(\mf X_{i(m-1)+j}^{(m)}))^2\mid \mathcal{H}_{i(m-1)+j}^{(m)},\mathcal{G}_{i(m-1)+j}^{(m)}\right]\right|.
\end{align}
Let $\mf V_{i,j}^{(m)}=:\mb E\left[\beta^\top \mf b_\omega(\mf X_{i(m-1)+j}))(c_0+\psi(\bar\varepsilon_{m(i-1)+j}) )\mid \mathcal{H}_{i(m-1)+j}^{(m)},\mathcal{G}_{i(m-1)+j}^{(m)}\right]$, and notice that
$$
\mb E_\epsilon\left|\sum_{i=1}^{n/m} \epsilon_i \mf V_{i,j}^{(m)}\right|\leq \left(\mb E_\epsilon\left|\sum_{i=1}^{n/m} \epsilon_i \mf V_{i,j}^{(m)}\right|^2\right)^{1/2}=\left(\sum_{i=1}^{n/m} |\mf V_{i,j}^{(m)}|^2\right)^{1/2},
$$
where $\mb E_\epsilon(\cdot)$ denotes expectation with respect to the Rademacher variables $\{\epsilon_i\}$. Therefore, by $\sup_{|\beta|\leq r_n}|\beta^\top \nu|\leq r_n|\nu|$ and Assumption \hyperref[(B2)]{(B2)},
\begin{align}
    T_{1,j}&\leq r_n\mb E\left| \frac{1}{\sqrt{n/m}}\sum_{i=1}^{n/m} \epsilon_i \mf V_{i,j}^{(m)} \right|\leq  r_n\left\| \frac{1}{\sqrt{n/m}}\sum_{i=1}^{n/m}  \mf V_{i,j}^{(m)} \right\| =O(r_n\xi_{K,n}), \label{eq:bound for T_1,j}
\end{align}
uniformly over $j=1,\dots,m$. Similarly, on the other hand,
\begin{align}
    T_{2,j}&\leq c_1 r_n^2\mb E\left| \frac{1}{\sqrt{n/m}}\sum_{i=1}^{n/m} \epsilon_i \mf |b_\omega(\mf X_{m(i-1)+j})|^2 \right|= r_n \|\mf b_\omega(\mf X_i)\|_4^2 =O(r_n^2\xi_{K,n}\sqrt{K}).\label{eq:bound for T_2,j}
\end{align}


Combining \eqref{eq:bound for T_1,j}, \eqref{eq:bound for T_2,j}, \eqref{eq:symmetrization}, and \eqref{eq:loss error for M-decomp}, with appropriate setting $m\asymp \log n$ (e.g., $m= \frac{\log n}{\alpha\log(1/\chi)}$ ) and the fact $\xi_{K,n}r_n=o(1)$, we have
\begin{equation}
        \mb E\sup_{|\beta|\leq r_n} | \mb P_n(\beta)- \mb P(\beta)|=O\left(n^{-1}+r_n\xi_{K,n}\Delta_{K,n}^\alpha n^{-1}+r_n (\sqrt{K}\vee\xi_{K,n}) \sqrt{\frac{\log n}{n}}\right). 
\end{equation}
Then setting $0<\alpha\leq  \min\{(\omega_0+1)/2\omega_1',1\}$, the \eqref{eq:maximal ineq for Pn-P} holds.
\end{proof}

\subsection{Bahadur representations}

\begin{proof}[Proof of Proposition \ref{prop:Bahadur representation}.]
Recall $\mf Z_n$ in \eqref{eq:Z_n}, and define 
$$
\beta_{\mf Z}=:\frac{1}{\sqrt{n}}\bar Q_n^{-1}\mf Z_n,\quad e_n=:\hat{\beta}-\beta_{\mf Z},
$$
where $\bar Q_n=\mb E[\overline \Xi_n^{(1)}(0|\mf X_i) \mf b_\omega(\mf X_i)\mf b_\omega(\mf X_i)^\top ]$.
Hence, to establish the Bahadur representation in Proposition \ref{prop:Bahadur representation}, it suffices to show that $e_n$ satisfies the desired remainder bound.

Define
\begin{equation}
\widetilde R_i(\theta,h)
=:
L_i(\theta+h)-L_i(\theta)+\psi(\bar\varepsilon_i)h^\top\mf b_\omega(\mf X_i),\label{eq:remainder}    
\end{equation}
and $\widetilde{\mb R}_n(\theta,h)=:\frac1n\sum_{i=1}^n \widetilde R_i(\theta,h)$,
$\widetilde{\mb R}(\theta,h)=:\mb E\widetilde{\mb R}_n(\theta,h)$. Then we have
\begin{align}
        \mb P_n(\beta_{\mf Z}+e_n)-\mb P_n(\beta_{\mf Z})&=-\frac{1}{\sqrt{n}}\mf Z_n^\top e_n+\big(\tilde{\mb R}_n(\beta_\mf Z,e_n)-\tilde{\mb R}(\beta_\mf Z,e_n)\big)+\tilde{\mb R}(\beta_\mf Z,e_n).\label{eq:localization-exact decomposition}
\end{align}
Applying the deterministic expansion in Lemma \ref{lem:deterministic expansion}, we have
\begin{equation}
    \tilde{\mb R}(\beta_{\mf Z},e_n)=e_n^\top \bar{Q}_n\beta_{\mf Z}+\frac{1}{2}e_n^\top \bar{Q}_ne_n+r_P(\beta_{\mf Z},e_n),\label{eq:deterministic expansion beta_z e_n}
\end{equation}
where $r_P(\theta,h)=O(t(\theta)|\theta| |h|+t(h)|h|^2+t(\theta) |h|^2 )$.

Together with (ii) in Proposition \ref{prop:quadra approx} and (i) in Lemma \ref{lem:sup_bZ}, we have $t(e_n)=o_p(1)$ and
$r_P(\beta_{\mf Z},e_n)=O_p(\bar r_n^2|e_n|)+o_p(|e_n|^2)$, where $\bar r_n= (\xi_{K,n}\vee K^{1/2})n^{-1/2}\log n$. Since
$$
\frac{1}{\sqrt{n}}\mf Z_n^\top e_n=e_n^\top\bar{Q}_n\beta_\mf Z,\quad \mb P(\beta_\mf Z+e_n)=\mb P_n(\hat{\beta})\leq \mb P(\beta_{\mf Z}),
$$
combining \eqref{eq:localization-exact decomposition} and \eqref{eq:deterministic expansion beta_z e_n} yield
\begin{equation}
    0\geq -|e_n|\Gamma_n(t(\beta_{\mf Z}),t(e_n)) + \frac{1+o_p(1)}{2}e_n^\top \bar Q_ne_n+O_p(\bar r_n^2 |e_n|),\label{eq:ineq iteration bahadur bound}
\end{equation}
where
\begin{equation}
    \Gamma_n(a_\theta,a_h) = : \sup \left\{ \left|\frac{\tilde{\mb R}_n(\theta,h)-\tilde{\mb R}(\theta,h) }{|h|}\right|:|\theta|\leq \bar r_n,0<|h|\leq 2\bar r_n,t(\theta)\leq a_\theta,t(h)\leq a_h \right\}
\end{equation}
By Assumption \hyperref[(A2)]{(A2)} and \hyperref[(B3)]{(B3)}, there exists constant $c_0>0$ such that $e_n^\top \bar Q_ne_n\geq c_0|e_n|^2$. Therefore, \eqref{eq:ineq iteration bahadur bound} yields
\begin{equation}
    |e_n|=O_p(\Gamma_n(t(\beta_{\mf Z}),t(e_n))+\bar r_n^2),\label{eq:main iteration bahadur bound}
\end{equation}
where the $(a_\theta,a_h)$ will be specified in each step below. 
In the following steps, we proceed with a finite iterative approach to show that \eqref{eq:main iteration bahadur bound} can achieve \eqref{eq:bahadur beta}.

\paragraph{Beginning step.}
Let 
$$
A_{n,0}=:\{|\hat{\beta}|\leq \bar r_n,|\beta_{\mf Z}|\leq \bar r_n, t(\beta_{\mf Z})\leq \bar r_n\}.
$$
By Proposition \ref{prop:quadra approx}, \eqref{eq:bound for Z_n}, and Lemma \ref{lem:sup_bZ}, we have $\mr P(A_{n,0})=1-o(1)$. Denote event
$$
B_{n,0}=:\left\{ \Gamma_n(t(\beta_{\mf Z}),t(e_n) )\leq g_n K^{1/2}n^{-1/2}(\bar r_n+\nu_{n,0})^{\eta/2}\log^2 n \right\},
$$
where $\nu_{n,0}=\xi_{K,n}\bar r_n$ and $g_n\to \infty$ arbitrarily slow. Condition on $A_{n,0}$, we have $t(e_n)\leq \xi_{K,n}|\hat\beta|+t(\beta_{\mf Z})\leq \xi_{K,n}\bar r_n$ at the starting rate. Applying Lemma \ref{lem:bound centered remainder} with $r_\theta=\bar r_n$, $r_h=2\bar r_n$, $a_\theta=\bar r_n$, and $a_h=\nu_{n,0}$, we have $\mr P(B_{n,0}\cap A_{n,0})=1-o(1)$. Substituting the bound in event $B_{n,0}$ into \eqref{eq:main iteration bahadur bound} gives
\begin{equation}
    |e_n|=O_p(r_{n,0}),\quad r_{n,0}=\Phi_n(\bar r_n,\nu_{n,0}),\label{eq:starting bahadur rate}
\end{equation}
where $\Phi_n(a,b)=:g_n(K^{1/2}n^{-1/2}(a+b)^{\eta/2}\log^2 n)+\bar r_n^2$. Consequently, $t(\hat \beta)\leq \bar r_n+\xi_{K,n}|e_n|=O_p(\nu_{n,1})$ where $\nu_{n,1}=\bar r_n+\xi_{K,n}r_{n,0}$.

\paragraph{First self-refinement.}
On the event $A_{n,0}\cap B_{n,0}$, we can re-localize the stochastic remainder in Lemma \ref{lem:bound centered remainder} with
$r_\theta=\bar r_n$, $r_h=2\bar r_n$, $a_\theta = \bar r_n$, $a_h=\nu_{n,1}$. In this way we can replace event $B_{n,0}$ by $B_{n,1}$ with $\mr P(B_{n,1})=1-o(1)$ where
$$
B_{n,1}=:\left\{ \Gamma_n(t(\beta_{\mf Z}),t(e_n) )\leq g_n K^{1/2}n^{-1/2}(\bar r_n+\nu_{n,1})^{\eta/2}\log^2 n \right\}.
$$
Therefore, \eqref{eq:main iteration bahadur bound} yields
\begin{equation}
    |e_n|=O_p(r_{n,1}),\quad r_{n,1}=\Phi_n(\bar r_n,\nu_{n,1}),\label{eq:starting bahadur rate}
\end{equation}
Consequently, $t(\hat \beta)\leq \bar r_n+\xi_{K,n}|e_n|=O_p(\nu_{n,2})$ where $\nu_{n,2}=\bar r_n+\xi_{K,n}r_{n,1}$.

\paragraph{Second self-refinement.} A third application of Lemma \ref{lem:bound centered remainder} with $r_\theta=\bar r_n$, $r_h=2\bar r_n$, $a_\theta = \bar r_n$, $a_h=\nu_{n,2}$, gives the event $B_{n,2}$ with $\mr P(B_{n,2})=1-o(1)$ such that
\begin{equation}
B_{n,2}=:\left\{ \Gamma_n(t(\beta_{\mf Z}),t(e_n) )\leq g_n K^{1/2}n^{-1/2}(\bar r_n+\nu_{n,2})^{\eta/2}\log^2 n \right\}.
\end{equation}
Substituting again into \eqref{eq:main iteration bahadur bound} yields
$$
|e_n|=O_p(r_{n,2}),\quad r_{n,2}=\Phi_n(\bar r_n,\nu_{n,0}).
$$
It remains to evaluate whether the final recursive $r_{n,2}$ achieves the target Bahadur rate \eqref{eq:bahadur beta}. Basic calculation yields
$$
r_{n,2}=g_n(K^{1/2}n^{-1/2}(\bar r_n+\nu_{n,2})^{\eta/2}\log^2 n)+(K\vee \xi_{K,n}^2)n^{-1}\log^2 n.
$$
Using the fact $\eta\in[1,2]$ in Assumption \hyperref[(B3)]{(B3)}, it suffices to show $\nu_{n,2}\lesssim \bar r_n$, which means $\xi_{K,n}r_{n,1}\lesssim \bar r_n$. Notice that $\xi_{K,n}\bar r_n=o(1)$, we only need to verify $(\xi_{K,n}r_{n,0})^{\eta/2}\log n=o(1)$, which can be implied by condition $\xi_{K,n} K^{\eta/4}n^{-\eta/4}\log^2 n=o(1)$.

\end{proof}

\begin{proposition}
        \label{prop:quadra approx}
    If Assumptions \hyperref[(A1)]{(A1)}-\hyperref[(A4)]{(A4)} and \hyperref[(B1)]{(B1)}-\hyperref[(B3)]{(B3)} hold, we have the following assertions:
    \begin{description}
         \item[(i)] For $|\beta|\to0$, 
        \begin{equation}
        \mb P(\beta)=\frac{1}{2}\beta^\top \bar{Q}_n\beta+ o(|\beta|^2).\label{eq:quadra approx}    
    \end{equation}
        \item[(ii)] If $r_n=(\xi_{K,n}\vee K^{1/2})n^{-1/2}\log n$ and $\xi_{K,n}K^{1/2}n^{-1/2}\log n=o(1)$, then $\hat{\beta}=o_p(r_n)$.
     \end{description}
\end{proposition}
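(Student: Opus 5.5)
For part (i), the plan is to write $\mb P(\beta)$ as an expectation of a conditional integral of $\psi$ and then apply the expansions in Assumption (B3)(i) with $\theta=0$. By stationarity,
\[
\mb P(\beta)=\mb E\bigl[\rho(\bar\varepsilon_i-\beta^\top\mf b_\omega(\mf X_i))-\rho(\bar\varepsilon_i)\bigr].
\]
Using $\rho(u-v)-\rho(u)=-\int_0^{v}\psi(u-s)\,\mr ds$ and conditioning on $\mf X_i$, this gives
\[
\mb P(\beta)=-\mb E\int_0^{\beta^\top\mf b_\omega(\mf X_i)}\bigl\{\overline\Xi_n(-s|\mf X_i)+\mb E[\psi(\bar\varepsilon_i)|\mf X_i]\bigr\}\mr ds .
\]
The linear term is $-\mb E[\psi(\bar\varepsilon_i)\beta^\top\mf b_\omega(\mf X_i)]$. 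It vanishes because $\theta_{0,n}$ minimizes $\mb E\rho(\cdot)$ over $\Theta$, and $\theta_{0,n}$ is interior, since (A3) is a bound on $t(\theta)$ that is not active near $\theta_{0,n}$. The first-order condition, i.e. the directional derivative in each direction $\pm\beta$, is therefore zero. Next I would expand $\overline\Xi_n(-s|\mf X_i)=-\overline\Xi^{(1)}_n(0|\mf X_i)s+R(s,\mf X_i)$. The main term integrates to $\tfrac12\mb E[\overline\Xi_n^{(1)}(0|\mf X_i)(\beta^\top\mf b_\omega)^2]=\tfrac12\beta^\top\bar Q_n\beta$.

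The remainder is where the work lies. I would control it by writing $\int_0^{v}R\,\mr ds=\int_0^1 R(tv)\,v\,\mr dt$ and applying the first display of (B3)(i) with $\theta=t\beta$, $h=\beta$. This gives a bound $O(t(\beta)|\beta|^2)$. Since $t(\beta)\le\xi_{K,n}|\beta|\to0$ in the relevant regime, this is $o(|\beta|^2)$. Alternatively, the third display together with the modulus $\omega(t(\beta))\to0$ gives the same conclusion. One subtlety is that $|\beta|\to0$ must be read together with $t(\beta)\to0$; I would state this explicitly, using (A1).

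For part (ii), the plan is the classical convexity argument of \cite{pollard1991asymptotics}. Fix $\epsilon>0$ and the sphere $S=\{|\beta|=\epsilon r_n\}$. Because $\mb P_n$ is convex and $\mb P_n(0)=0$, if $\inf_{\beta\in S}\mb P_n(\beta)>0$ then $|\hat\beta|<\epsilon r_n$. On $S$, part (i) and (A2)–(B3)(ii) give $\mb P(\beta)\ge c\,\epsilon^2r_n^2$. This uses the facts that $\bar Q_n\succeq c\mf B_K$ (for this I would need $\overline\Xi^{(1)}_n$ bounded below, which I would take from (B3)(ii) as a uniform lower bound) and that $\xi_{K,n}r_n=o(1)$, so that the $o(|\beta|^2)$ term is uniform.

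The fluctuation is then handled by Theorem \ref{thm:maximal inequality} with radius $\epsilon r_n$, which is legitimate because $\epsilon r_n\gtrsim n^{-1/2}$ and $\epsilon r_n\xi_{K,n}=o(1)$ under the stated condition. It gives
\[
\mb E\sup_{|\beta|\le\epsilon r_n}|\mb P_n-\mb P|\lesssim \epsilon r_n(\xi_{K,n}\vee\sqrt K)\sqrt{\log n/n}=\epsilon r_n^2/\sqrt{\log n}.
\]
By Markov's inequality this is $o_p(\epsilon^2r_n^2)$ for each fixed $\epsilon$, because the extra $\sqrt{\log n}$ factor gives room to spare. Hence $\inf_S\mb P_n\ge c\epsilon^2r_n^2(1-o_p(1))>0$ with probability tending to one, so $|\hat\beta|\le\epsilon r_n$ with probability tending to one. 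Since $\epsilon$ is arbitrary, $\hat\beta=o_p(r_n)$.

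I expect the main obstacle to be the uniformity of the $o(|\beta|^2)$ term in (i) over the sphere, which grows in dimension with $K$. This is why I would route it through $t(\beta)\le\xi_{K,n}|\beta|$ rather than through pointwise Taylor arguments.
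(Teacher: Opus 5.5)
Your proposal is correct and follows the paper's proof essentially step for step. For (i), both use the integral representation of $\rho$, the sieve first-order condition $\mathbb{E}[\psi(\bar\varepsilon_i)\mathbf{b}_\omega(\mathbf{X}_i)]=0$, and the (B3)(i) expansion of $\overline{\Xi}_n$. For (ii), both use Pollard's convexity argument on a sphere, combined with Theorem~\ref{thm:maximal inequality} and its $\sqrt{\log n}$ slack.

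Your version is more explicit than the paper's in three places. You quantify the remainder as $O(t(\beta)|\beta|^2)$ with $t(\beta)\le\xi_{K,n}|\beta|$. You work directly at radius $\epsilon r_n$ rather than at $r_n$. You also flag that a uniform lower bound on $\overline{\Xi}_n^{(1)}(0|\mathbf{X}_i)$ is needed.

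One caveat is shared with the paper. Your claim $\epsilon r_n\xi_{K,n}=o(1)$ does not follow from $\xi_{K,n}K^{1/2}n^{-1/2}\log n=o(1)$ in general, because $r_n\xi_{K,n}=\xi_{K,n}(\xi_{K,n}\vee K^{1/2})n^{-1/2}\log n$. It holds only when $\xi_{K,n}\lesssim\sqrt{K}$.
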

\begin{proof}
Using the fact $\rho(x-y)-\rho(x) = \int_0^1 \psi(x-yt)y\mr d t  $, by Assumption \hyperref[(B3)]{(B3)} and the sieve first-order condition $\mb E[\psi(\bar{\varepsilon}_i)\mf b_\omega(\mf X_i)]=0$, we have
\begin{align}
    \mb P(\beta)&=\mb E[\rho(\bar{\varepsilon}_i-\beta^\top \mf b_\omega(\mf X_i))-\rho(\bar{\varepsilon}_i)],\notag\\
    &=-\int_0^1 \mb E\left[ \psi\left(\bar\varepsilon_i-t\beta^\top\mf b_\omega(\mf X_i) \right)\beta^\top\mf b_\omega(\mf  X_i) \right]\mr d t,\notag\\
    &=- \int_0^1 \mb E\left[ \bar \Xi_n\left(-t\beta^\top \mf b_\omega(\mf X_i)\Big|\mf X_i \right)\beta^\top\mf b_\omega(\mf  X_i) \right]\mr d t,\notag\\
    &=\int_0^1 \mb E\left[ \bar \Xi_n^{(1)}\left(0\Big|\mf X_i \right)|\beta^\top\mf b_\omega(\mf  X_i)|^2\right] t\mr d t + o\left( |\beta|^2 \right),\notag\\
    &=\frac{1}{2}\mb E\left[ \bar \Xi_n^{(1)}\left(0\Big|\mf X_i \right)|\beta^\top\mf b_\omega(\mf  X_i)|^2\right] + o\left( |\beta|^2 \right)=\frac{1}{2}\beta^\top \bar{Q}_n\beta+o(|\beta|^2),\label{eq:quadra on risk}
\end{align}
which yields \eqref{eq:quadra approx}.

Combining (i)-(ii) in Assumption \hyperref[(B3)]{(B3)} and Assumption \hyperref[(A2)]{(A2)}, there exists universal constant $\underline{p},\overline{p}>0$ s.t.
\begin{equation}
    \underline{p}|\beta|^2\leq \frac{1}{2}\beta^\top \bar{Q}_n\beta \leq \overline{p}|\beta|^2.
\end{equation}
therefore, we can denote
\begin{equation}
    p(0)|\beta|^2=: \frac{1}{2}\mb E\left\{ \bar \Xi^{(1)}\left(0\Big|\mf X_i \right)|\beta^\top\mf b_\omega(\mf  X_i)|^2\right\}\label{eq:p(0) def}
\end{equation}
where $p(0)$ is independent of $n$ and $p(0)\in[\underline{p}, \overline{p}]$. Combining \eqref{eq:p(0) def} and \eqref{eq:quadra on risk}, the \eqref{eq:quadra approx} holds.  

Moreover, for any $\beta\in\{\beta:|\beta|=r_n\}$, combining \eqref{eq:quadra approx} and the fact $r_n\xi_{K,n}=o(1)$, we have $\{\beta:|\beta|=r_n\}\subset \Theta^\Delta$ for sufficiently large $n$ and
    \begin{equation}
        \liminf_n\inf_{ |\beta|= r_n }  \mb P(\beta)\geq \frac{1}{2}p(0)r_n^2.\label{eq:lower bound}    
    \end{equation}

By Theorem \ref{thm:maximal inequality},
\begin{align}
    \inf_{|\beta|=r_n} \mb P_n(\beta)&\geq \inf_{|\beta|=r_n}\mb P(\beta)-\sup_{|\beta|\leq r_n}|\mb P_n(\beta)-\mb P(\beta)|,\label{eq:inf lower bound}\\
    &\geq \frac{p(0)+o(1)}{2}r_n^2 + O_p\left(r_n(\xi_{K,n}\vee K^{1/2})n^{-1/2}\log ^{1/2}n\right).
\end{align} 
Note that $r_n(\xi_{K,n}\vee K^{1/2})n^{-1/2}\log ^{1/2}n=o(r_n^2)$, we have
\begin{equation}
\lim_{n\rightarrow\infty}\mr P\left\{\inf_{|\beta|= r_n} \mb P_n(\beta)\geq  \frac{1}{3}p(0)r_n^2\right\}=1\label{eq:positive lower bound p1}    
\end{equation}
By the convexity of $\mb P_n(\beta)$, the event
\begin{align}
    \left\{\inf_{|\beta|\geq r_n} \mb P_n(\beta)\geq  \frac{1}{3}p(0)r_n^2\right\}=\left\{\inf_{|\beta|= r_n} \mb P_n(\beta)\geq  \frac{1}{3}p(0)r_n^2\right\}.
\end{align} 
Then for any $\varepsilon>0$, note that $\mb P_n(\hat{\beta})\leq \mb P_n(0)=0$, we have
\begin{align}
0\leq \mr P\left\{ |\hat \beta|\geq \varepsilon r_n \right\}&= \mr P\left\{\mb P_n(\hat{\beta})\geq \frac{1}{3}p(0)\varepsilon r_n^2,|\hat{\beta}|\geq \varepsilon r_n\right\}+\mr P\left\{\mb P_n(\hat{\beta})< \frac{1}{3}p(0)\varepsilon r_n^2,|\hat{\beta}|\geq \varepsilon r_n\right\}\notag\\
&\leq \mr P\left\{\mb P_n(\hat{\beta})> 0\right\}+\mr P\left\{\inf_{|\beta|\geq \varepsilon r_n}\mb P_n(\beta)< \frac{1}{3}p(0)\varepsilon r_n^2\right\}.\label{eq:consistency op}
\end{align}
Let $n\rightarrow\infty$, \eqref{eq:positive lower bound p1} and \eqref{eq:consistency op} imply that $\hat{\beta}=o_p(r_n)$.

\end{proof}

\begin{lemma}
    \label{lem:deterministic expansion}
    If Assumptions \hyperref[(A1)]{(A1)}-\hyperref[(A3)]{(A3)} and \hyperref[(B3)]{(B3)} hold, then
    \begin{equation}
        \tilde{\mb R}(\theta,h)=h^\top \bar{Q}_n \theta + \frac{1}{2}h^\top \bar{Q}_n h+ r_P(\theta,h),\label{eq:lem deterministic expansion}    
    \end{equation}
    where $$
    r_P(\theta,h)=O(t(\theta)|\theta| |h|+t(h)|h|^2+t(\theta) |h|^2 ),
    $$ 
    and $t(\theta)=\sup_{\mf x\in \mathcal{D}_n}|\theta^\top \mf b_\omega(\mf x)|$, $t(h)=\sup_{\mf x\in \mathcal{D}_n}|h^\top \mf b_\omega(\mf x)|$.
\end{lemma}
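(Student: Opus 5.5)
We need to expand $\tilde{\mb R}(\theta,h)=\mb E[L_i(\theta+h)-L_i(\theta)+\psi(\bar\varepsilon_i)h^\top\mf b_\omega(\mf X_i)]$. The plan is to write everything in terms of $\overline\Xi_n$ by conditioning on $\mf X_i$ and integrating along the segment from $\theta$ to $\theta+h$, then apply the three bounds in Assumption (B3)(i) to the resulting pieces.

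\textbf{Step 1: integral representation.} Abbreviate $u=\theta^\top\mf b_\omega(\mf X_i)$ and $v=h^\top\mf b_\omega(\mf X_i)$. Using $\rho(x-y)-\rho(x)=-\int_0^1\psi(x-sy)y\,\mr ds$, we have
\[
L_i(\theta+h)-L_i(\theta)=\rho(\bar\varepsilon_i-u-v)-\rho(\bar\varepsilon_i-u)=-\int_0^1 \psi(\bar\varepsilon_i-u-sv)\,v\,\mr ds .
\]
Adding $\psi(\bar\varepsilon_i)v$, conditioning on $\mf X_i$ and using the definition \eqref{eq:bar Xi} gives
\[
\tilde{\mb R}(\theta,h)=-\int_0^1\mb E\big[\overline\Xi_n(-(u+sv)\mid\mf X_i)\,v\big]\,\mr ds .
\]
The sign convention in \eqref{eq:bar Xi} should be checked, since the argument enters as $\bar\varepsilon_i+x$. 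For convenience I will replace $\theta,h$ by $-\theta,-h$ inside the (B3) conditions. This is harmless because $t(\cdot)$ and $|\cdot|$ are symmetric; equivalently, one can directly define the remainder with $+$ signs, matching the proof of Proposition \ref{prop:quadra approx}.

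\textbf{Step 2: decomposition.} Write $w_s=u+sv$, the value $(\theta+sh)^\top\mf b_\omega(\mf X_i)$, and split
\[
\overline\Xi_n(w_s)=\big[\overline\Xi_n(w_s)-\overline\Xi_n(u)-\overline\Xi_n^{(1)}(u)sv\big]+\big[\overline\Xi_n^{(1)}(u)-\overline\Xi_n^{(1)}(0)\big]sv+\overline\Xi_n^{(1)}(0)sv+\big[\overline\Xi_n(u)-\overline\Xi_n^{(1)}(0)u\big]+\overline\Xi_n^{(1)}(0)u .
\]
Multiply by $v$, take expectations, and integrate over $s$. The two main terms produce the leading part: $\int_0^1 s\,\mr ds\;\mb E[\overline\Xi_n^{(1)}(0)v^2]=\tfrac12h^\top\bar Q_nh$ and $\mb E[\overline\Xi_n^{(1)}(0)uv]=h^\top\bar Q_n\theta$, up to the overall sign fixed in Step 1. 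The remaining three brackets are controlled by Assumption (B3)(i).
\begin{itemize}
\item The fourth bracket, by the first display of (B3)(i), is $O(t(\theta)|\theta||h|)$.
\item The first bracket, by the second display applied with increment $sh$, is $O(s^2|h|^2)$. This yields only $O(|h|^2)$. To reach the stated $t(h)|h|^2$ I would redo the bound with a Taylor remainder of $\overline\Xi_n$: it is bounded by $\sup|\overline\Xi^{(2)}_n|\,|v|^2$ times $|v|\le t(h)$, combined with $\mb E v^2\lesssim|h|^2$ from (A2). Equivalently, I would read the $O(h^2)$ in (B3) as carrying the factor $t(h)$ after multiplication by $v$.
\item The second bracket, by the third display, is $O(\omega(t(\theta))|h|^2)$. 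This matches $t(\theta)|h|^2$ when $\omega(u)\lesssim u$, i.e.\ when $\overline\Xi_n^{(1)}$ is Lipschitz; I would note this reading.
\end{itemize}

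\textbf{Main obstacle.} The delicate point is matching the exact remainder orders, in particular extracting the extra factor $t(h)$ from the second-order term. Assumption (B3) as stated gives only $O(h^2)$ for the un-multiplied remainder, so I would either invoke Lipschitz continuity of $\overline\Xi^{(1)}_n$ (implicit in (B3)) or bound via Cauchy--Schwarz together with $|v|\le t(h)$. Integrability in $s$ and the requirement $|\theta|,|h|\le\delta_0$ are routine.
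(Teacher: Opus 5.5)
Your proposal is correct and is essentially the paper's proof. The paper uses the same representation $\tilde{\mathbb R}(\theta,h)=\mathbb E[u_i\int_0^1\overline\Xi_n(\nu_i+tu_i\mid\mathbf X_i)\,dt]$, with $\nu_i=-\theta^\top\mathbf b_\omega(\mathbf X_i)$ and $u_i=-h^\top\mathbf b_\omega(\mathbf X_i)$ (i.e.\ your sign flip), and the same three-way split: a first-order term compared with $\overline\Xi_n^{(1)}(0\mid\mathbf X_i)\nu_i$, a derivative term compared with $\overline\Xi_n^{(1)}(0\mid\mathbf X_i)$, and a second-order Taylor remainder. The two caveats you raise are real and are exactly what the paper uses without comment: it obtains $O(\omega(t(\theta))|h|^2)$ for the derivative term but reports $t(\theta)|h|^2$ (so $\omega(u)\lesssim u$, as in the supplement's verifications), and it bounds the remainder by $\mathbb E|u_i|^3\le t(h)\,\mathbb E u_i^2\lesssim t(h)|h|^2$, which needs a pointwise second-order bound like (B.3a) rather than the expectation-level display in (B3)(i).
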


\begin{proof}
    Denote $\nu_i=-\theta^\top \mf b_\omega(\mf X_i)$ and $u_i=-h^\top \mf b_{\omega}(\mf X_i)$, then
    \begin{align}
        \tilde{\mb R}(\theta,h)&=\mb E\left[ u_i\int_0^1 \overline{\Xi}_n(\nu_i+tu_i|\mf X_i)\mr dt \right],\notag\\
        &=\mb E\left[ u_i \overline{\Xi}_n(\nu_i|\mf X_i)\mr dt \right]+\frac{1}{2}\mb E\left[ u_i^2 \overline{\Xi}_n^{(1)}(\nu_i|\mf X_i)\mr dt \right]\notag\\
        &
        +\mb E\left[u_i\int_0^1 \left(  \overline{\Xi}_n(\nu_i+tu_i|\mf X_i)- \overline{\Xi}_n(\nu_i|\mf X_i)-\overline{\Xi}_n^{(1)}(\nu_i|\mf X_i)tu_i\right)\mr d t\right]\notag\\
        &=T_1(\theta,h)+T_2(\theta,h)+T_3(\theta,h),\label{eq:decomp deterministic expansion}
    \end{align}
where
\begin{align}
    T_1(\theta,h)&=:\mb E\left[ u_i \overline{\Xi}_n(\nu_i|\mf X_i) \right],\notag\\
    T_2(\theta,h)&=:\frac{1}{2}\mb E\left[ u_i^2 \overline{\Xi}_n^{(1)}(\nu_i|\mf X_i) \right],\notag\\
    T_3(\theta,h)&=:\mb E\left[u_i\int_0^1 \left(  \overline{\Xi}_n(\nu_i+tu_i|\mf X_i)- \overline{\Xi}_n(\nu_i|\mf X_i)-t\overline{\Xi}_n^{(1)}(\nu_i|\mf X_i)u_i\right)\mr d t\right].
\end{align}
For $T_1(\theta,h)$, by Assumption \hyperref[(B3)]{(B3)},
$$
    \mb E\left[ u_i (\overline{\Xi}_n(\nu_i|\mf X_i)-\overline \Xi(0|\mf X_i)-\overline \Xi^{(1)}(0|\mf X_i)\nu_i) \right]+O(|\theta||h|\sup_{\mf x}|\theta^\top \mf b_\omega(\mf x)|),
$$
therefore,
\begin{equation}
 T_1(\theta,h)=h^\top \bar Q_n\theta +  O\left(t(\theta)|\theta||h| \right).\label{eq:bound T1}
\end{equation}
For $T_2(\theta,h)$, 
\begin{align}
&\mb E\left[ u_i^2 \overline{\Xi}_n^{(1)}(\nu_i|\mf X_i)\right]-h^\top \bar Q_0h,\notag\\
=&\mb E\left[ u_i^2\big(\overline{\Xi}_n^{(1)}(\nu_i|\mf X_i)-\overline{\Xi}_n^{(1)}(0|\mf X_i)\big) \right]=O(\omega(t(\theta))h^2).\label{eq:bound T2}
\end{align}
By Assumptions \hyperref[(A2)]{(A2)} and \hyperref[(B3)]{(B3)}
\begin{equation}
    T_3(\theta,h)\lesssim \mb E(u_i^3)=O(t(h)|h|^2).\label{eq:bound T3}
\end{equation}
Combining \eqref{eq:bound T1}, \eqref{eq:bound T2}, \eqref{eq:bound T3}, and \eqref{eq:decomp deterministic expansion}, \eqref{eq:lem deterministic expansion} holds.
\end{proof}

\subsection{Gaussian approximation}
\label{sec:gaussian approximation}
\begin{proof}[Proof of Theorem \ref{thm:Gaussian approximation}.]
Recall $\mf Z_n=\sum_{i=1}^n \mf z_i/\sqrt{n}$ in \eqref{eq:Z_n}, define the truncated $\mf z_i$ as
\begin{equation}
    \bar{\mf z}_i=\begin{cases}
        \mf z_i,&\quad |\mf z_i|\leq \pi_n\\
        0,&\quad \text{otherwise}.
    \end{cases}\label{eq:truncation z}
\end{equation}
Denote $\bar{\mf Z}_n=(\bar{\mf Z}_{n,1},\dots,\bar{\mf Z}_{n,K})^\top=:\sum_{i=1}^n  \bar{\mf z}_i/\sqrt{n}$ and $\bar{\mf Z}_n^*=:\bar{\mf Z}_n-\mb E\bar{\mf Z}_n$. Recall $\Upsilon_i^{(m)}=(\gamma_{i-m+1},\dots,\gamma_i)$ where $\gamma_i=(\zeta_i,\eta_i)$, suppose $n/m\in\mb Z$ w.l.o.g. and define
\begin{equation}
    \bar{\mf z}_i^{(m)}=:\mb E(\bar{\mf z}_i| \mathcal{F}_m(i) ),
\end{equation}
where $\mathcal{F}_m(i)=\sigma(\gamma_{i-m+1},\dots,\gamma_i)$ is the sigma-field of $\Upsilon_i^{(m)}$.
Then $\bar{\mf z}_i^{(m)},\bar{\mf z}_j^{(m)}$ are independent as long as $|i-j|>m$. Further let $\bar{\mf Z}_n^{(m)}=:\sum_{i=1}^n  \bar{\mf z}_i^{(m)}/\sqrt{n}$ and $\tilde{\mf Z}_n^{(m)}=:\bar{\mf Z}_n^{(m)}-\mb E\bar{\mf Z}_n^{(m)}$, then $\bar{\mf Z}_n^*-\tilde{\mf Z}_n^{(m)}=\bar{\mf Z}_n-\bar{\mf Z}_n^{(m)}$.

Denote the covariance matrix of $\bar{\mf{Z}}_n^{(m)}$
$$
\begin{aligned}
&\Sigma_{\mf Z}^{(m)}=:\frac{1}{n}\mb E\left\{\left[\sum_{i=1}^n\left(\overline{\mf z}_{i}^{(m)}-\mb E \overline{\mf{z}}_{i}\right)\right]\left[\sum_{i=1}^n\left(\overline{\boldsymbol{z}}_{ i}^{(m)}-\mb E \overline{\mf{z}}_{ i}\right)^{\top}\right]\right\}.
\end{aligned}
$$
Introduce a $K$-dimensional standard Gaussian random vector $\mf G$ and denote 
$$
\mf{G}_n=\Sigma_{\mf Z}^{1/2}\mf G,\quad \tilde{\mf G}_n^{(m)}=\left(\Sigma_{\mf Z}^{(m)}\right)^{1/2}\mf G
$$
so that $\mf{G}_n$ and $\tilde{\mf G}_n^{(m)}$ preserve the covariance structure $\Sigma_{\mf Z},\Sigma_{\mf Z}^{(m)}$, respectively. 

Introduce a smoothed function
$$
h_{A,\epsilon_1}(\omega)=:h\left(\frac{\inf_{\nu\in A}|\omega-\nu| }{\epsilon_1}\right),
$$
where $\omega\in\mb R^K$, set $A\subset \mb R^K$, and
$$
h(x)= \begin{cases}1, & x<0, \\ 1-2 x^2, & 0 \leqslant x<\frac{1}{2}, \\ 2(1-x)^2, & \frac{1}{2} \leqslant x<1, \\ 0, & x \geqslant 1 .\end{cases}
$$

By Lemma \ref{lem:liu2025}, for any $\epsilon>0$, $\mathcal{K}(\mf Z_n,\mf G_n)$ can be bounded as
\begin{equation}
    \mathcal{K}(\mf Z_n,\mf G_n)\lesssim K^{1/4}\epsilon+\min\left\{\sup _{A \in \mathcal{A}}\left|\mb E\left(h_{A, \epsilon}\left(\mf Z_n\right)-h_{A, \epsilon}\left(\mf G_n\right)\right)\right| + \mr P\left( |\mf Z_n-\mf G_n|\geq \epsilon \right)   \right\}.\label{eq:liu2025 lemma decomp}
\end{equation}
In the following arguments, we will take two approaches to bound $\mathcal{K}(\mf Z_n,\mf G_n)$ based on the two quantities in the minimum part of \eqref{eq:liu2025 lemma decomp}.

\subsubsection{First approach}
The first approach starts from the first result in the minimum part of Lemma \eqref{eq:liu2025 lemma decomp}. Notice that $|\nabla h_{A,\epsilon}|\leq 2\epsilon^{-1}$,
then for any $\epsilon_1>0$, we can decompose the $\mathcal{K}(\mf Z_n,\mf G_n)$ as
\begin{align}
    \mathcal{K}(\mf Z_n,\mf G_n) & \lesssim K^{1/4}\epsilon_1+\sup _{A \in \mathcal{A}}\left|\mb E\left[h_{A, \epsilon_1}\left(\mf Z_n\right)-h_{A, \epsilon_1}\left(\mf G_n\right)\right]\right|,\notag\\
    &\lesssim K^{\frac{1}{4}} \epsilon_1+\sup _{A \in \mathcal{A}}\left|\mb E\left[h_{A, \epsilon_1}\left(\mf Z_n\right)-h_{A, \epsilon_1}\left(\overline{\mf Z}_n^*\right)\right]\right|+\sup_{A \in \mathcal{A}}\left|\mb E\left[h_{A, \epsilon_1}\left(\overline{\mf Z}_n^*\right)-h_{A, \epsilon_1}\left(\tilde{\mf Z}_n^{(m)}\right)\right]\right|\notag\\
    &+\sup_{A \in \mathcal{A}}\left|\mb E\left[h_{A, \epsilon_1}\left(\tilde{\mf G}_n^{(m)}\right)-h_{A, \epsilon_1}\left(\mf G_n\right)\right]\right|+\sup_{A \in \mathcal{A}}\left|\mb E\left[h_{A, \epsilon_1}\left(\tilde{\mf Z}_n^{(m)}\right)-h_{A, \epsilon_1}\left(\tilde{\mf G}_n^{(m)}\right)\right]\right|,\notag\\
    &\lesssim K^{\frac{1}{4}} \epsilon_1+\frac{1}{\epsilon_1} \mb E\left|\mf Z_n-\overline{\mf Z}_n^*\right|+\frac{1}{\epsilon_1} \mb E\left|\overline{\mf Z}_n^*-\tilde{\mf Z}_n^{(m)}\right|+\frac{1}{\epsilon_1} \mb E\left|\tilde{\mf G}_n^{(m)}-\mf{G}_n\right|\notag\\
    &+\sup_{A \in \mathcal{A}}\left|\mb E\left[h_{A, \epsilon_1}\left(\tilde{\mf Z}_n^{(m)}\right)-h_{A, \epsilon_1}\left(\tilde{\mf G}_n^{(m)}\right)\right]\right|.\label{eq:decomp of Kolmogorov dist}
\end{align}

Based on decomposition \eqref{eq:decomp of Kolmogorov dist}, setting appropriate $m\asymp \log n$, we shall prove the following assertions as follows:
\begin{description} 
    \item[(1)] Truncation error
    \begin{equation}
        \mb E\left|\mf Z_n-\overline{\mf Z}_n^*\right|=O(\sqrt{n}\pi_n^{1-q} \xi_{K,n}^q).\label{eq:truncation error}
    \end{equation}
    \item[(2)] M-decomposition error
    \begin{equation}
        \mb E\left|\overline{\mf Z}_n^*-\tilde{\mf Z}_n^{(m)}\right|=o(\sqrt{n}\pi_n^{1-q} \xi_{K,n}^q).  \label{eq:m-decomposition error GA}
    \end{equation}
    \item[(3)] Gaussian comparison
    \begin{equation}
        \mb E\left|\tilde{\mf G}_n^{(m)}-\mf{G}_n\right| =O\left(n\pi_n^{2-2q}\xi_{K,n}^{2q}\right).\label{eq:Gaussian comparison}
    \end{equation}
    \item[(4)] Gaussian approximation
    \begin{equation}
        \sup_{A \in \mathcal{A}}\left|\mb E\left[h_{A, \epsilon_1}\left(\tilde{\mf Z}_n^{(m)}\right)-h_{A, \epsilon_1}\left(\tilde{\mf G}_n^{(m)}\right)\right]\right|=O\left( K^{1/4}n^{-1/2}\pi_n^3\log^2n+\frac{1}{\epsilon_1}K^{1/4}n^{-1}\pi_n^6\log^4 n \right). \label{eq:bounded Gaussian approx}
    \end{equation}
\end{description}
Combining \eqref{eq:truncation error}, \eqref{eq:m-decomposition error GA}, \eqref{eq:Gaussian comparison}, \eqref{eq:bounded Gaussian approx}, and \eqref{eq:decomp of Kolmogorov dist},
\begin{align*}
    \mathcal{K}(\mf Z_n,\mf G_n)&=O\left( K^{\frac{1}{4}}\epsilon_1+\frac{1}{\epsilon_1}\left( n^{\frac{1}{2}}\pi_n^{1-q} \xi_{K,n}^{q}+n\pi_n^{2-2q} \xi_{K,n}^{2q}+K^{\frac{1}{4}}n^{-1}\pi_n^6\log^4 n\right)+ K^{\frac{1}{4}}n^{-\frac{1}{2}}\pi_n^{3}\log^2 n \right).
\end{align*}
Therefore, with setting $\pi_n=K^{-\frac{1}{2(q+5)}}n^{\frac{3}{2(q+5)}}\xi_{K,n}^{\frac{q}{q+5}}\log^{-\frac{4}{q+5}}n$, using the arbitrariness of $\epsilon_1>0$, we can have
\begin{equation}
     \mathcal{K}(\mf Z_n,\mf G_n)=O\left(K^{\frac{q-1}{4(q+5)}}\xi_{K,n}^{\frac{3q}{q+5}}n^{\frac{4-q}{2(q+5)}}\log^{2-\frac{12}{q+5}}n\right).\label{eq:1st approach rate}
\end{equation}


In the following we show the \eqref{eq:truncation error}, \eqref{eq:m-decomposition error GA}, \eqref{eq:Gaussian comparison}, and \eqref{eq:bounded Gaussian approx} hold.
\paragraph*{Truncation error}
In view of $\mb E\mf Z_n=0$ and $\mf z_i-\bar{\mf z}_i=\mf z_i\mf 1_{|\mf z_i|>\pi_n}$, we have for any $q> 1$,
\begin{align}
    \mb E|\mf Z_n-\bar{\mf Z}^*_n|&\leq\mb E|\mf Z_n-\bar{\mf Z}_n|+|\mb E \mf Z_n-\mb E\bar{\mf Z}_n|\notag\\
    &\leq \frac{2}{\sqrt{n}} \mb E\left|\sum_{i=1}^n\mf z_i\mf 1_{|\mf z_i|>\pi_n}\right|\notag\\
    &\leq 2\sqrt{n}\mb E \left[ |\mf z_i|\left(\frac{|\mf z_i|}{\pi_n}\right)^{q-1} \right]\notag\\
    &=2\sqrt{n}\pi_n^{1-q}\mr \|\mf z_i\|_q^q. \label{eq:error bound for trucation}
\end{align}
Then by condition $\|\mf z_i\|_q = O(\xi_{K,n}),q>4$ in Theorem \ref{thm:Gaussian approximation}, we show that \eqref{eq:truncation error} holds. 
Moreover, for the $q>4$ in condition $\|\mf z_i\|_q = O(\xi_{K,n})$, we can also have for $s=q/2$,
\begin{align}
    \|\mf Z_n-\bar{\mf Z}^*_n\|&\leq 2\sqrt{n}\left\{\mb E \left[ |\mf z_i|^2\left(\frac{|\mf z_i|}{\pi_n}\right)^{2s-2} \right]\right\}^{1/2}\notag\\
    &=O(\sqrt{n}\pi_n^{1-s}\mr \|\mf z_i\|_{2s}^{s})=O(T_{n,q}), \label{eq:error bound for trucation-q2}
\end{align}
where $T_{n,q}=:\sqrt{n}\pi_n^{1-q/2}\xi_{K,n}^{q/2}$.
\paragraph*{M-decomposition error}
Denote operator  $\mathcal{P}^{(k)}\bar{\mf z}_i=:\mb E(\bar{\mf z}_i|\mathcal{F}_{k}(i))-\mb E(\bar{\mf z}_i|\mathcal{F}_{k-1}(i) )$ 
using the fact $\bar{\mf z}_i=\lim_{j\rightarrow \infty}\mb E(\bar{\mf z}_i|\mathcal{F}_{i+j}(i))$, we have
\begin{equation}
    \bar{\mf Z}_n-\bar{\mf Z}_n^{(m)}=\frac{1}{\sqrt{n}}\sum_{i=1}^n \sum_{j=m-i+1}^\infty \mathcal{P}^{(i+j)}\bar{\mf z}_i=\frac{1}{\sqrt{n}}\sum_{j=m-n+1}^\infty \mf R_{n,j},\label{eq:decomp for m-dependent-GA}
\end{equation}
where $\mf R_{n,j}=:\sum_{i=(m-j+1)\vee1}^n \mathcal{P}^{(i+j)}\bar{\mf z}_i$. Recall $\Upsilon_i$ in Section \ref{sec:proof of thm:maximal inequality}, by Jensen's inequality, for $q> 1$,
\begin{align}
    \left\|\mathcal{P}^{(k)}\bar{\mf z}_i\right\|_q &\leq \left\|\mathcal{P}^{(k)}\mf z_i\right\|_q=\left\{\mb E\left| \mb E(\mf z_i|\gamma_{i-k+1},\dots,\gamma_i)- \mb E(\mf z_i|\gamma_{i-k+2},\dots,\gamma_i)\right|^q\right\}^{1/q}\notag\\
    &= \left\{\mb E\left| \mb E\left[ \mb E\left(\mf z_i| \Upsilon_{i,k-1}\right)- \mb E\left(\mf z_i| \Upsilon_i\right) \Big| \Upsilon_{i-k+1}\right]\right|^q\right\}^{1/q},\notag\\
    &\leq \left\| \psi(\varepsilon_{i,k-1})\mf b_\omega(\mf X_{i,k-1})- \psi(\varepsilon_{i})\mf b_\omega(\mf X_{i}) \right\|_q=\delta_{\mf z}(k-1,q).\label{eq:bound for operator P(k)}
\end{align}
Note that process $\{\mf R_{n,j},j\geq m-n+1\}$ is martingale difference with respect to filtration $\sigma(\gamma_{-j+1},\gamma_{-j+2},\dots)$. If $q\geq 2$, by Burkholder's inequality, there exists constant $C_q>0$ such that
\begin{align}
    \left\| \sum_{j=m-n+1}^\infty \mf R_{n,j}\right\|_q^2&\leq C_q \sum_{j=m-n+1}^\infty \|\mf R_{n,j}\|_q^2 \notag\\
    &\leq C_q\sum_{j=m-n+1}^\infty \left(\sum_{i=(m-j+1)\vee1}^n \left\|\mathcal{P}^{(i+j)}\bar{\mf z}_i\right\|_q   \right)^2.\label{eq:Burkholder for m-dependent-GA}
\end{align}
By Lemma \ref{lem:phd on z} and \eqref{eq:bound for operator P(k)}, there exists constant $\chi\in(0,1)$, for any given $\alpha\in(0,1)$,
\begin{align}
 \left\|\mathcal{P}^{(k)}\bar{\mf z}_i\right\|=O(\xi_{K,n}\Delta_{K,n}^\alpha\chi^{\alpha k}).
 \label{eq:bound for operator P(k)}
\end{align}
Combining \eqref{eq:decomp for m-dependent-GA}, \eqref{eq:bound for operator P(k)}, and \eqref{eq:Burkholder for m-dependent-GA}, elementary calculation yields
\begin{equation}
\|\bar{\mf Z}_n^*-\tilde{\mf Z}_n^{(m)}\|=\| \bar{\mf Z}_n-\bar{\mf Z}_n^{(m)} \|=\frac{1}{\sqrt
n}\left\| \sum_{j=m-n+1}^\infty \mf R_{n,j}\right\|=O(\xi_{K,n}\Delta_{K,n}^\alpha\chi^{\alpha m} ).\label{eq:error bound for m-dependent-GA}
\end{equation}
Setting appropriate m-decomposition $m\asymp \log n$ (e.g., $m= \frac{(q-4)|\omega_1+\omega_0/12-1/6|\log n}{\alpha\log(1/\chi)}$ ), we have for $q>4$,
\begin{equation}
    \xi_{K,n}\Delta_{K,n}^\alpha\chi^{\alpha m} \ll T_{n,q} \label{eq:m-decomp design}
\end{equation} 
with $\alpha<\min\{1,\frac{q-4}{2\omega_1^\prime}|\omega_1+\omega_0/12-1/6|\}$.




\paragraph*{Gaussian comparison} 
For a matrix $\mf A$, denote $\|\mf A\|_F$ as the Frobenius norm of $\mf A$ i.e. $\|\mf A\|_F=\left(\operatorname{tr}(\mf A^\top \mf A)\right)^{1/2}$. Recall $\Sigma_{\mf Z}=\mb E\left(\sum_{i=1}^n \mf{z}_{i}\right)\left(\sum_{i=1}^n \mf{z}_{i}^{\top}\right) / n$ and denote the covariance matrix of $\bar{\mf{Z}}_n^{(m)}$
$$
\begin{aligned}
&\Sigma_{\mf Z}^{(m)}=:\frac{1}{n}\mb E\left\{\left[\sum_{i=1}^n\left(\overline{\mf z}_{i}^{(m)}-\mb E \overline{\mf{z}}_{i}\right)\right]\left[\sum_{i=1}^n\left(\overline{\boldsymbol{z}}_{ i}^{(m)}-\mb E \overline{\mf{z}}_{ i}\right)^{\top}\right]\right\}.
\end{aligned}
$$

Introduce $K$-dimensional Gaussian random vectors
$$
\mf{G}_n=\Sigma_{\mf Z}^{1/2}\mf G,\quad\tilde{\mf G}_n^{(m)}=\left(\Sigma_{\mf Z}^{(m)}\right)^{1/2}\mf G,
$$ 
where $\mf G$ is a $K$-dimensional standard Gaussian random vector so that $\mf{G}_n$ and $\tilde{\mf G}_n^{(m)}$ preserve the same covariance structure of $\mf Z_n$ and $\bar{\mf Z}_n^{(m)}$, respectively.

Consider the difference of covariance matrix between $\bar{\mf{Z}}_n^{(m)}$ and $\mf{Z}_n$ based on Frobenius norm, using the fact $\mb E\mf Z_n=0$ and $\mb E\bar{\mf Z}_n^{(m)}=\mb E \bar{\mf Z}_n$,
\begin{align}
\left\|\Sigma_{\mf Z}-\Sigma_{\mf Z}^{(m)}\right\|_F&\leq \left\|\mb E\left(\mf Z_n-\bar{\mf Z}_n^{(m)}\right)\left(\mf Z_n\right)^\top\right\|_F+\left\|\mb E\bar{\mf Z}_n^{(m)}\left(\mf Z_n-\bar{\mf Z}_n^{(m)}\right)^{\top}\right\|_F\notag\\
&+\left\| \mb E\left(\mf Z_n-\bar{\mf Z}_n\right)  \mb E\left(\mf Z_n-\bar{\mf Z}_n\right)^{\top}\right\|_F.    
\end{align}
Denote operator $\mathcal{P}_k\cdot=:\mb E(\cdot|\Upsilon_k)-\mb E(\cdot|\Upsilon_{k-1})$. By Burkholder's inequality and H\"older's inequality, for any given $\alpha\in(0,1)$,
\begin{align}
    \left\| \mf Z_n \right\|&=\frac{1}{\sqrt{n}}\left\| \sum_{i=1}^n\sum_{k=0}^\infty \mathcal{P}_{i-k}\psi(\varepsilon_i+a_n(\mf X_i))\mf b_\omega(\mf X_i) \right\|\leq\frac{1}{\sqrt{n}} \sum_{k=0}^\infty\left\| \sum_{i=1}^n \mathcal{P}_{i-k}\psi(\varepsilon_i+a_n(\mf X_i))\mf b_\omega(\mf X_i) \right\|,\notag\\
    &\lesssim \frac{1}{\sqrt{n}} \sum_{k=0}^\infty \sqrt{\sum_{i=1}^n\left\| \mathcal{P}_{i-k}\psi(\varepsilon_i+a_n(\mf X_i))\mf b_\omega(\mf X_i) \right\|^2},\notag\\
    &=O\left(\sum_{k=0}^\infty \delta_{\mf z}(k,2)\right)=O( \Delta_{K,n}^\alpha\xi_{K,n} ),\label{eq:bound for Z_n}
\end{align}
where the last line is obtained by Assumption \hyperref[(B2)]{(B2)}, \hyperref[(A1)]{(A1)} and Lemma \ref{lem:phd on bx}.

By \eqref{eq:error bound for trucation-q2}, \eqref{eq:bound for Z_n}, and \eqref{eq:error bound for m-dependent-GA},
\begin{align}
    \left\|\mb E\left(\mf Z_n-\bar{\mf Z}_n^{(m)}\right)\left(\mf Z_n\right)^\top\right\|_F\leq \|\mf Z_n-\bar{\mf Z}_n^{(m)}\|\cdot \|\mf Z_n\|=O\left(T_{n,q}\xi_{K,n}\Delta_{K,n}^\alpha\chi^{\alpha m}  \right),\label{eq: cov comparison-1}
\end{align}
and similarly, we also have
\begin{equation}
    \left\|\mb E\bar{\mf Z}_n^{(m)}\left(\mf Z_n-\bar{\mf Z}_n^{(m)}\right)^{\top}\right\|_F\leq \|\mf Z_n-\bar{\mf Z}_n^{(m)}\|\cdot \|\bar{\mf Z}_n\|=O\left( T_{n,q}\xi_{K,n}\Delta_{K,n}\chi^{\alpha m}  \right).\label{eq: cov comparison-2}
\end{equation}
Besides, by \eqref{eq:error bound for trucation},
\begin{equation}
    \left\| \mb E\left(\mf Z_n-\bar{\mf Z}_n\right)  \mb E\left(\mf Z_n-\bar{\mf Z}_n\right)^{\top}\right\|_F \leq   \left|\mb E(\mf Z_n-\bar{\mf Z}_n)\right|^2=O\left(T_{n,q}^2\right).\label{eq: cov comparison-3}
\end{equation}
Combining \eqref{eq: cov comparison-1}, \eqref{eq: cov comparison-2}, \eqref{eq: cov comparison-3}, 
\begin{equation}
    \left\|\Sigma_{\mf Z}-\Sigma_{\mf Z}^{(m)}\right\|_F=O\left(T_{n,q}^2+T_{n,q}\xi_{K,n}\Delta_{K,n}^\alpha\chi^{\alpha m}\right).\label{eq:bound for cov comparison}
\end{equation}
Since we assume the smallest eigenvalue of $\Sigma_{\mf Z}$ is bounded away from $0$, as $n$ large enough, we have
\begin{equation}
\lambda_{min}(\Sigma_{\mf Z}^{(m)})\geq \lambda_{min}(\Sigma_{\mf Z}) + \lambda_{min}(\Sigma_{\mf Z}-\Sigma_{\mf Z}^{(m)}) \gtrsim c-\left\|\Sigma_{\mf Z}-\Sigma_{\mf Z}^{(m)}\right\|_F>0.\label{eq: lower bound for cov-m}    
\end{equation}
Combining \eqref{eq:bound for cov comparison}, \eqref{eq: lower bound for cov-m} and sub-multiplicativity of Frobenius norm, we have
\begin{align}
    \mb E\left|\tilde{\mf G}_n^{(m)}-\mf{G}_n\right| &=\mb E\left| \left(\Sigma_{\mf Z}^{1/2}-\left(\Sigma_{\mf Z}^{(m)}\right)^{1/2}\right) \mf{G}\right|\notag\\
    &\leq \left\|\Sigma_{\mf Z}^{1/2}-\left(\Sigma_{\mf Z}^{(m)}\right)^{1/2}\right\|_F=O\left(T_{n,q}^2+T_{n,q}\xi_{K,n}\Delta_{K,n}^\alpha\chi^{\alpha m}\right). \label{eq:Gaussian vector comparison}
\end{align}
By similar arguments in \eqref{eq:m-decomp design}, $\xi_{K,n}\Delta_{K,n}^\alpha\chi^{\alpha m}=o(T_{n,q})$ by appropriate $m$ and $\alpha$, which yields \eqref{eq:Gaussian comparison}.

\paragraph*{Gaussian approximation}
Plug $n_1=m$, $n_2=2m$, $n_3=3m$, $\kappa=n^{-1/2}\pi_n$ into Lemma \ref{lem:Fang2016} with \eqref{eq: lower bound for cov-m}, we have
\begin{equation}
    \mathcal{K}\left(\tilde{\mf Z}_n^{(m)},\tilde{\mf G}_n^{(m)}\right)=O\left(K^{1/4}n^{-1/2}\pi_n^3\log^2 n\right). \label{eq: application of Fang2016 }
\end{equation}
Moreover, in the proof of Lemma \ref{lem:Fang2016}, equation (4.23) in \cite{Fang2016CLT} yields
$$
\sup_{A \in \mathcal{A}}\left|\mr{P}\left[h_{A, \epsilon_1}\left(\tilde{\mf Z}_n^{(m)}\right)-h_{A, \epsilon_1}\left(\tilde{\mf G}_n^{(m)}\right)\right]\right|\leq C n\kappa^3n_1n_2\epsilon_1^{-1}\left[K^{1/4}(\epsilon_1+n_3\beta)+\mathcal{K}\left(\tilde{\mf Z}_n^{(m)},\tilde{\mf G}_n^{(m)}\right)\right],
$$
by \eqref{eq: application of Fang2016 },
\begin{equation}
    \sup_{A \in \mathcal{A}}\left|\mr{P}\left[h_{A, \epsilon_1}\left(\tilde{\mf Z}_n^{(m)}\right)-h_{A, \epsilon_1}\left(\tilde{\mf G}_n^{(m)}\right)\right]\right|=O\left( K^{1/4}n^{-1/2}\pi_n^3\log^2n+\frac{1}{\epsilon_1}K^{1/4}n^{-1}\pi_n^6\log^4 n \right).\label{eq:mid-proof of Fang 2016}
\end{equation}

\subsubsection{Second approach}
The second approach is to bound the second result in the minimum part of \eqref{eq:liu2025 lemma decomp}. For any $\epsilon_2>0$, we can decompose the $\mathcal{K}(\mf Z_n,\mf G_n)$ as
\begin{align}
    \mathcal{K}(\mf Z_n,\mf G_n)&\lesssim K^{1/4}\epsilon_2+\mr P(|\mf Z_n-\mf G_n|\geq \epsilon_2),\notag\\
    &\lesssim K^{1/4}\epsilon_2+\mr P(|\mf Z_n-\mf G_n^{(m)}|\geq \epsilon_2/2)+\mr P(|\mf G_n-\mf G_n^{(m)}|\geq \epsilon_2/2),\notag\\
    &\lesssim K^{1/4}\epsilon_2+\frac{1}{\epsilon_2^2}\|\mf Z_n-\mf G_n^{(m)}\|^2+\frac{1}{\epsilon_2^2}\|\mf G_n-\mf G_n^{(m)}\|^2,\label{eq:2nd approach GA}
\end{align}
where the last line in \eqref{eq:2nd approach GA} is obtained by Chebyshev's inequality. By 
triangular inequality, we have
\begin{equation}
    \left\|\mf Z_n-\mf G_n^{(m)}\right\|\leq \left\|\mf Z_n-\tilde{\mf Z}_n^{(m)}\right\|+\left\|\tilde{\mf Z}_n^{(m)}-\mf G_n^{(m)}\right\|.\label{eq:2nd GA decomp}
\end{equation}
$\left\|\mf Z_n-\tilde{\mf Z}_n^{(m)}\right\|$ has been bounded by \eqref{eq:truncation error} and \eqref{eq:m-decomposition error GA} so it remains to bound $\left\|\tilde{\mf Z}_n^{(m)}-\mf G_n^{(m)}\right\|$.

Recall $\tilde{\mf Z}_n^{(m)}=\sum_{i=1}^n\tilde{\mf z}_i^{(m)}/\sqrt{n}$ and suppose $n/m=L\in\mb Z$ without loss of generality. Let $s=m+1$ and define the residue classes
\[
\mathcal I_r=:\{\,r+ks:\ k\ge 0,\ r+ks\le n\,\},\qquad r=1,\dots,s,
\]
with $n_r=|\mathcal I_r|$. For each $r$, the collection
$\{\tilde{\mf z}_i^{(m)}: i\in \mathcal I_r\}$ is independent because
$|i-j|\ge s=m+1$ for distinct $i,j\in\mathcal I_r$.
Denote 
\begin{equation}
    \tilde{\mf Z}_{n,r}^{(m)}=:\frac{1}{\sqrt n}\sum_{i\in \mathcal I_r}\tilde{\mf z}_i^{(m)},
    \label{eq:subsequence split}
\end{equation}
then $\tilde{\mf Z}_n^{(m)}=\sum_{r=1}^{s}\tilde{\mf Z}_{n,r}^{(m)}$.
Fix $r\in\{1,\dots,s\}$ and enumerate $\mathcal I_r=\{i_{r,1}<\cdots<i_{r,n_r}\}$.
Let $\mf x_{r,t}^*=:\tilde{\mf z}_{i_{r,t}}^{(m)}$ for $t=1,\dots,n_r$, then $\{\mf x_{r,t}^*\}_{t=1}^{n_r}$ are independent, centered, and satisfy
$|\mf x_{r,t}^*| \le 2\pi_n$. Apply Lemma \ref{lem:strong GA in mies}, on a rich enough probability space, there exist random vectors $\{\tilde{\mf x}_{r,t}\}_{t=1}^{n_r}$ with
$\tilde{\mf x}_{r,t}\stackrel{d}{=}\mf x_{r,t}^*$ and independent Gaussian vectors
$\{\mf g_{r,t}\}_{t=1}^{n_r}$ satisfying $\mf g_{r,t}\sim N(\mathbf 0,\operatorname{Cov}(\mf x_{r,t}^*))$
such that
\begin{equation}
    \left\|
    \frac{1}{\sqrt{n_r}}\sum_{t=1}^{n_r}\big(\tilde{\mf x}_{r,t}-\mf g_{r,t}\big)
    \right\|
    \leq
    C\,\pi_n\,K^{\frac12-\frac1q}\,n_r^{-\frac12+\frac1q}\sqrt{\log n},
    \label{eq:subseq strong approx}
\end{equation}
where $C>0$ is a constant and we used $\log n_r\le \log n$. Define the Gaussian partial sum on the original $\sqrt n$ scale by
\begin{equation}
    \mf G_{n,r}^{(m)}=:\frac{1}{\sqrt n}\sum_{t=1}^{n_r}\mf g_{r,t}.
    \label{eq:Gnr def}
\end{equation}
Then, by \eqref{eq:subseq strong approx},
\begin{align}
    \big\|\tilde{\mf Z}_{n,r}^{(m)}-\mf G_{n,r}^{(m)}\big\|
    &=
    \left\|
    \frac{1}{\sqrt n}\sum_{t=1}^{n_r}\big(\tilde{\mf x}_{r,t}-\mf g_{r,t}\big)
    \right\|
    =
    \sqrt{\frac{n_r}{n}}
    \left\|
    \frac{1}{\sqrt{n_r}}\sum_{t=1}^{n_r}\big(\tilde{\mf x}_{r,t}-\mf g_{r,t}\big)
    \right\| \notag\\
    &\le
    C\,\pi_n\,K^{\frac12-\frac1q}\,n^{-\frac12}\,n_r^{\frac1q}\sqrt{\log n}.
    \label{eq:subseq scaled error}
\end{align}
Note that $\mf G_n^{(m)}=\sum_{r=1}^{s}\mf G_{n,r}^{(m)}$. By the triangle inequality and \eqref{eq:subseq scaled error},
\begin{align}
    \big\|\tilde{\mf Z}_n^{(m)}-\mf G_n^{(m)}\big\|
    &\le
    \sum_{r=1}^{s}\big\|\tilde{\mf Z}_{n,r}^{(m)}-\mf G_{n,r}^{(m)}\big\|
    \le
    C\,\pi_n\,K^{\frac12-\frac1q}\,n^{-\frac12}\sqrt{\log n}\sum_{r=1}^{s}n_r^{\frac1q}.
    \label{eq:aggregate step 1}
\end{align}
Since $\sum_{r=1}^{s} n_r=n$ and $q>2$, Hölder's inequality yields
\begin{equation}
    \sum_{r=1}^{s} n_r^{\frac1q}
    \le
    s^{1-\frac1q}\left(\sum_{r=1}^{s} n_r\right)^{\frac1q}
    =
    s^{1-\frac1q}n^{\frac1q}.
    \label{eq:holder nr}
\end{equation}
Combining \eqref{eq:aggregate step 1}--\eqref{eq:holder nr} and $s=m+1$ gives
\begin{equation}
    \left\|\tilde{\mf Z}_n^{(m)}-\mf G_n^{(m)}\right\|
    \le
    C\,\pi_n\,K^{\frac12-\frac1q}\,(m+1)^{1-\frac1q}\,n^{-\frac12+\frac1q}\sqrt{\log n}
    \lesssim
    \pi_n\,K^{\frac12-\frac1q}\,m^{1-\frac1q}\,n^{-\frac12+\frac1q}\sqrt{\log n}.
    \label{eq:2nd approach main strong approx}
\end{equation}
Plugging \eqref{eq:2nd approach main strong approx} and \eqref{eq:Gaussian vector comparison}, into \eqref{eq:2nd approach GA} and \eqref{eq:2nd GA decomp}, we have
\begin{align}
    \|\mf Z_n-\mf G_n\|
    &\le
    \|\mf Z_n-\tilde{\mf Z}_n^{(m)}\|+\|\tilde{\mf Z}_n^{(m)}-\mf G_n^{(m)}\|
    +\|\mf G_n-\mf G_n^{(m)}\|\notag\\
    &=O\left( n^{-\frac{1}{2}+\frac{3}{q}-\frac{1}{q^2}}\xi_{K,n}K^{\frac{1}{2}+\frac{2}{q^2}-\frac{2}{q}}\log^{2-\frac{4}{q}}n \right),
    \label{eq:2nd approach final}
\end{align}
where the last line is obtained by $\xi_{K,n}\Delta_{K,n}^{\alpha}\chi^{\alpha m}=o(T_{n,q})$ and $T_{n,q}=\sqrt{n}\pi_n^{1-q/2}\xi_{K,n}^{q/2}$ with setting $m\asymp \log n$ and $\pi_n=n^{2/q-1/q^2}\xi_{K,n} K^{2/q^2-1/q}\log^{-4/q}n$. Therefore, by the arbitrariness of $\epsilon_1>0$, the second approach yields
\begin{align}
    \mathcal{K}(\mf Z_n,\mf G_n)& \lesssim     K^{1/4}\epsilon_2
    +\frac{1}{\epsilon_2^2}\|\mf Z_n-\mf G_n\|^2 ,\notag\\
    &\lesssim K^{1/4}\epsilon_2+\frac{1}{\epsilon_2^2}\left( 
    n^{-\frac{1}{2}+\frac{3}{q}-\frac{1}{q^2}}\xi_{K,n}K^{ \frac{1}{2}+\frac{2}{q^2}-\frac{2}{q}}\log^{2-\frac{4}{q}}n \right)^2,\notag\\
    &=O\left(K^{\frac{1}{2}+\frac{4}{3q}(\frac{1}{q}-1)}\xi^{\frac{2}{3}}_{K,n}n^{-\frac{1}{3}+\frac{2}{q}(1-\frac{1}{3q})}\log^{\frac{4}{3}-\frac{8}{3q}} n\right).\label{eq:2nd approach rate}
\end{align}
Combining \eqref{eq:2nd approach rate}, \eqref{eq:1st approach rate}, and \eqref{eq:liu2025 lemma decomp}, we have
$$
\mathcal{K}(\mf Z_n,\mf G_n)=O\left(\min\left\{K^{\frac{q-1}{4(q+5)}}\xi_{K,n}^{\frac{3q}{q+5}}n^{\frac{4-q}{2(q+5)}}\log^{2-\frac{12}{q+5}}n,\quad K^{\frac{1}{2}+\frac{4}{3q}(\frac{1}{q}-1)}\xi^{\frac{2}{3}}_{K,n}n^{-\frac{1}{3}+\frac{2}{q}(1-\frac{1}{3q})}\log^{\frac{4}{3}-\frac{8}{3q}} n  \right\}\right),
$$
which yields \eqref{eq:GA bound Z}. 

Furthermore, by Lemma \ref{lem:basic_smoothing}, we have for any $\epsilon_0>0$,
\begin{align}
   \sup_{A\in\mathcal{A}} \left| \mr P(\sqrt{n}\hat{\beta}\in A| \mathcal{B}_n^\epsilon )-\mr P\left(\bar{Q}_n^{-1}\mf G_n\in A\right)\right| \lesssim     K^{1/4}\epsilon_0
    +\frac{1}{\epsilon_0^2}\mb E\left( |\sqrt{n}\hat{\beta}-\bar{Q}_n^{-1}\mf Z_n|^2 \mid \mathcal{B}_n^{\epsilon} \right)+\mathcal{K}(\mf Z_n,\mf G_n).\label{eq:biginning decomp GA}
\end{align}
Under the event $\mathcal{B}_n^\epsilon$, combining \eqref{eq:biginning decomp GA} and  \eqref{eq:GA bound Z},
\begin{align}
&\sup_{A\in\mathcal{A}} \left| \mr P(\sqrt{n}\hat{\beta}\in A| \mathcal{B}_n^\epsilon )-\mr P\left(\bar{Q}_n^{-1}\mf G_n\in A\right)\right|\notag\\
& \lesssim     K^{1/4}\epsilon_0
    +\frac{1}{\epsilon_0^2}\mb E\left( |\sqrt{n}\hat{\beta}-\bar{Q}_n^{-1}\mf Z_n|^2 \mid \mathcal{B}_n^{\epsilon} \right)+\mathcal{K}(\mf Z_n,\mf G_n),\notag\\
    &\lesssim K^{1/4}\epsilon_0+ \frac{1}{\epsilon_0^2} \left( K^{\frac{\eta+2}{4}}n^{-\frac{\eta}{4}}\log^{\frac{\eta+1}{2}} n \right)^2 \notag\\
    &+\min\left\{K^{\frac{q-1}{4(q+5)}}\xi_{K,n}^{\frac{3q}{q+5}}n^{\frac{4-q}{2(q+5)}}\log^{2-\frac{12}{q+5}}n,\quad K^{\frac{1}{2}+\frac{4}{3q}(\frac{1}{q}-1)}\xi^{\frac{2}{3}}_{K,n}n^{-\frac{1}{3}+\frac{2}{q}(1-\frac{1}{3q})}\log^{\frac{4}{3}-\frac{8}{3q}} n  \right\},
\end{align}
which yields \eqref{eq:GA bound} by the arbitrariness of $\epsilon_0>0$.
\end{proof}

\subsection{Proof of Corollary \ref{cor:SCR Gaussian approx}}
\begin{proof}
Recalling $\mf G_n\sim N_K(0,\Sigma_{\mf Z})$, we can rewrite
\begin{equation}
\frac{1}{2\sqrt{n}p(0)} \mf G_n^\top \mf b_\omega(\mf x)=\mf l(x)^\top \mf G,
\end{equation}
where $\mf G=(G_1,\dots,G_K)^{\top}$ and $\{G_j\}_{j=1}^K$ are i.i.d. standard normal random variables
\begin{equation}
    \mf l(\mf x)=:\frac{1}{2\sqrt{n}p(0)}\Sigma_{\mf Z}^{1/2}\mf b_\omega(\mf x),\notag
\end{equation}
Denote set
\begin{equation}
    A(C) = :\left\{\mf S\in\mb R^K: \left|\mf S^\top \mf b_\omega(\mf x) \right|\leq C|\mf l(\mf x)|,\forall \mf x\in \mathcal{D}_n \right\} .\label{eq:convex set A}
\end{equation}
Note that $A(C)$ is a convex set and $A(C)\in\mathcal{A}$ where $\mathcal{A}$ is the collection of all the convex sets in $\mb R^K$. Using the fact $|\mf l(\mf x)|=h(\mf x)/2\sqrt{n}p(0)$, we have 
\begin{align}
    &\sup_{C\geq 0}\left|\mr P\left(\frac{\sqrt{n}|\hat{Q}(\mf x)-Q_{0,n}(\mf x)|}{h(\mf x)}\leq C,\forall \mf x\in\mathcal{D}_n\right)-\mr P\left(\frac{\left|\bar Q_n^{-1} \mf G_n^\top \mf b_\omega(\mf x )\right|}{h(\mf x)} \leq C,\forall \mf x\in\mathcal{D}_n\right)\right|,\notag\\
    &=\sup_{C\geq 0} \left|\mr P\left(\left|\hat{\beta}^\top\mf b_\omega(\mf x)\right|\leq C|\mf l(\mf x)|,\forall \mf x\in\mathcal{D}_n\right)-\mr P\left(\left|\frac{1}{2\sqrt{n}p(0)} \mf G_n^\top \mf b_\omega(\mf x )\right| \leq C|\mf l(\mf x)|,\forall \mf x\in\mathcal{D}_n\right)\right|,\notag\\
    &=\sup_{C\geq 0}\left|\mr P\left(\hat{\beta}\in A(C)\right)-\mr P\left(\frac{1}{2\sqrt{n}p(0)}\mf G_n\in A(C) \right)\right|,\notag\\
    &\leq \sup_{A\in\mathcal{A}}\left| \mr P\left(\sqrt{n}\hat{\beta}\in A\right)-\mr P\left(\bar Q_n^{-1}\mf G_n\in A \right) \right|,\label{eq:SCR decomp}
\end{align}    
which immediately yields \eqref{eq:SCR approximation} by Theorem \ref{thm:Gaussian approximation}.
\end{proof}

\subsection{Proof of Proposition \ref{prop:critical value}}
\begin{proof}
Suppose $C_\alpha$ satisfies
$$
\mr P\left\{\sup_{\mf x\in\mathcal{X}}\frac{|\mf b(\mf x)^\top\bar Q_n^{-1}\mf G_n |}{\sigma_n(\mf x)}\leq C_{\alpha} \right\}=1-\alpha.
$$
Note that
$$
\mr P\left\{\sup_{\mf x\in\mathcal{X}}\frac{|\mf b(\mf x)^\top\bar Q_n^{-1}\mf G_n |}{h(\mf x)}\leq C_{\alpha,h} \right\}=1-\alpha=\mr P\left\{\sup_{\mf x\in\mathcal{X}}\frac{|\mf b(\mf x)^\top\bar Q_n^{-1}\mf G_n |}{\sigma_n(\mf x)}\leq C_{\alpha} \right\}.
$$
On the one hand,
\begin{align}
    \mr P\left\{\sup_{\mf x\in\mathcal{X}}\frac{|\mf b(\mf x)^\top\bar Q_n^{-1}\mf G_n |}{h(\mf x)}\leq C_{\alpha,h} \right\}&=\mr P\left\{\frac{|\mf b(\mf x)^\top\bar Q_n^{-1}\mf G_n |}{\sigma_n(\mf x)}\leq C_{\alpha,h} \frac{h(\mf x)}{\sigma_n(\mf x)}, \forall \mf x\in \mathcal{X}\right\},\notag\\
    &\leq \mr P\left\{\sup_{\mf x\in\mathcal{X}}\frac{|\mf b(\mf x)^\top\bar Q_n^{-1}\mf G_n |}{\sigma_n(\mf x)}\leq C_{\alpha,h} \sup_{ \mf x\in \mathcal{X}}\frac{h(\mf x)}{\sigma_n(\mf x)}\right\}.\notag
\end{align}
By the monotonicity of the distribution function, we have
\begin{equation}
    C_\alpha \leq C_{\alpha,h} \sup_{ \mf x\in \mathcal{X}}\frac{h(\mf x)}{\sigma_n(\mf x)}.\label{eq:critical value lower bound}
\end{equation}
On the other hand,
\begin{align}
    \mr P\left\{\sup_{\mf x\in\mathcal{X}}\frac{|\mf b(\mf x)^\top\bar Q_n^{-1}\mf G_n |}{\sigma_n(\mf x)}\leq C_{\alpha} \right\}&=\mr P\left\{\frac{|\mf b(\mf x)^\top\bar Q_n^{-1}\mf G_n |}{h(\mf x)}\leq C_{\alpha} \frac{\sigma_n(\mf x)}{h(\mf x)}, \forall \mf x\in \mathcal{X}\right\},\notag\\
    &\leq \mr P\left\{\sup_{\mf x\in\mathcal{X}}\frac{|\mf b(\mf x)^\top\bar Q_n^{-1}\mf G_n |}{\sigma_n(\mf x)}\leq C_{\alpha} \sup_{ \mf x\in \mathcal{X}}\frac{\sigma_n(\mf x)}{h(\mf x)}\right\}.\notag
\end{align}
By the monotonicity of the distribution function, we also have
\begin{equation}
    C_{\alpha,h} \leq C_{\alpha} \sup_{ \mf x\in \mathcal{X}}\frac{\sigma_n(\mf x)}{h(\mf x)}.\label{eq:critical value upper bound}
\end{equation}
Combining \eqref{eq:critical value lower bound} and \eqref{eq:critical value upper bound}, we have
$$
C_\alpha \inf_{ \mf x\in \mathcal{X}}\frac{\sigma_n(\mf x)}{h(\mf x)}\leq C_{\alpha,h} \leq C_{\alpha} \sup_{ \mf x\in \mathcal{X}}\frac{\sigma_n(\mf x)}{h(\mf x)}.
$$
Based on \eqref{eq:SCR decomp} and \eqref{eq:SCR approximation}, it suffices to show $C_\alpha\asymp \sqrt{\log n}$ such that
\begin{equation}
    \mr P\left( \sup_{\mf x\in \mathcal{X}} |\mf T(\mf x)^\top \mf G| \leq  C_\alpha \right)= 1-\alpha,\label{eq:critical value GA}
\end{equation}
where $\mf T(\mf x)=\mf l(\mf x)/|\mf l(\mf x)|=\Sigma_{\mf Z}^{1/2}\mf b(\mf x)/\sqrt{\mf b(\mf x)^\top \Sigma_{\mf Z}\mf b(\mf x)}$ and $\mf G$ is standard $K$-dimensional random vector. Denote manifold
\begin{equation}
    \mathcal{M}=:\{\mf T(\mf x): \mf x \in \mathcal{X}\},
\end{equation}
and let $\kappa_0$ be the volume of the manifold $\mathcal{M}$, and $\zeta_0$ be the area of the boundary $\partial\mathcal{M}$; Let $\kappa_2$ and $\zeta_1$ be measures of the curvature of $\mathcal{M}$ and $\partial\mathcal{M}$ respectively, and $m_0$ measures the rotation angles in the regions $\partial^2\mathcal{M}$.

By Proposition 3 in \cite{Sun&Loader}, for the $\alpha$ in \eqref{eq:critical value GA}, we have 
    \begin{align}
\alpha = & \kappa_0 \frac{\Gamma((d+1) / 2)}{\pi^{(d+1) / 2}}\mr P\left(\chi_{d+1}^2>C_\alpha^2\right)+\frac{\zeta_0}{2} \frac{\Gamma(d / 2)}{\pi^{d / 2}}\mr P\left(\chi_d^2>C_\alpha^2\right) \notag\\
& +\frac{\kappa_2+\zeta_1+m_0}{2 \pi} \frac{\Gamma((d-1) / 2)}{\pi^{(d-1) / 2}}\mr P\left(\chi_{d-1}^2>C_\alpha^2\right) \notag\\
& +O\left(C_\alpha^{d-4} \exp \left(-\frac{C_\alpha^2}{2}\right)\right),\label{eq:alpha compute}
    \end{align}
where $\chi_d^2$ is the chi-square random variable with the degree of freedom $d$.

To bound the positive geometric quantities $\kappa_0,\zeta_0,\kappa_2,\zeta_1,m_0$ appearing in \eqref{eq:alpha compute}, we give the following formulations for numerical computation. For simplicity, we suppose $\mathcal{X}=[0,1]^d$ and the boundary $\partial \mathcal{X}$ consists of those points $\mf x$ with exactly one component 0 or 1. The regions where two faces of $\partial \mathcal{X}$ meet are denoted $\partial^2\mathcal{X}$.
Denote matrix $\mf A=(\mf T_1(\mf x),\dots, \mf T_d(\mf x))$ where $\mf T_j(\mf x)=\partial \mf T(\mf x)/\partial x_j$ with $\mf x=(x_1,\dots, x_d)^\top$ and indicator vector $\mf e_j=(e_{j,1},\dots,e_{j,d})^\top$ such that $e_{j,j}=1$ and $e_{j,k}=0$ if $k\neq j$. By (3.2) and (3.3) in \cite{Sun&Loader}, the $\kappa_0$ and $\kappa_2$ can be computed as
\begin{align}
    \kappa_0&=\int_{\mathcal{X}} \operatorname{det}^{1/2}(\mf A^\top \mf A)  \mr d\mf x, \label{eq:kappa0}\\
    \kappa_2&= \int_{\mathcal{X}} \left\{\frac{S(\mf x)}{2}-\frac{d(d-1)}{2}\right\}\operatorname{det}^{1/2}(\mf A^\top \mf A)  \mr d\mf x,\label{eq:kappa2}
\end{align}
where
\begin{align*}
&S(\mf x)=2\sum_{j=2}^d\sum_{k=1}^{j-1}[\alpha_{j,j}(\mf x)^\top \alpha_{k,k}(\mf x)-\alpha_{j,k}(\mf x)^\top 
\alpha_{k,j}(\mf x) ],\\
&\alpha_{k, j}(\mf x)^\top=\mf e_k^\top\left(\mf A^\top \mf A\right)^{-1} \frac{\partial \mf A^\top}{\partial x_j}\left(I-\mf A\left(\mf A^\top \mf A\right)^{-1} \mf A^\top\right).
\end{align*}

For $\zeta_1,\zeta_0$ measuring the boundary $\partial\mathcal{M}$, by (3.4) in \cite{Sun&Loader} and  the second and third equation on Page 1335 of \cite{Sun&Loader},
\begin{align}
    \zeta_0&=\int_{\partial \mathcal{X}}\operatorname{det}^{1/2}(\mf A_*^\top\mf A_*)\mr d\mf x,\label{eq:zeta0}\\
    \zeta_1&=\int_{\partial\mathcal{X}}\zeta_1(\mf x)\operatorname{det}^{1/2}(\mf A_*^\top\mf A_*)\mr d\mf x,\label{eq:zeta1}
\end{align}
where indicator vector $\mf e_j^*=(e_{j,1},\dots,e_{j,d-1})^\top$ such that $e_{j,j}=1$ and $e_{j,k}=0$ if $k\neq j$.
\begin{align}
    \zeta_1(\mf x)&=-\sum_{j=1}^{d-1}(\mf e_j^{*})^\top (\mf A_*^\top\mf A_*)^{-1}\frac{\partial \mf A_*^\top}{\partial x_j}\mf U_d(\mf x),\notag\\
    \mf U_d(\mf x)&\asymp (I-\mf A_*(\mf A_*^\top\mf A_*)^{-1}\mf A_*^\top)\mf T_d(\mf x),\notag\\
    \mf A_*&=(\mf T_1(\mf x),\dots,\mf T_{d-1}(\mf x)),
\end{align}
on the face $\mf x\in\partial \mathcal{X}$ at which $x_d$ is maximized, with similar definitions for $\zeta_1(\mf x),\mf U_j(\mf x), \mf A_*$ on other faces where $x_j$ is maximized. Moreover, by the fifth and seventh equation on Page 1335 of \cite{Sun&Loader}, 
\begin{equation}
    m_0=\int_{\partial^2\mathcal{X}}m_0(\mf x)\operatorname{det}^{1/2}(\mf A_{**}^\top\mf A_{**})\mr d\mf x,\label{eq:m0}
\end{equation}
where
\begin{align}
    m_0(\mf x)&=\cos^{-1}\left(  \mf U_{d-1}(\mf x)^\top \mf U_d(\mf x) \right),\notag\\
    \mf A_{**}&=(\mf T_1(\mf x),\dots,\mf T_{d-2}(\mf x)),
\end{align}
at a point $\mf x$ at the meeting of the faces $x_{d-1}=1$ and $x_{d}=1$, with similar 
definitions for $m_0(\mf x)$ on other meetings of the two faces of $\partial\mathcal{X}$.

Denote $\widetilde{\mf b}(\mf x)=\Sigma_{\mf Z}^{1/2}\mf b(\mf x)$ and $\partial_{x_j}\widetilde{\mf b}(\mf x)=\partial \widetilde{\mf b}(\mf x)/\partial x_j$, then basic calculation yields
\begin{equation}
        0\leq |\mf T_j(\mf x)|^2= \frac{|\partial_{x_j}\widetilde{\mf b}(\mf x)|^2}{|\widetilde{\mf b}(\mf x)|^2}-\frac{\left|\left(\partial_{x_j}\widetilde{\mf b}(\mf x)\right)^\top \widetilde{\mf b}(\mf x)\right|^2}{|\widetilde{\mf b}(\mf x)|^4}\leq \frac{|\partial_{x_j}\widetilde{\mf b}(\mf x)|^2}{|\widetilde{\mf b}(\mf x)|^2}.\notag
\end{equation}
Using the fact $\operatorname{det}^{1/d}(\mf A^\top \mf A)\leq \operatorname{tr}(\mf A^\top \mf A)/d$ and $\operatorname{tr}(\mf A^\top \mf A)=\sum_{j=1}^d |\mf T_j(\mf x)|^2$, by \eqref{eq:kappa0}, \eqref{eq:zeta0}, \eqref{eq:m0}. Note that $|\tilde{\mf b}(\mf x)|\geq \sqrt{\lambda_{min}(\Sigma_{\mf Z})} |\mf b(\mf x)|\gtrsim n^{c_0}$ by condition \eqref{eq:critical value condition} and Assumption \hyperref[(A1)]{(A1)}, there exists constant $c_1>0$ such that
\begin{align}
    \kappa_0&\leq \int_{\mathcal{X}} \left( \frac{1}{d}\operatorname{tr}(\mf A^\top \mf A)\right)^{d/2}\mr d\mf x\lesssim \int_{\mathcal{X}} \left( \sum_{j=1}^d |\mf T_j(\mf x)|^2\right)^{d/2}\mr d\mf x\lesssim  \int_{\mathcal{X}} \frac{|\nabla\widetilde{\mf b}(\mf x)|^d}{|\widetilde{\mf b}(\mf x)|^d}\mr d\mf x=O(n^{c_1}),\notag
\end{align}
and similarly, $\zeta_0=O\left( n^{c_1} \right),m_0=O(n^{c_1})$ since $\operatorname{tr}(\mf A_{**}^\top \mf A_{**})\leq \operatorname{tr}(\mf A_*^\top \mf A_*)\leq \operatorname{tr}(\mf A^\top \mf A)$.

For $\kappa_2$, note that matrix $\mf A(\mf A^\top \mf A)^{-1}\mf A^\top$ is idempotent, thus 
$$
|\alpha_{k,j}(\mf x)|\leq \left| \frac{\partial \mf A}{\partial x_j}(\mf A^\top \mf A)^{-1}\mf e_k \right|\leq \frac{1}{\lambda_{min}(\mf A^\top \mf A)}  \left|\frac{\partial \mf A}{\partial x_j} \right|
$$
By condition \eqref{eq:critical value condition}, for some constant $c_2>0$,
$$
\kappa_2\lesssim n^{c_1}\int_{\mathcal{X}}\frac{1}{\lambda_{min}(\mf A^\top \mf A)}  \left|\frac{\partial \mf A}{\partial x_j} \right|  \mr d\mf x+O(n^{c_1})=O(n^{c_1+c_2}).
$$
thus $\kappa_2=O(n^{c_3})$ and similarly, we have $\zeta_1=O(n^{c_3})$ for some constant $c_3>0$.

To sum up, there exists constant $\bar c>0$ such that
\begin{equation}
    \max\{\kappa_0,\zeta_0,\kappa_2,\zeta_1,m_0 \}=O(n^{\bar c}). \label{eq:bound geom constants}
\end{equation}
By Theorem 6 in \cite{zhang2020non}, for constants $\tilde{c},\tilde{C},\bar{C}>0$, the tail bounds of $\chi^2_{d}$ 
\begin{equation}
    \tilde{c} \exp \left(-\tilde{C} x \right) \leq \mr{P}(\chi_d^2-d \geq x) \leq \exp \left(-\bar{C} x \right), \forall x>d,\label{eq:tail bound of chi}
\end{equation}
Combining \eqref{eq:bound geom constants}, \eqref{eq:alpha compute}, \eqref{eq:tail bound of chi} and the fact $\alpha$ is a fixed value, we have $n^{\bar c}\exp(-\tilde{C} C_\alpha^2 )\gtrsim 1$, which implies that $C_\alpha=O(\sqrt{\log n})$. On the other hand, by condition \eqref{eq:critical value condition}, there exists constant $\underline{c}>0$, such that 
$$
\kappa_0\geq \int_\mathcal{X} \lambda^{d/2}_{min}(\mf A^\top \mf A)\mr d\mf x\gtrsim n^{\underline{c}}.
$$
Combining \eqref{eq:alpha compute}, \eqref{eq:tail bound of chi} and the fact $\alpha$ is a fixed value, we have
$n^{\underline{c}}\exp(-\tilde{C}C_\alpha^2)\lesssim 1$, which shows $C_\alpha\gtrsim \sqrt{\log n}$. Combining \eqref{eq:critical value GA}, \eqref{eq:SCR decomp}, and Theorem \ref{thm:Gaussian approximation}, we have 
\begin{equation*}
    \lim_{n\rightarrow\infty}\mr P\left( \hat Q(\mf x)- \frac{C_{\alpha,h} }{\sqrt{n}}h(\mf x) \leq Q_{0,n}(\mf x)\leq \hat Q(\mf x)+ \frac{C_{\alpha,h} }{\sqrt{n}}h(\mf x),\forall \mf x\in \mathcal{D}_n\right)= 1-\alpha,
\end{equation*}
with
$$
\inf_{\mf x\in \mathcal{X}}\frac{\sigma_n(\mf x)}{h(\mf x)}\sqrt{\log n} \lesssim C_{\alpha,h} \lesssim\sup_{\mf x\in \mathcal{X}}\frac{\sigma_n(\mf x)}{h(\mf x)}\sqrt{\log n}.
$$
\end{proof} 


\subsection{Proof of Proposition \ref{prop:undersmoothing}}

\begin{proof}
Let
\[
T_{n,h}^{s}
=
\sup_{\mf x\in\mathcal D_n}
\frac{\sqrt n|\widehat Q(\mf x)-Q_{0,n}(\mf x)|}{h(\mf x)},
\qquad
T_{n,h}^{0}
=
\sup_{\mf x\in\mathcal D_n}
\frac{\sqrt n|\widehat Q(\mf x)-Q_0(\mf x)|}{h(\mf x)}.
\]
By Assumption \hyperref[(A5)]{(A5)}, there exists a constant \(C_A>0\) such that, for all
sufficiently large \(n\),
\[
\left|T_{n,h}^{0}-T_{n,h}^{s}\right|
\le
C_A\sqrt n K^{-\varsigma}
\sup_{\mf x\in\mathcal D_n}h(\mf x)^{-1}.
\]
Hence,
\begin{equation}
\{T_{n,h}^{s}\le C_{\alpha,h}-\delta_{n,h}\}
\subset
\{T_{n,h}^{0}\le C_{\alpha,h}\}
\subset
\{T_{n,h}^{s}\le C_{\alpha,h}+\delta_{n,h}\}.
\label{eq:sandwich-Q0-Q0n}
\end{equation}
Next define the Gaussian supremum
\[
Z_{n,h}
=
\sup_{\mf x\in\mathcal D_n}
\left|
\frac{
\mf b_\omega(\mf x)^\top \bar Q_n^{-1}\mf G_n}{h(\mf x)}
\right|,
\]
where \(\mf G_n\sim N(0,\Sigma_Z)\). Let
\[
\sigma_{h,n}(\mf x)
=
\frac{\sigma_n(\mf x)}{h(\mf x)},
\qquad
\underline\sigma_{h,n}
=
\inf_{\mf x\in\mathcal D_n}
\sigma_{h,n}(\mf x).
\]
Then $\operatorname{Var}(\mf b_\omega(\mf x)^\top \bar Q_n^{-1}\mf G_n/h(\mf x))\geq\underline\sigma_{h,n}$. By Theorem \ref{thm:Gaussian approximation} and the convexity of the sets
\[
A(t)
=
\left\{
u\in\mathbb R^K:
|u^\top \mf b_\omega(\mf x)|\le t h(\mf x),
\ \forall \mf x\in\mathcal D_n
\right\},
\]
we have the uniform distributional approximation
\begin{equation}
\sup_{t\in\mathbb R}
\left|
\mr P(T_{n,h}^{s}\le t)-\mr P(Z_{n,h}\le t)
\right|
=o(1).
\label{eq:uniform-dist-approx}    
\end{equation}
Note that the proof of Proposition \ref{prop:critical value} gives a polynomial
upper bound for the geometric constants in the volume-of-tube formula and therefore a tail bound of the form
\[
\mr P\left(
\sup_{\mf x\in\mathcal D_n}
\left|
\frac{
\mf b_\omega(\mf x)^\top \bar Q_n^{-1}\mf G_n
}{
\sigma_n(\mf x)
}
\right|>u
\right)
\le
C n^c e^{-c'u^2}
\]
for all sufficiently large \(u\). As a result, we have
\begin{equation}
\mb E\left[
\sup_{\mf x\in\mathcal D_n}
\left|
\frac{
\mf b_\omega(\mf x)^\top \bar Q_n^{-1}\mf G_n
}{
\sigma_n(\mf x)
}
\right|
\right]
\lesssim \sqrt{\log n}.
\label{eq:std-gaussian-complexity}
\end{equation}
Combining Lemma \ref{lem:gaussian-anti-concentration} with
\eqref{eq:std-gaussian-complexity}, we obtain
\begin{equation}
\sup_{t\in\mathbb R}
\mr P\left(
|Z_{n,h}-t|\le u
\right)
\lesssim
u\,
\frac{\sqrt{\log n}}{\underline\sigma_{h,n}},
\qquad u>0.
\label{eq:anti-concentration-Znh}    
\end{equation}
Using \eqref{eq:uniform-dist-approx} and \eqref{eq:anti-concentration-Znh}, for $\delta_{n,h}=\sqrt n K^{-\varsigma}
\sup_{\mf x\in\mathcal D_n}h(\mf x)^{-1}$
\[
\begin{aligned}
&\mr P(T_{n,h}^{s}\le C_{\alpha,h}+\delta_{n,h})
-
\mr P(T_{n,h}^{s}\le C_{\alpha,h}-\delta_{n,h})  \\
&\le
\mr P(Z_{n,h}\le C_{\alpha,h}+\delta_{n,h})
-
\mr P(Z_{n,h}\le C_{\alpha,h}-\delta_{n,h})
+
2\sup_{t\in\mathbb R}
\left|
\mr P(T_{n,h}^{s}\le t)-\mr P(Z_{n,h}\le t)
\right|  \\
&\le
\sup_{t\in\mathbb R}
P(|Z_{n,h}-t|\le \delta_{n,h})
+
2\sup_{t\in\mathbb R}
\left|
\mr P(T_{n,h}^{s}\le t)-\mr P(Z_{n,h}\le t)
\right|  \\
&\lesssim
\delta_{n,h}
\frac{\sqrt{\log n}}{\underline\sigma_{h,n}}
+
o(1).
\end{aligned}
\]
By condition \eqref{eq:undersmoothing condition},
\begin{equation}
\delta_{n,h}
\frac{\sqrt{\log n}}{\underline\sigma_{h,n}}=K^{-\varsigma}\sqrt{n\log n}
\frac{
\sup_{\mf x\in\mathcal D_n}h(\mf x)^{-1}
}{
\inf_{\mf x\in\mathcal D_n}\sigma_n(\mf x)/h(\mf x)
}
=o(1).
\label{eq:strong-undersmoothing}
\end{equation}
Hence,
\begin{equation}
\mr P(T_{n,h}^{s}\le C_{\alpha,h}+\delta_{n,h})
-
\mr P(T_{n,h}^{s}\le C_{\alpha,h}-\delta_{n,h})
=o(1).
\label{eq:small-shift}    
\end{equation}
Since \(C_{\alpha,h}\) satisfies \eqref{eq:SCR on Q}, equivalently,
$\lim_{n\to\infty}\mr P(T_{n,h}^{s}\le C_{\alpha,h})= 1-\alpha,$
\eqref{eq:small-shift} and \eqref{eq:sandwich-Q0-Q0n} yield
\[
\lim_{n\to\infty}\mr P(T_{n,h}^{0}\le C_{\alpha,h})= 1-\alpha.
\]
This is exactly
\[
\lim_{n\to\infty}
P\left(
\widehat Q(\mf x)-\frac{C_{\alpha,h}}{\sqrt n}h(\mf x)
\le Q_0(\mf x)\le
\widehat Q(\mf x)+\frac{C_{\alpha,h}}{\sqrt n}h(\mf x),
\ \forall \mf x\in\mathcal D_n
\right)
=
1-\alpha.
\]
The proof is complete.
\end{proof}

\subsection{Bootstrap validation}
\label{sec:bootstrap validation}

\begin{proof}[Proof of Theorem \ref{thm:bootstrap}]
    Recall the Bahadur representation $\mf W_n=: \bar Q_n^{-1}\mf Z_n$ for $\sqrt{n}\hat{\beta}=\sqrt{n}(\hat{\theta}-\theta_{0,n})$, we denote $\hat{\beta}(j)=:\hat{\theta}(j)-\theta_{0,n}$ and introduce Lemma \ref{lem:uniform Bahadur} to show the Bahadur representation for $\sqrt{M}\hat{\beta}(j)$ can be denoted as
 \begin{equation}
     \mf W(j)=:\frac{1}{\sqrt{M}}\sum_{i=j}^{j+M-1} \bar{Q}_n^{-1}\psi(\varepsilon_i)\mf b_\omega(\mf X_i).
 \end{equation}
 By Lemma \ref{lem:Fang2024} and similar arguments in \eqref{eq:decomp of Kolmogorov dist}, and \eqref{eq:Gaussian vector comparison}, on condition $\Upsilon_n$, we have
\begin{align}
    &\sup_{A\in\mathcal{A}}\left|\mr P\left(\Phi_M\in A\Big|\mathcal{H}_n,\mathcal{G}_n\right)-\mr P\left(\bar Q_n^{-1}\mf G_n\in A\right)\right|\notag\\
    &\lesssim \epsilon_3 K^{1/4}+\sup_{A \in \mathcal{A}}\left|\mb E\left[ h_{A, \epsilon_3}\left(\Phi_M\right)\Big | \Upsilon_n\right]-\mb E\left[ h_{A, \epsilon_3}\left(\tilde{\Phi}_M \right)\Big | \Upsilon_n\right] \right|\notag\\
    &+\sup_{A \in \mathcal{A}}\left|\mb E\left[ h_{A, \epsilon_3}\left(\tilde{\Phi}_M \right)\Big | \Upsilon_n\right]-\mb E h_{A, \epsilon_3}\left(\bar Q_n^{-1}\mf G_n \right)\right|\notag\\
    &\lesssim \epsilon_3 K^{1/4}+ \frac{1}{\epsilon_3} \mb E\left(|\Phi_M-\tilde{\Phi}_M|\Big| \Upsilon_n\right)+\frac{1}{\epsilon_3}\left\| \tilde{\boldsymbol{\Sigma}}_M-\boldsymbol{\Sigma}_n \right\|_F.\label{eq:decomp K-dist bootstrap}
\end{align}
where
\begin{align}
       \tilde{\Phi}_M&=:\frac{1}{\sqrt{n-M+1}}\sum_{j=1}^{n-M+1} \mf W(j)V_j,\label{eq:def tilde_Phi}\\
       \boldsymbol{\Sigma}_n&=:\operatorname{Cov}\left(\bar{Q}_n^{-1}\mf G_n\right)=\mb E\left\{\left(\bar Q_n^{-1}\mf G_n\right)\left(\bar Q_n^{-1}\mf G_n\right)^\top\right\},\notag\\
           \tilde{\boldsymbol{\Sigma}}_M&=: \operatorname{Cov}(\tilde{\Phi}_M|\Upsilon_n)=\frac{1}{n-M+1}\sum_{j=1}^{n-M+1}\mf W(j)\mf W(j)^\top,\label{eq:def tilde_Sigma}
\end{align}
and $V_j$ is the same standard Gaussian random variable in \eqref{eq:bootstrap sample}.

By Lemma \ref{lem: cov comparison-element} and Lemma \ref{lem: comparison between Pcov and cov}, we have
\begin{align}
    \left\| \tilde{\boldsymbol{\Sigma}}_M-\boldsymbol{\Sigma}_n \right\|_F&\leq \left\| \tilde{\boldsymbol{\Sigma}}_M-\mb E\tilde{\boldsymbol{\Sigma}}_M\right\|_F+\left\| \mb E\tilde{\boldsymbol{\Sigma}}_M-\boldsymbol{\Sigma}_n \right\|_F\notag\\
    &=O_p\left(\xi_{K,n}^2\Delta_{K,n}^{\alpha} \left(\sqrt{\frac{M}{n}}+\frac{1}{M}\right) \right)\label{eq:rate of comparison between tile_Sigma}
\end{align}
Note that $V_j,j=1,\dots, n-M+1$ in \eqref{eq:def tilde_Phi} are i.i.d. standard normal random variable, condition on data, by Proposition \ref{prop:quadra approx} and Lemma \ref{lem:uniform Bahadur}, 
\begin{align}
    \mb E\left\{\left| \Phi_M-\tilde{\Phi}_M \right|\Big |\Upsilon_n\right\}
    &\leq  \sqrt{\frac{M}{n-M+1}\sum_{j=1}^{n-M+1}\left|\hat\beta(j)-\frac{\sum_{i=j}^{j+M-1} \bar Q_n^{-1}\mf z_{i} }{2M}\right|^2} +  \sqrt{M}|\hat{\beta}|,\notag\\
    &=O_p\left(\Delta_{K,n}^{\alpha}K^{\frac{\eta+2}{4}}M^{-\frac{\eta}{4}}\log M+ (\xi_{K,n}\vee \sqrt{K})M^{\frac{1}{2}}n^{-\frac{1}{2}}\log n \right).\label{eq:rate of phi-tile_phi}
\end{align}
Combining \eqref{eq:decomp K-dist bootstrap}, \eqref{eq:rate of comparison between tile_Sigma}, and \eqref{eq:rate of phi-tile_phi}, with appropriate $\epsilon_3$, we have for any given $\alpha\in(0,1)$,
\begin{align}
&\sup_{A\in\mathcal{A}}\left|\mr P\left(\Phi_M\in A\Big|\mathcal{H}_n,\mathcal{G}_n\right)-\mr P\left(\bar Q_n^{-1}\mf G_n\in A\right)\right|\notag\\
    &=O_p\left( \sqrt{ K^{\frac{1}{4}}\left[\Delta_{K,n}^{\alpha}\xi_{K,n}^2 \left(\sqrt{\frac{M}{n}}+\frac{1}{M}\right) + \Delta_{K,n}^{\alpha} K^{\frac{\eta+2}{4}} M^{-\frac{\eta}{4}} \log^2 n + (\xi_{K,n} \vee \sqrt{K})\sqrt{\frac{M}{n}}\log n \right]}  \right)\notag\\
    &= o_p\left(\log n \sqrt{K^{\frac{1}{4}} \left(\Delta_{K,n}^{\alpha}\xi_{K,n}^2\left(\sqrt{\frac{M}{n}}+\frac{1}{M}\right) + \Delta_{K,n}^{\alpha} K^{\frac{\eta+2}{4}} M^{-\frac{\eta}{4}}+ (\xi_{K,n} \vee \sqrt{K})\sqrt{\frac{M}{n}}\right)} \right).\label{eq:final rate of K-dist boot}
\end{align}

Note that the set
$$
A(C)=: \{\mf S\in \mb R^K:|\mf S^\top \mf b_\omega(\mf x)|\leq Ch(\mf x),\forall \mf x\in\mathcal{D}_n \}
$$
is a convex set and $A(C)\in \mathcal{A}$ for any $C>0$ where $\mathcal{A}$ is the collection of all convex sets in $\mb R^K$.
By the bootstrap Algorithm \ref{alg:self-convolved bootstrap}, there exists $\hat{C}_{\alpha,h}\in >0$ such that, as bootstrap sample $B\rightarrow\infty$,
\begin{equation}
    \mr P\left(|\Phi_M^\top\mf b_\omega(\mf x)|\leq \hat{C}_{\alpha, h}h(\mf x),\forall \mf x\in\mathcal{D}_n\Big| \Upsilon_n\right)\rightarrow_p 1-\alpha.
\end{equation}
If \eqref{eq:final rate of K-dist boot} is of rate $o_p(1)$, we then have
\begin{equation}
   \mr P\left(\left|\left(\bar Q_n^{-1}\mf G_n\right)^\top \mf b_\omega(\mf x)\right|\leq \hat{C}_{\alpha,h}h(\mf x),\forall \mf x\in\mathcal{D}_n\Big| \Upsilon_n\right)\rightarrow_p 1-\alpha,
\end{equation}
as $n\to \infty$ and $B\to \infty$. By Theorem \ref{thm:Gaussian approximation} and the boundedness of the conditional probability, taking expectations yields \eqref{eq:valid boot strap}.
\end{proof}

\begin{lemma}
\label{lem:max beta-j}
Under Assumptions \hyperref[(A1)]{(A1)}-\hyperref[(A4)]{(A4)} and \hyperref[(B1)]{(B1)}-\hyperref[(B3)]{(B3)}, suppose $M\rightarrow\infty$ and $M= O(n^{\kappa})$ for some $\kappa\in(0,1)$.
Define $\bar{r}_M = (\xi_{K,n} \vee \sqrt{K})M^{-1/2}\log^{1/2}n$ and $r_{\max} = \bar{r}_M \sqrt{\log n}$. If $\xi_{K,n}\bar{r}_M=o(1)$, then 
$$
\mr P\left(\max_{1\leq j\leq n-M+1}|\hat{\beta}(j)|>r_{\max}\right)=o(1).
$$
\end{lemma}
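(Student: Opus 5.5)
The plan is to repeat the consistency argument of Proposition \ref{prop:quadra approx}(ii) for every block and then take a union bound over the $n-M+1$ blocks. Because the blocks number polynomially many in $n$, expectation bounds are too weak for this. We therefore need, for each fixed block, an exponential-type tail bound for the local empirical process. The extra factor $\sqrt{\log n}$ in $r_{\max}$ relative to $\bar r_M$ supplies the room to absorb that union bound.

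\emph{Step 1: reduction to a sphere.} Let $\mb P_M^{(j)}(\beta)=M^{-1}\sum_{i=j}^{j+M-1}L_i(\beta)$. It is convex in $\beta$, it satisfies $\mb P_M^{(j)}(0)=0$, and $\hat\beta(j)$ minimizes it. Hence, exactly as in \eqref{eq:consistency op}, we have $|\hat\beta(j)|>r_{\max}$ only if $\inf_{|\beta|=r_{\max}}\mb P_M^{(j)}(\beta)\le 0$.

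By stationarity, $\mb E\,\mb P_M^{(j)}(\beta)=\mb P(\beta)$. Since $\xi_{K,n}r_{\max}=o(1)$ (we may need to strengthen the hypothesis slightly by a $\sqrt{\log n}$ factor), Proposition \ref{prop:quadra approx}(i) gives $\inf_{|\beta|=r_{\max}}\mb P(\beta)\ge c\,r_{\max}^2$. It therefore suffices to show
\[
\mr P\Big(\max_{j}\sup_{|\beta|\le r_{\max}}|\mb P_M^{(j)}(\beta)-\mb P(\beta)|\ge c\,r_{\max}^2\Big)=o(1).
\]
By stationarity and the union bound, this reduces to showing that the single-block probability is $o(n^{-1})$.

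\emph{Step 2: a concentration version of Theorem \ref{thm:maximal inequality} on a block of length $M$.}
\begin{itemize}
\item Apply the same $m$-dependent decomposition with $m\asymp\log n$. By \eqref{eq:loss error for M-decomp}, the approximation error is $O(\chi^m+r\xi_{K,n}\Delta_{K,n}^\alpha\chi^{\alpha m})$ in $L^1$. Choosing $m=C\log n$ with $C$ large makes it $O(n^{-2})$, and Markov's inequality plus a union bound over blocks then handles this term.
\item Within each of the $m$ residue classes the summands are i.i.d. The expectation of their supremum is bounded by $r(\xi_{K,n}\vee\sqrt K)\sqrt{m/M}$, by the proof of Theorem \ref{thm:maximal inequality}.
\item Control the deviation around that expectation with Talagrand's (Bousquet's) inequality. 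Using \eqref{eq:multiplier reduction}, the envelope is of order $r\xi_{K,n}$ when $\psi$ is bounded, with a truncation otherwise, and the variance proxy is of order $r^2$.
\item Taking deviation level $x=2\log n$, the supremum is at most $C r\big[(\xi_{K,n}\vee\sqrt K)\sqrt{\log n/M}+\sqrt{\log n/M}+\xi_{K,n}\log n/M\big]$, with probability at least $1-n^{-2}$ for each residue class.
\item Summing over $m$ classes costs a factor $m$ in probability and leaves the rate unchanged after averaging.
\end{itemize}
With $r=r_{\max}=\bar r_M\sqrt{\log n}$, the right-hand side is $O(r_{\max}\bar r_M)=o(r_{\max}^2)$, and this closes Step 1.

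\emph{Main obstacle.} The hard part is the exponential tail in Step 2 for unbounded $\psi$ (expectile and $L_q$ losses) under dependence. For these, I would first truncate $\psi(\bar\varepsilon_i)$ at level $n^{c}$, using the moment bound (B2) and Markov's inequality on the truncated part. I would then apply Bousquet's inequality to the bounded remainder, at the price of extra logarithmic factors. These factors are absorbed because the gap between $r_{\max}^2$ and $r_{\max}\bar r_M$ is a full $\sqrt{\log n}$. If that gap proves insufficient, I would enlarge the $\log$ power in $r_{\max}$ accordingly.
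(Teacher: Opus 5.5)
Your proposal is correct for bounded $\psi$ and follows essentially the paper's own route. Convexity reduces the event to $\max_j\sup_{|\beta|\le r_{\max}}|\mathbb{P}_{n,j}(\beta)-\mathbb{P}(\beta)|\gtrsim r_{\max}^2$, the $m$-dependent approximation error with $m\asymp\log n$ is removed by Markov's inequality, and Bousquet's inequality on the $m$ interleaved i.i.d.\ subsequences of each block, plus a union bound over the $(n-M+1)m$ pieces, finishes the argument. Your per-class bound drops factors $\sqrt{m}$ and $m$ in the variance and envelope terms, but the conclusion $o(r_{\max}^2)$ survives because $K\to\infty$ and $M\gg\log^2 n$.

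The two caveats you flag are implicit in the paper as well: it simply takes the envelope $U\asymp\xi_{K,n}r_{\max}$, which presumes bounded $\psi$, and it effectively uses $\xi_{K,n}r_{\max}=o(1)$. Your truncation fix for unbounded $\psi$, however, cannot work at merely logarithmic cost under the fourth moments available from (B2). A single score $\psi(\bar\varepsilon_i)$ of order $n^{1/4-\epsilon}$, which four moments allow, moves one block estimator by about $\sqrt{K}\,n^{1/4-\epsilon}/M$. When $\xi_{K,n}\asymp\sqrt K$, this exceeds $r_{\max}\asymp\sqrt{K}\log n/\sqrt{M}$ once $M\le n^{1/2-3\epsilon}$. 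So the unbounded case genuinely requires stronger moment or block-length conditions, not just larger log powers in $r_{\max}$.
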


\begin{proof}
Let $T = n-M+1$. For any given block $j\in\{1,\dots,n-M+1\}$, define the block empirical criterion as
$$
\mathbb P_{n,j}(\beta) =: \frac{1}{M}\sum_{i=j}^{j+M-1}L_i(\beta).
$$
By similar arguments in \eqref{eq:inf lower bound} and the convexity of the empirical criterion $\mathbb{P}_{n,j}(\beta)$, the maximal deviation is governed by the boundary of the local neighborhood. We have for some constant $c_0>0$,
$$ \left\{ \max_{1\le j\le T} |\hat{\beta}(j)| > r_{\max} \right\} \subseteq \left\{ \max_{1\le j\le T} \sup_{|\beta| \le r_{\max}} \left| \mathbb{P}_{n,j}(\beta) - \mathbb{P}(\beta) \right| \ge c_0 r_{\max}^2 \right\}, $$
where $\mathbb{P}_{n,j}(\beta) - \mathbb{P}(\beta) = \frac{1}{M}\sum_{i=j}^{j+M-1} (L_i(\beta) - \mathbb{E}L_i(\beta))$.

Following the $m$-decomposition arguments in \eqref{eq:loss error for M-decomp} and Theorem \ref{thm:maximal inequality}, recall $L_i^{(m)}(\beta) = \mathbb{E}[L_i(\beta)|\Upsilon_i^{(m)}]$. 
We split the block $\mathcal{I}_{j} = \{j,\dots, j+M-1\}$ into $m$ independent interleaving subsets $\mathcal{I}_{j,k} = \{ i \in \mathcal{I}_{j} \mid i \equiv k \pmod m \}$. The effective sample size for each subset is $N_k \asymp M/m$. Define the supremum of the empirical process over each subset as
$$ 
\mathcal{S}_{j,k}=:\sup_{|\beta|\le r_{\max}} \left| \sum_{i \in \mathcal{I}_{j,k}} \big( L_i^{(m)}(\beta) - \mathbb{E}L_i(\beta) \big) \right|,
$$ 
then
$$ \sum_{j=1}^T \sum_{k=1}^m \mr P\left( \sup_{|\beta|\le r_{\max}} \left| \sum_{i \in \mathcal{I}_{j,k}} \big( L_i^{(m)}(\beta) - \mathbb{E}L_i(\beta) \big) \right| \ge \frac{M}{m} \frac{c_0}{8} r_{\max}^2 \right) = \sum_{j=1}^T \sum_{k=1}^m \mr P\Big( \mathcal{S}_{j,k} \ge \frac{M}{m} \frac{c_0}{8} r_{\max}^2 \Big). $$
Since the indices in each $\mathcal{I}_{j,k}$ are separated by exactly $m$, and $L_i^{(m)}(\beta)$ depends only on the recent $m$ innovations $\Upsilon_i^{(m)}$, the summands within $\mathcal{S}_{j,k}$ are i.i.d.
Applying Lemma \ref{lem:Bousquet's inequality} for independent empirical processes gives
$$ \mr P\Big( \mathcal{S}_{j,k} \ge \mathbb{E}[\mathcal{S}_{j,k}] + x \Big) \le \exp\left( - \frac{x^2}{2(v + 2U \mathbb{E}[\mathcal{S}_{j,k}]) + 2Ux/3} \right), $$
where $U$ is a uniform envelope bounding the function family defined in \eqref{eq: R2 function family} replacing $r_n$ by $r_{\max}$, and $v$ is the variance proxy satisfying $\sup_{|\beta| \le r_{\max}} \sum_{i \in \mathcal{I}_k} \text{Var}(L_i^{(m)}(\beta)) \le v$.
Over the ball $|\beta| \le r_{\max}$, the required parameters are evaluated as $U \asymp \xi_{K,n} r_{\max}$ and $v \asymp \frac{M}{m} r_{\max}^2$, with the expectation bound $\mathbb{E}[\mathcal{S}_{j,k}] \lesssim \frac{M}{m} r_{\max} \bar{r}_M = o\left( \frac{M}{m} r_{\max}^2 \right)$.

Substituting $x = \frac{M}{m} \frac{c_0}{8} r_{\max}^2 - \mathbb{E}[\mathcal{S}_{j,k}]$ into the exponent directly yields
\begin{align}
    \mr P\Big( \mathcal{S}_{j,k} \ge \frac{M}{m} \frac{c_0}{8} r_{\max}^2 \Big)&\leq \exp\left( -\frac{ \left( \frac{M}{m} r_{\max}^2 \right)^2 }{ \frac{M}{m} r_{\max}^2 + (\xi_{K,n} r_{\max}) \left( \frac{M}{m} r_{\max} \bar{r}_M \right) + (\xi_{K,n} r_{\max}) \left( \frac{M}{m} r_{\max}^2 \right) } \right),\notag\\
    &\leq \exp\left(- \frac{M}{m}r_{\max}^2\right)\leq \exp\left( -(\xi_{K,n}^2\vee K)m^{-1}\log^2 n \right)
\end{align}
Finally, by the triangle inequality and union bound, we explicitly decompose the probability into the $m$-decompostion error and the summation over the independent empirical processes. By \eqref{eq:loss error for M-decomp} and Markov's inequality for the m-decomposition error, and summing over $T \le n$ blocks and sub-sums with appropriately selected $m\asymp \log n$ (e.g., $m= \lfloor (1+\kappa+\omega_1^\prime)\log n /|\log \chi|\rfloor$), we have
\begin{align*}
\mr P\left( \max_{1\le j\le T} |\hat{\beta}(j)| > r_{\max} \right) 
&\le \mr P\left( \max_{1\le j\le T} \sup_{|\beta|\le r_{\max}} \left| \frac{1}{M}\sum_{i=j}^{j+M-1} \big( L_i(\beta) - L_i^{(m)}(\beta) \big) \right| \ge \frac{c_0}{2} r_{\max}^2 \right) \\
&\quad + \sum_{j=1}^T \sum_{k=1}^m \mr P\Big( \mathcal{S}_{j,k} \ge \frac{M}{m} \frac{c_0}{2} r_{\max}^2 \Big) \\
&\le \frac{C n \xi_{K,n} \Delta_{K,n}^\alpha \chi^m}{r_{\max}^2} + T m\exp\left(- \frac{M}{m}r_{\max}^2\right)=o(1),
\end{align*}
which concludes the proof.
\end{proof}

\begin{lemma}
\label{lem:uniform Bahadur}
    Under Assumptions \hyperref[(A1)]{(A1)}-\hyperref[(A4)]{(A4)} and \hyperref[(B1)]{(B1)}-\hyperref[(B3)]{(B3)}, suppose $M\rightarrow\infty$ and $M= O(n^{\kappa})$ for some $\kappa\in(0,1)$. If $\max\{K,\xi_{K,n}^2K^{1/2}\}=o(M^{1/2})$, we have for any given $\alpha\in(0,1)$,
    \begin{equation}
    \max_{1\leq j\leq n-M+1}\left|\hat{\beta}(j)-\bar Q_n^{-1}\frac{1}{M}\sum_{i=j}^{j+M-1} \mf z_i\right|=O_p\left( \Delta^{\alpha}_{K,n}(K\vee \xi_{K,n})^{(\eta+2)/4}M^{-(\eta+2)/4}\log^2 n \right).\label{eq: uniform Bahadur}
    \end{equation}    
\end{lemma}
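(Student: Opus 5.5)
The plan is to repeat the proof of Proposition \ref{prop:Bahadur representation} block by block, with $M$ in place of $n$, and to replace every in-probability statement by one that holds uniformly over the $T=n-M+1$ blocks. The uniformity will come from exponential concentration combined with a union bound over $j$, since expectation bounds alone are not enough.

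\textbf{Step 1 (setup).} For block $j$, write $\beta_{\mf Z}(j)=\bar Q_n^{-1}M^{-1}\sum_{i=j}^{j+M-1}\mf z_i$ and $e(j)=\hat\beta(j)-\beta_{\mf Z}(j)$. The decomposition \eqref{eq:localization-exact decomposition} and the deterministic expansion in Lemma \ref{lem:deterministic expansion} carry over verbatim with $\frac1M\sum_{i\in\mathcal I_j}$ in place of $\frac1n\sum_{i=1}^n$. Together with the lower bound $e^\top\bar Q_ne\ge c_0|e|^2$, they give, for every $j$ at once,
\[
|e(j)|\lesssim \Gamma_{M,j}\big(t(\beta_{\mf Z}(j)),t(e(j))\big)+r_P\text{-terms},
\]
where $\Gamma_{M,j}$ is the block analogue of $\Gamma_n$, the normalized centered remainder $\sup|\tilde{\mb R}_{M,j}(\theta,h)-\tilde{\mb R}(\theta,h)|/|h|$.

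\textbf{Step 2 (uniform localization).} Lemma \ref{lem:max beta-j} already gives $\max_j|\hat\beta(j)|\le r_{\max}$ with probability $1-o(1)$. I will then show $\max_j|\beta_{\mf Z}(j)|\lesssim r_{\max}$ by the same device: $m$-decomposition with $m\asymp\log n$, interleaved independent subsums, a Bernstein/Bousquet bound for each coordinate sum (the summands are bounded after truncating $\psi$ via (B2)), and a union bound over $T\le n$ blocks. The sup-norm bound $\max_j t(\beta_{\mf Z}(j))$ follows along the lines of Lemma \ref{lem:sup_bZ}, using a chaining or net argument over $\mf x\in\mathcal D_n$ with the gradient bound $\Delta_{K,n}$. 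This is where the factor $\Delta_{K,n}^\alpha$ enters, exactly as in \eqref{eq:loss error for M-decomp}. The hypothesis $\max\{K,\xi_{K,n}^2K^{1/2}\}=o(M^{1/2})$ is precisely what makes $\xi_{K,n}r_{\max}=o(1)$, so that the quadratic term dominates.

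\textbf{Step 3 (main obstacle: a uniform bound on the stochastic remainder).} I must control $\max_j\Gamma_{M,j}(a_\theta,a_h)$. For each block I will use the $m$-decomposition, with error $O(\xi_{K,n}\Delta_{K,n}^\alpha\chi^{\alpha m})$ that is negligible once $m\asymp\log n$, and then Bousquet's inequality (Lemma \ref{lem:Bousquet's inequality}) on each interleaved i.i.d. subsum. The inputs are:
\begin{itemize}
\item envelope $U\asymp \xi_{K,n}r_{\max}$;
\item variance proxy $\lesssim (M/m)\,r_h^2(a_\theta+a_h)^\eta$, supplied by (B3)(iii) through $\nu(\delta)$;
\item expected supremum from the same symmetrization and covering argument as Lemma \ref{lem:bound centered remainder}, using (A4).
\end{itemize}
Taking the deviation level $x$ to be a $\log n$ multiple of the standard deviation makes the tail $\exp(-c\log^2 n)$. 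This survives the union bound over $T\,m\le n\log n$ index pairs and a polynomial-size net in $(\theta,h)$ used to handle the normalization by $|h|$, and it yields
\[
\max_j\Gamma_{M,j}\lesssim K^{1/2}M^{-1/2}(a_\theta+a_h)^{\eta/2}\log^2 n
\]
up to the $\Delta_{K,n}^\alpha$ factor. I expect the delicate point to be the ratio normalization by $|h|$. I would first try peeling over dyadic shells of $|h|$, applying Bousquet on each shell and union-bounding across the $O(\log n)$ shells.

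\textbf{Step 4 (iteration).} I will run the same finite self-refinement used for Proposition \ref{prop:Bahadur representation}: a beginning step at rate $\bar r_M$, followed by two refinements of $a_h=\nu_{M,k}$. Every event now holds simultaneously in $j$, so the iterated rate lands at
\[
\Delta_{K,n}^\alpha (K\vee\xi_{K,n})^{(\eta+2)/4}M^{-(\eta+2)/4}\log^2 n,
\]
uniformly over blocks. The $r_P$ terms are $O(\bar r_M^2)$, which is of smaller order, and this closes the proof of \eqref{eq: uniform Bahadur}.
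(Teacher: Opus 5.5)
Your proposal follows the paper's proof essentially step for step. It uses the blockwise version of the decomposition \eqref{eq:localization-exact decomposition} together with Lemma \ref{lem:deterministic expansion}, uniform localization of $\hat\beta(j)$ and $\beta_{\mathbf Z}(j)$ through Lemma \ref{lem:max beta-j} and Lemma \ref{lem:sup_bZ}(ii), a bound on the block-maximal stochastic modulus, and the same three-stage self-refinement as in Proposition \ref{prop:Bahadur representation}. The only real difference is in your Step 3. There you control $\max_j$ of the centered remainder explicitly, applying Bousquet's inequality (Lemma \ref{lem:Bousquet's inequality}) to the interleaved $m$-dependent subsums and then a union bound over blocks; the paper instead simply cites a ``blockwise version'' of the single-sum expectation bound in Lemma \ref{lem:bound centered remainder}, and your concentration argument is what actually makes that step uniform in $j$.
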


\begin{proof}
Denote the target linear sequence and the estimation error on block $j$ as
\begin{equation}
    \beta_Z^{(j)}=:\bar Q_n^{-1}\frac{1}{M}\sum_{i=j}^{j+M-1}\mf z_i, \qquad e_j=:\hat\beta^{(j)}-\beta_Z^{(j)}.\label{eq:blockk Z}
\end{equation}
Recall the increment process $\widetilde R_i(\theta,h)$ in \eqref{eq:remainder}, and notice the algebraic identity $L_i(\theta+h) - L_i(\theta) = \widetilde R_i(\theta,h) - h^\top \mf z_i$.
Evaluating this at $\theta = \beta_Z^{(j)}$ and $h = e_j$, and averaging over the block $j$, we have
\begin{align}
\mathbb P_{n,j}(\hat\beta^{(j)}) - \mathbb P_{n,j}(\beta_Z^{(j)})
&= \widetilde{\mathbb R}_{n,j}(\beta_Z^{(j)}, e_j) - e_j^\top \bar Q_n \beta_Z^{(j)}, \label{eq: block diff}
\end{align}
where $\widetilde{\mathbb R}_{n,j}(\theta,h)=\frac{1}{M}\sum_{i=j}^{j+M-1} \widetilde{R}_i(\theta,h)$ is the block empirical increment. By the local deterministic expansion in Lemma \ref{lem:deterministic expansion}, the corresponding population increment is
\begin{equation}
\widetilde{\mathbb R}(\beta_Z^{(j)}, e_j) = e_j^\top \bar Q_n \beta_Z^{(j)} + \frac{1}{2} e_j^\top \bar Q_n e_j + r_P(\beta_Z^{(j)}, e_j). \label{eq: pop diff}
\end{equation}
Combining \eqref{eq: pop diff} and \eqref{eq: block diff}, using the minimizer property $\mathbb P_{n,j}(\hat\beta^{(j)}) \le \mathbb P_{n,j}(\beta_Z^{(j)})$ for any $1\leq j\leq n-M+1$,
\begin{equation}
 \frac{1}{2}e_j^\top \bar Q_n e_j 
\le 
 \Big| \widetilde{\mathbb R}_{n,j}(\beta_Z^{(j)}, e_j) - \widetilde{\mathbb R}(\beta_Z^{(j)}, e_j) \Big| 
- r_P(\beta_Z^{(j)}, e_j), \label{eq:uniform quad bound}
\end{equation}
where $r_P(\beta_Z^{(j)}, e_j)=O(t(\beta_Z^{(j)})|\beta_Z^{(j)}| |e_j|+t(e_j)|e_j|^2+t(\beta_Z^{(j)}) |e_j|^2 )$ with big $O(\cdot)$ uniformly with $j$. 

For any given $\alpha\in(0,1)$, denote $\tilde{r}_M=\Delta_{K,n}^\alpha(\xi_{K,n}\vee \sqrt{K})M^{-1/2}\log n$ and event 
\begin{equation}
    A_{M,0}=:\left\{ \max_{1\leq j\leq T}|\hat{\beta}(j)|\leq \tilde{r}_M,\max_{1\leq j\leq T}|\beta_Z^{(j)}|\leq \tilde{r}_M,\max_{1\leq j\leq T}|t(\beta_Z^{(j)})|\leq \tilde{r}_M \right\},\quad T=:n-M+1.
\end{equation}
By Lemma \ref{lem:max beta-j} and (ii) in Lemma \ref{lem:sup_bZ}, $\mr P(A_{M,0})=1-o(1)$. On event $A_{M,0}$, for any $1\leq j\leq T$,
$$
t(e_j)\leq t(\hat \beta(j))+t(\beta_Z^{(j)})\leq \nu_{M,0}
$$
where $\nu_{0,M}=(1+\xi_{K,n})\tilde{r}_M$. Denote the localized stochastic modulus
\begin{equation}
    \tilde \Gamma_M(a_\theta,a_h) = : \max_{1\leq j\leq n-M+1}\sup \left\{ \left|\frac{\tilde{\mb R}_{n,j}(\theta,h)-\tilde{\mb R}(\theta,h) }{|h|}\right|:|\theta|\leq \tilde r_M,0<|h|\leq 2\tilde r_M,t(\theta)\leq a_\theta,t(h)\leq a_h \right\}.
\end{equation}
Then for the event
$$
B_{M,0}=:\left\{\tilde{\Gamma}_M(\tilde r_M,\nu_{M,0})\leq g_nK^{1/2}M^{-1/2}(\tilde{r}_M+\nu_{M,0})^{\eta/2}\log^2 n\right\},
$$
Lemma \ref{lem:bound centered remainder} ensures $\mr P(B_{M,0}\cap A_{M,0})=1-o(1)$ with $g_n\to \infty$ at arbitrarily slowly.


\paragraph{Beginning step.}
On $A_{M,0}$, by Assumption \hyperref[(B3)]{(B3)} and \hyperref[(A2)]{(A2)}, using similar arguments in \eqref{eq:ineq iteration bahadur bound}, \eqref{eq:uniform quad bound} yields
\begin{equation}
\max_{1\leq j\leq n-M+1}|e_j| \le O_p\left( \tilde\Gamma_n(\tilde r_M,\nu_{M,0})+\tilde r_M^2\right),\label{eq:main iteration uniform bahadur bound}    
\end{equation}
Define
$$
\Phi_M(a,b)=:g_nK^{1/2}M^{-1/2}(a+b)^{\eta/2}\log^2 n+\tilde{r}_M^2.
$$
Notice that $\mr P(A_{M,0}\cap B_{M,0})=1-o(1)$, we have
\begin{equation}
    \max_{1\leq j\leq T}|e_j|=O_p(r_{M,0}),\quad r_{M,0}=\Phi_M(\tilde{r}_M,\nu_{M,0}).\label{eq:beginning uniform bahadur rate}
\end{equation}
Consequently $\max_{1\leq j\leq T}t(\hat \beta(j))=O_p(\nu_{M,1})$, $\nu_{M,1}=\tilde{r}_M+\xi_{K,n}r_{M,0}$.

\paragraph{First self-refinement.}
Based on $\nu_{M,1}=\tilde{r}_M+\xi_{K,n}r_{M,0}$, apply the same blockwise version of Lemma \ref{lem:bound centered remainder} with $a_\theta=\tilde{r}_M$, $a_h=\nu_{M,1}$. Define the event
$$
B_{M,1}=:\left\{\tilde \Gamma_M(\tilde{r}_M,\nu_{M,1})\leq g_nK^{1/2}M^{-1/2}(\tilde{r}_M+\nu_{M,1})^{\eta/2}\log^2 n\right\}.
$$
Then $\mr P(B_{M,1})=1-o(1)$ by Lemma \ref{lem:bound centered remainder}. Repeating the arguments leading to \eqref{eq:beginning uniform bahadur rate} yields
\begin{equation}
    \max_{1\leq j\leq T}|e_j|=O_p(r_{M,1}),\quad r_{M,1}=\Phi_M(\tilde{r}_M,\nu_{M,1}),
\end{equation}
and  $\max_{1\leq j\leq T}t(\hat \beta(j))=O_p(\nu_{M,2})$, $\nu_{M,2}=\tilde{r}_M+\xi_{K,n}r_{M,1}$.

\paragraph{Second self-refinement.} Apply the same argument once more with $a_h=\nu_{M,2}$. Define
$$
B_{M,2}=:\left\{\tilde \Gamma_M(\tilde{r}_M,\nu_{M,1})\leq g_nK^{1/2}M^{-1/2}(\tilde{r}_M+\nu_{M,2})^{\eta/2}\log^2 n\right\}
$$
Then $\mr P(B_{M,2})=1-o(1)$ by Lemma \ref{lem:bound centered remainder} and the same localization inequality yields
\begin{equation}
    \max_{1\leq j\leq T}|e_j|=O_p(r_{M,2}),\quad r_{M,1}=\Phi_M(\tilde{r}_M,\nu_{M,2}).
\end{equation}
Elementary calculation yields $r_{M, 2}$ can attain \eqref{eq: uniform Bahadur} by arguments
exactly as in the last part of the proof of Proposition \ref{prop:Bahadur representation} with the condition $\xi_{K,n}K^{\eta/4}M^{-\eta/4}\log^2 n=o(1)$. 
\end{proof}

\subsection{Lemmas for maximal inequality}


\begin{lemma}[Corollary 5.1 in \cite{Chernouzhukov2014EP}]
    \label{lem:modified maximal ineq uniform entropy}
    Let $\left(W_i\right)_{i=1}^n$ be a sequence of independent random elements taking values in a measurable space $\left(\mathcal{W}, \mathcal{A}_{\mathcal{W}}\right)$ according to the same probability law $P$. Let $\mathcal{F}$ be a set of suitably measurable functions $f: \mathcal{W} \rightarrow \mathbb{R}$, equipped with a measurable envelope $F: \mathcal{W} \rightarrow$ $\mathbb{R}$.
    Suppose that $F \geqslant \sup _{f \in \mathcal{F}}|f|$ is a measurable envelope for function family $\mathcal{F}$ with $\|F(W_i)\|_{q}<$ $\infty$ for some $q \geqslant 2$. Let $M_n=\max _{i \leqslant n} F\left(W_i\right)$ and $\sigma^2>0$ be any positive constant such that $\sup _{f \in \mathcal{F}}\|f(W_i)\|^2 \leqslant \sigma^2 \leqslant\|F(W_i)\|^2$. Suppose that there exist constants $a \geqslant e$ and $v \geqslant 1$ such that
$$
\log \sup _Q N\left(\epsilon\|F\|_{Q, 2}, \mathcal{F},\|\cdot\|_{Q, 2}\right) \leqslant v \log (a / \epsilon), 0<\epsilon \leqslant 1 .
$$
Then for empirical process $\mb G_n(f)=\sum_{i=1}^n (f(W_i)-\mb Ef(W_i))/\sqrt{n}$,
$$
\mb{E}\left[\sup_{f\in\mathcal{F}}|\mathbb{G}_n(f)|\right] \leqslant  C\left(\sqrt{v \sigma^2 \log \left(\frac{a\|F(W_i)\|}{\sigma}\right)}+\frac{v\|M_n\|}{\sqrt{n}} \log \left(\frac{a\|F(W_i)\|}{\sigma}\right)\right),
$$
where $C$ is an absolute constant.
\end{lemma}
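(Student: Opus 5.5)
This lemma is Corollary 5.1 of \cite{Chernouzhukov2014EP}, and I would follow the standard route: symmetrization, then conditional chaining, then a self-bounding step for the random $L^2$ radius. Write $Z=:\sup_{f\in\mathcal{F}}|\mathbb G_n(f)|$ and $\mathbb P_n$ for the empirical measure of $W_1,\dots,W_n$.

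\emph{Step 1: symmetrization and conditional chaining.} By the symmetrization inequality \citep{van1996weak},
$$\mb E Z\le 2\,\mb E\sup_{f}\Big|n^{-1/2}\sum_i\epsilon_i f(W_i)\Big|$$
with i.i.d.\ Rademacher $\epsilon_i$. Conditionally on the data, this Rademacher process is sub-Gaussian with respect to $\|\cdot\|_{\mathbb P_n,2}$. Dudley's entropy integral, applied at the random scale $\delta_n=:\hat\sigma_n/\|F\|_{\mathbb P_n,2}$ with $\hat\sigma_n^2=:\sup_f\mathbb P_n f^2$, and combined with the uniform entropy hypothesis for $Q=\mathbb P_n$, gives
$$\mb E_\epsilon\sup_f\Big|n^{-1/2}\sum_i\epsilon_if(W_i)\Big|\lesssim \|F\|_{\mathbb P_n,2}\,J(\delta_n),\qquad J(\delta)=:\int_0^\delta\sqrt{1+v\log(a/\epsilon)}\,\mr d\epsilon\lesssim\delta\sqrt{v\log(a/\delta)}.$$

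\emph{Step 2: remove the randomness of the radius.} The map $(x,y)\mapsto \sqrt{y}\,J(\sqrt{x/y})$ is jointly concave, so Jensen's inequality lets me replace $\hat\sigma_n^2$ and $\|F\|^2_{\mathbb P_n,2}$ by their expectations. This yields
$$\mb EZ\lesssim \|F\|\,J\big(\sqrt{\mb E\hat\sigma_n^2}/\|F\|\big)\lesssim \sqrt{v\,\mb E\hat\sigma_n^2\,\log(a\|F\|/\sqrt{\mb E\hat\sigma_n^2})}.$$

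\emph{Step 3: self-bounding inequality (the main obstacle).} I would bound $\mb E\hat\sigma_n^2\le\sigma^2+\mb E\sup_f|(\mathbb P_n-P)f^2|$. The second term is handled by symmetrizing again and applying the Rademacher contraction principle to $f^2=f\cdot f$ with $|f|\le M_n$. Using Cauchy--Schwarz/H\"older, this gives
$$\mb E\sup_f|(\mathbb P_n-P)f^2|\lesssim n^{-1/2}\,\|M_n\|\,\big(\mb E\sup_f|\mathbb G_n^{\epsilon}f|^2\big)^{1/2},$$
and the last factor is itself controlled by Steps 1--2 up to a Hoffmann-J{\o}rgensen step that moves from first to second moments. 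The result is an implicit inequality of the form
$$\mb EZ\lesssim \sqrt{v\log(a\|F\|/\sigma)}\,\Big(\sigma+\sqrt{\|M_n\|\,\mb EZ/\sqrt n}\Big).$$
In passing I use that $x\mapsto x\log(a\|F\|/x)$ is monotone on the relevant range, which lets me put $\sigma$ in place of $\sqrt{\mb E\hat\sigma_n^2}$ inside the logarithm. Solving this quadratic in $\sqrt{\mb EZ}$ gives
$$\mb EZ\lesssim \sqrt{v\sigma^2\log(a\|F\|/\sigma)}+\frac{v\|M_n\|}{\sqrt n}\log(a\|F\|/\sigma),$$
which is the stated bound. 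The delicate points are the moment bookkeeping in the contraction and Hoffmann-J{\o}rgensen steps, which is why $\|M_n\|$ rather than $\|F\|_\infty$ appears, and keeping the logarithm at $\sigma$ rather than at the random $\hat\sigma_n$.
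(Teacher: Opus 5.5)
Your proposal is correct and follows essentially the same argument as the source the paper relies on. The paper gives no proof of its own and imports Corollary 5.1 of Chernozhukov, Chetverikov and Kato (2014), which follows from their Theorem 5.2 by your chain: symmetrization, conditional chaining with the uniform entropy integral at the random radius, Jensen through joint concavity of $(x,y)\mapsto\sqrt{y}\,J(\sqrt{x/y})$, and a contraction/Cauchy--Schwarz/Hoffmann--J{\o}rgensen bound on $\mathbb{E}\hat\sigma_n^2$ closed by a quadratic inequality. The bookkeeping you leave implicit is harmless. The quantity fed back is the symmetrized expectation $\mathbb{E}\sup_f|\mathbb{G}_n^{\epsilon}f|$, which you can close on directly or bound by $2\,\mathbb{E}Z+\sigma$. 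Hoffmann--J{\o}rgensen adds a $\|M_n\|^2/n$ term, and $\mathbb{E}\hat\sigma_n^2$ should be replaced by $\max\{\mathbb{E}\hat\sigma_n^2,\sigma^2\}$ before putting $\sigma$ in the logarithm. All of these are absorbed because $v\log(a\|F\|/\sigma)\ge 1$.
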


\begin{lemma}[Theorem 4.17 in \cite{sen2018gentle}]
\label{lem:maximal ineq uniform entropy}
If $\mathcal{F}$ is a class of measurable functions with measurable envelope function $F$, then for i.i.d. $X_i, i=1, \ldots, n$,
$$
\mathbb{E}\left[\sup _{f \in \mathcal{F}}\left|\frac{1}{\sqrt{n}} \sum_{i=1}^n\left(f\left(X_i\right)-\mathbb{E} f\left(X_i\right)\right)\right|\right] \lesssim J(1, \mathcal{F}, F)\|F(X_i)\|,
$$
where
$$
J(\delta, \mathcal{F}, F)=:\int_0^\delta \sup _Q \sqrt{\log N\left(\epsilon\|F\|_{Q, 2}, \mathcal{F} \cup\{0\}, L_2(Q)\right)} d \epsilon, \quad \delta>0 .
$$
\end{lemma}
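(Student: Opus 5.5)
Since Lemma \ref{lem:maximal ineq uniform entropy} is quoted directly from \cite{sen2018gentle}, we would give the classical argument rather than derive it from earlier results of the paper. The plan is symmetrization followed by a chaining bound for the Rademacher process conditional on the data.

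\emph{Symmetrization.} With $\{\epsilon_i\}$ i.i.d.\ Rademacher variables independent of the $X_i$, the standard inequality gives
$$\mathbb{E}\sup_{f\in\mathcal F}|\mathbb{G}_n f|\le 2\,\mathbb{E}\,\mathbb{E}_\epsilon\sup_{f\in\mathcal F}\Bigl|n^{-1/2}\textstyle\sum_i\epsilon_i f(X_i)\Bigr|.$$
Adjoining the zero function to $\mathcal F$ does not decrease the supremum and only helps the covering numbers. This is why $J$ is defined with $\mathcal F\cup\{0\}$.

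\emph{Conditional chaining.} Fix the data and let $P_n$ be the empirical measure. The process $f\mapsto n^{-1/2}\sum_i\epsilon_i f(X_i)$ is sub-Gaussian with respect to the metric $\|f-g\|_{P_n,2}$, by Hoeffding's inequality. Every element of $\mathcal F\cup\{0\}$ lies within $\|F\|_{P_n,2}$ of $0$, so the diameter is at most $2\|F\|_{P_n,2}$. Dudley's entropy integral, started from $f_0=0$, then gives
$$\mathbb{E}_\epsilon\sup_f\Bigl|n^{-1/2}\textstyle\sum\epsilon_i f(X_i)\Bigr|\lesssim\int_0^{\|F\|_{P_n,2}}\sqrt{\log N(u,\mathcal F\cup\{0\},L_2(P_n))}\,du.$$
Substituting $u=\epsilon\|F\|_{P_n,2}$ turns the right side into
$$\|F\|_{P_n,2}\int_0^1\sqrt{\log N(\epsilon\|F\|_{P_n,2},\mathcal F\cup\{0\},L_2(P_n))}\,d\epsilon.$$
Bounding the integrand by its supremum over all finitely discrete $Q$ gives at most $J(1,\mathcal F,F)\,\|F\|_{P_n,2}$.

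\emph{Unconditioning.} $J(1,\mathcal F,F)$ is deterministic, and by Jensen's inequality
$$\mathbb{E}\|F\|_{P_n,2}=\mathbb{E}\Bigl(n^{-1}\textstyle\sum F(X_i)^2\Bigr)^{1/2}\le\|F(X_i)\|.$$
Combining the three steps yields the claim.

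The only delicate point is measurability of the supremum. It is handled by the usual pointwise-measurability or outer-expectation convention, which is implicit in the citation. The degenerate event $\|F\|_{P_n,2}=0$ is trivial, since the symmetrized supremum is then zero.
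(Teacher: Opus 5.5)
Your argument is correct and is the standard proof of the cited result. It runs symmetrization, then Hoeffding-based sub-Gaussianity of the Rademacher process in $L_2(P_n)$, then Dudley's entropy integral anchored at $0\in\mathcal{F}\cup\{0\}$, then the substitution $u=\epsilon\|F\|_{P_n,2}$, and finally Jensen to uncondition; the paper gives no proof of its own and simply imports the lemma from \cite{sen2018gentle}. Your one implicit step, stopping Dudley's integral at $\|F\|_{P_n,2}$ rather than at the diameter $2\|F\|_{P_n,2}$, is valid because the single ball of radius $\|F\|_{P_n,2}$ centered at $0$ already covers $\mathcal{F}\cup\{0\}$, so the integrand vanishes beyond that point.
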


\begin{lemma}[Nazarov's inequality \cite{nazarov2003maximal}]
\label{lem:gaussian-anti-concentration}
Let \(Y=(Y_1,\ldots,Y_p)^\top\) be a centered Gaussian random vector with $\sigma_j^2=\mb E(Y_j^2)>0$, $\underline\sigma=\min_{1\le j\le p}\sigma_j$.
Then there exists a universal constant \(C>0\) such that, for every
\(\epsilon>0\),
\[
\sup_{t\in\mathbb R}
\mr P\left(
\left|\max_{1\le j\le p}Y_j-t\right|\le \epsilon
\right)
\le
C\epsilon\,
\frac{a_p+1}{\underline\sigma},
\]
where
$a_p=\mb E\left[\max_{1\le j\le p}\frac{Y_j}{\sigma_j}
\right]$. Consequently, the same inequality applies to
\(\max_{1\le j\le p}|Y_j|\) by applying the preceding bound to the
\(2p\)-dimensional Gaussian vector $(Y_1,\ldots,Y_p,-Y_1,\ldots,-Y_p)^\top$.
\end{lemma}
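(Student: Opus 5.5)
The statement just before this point is Nazarov's anti-concentration inequality (Lemma \ref{lem:gaussian-anti-concentration}). It is an imported classical result. The plan therefore reproduces the standard argument in the finite-dimensional case and then passes to the absolute value by the stated doubling trick.

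\emph{Step 1: reduction to unit variances.} Write $X_j=Y_j/\sigma_j$, so that $\max_j Y_j = \max_j \sigma_j X_j$. Because $\sigma_j\ge\underline\sigma$, an $\epsilon$-strip around $t$ for $\max_j Y_j$ corresponds, coordinatewise, to a strip of width at most $\epsilon/\underline\sigma$ for the rescaled variables. It therefore suffices to show
\[
\mr P(t<\max_j \sigma_j X_j\le t+\epsilon)\le C\epsilon(a_p+1)/\underline\sigma .
\]
I would not normalize $\sigma_j$ fully away. Instead I would bound the density of $\max_j Y_j$ directly.

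\emph{Step 2: density bound.} The distribution of $M=\max_j Y_j$ is absolutely continuous off atoms, and there are no atoms since $\sigma_j>0$. Its density at $t$ decomposes as
\[
f_M(t)=\sum_j \sigma_j^{-1}\phi(t/\sigma_j)\,\mr P(Y_k\le t,\ \forall k\neq j \mid Y_j=t).
\]
The main obstacle is bounding this sum by $C(a_p+1)/\underline\sigma$ uniformly in $t$. I would follow Nazarov's Gaussian-measure argument:
\begin{itemize}
\item Express $\mr P(M\le t)$ as the standard Gaussian measure of a convex polyhedron $\{z:\Sigma^{1/2}z\in(-\infty,t]^p\}$.
\item Differentiate in $t$ to obtain the Gaussian surface measure of its boundary weighted by inward normal speeds, which are $\ge 1/\sigma_j\cdot$(unit normal), controlled by $1/\underline\sigma$.
\item Bound that weighted boundary measure via the Gaussian integration-by-parts identity $\mathbb E[\langle \nabla,\cdot\rangle]$. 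Concretely, $\frac{d}{dt}\mr P(\max_j X_j\le t)$ for unit-variance $X_j$ is at most $\mathbb E[\max_j X_j]+1$, up to constants, at all $t$.
\end{itemize}
For $t\le0$ the factor $\phi(t/\sigma_j)$ contributes the $+1$. For $t>0$ the Gaussian tail compensates, and the integration-by-parts identity gives the $a_p$ term.

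\emph{Step 3: integrate and symmetrize.} Integrating the density bound over $[t,t+\epsilon]$ gives the claimed inequality for $\max_j Y_j$. Applying this to the $2p$-vector $(Y,-Y)$ yields the bound for $\max_j|Y_j|$. The only change is that $a_{2p}=\mathbb E\max_j|Y_j|/\sigma_j$, which is exactly the quantity used later in \eqref{eq:anti-concentration-Znh}.

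I expect Step 2's uniform-in-$t$ density bound to be the delicate point. If the direct geometric argument becomes messy, I would cite Chernozhukov--Chetverikov--Kato's proof of Nazarov's inequality, which executes precisely this density estimate.
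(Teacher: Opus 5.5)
\textbf{There is a genuine gap.} It sits in the one step that carries all the content: bounding the density of $M=\max_jY_j$ by $C(a_p+1)/\underline\sigma$ when the $\sigma_j$ differ.

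Write $Y_j=\sigma_j\langle u_j,Z\rangle$ with $Z$ standard Gaussian in $\mathbb R^r$ and $|u_j|=1$. Your Step~1 does not reduce to a strip for $\max_jX_j$. The event $\{t<M\le t+\epsilon\}$ lies in $\{X\le y+\delta\mathbf 1\}\setminus\{X\le y\}$, where the corner $y=(t/\sigma_j)_j$ is \emph{non-constant} and $\delta=\epsilon/\underline\sigma$. On that event $\max_jX_j$ can sit near $t/\sigma_j$ for very different $j$; take $p=2$, $\sigma_1=1$, $\sigma_2=2$.

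Step~2 meets the same object. One has $f_M(t)=\sum_j\sigma_j^{-1}\gamma_{r-1}(F_j)$, where the $F_j$ are the facets of $\{z:\langle u_j,z\rangle\le t/\sigma_j\}$ and $\gamma_{r-1}$ is Gaussian surface measure. So $f_M(t)$ is at most $\underline\sigma^{-1}$ times the Gaussian perimeter of a polytope with \emph{unequal} offsets. (The facet speeds are $1/\sigma_j\le 1/\underline\sigma$, not $\ge$.)

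The fact you then invoke, $\frac{d}{dt}\mathrm P(\max_jX_j\le t)\le C(\mathbb E\max_jX_j+1)$, bounds the perimeter of the \emph{equal}-offset polytope $\{z:\langle u_j,z\rangle\le t\}$ only. This is Chernozhukov--Chetverikov--Kato (2015), Thm.~3(i), and it does not transfer to unequal offsets. Its proof is also not an integration by parts but a monotonicity argument:
\begin{itemize}
\item $G(t)=\sum_j\mathrm P(X_k\le t\ \forall k\ne j\mid X_j=t)$ is nondecreasing, because $\mathbb E[X_k\mid X_j=t]=\rho_{jk}t$ with $\rho_{jk}\le 1$.
\item Then $f=\phi\,G$, together with $G(t)\bar\Phi(t)\le \mathrm P(\max_jX_j>t)\le e^{-(t-a_p)_+^2/2}$ (Borell--TIS) and Mills' ratio, gives $f\lesssim a_p+1$.
\end{itemize}
With unequal variances, $\mathbb E[Y_k\mid Y_j=t]=(\sigma_{jk}/\sigma_j^2)t$, and $\sigma_{jk}/\sigma_j^2$ can exceed $1$ (e.g.\ $Y_k\approx 2Y_j$). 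Then $\mathrm P(Y_k\le t\ \forall k\ne j\mid Y_j=t)$ need not be monotone and the argument breaks. Your sketch supplies nothing in its place.

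\textbf{The fallback citation does not close the gap.} The Chernozhukov--Chetverikov--Kato proof of Nazarov's inequality gives $\frac{\epsilon}{\underline\sigma}(\sqrt{2\log p}+2)$, that is, $\sqrt{2\log p}$ rather than $a_p$. That version does allow arbitrary corners, via Nazarov's $O(\sqrt{\log N})$ bound on the Gaussian perimeter of an $N$-facet polytope. Their dimension-free bound for unequal variances (2015, Thm.~3(ii)) is $C\epsilon\{a_p+\sqrt{1\vee\log(\underline\sigma/\epsilon)}\}$, with $C$ depending on $\underline\sigma$ and $\bar\sigma$. Neither is the stated inequality with a universal $C$ and $a_p$. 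The paper gives no proof, only the citation, and the statement it records merges these two results.

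Your plan, completed with the monotonicity argument, does prove the lemma in two cases:
\begin{itemize}
\item when all $\sigma_j$ are equal (the doubling step is then fine, with $a_{2p}=\mathbb E\max_j|Y_j|/\sigma_j$);
\item with $a_p$ replaced by $\sqrt{2\log p}$.
\end{itemize}
It does not prove the universal-constant $a_p$ bound for unequal variances, and you should not expect to find that bound in the sources you name.
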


\begin{lemma}
\label{lem:Bousquet's inequality}
Let $X_1, \dots, X_n$ be independent random variables taking values in a measurable space $\mathcal{X}$, and let $\mathcal{F}$ be a countable class of measurable functions from $\mathcal{X}$ to $\mathbb{R}$. Assume there exists a constant $U > 0$ such that $f(X_i) - \mathbb{E}[f(X_i)] \le U$ almost surely for all $f \in \mathcal{F}$ and $i=1,\dots,n$. Define the supremum of the empirical process as
$$ Z = \sup_{f \in \mathcal{F}} \sum_{i=1}^n \big( f(X_i) - \mathbb{E}[f(X_i)] \big), $$
and let the variance proxy be $v = \sup_{f \in \mathcal{F}} \sum_{i=1}^n \mathrm{Var}(f(X_i))$. Then for any $x > 0$,
$$ \mr P\Big( Z \ge \mathbb{E}[Z] + x \Big) \le \exp\left( - \frac{x^2}{2(v + 2U \mathbb{E}[Z]) + 2Ux/3} \right). $$
\end{lemma}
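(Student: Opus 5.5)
The statement to be proved is Bousquet's inequality (Lemma \ref{lem:Bousquet's inequality}). I would prove it by the entropy method, following the Klein--Rio--Bousquet route. Two reductions come first. By monotone convergence, it suffices to treat a finite class $\mathcal F$. By replacing $f$ with $(f-\mathbb E f(X_i))/U$, I may assume each $f(X_i)$ is centered and $f\le 1$; the general case then follows by rescaling $x$, $v$ and $\mathbb E Z$.

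The core step is a bound on the log-Laplace transform. Set $F(\lambda)=\log\mathbb E e^{\lambda(Z-\mathbb E Z)}$ for $\lambda\ge 0$. I will apply the modified logarithmic Sobolev inequality (tensorization of entropy) to the leave-one-out quantities $Z_k=\sup_f\sum_{i\ne k}f(X_i)$. Let $f_*$ be a maximizer for $Z$ and set $Z-Z_k\le f_*(X_k)\le 1$. Combining this with $\phi(u)=e^{u}-u-1$, tensorization gives
\[
\lambda F'(\lambda)-F(\lambda)\le \frac{\mathbb E\big[e^{\lambda (Z-\mathbb E Z)}\sum_k \phi(-\lambda(Z-Z_k))\big]}{\mathbb E e^{\lambda (Z-\mathbb E Z)}}.
\]
Using $Z-Z_k\le 1$ and the convexity bound $\phi(-\lambda y)\le y^2\phi(-\lambda)$ for $y\le 1$, together with a lower bound $Z-Z_k\ge f_{k}(X_k)$ where $f_k$ attains $Z_k$, I bound $\sum_k(Z-Z_k)^2$ by $\sum_k f_{k}(X_k)^2+\dots$. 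Taking the tilted expectation, I replace $\sum_k f_k(X_k)^2$ by $v+2\mathbb E Z$ via the standard symmetrization/contraction estimate $\mathbb E\sup_f\sum_k f(X_k)^2\le v+2\mathbb E Z$. Bousquet's refinement performs this step under the tilted measure using centering and $Z_k\le Z$.

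The outcome should be the differential inequality
\[
\lambda F'(\lambda)-F(\lambda)\le \phi(-\lambda)\,\big(v+2\mathbb E Z\big)+\text{(term absorbed by } \lambda F'(\lambda)\text{)},
\]
which integrates, via Herbst's argument (divide by $\lambda^2$ and integrate from $0$), to
\[
F(\lambda)\le (v+2\mathbb E Z)\,\phi(\lambda).
\]
This is a Bennett-type bound with variance $\sigma^2=v+2\mathbb E Z$ and range $1$. The Cramér--Chernoff bound then gives
\[
P(Z\ge \mathbb E Z+x)\le \exp\{-\sigma^2 h(x/\sigma^2)\},\qquad h(u)=(1+u)\log(1+u)-u.
\]
The elementary inequality $h(u)\ge u^2/(2(1+u/3))$ yields $\exp\{-x^2/(2\sigma^2+2x/3)\}$. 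Restoring $U$ gives the stated $\exp\{-x^2/(2(v+2U\mathbb E Z)+2Ux/3)\}$.

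I expect the main obstacle to be the second step. The difficulty is obtaining the exact constant $v+2\mathbb E Z$, rather than the looser $v+c\,\mathbb E Z$, while keeping the inequality valid under the exponential tilting. My first attempt would follow Bousquet's (2002) trick. That trick works with $\lambda F'-F$ and the function $g(\lambda)=F(\lambda)/\lambda$, and uses $\mathbb E'[f_k(X_k)]=0$ under the measure conditioned on the other coordinates to absorb the cross term into $\lambda F'(\lambda)$. This absorption is what gives constant $2$. Since the lemma is quoted as a known tool, an acceptable fallback is to cite Bousquet's theorem directly after the reductions above.
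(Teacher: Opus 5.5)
The paper's proof is exactly your fallback. It rescales to $\mathcal{G}=\{f/U\}$ and applies Bousquet's inequality (Theorem 12.5 of Boucheron--Lugosi--Massart) with $y=x/U$, $\sigma^2=v/U^2$ and $\mathbb{E}Z'=\mathbb{E}Z/U$. Your reductions and your final Cram\'er--Chernoff step with $h(u)\ge u^2/(2(1+u/3))$ are fine.

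The self-contained entropy-method route you present as the main argument, however, fails at its central step.
\begin{itemize}
\item \emph{The $\phi$-bound is reversed.} You use $\phi(-\lambda y)\le y^2\phi(-\lambda)$ for $y\le 1$. Since $\phi(w)/w^2=\int_0^1(1-t)e^{tw}\,dt$ is increasing in $w$, for $\lambda\ge0$ and $y\le1$ one actually has $\phi(-\lambda y)\ge y^2\phi(-\lambda)$. The Bennett bound $\phi(\lambda y)\le y^2\phi(\lambda)$ has the opposite sign inside $\phi$.
\item \emph{No quadratic bound of this kind exists.} $Z-Z_k$ is bounded only from above and can equal a very negative $f_k(X_k)$. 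Since $\phi(-\lambda y)/y^2\to\infty$ as $y\to-\infty$, $\phi(-\lambda(Z-Z_k))$ cannot be bounded by $C(\lambda)(Z-Z_k)^2$. You must keep the weight: $e^{\lambda Z}\phi(-\lambda(Z-Z_k))=e^{\lambda Z_k}\tau(\lambda(Z-Z_k))$ with $\tau(u)=1+(u-1)e^u$, for which $\tau(u)/u^2=\int_0^1 te^{tu}\,dt$ is increasing.
\item \emph{The ``standard estimate'' is false.} The claim $\mathbb{E}\sup_f\sum_k f(X_k)^2\le v+2\mathbb{E}Z$ fails already for $n=1$, $X$ uniform on $\{1,\dots,m\}$ and $f_j=1-m\mathbf{1}\{X=j\}$. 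Each $f_j$ is centered with $f_j\le1$, $v=m-1$ and $Z\equiv1$, yet $\sup_j f_j(X)^2=(m-1)^2$. Symmetrization and contraction give larger constants, need a two-sided bound on $f$, involve $\mathbb{E}\sup_f|\sum_k f(X_k)|$, and control an untilted rather than the needed tilted expectation.
\item \emph{$Z_k\le Z$ fails.} For a single $f$, $Z-Z_k=f(X_k)$, which is negative with positive probability.
\end{itemize}

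In Bousquet's argument no bound on $\sup_f\sum_k f(X_k)^2$ is needed. Take $f_k$ a maximizer for $Z_k$ and $Y_k=f_k(X_k)$. Then $Y_k\le Z-Z_k\le1$ and $\sum_k(Z-Z_k)\le Z$. Because $f_k$ does not depend on $X_k$, $\mathbb{E}^{(k)}Y_k=0$ and $\mathbb{E}^{(k)}Y_k^2\le\sup_f\mathrm{Var}f(X_k)$, and Jensen gives $e^{\lambda Z_k}\le\mathbb{E}^{(k)}e^{\lambda Z}$. A sharp pointwise inequality for $\tau(\lambda x)$ on $\{y\le x\le1\}$ then turns the modified log-Sobolev inequality into a differential inequality for $\log\mathbb{E}e^{\lambda Z}$. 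The variance enters with coefficient one directly. The $2\mathbb{E}Z$ comes from absorbing the self-bounding term $\sum_k(Z-Z_k)\le Z$ into the left-hand side. Your last paragraph gestures at this, but the intermediate steps you write down contradict it. As a proof, the main route has a genuine gap; only the citation route stands.

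One caveat applies to both the paper and your fallback. Bousquet's theorem, in the cited form, is formulated for identically distributed summands. For merely independent $X_i$, the same argument bounds the variance term by $\sum_i\sup_f\mathrm{Var}f(X_i)$, not by $v=\sup_f\sum_i\mathrm{Var}f(X_i)$. This does not affect the paper's applications, where the summands in each interleaved subsequence are i.i.d.
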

\begin{proof}
This is a directly scaled version of Bousquet's inequality (Theorem 12.5 in \cite{boucheron2013concentration}). Let $\mathcal{G} = \{ f/U \mid f \in \mathcal{F} \}$. For any $g \in \mathcal{G}$, the scaled functions satisfy $g(X_i) - \mathbb{E}[g(X_i)] \le 1$ almost surely. Define the supremum for the scaled class as $Z' = \sup_{g \in \mathcal{G}} \sum_{i=1}^N \big( g(X_i) - \mathbb{E}[g(X_i)] \big) = Z / U$. The variance proxy for $\mathcal{G}$ is exactly $\sigma^2 = \sup_{g \in \mathcal{G}} \sum_{i=1}^N \mathrm{Var}(g(X_i)) = v / U^2$. 

Applying Theorem 12.5 of \cite{boucheron2013concentration} to the normalized class $\mathcal{G}$ with a deviation parameter $y = x / U > 0$, we obtain
$$ \mr P\Big( Z' \ge \mathbb{E}[Z'] + y \Big) \le \exp\left( - \frac{y^2}{2(\sigma^2 + 2\mathbb{E}[Z']) + 2y/3} \right). $$
Substituting $Z' = Z/U$, $\sigma^2 = v/U^2$, and $y = x/U$ into the above inequality yields
$$ \mr P\Big( Z/U \ge \mathbb{E}[Z]/U + x/U \Big) \le \exp\left( - \frac{(x/U)^2}{2(v/U^2 + 2\mathbb{E}[Z]/U) + 2x/(3U)} \right), $$
which concludes the proof.
\end{proof}

\begin{lemma}
\label{lem:bound centered remainder}
Denote
\[
\mathcal A_n(r_\theta,r_h;a_\theta,a_h)
=:
\left\{
(\theta,h): |\theta|\le r_\theta,\ 0<|h|\le r_h,\ 
t(\theta)\le a_\theta,\
t(h)\le a_h
\right\}.
\]
Under Assumptions \hyperref[(A1)]{(A1)}-\hyperref[(A4)]{(A4)} and
\hyperref[(B1)]{(B1)}-\hyperref[(B3)]{(B3)}, suppose $K^{1/2}\xi_{K,n}n^{-1/2}=o(1)$ and positive sequences $a_\theta=o(1),a_h=o(1)$. Then
\begin{align}
\mb E\sup_{(\theta,h)\in\mathcal A_n(r_\theta,r_h;a_\theta,a_h)}
\left|
\frac{1}{|h|}
\big(
\widetilde{\mb R}_n(\theta,h)-\widetilde{\mb R}(\theta,h)
\big)
\right|
=O\left(
K^{\frac12}n^{-\frac12}(a_\theta+a_h)^{\frac{\eta}{2}}\log^2 n\right).
\label{eq:centered remainder bound update}
\end{align}
\end{lemma}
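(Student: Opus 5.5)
We plan to follow the same $m$-dependent decomposition used in the proof of Theorem \ref{thm:maximal inequality}, now applied to the normalized increment class
\[
\mathcal R_n=\Big\{ g_{\theta,h}(\varepsilon,\mf x)=\tfrac{1}{|h|}\big[\rho(\varepsilon-(\theta+h)^\top\mf b_\omega(\mf x))-\rho(\varepsilon-\theta^\top\mf b_\omega(\mf x))+\psi(\varepsilon)h^\top\mf b_\omega(\mf x)\big] : (\theta,h)\in\mathcal A_n(r_\theta,r_h;a_\theta,a_h)\Big\}.
\]
The first step is to control the approximation error $\widetilde R_i-\widetilde R_i^{(m)}$, where $\widetilde R_i^{(m)}=\mb E(\widetilde R_i\mid\Upsilon_i^{(m)})$. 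We would reuse Lemma \ref{lem:phd on rho} together with \eqref{eq:sum up over m}. Because $|g_{\theta,h}|\lesssim |\mf b_\omega|\,(1+|\psi(\varepsilon)|+t(\theta)+t(h))$, the physical dependence bound gives an error of order $\xi_{K,n}\Delta_{K,n}^\alpha\chi^{\alpha m}$. Choosing $m\asymp\log n$ makes this $O(n^{-1})$, which is negligible.

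The second step handles the $m$ interleaved i.i.d. subsequences $\{\widetilde R^{(m)}_{m(i-1)+j}\}_i$, each of length $n/m$. For each one we apply the uniform-entropy maximal inequality of Lemma \ref{lem:modified maximal ineq uniform entropy}. This requires three ingredients.
\begin{itemize}
\item[(a)] An envelope $F\lesssim\xi_{K,n}(1+|\psi(\bar\varepsilon)|)$.
\item[(b)] A VC-type entropy bound $\log N\lesssim K\log(a/\epsilon)$. This follows from Assumption \hyperref[(A4)]{(A4)}, since $\mathcal R_n$ is built from differences of elements of $\mathcal F_\rho$ plus a linear class in $h$. The normalization by $|h|$ is handled by peeling $|h|$ into dyadic shells $2^{-l-1}r_h<|h|\le 2^{-l}r_h$, which costs at most one $\log n$ factor.
\item[(c)] A variance bound, which is the key input.
\end{itemize}

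For (c) we write $\widetilde R_i(\theta,h)=-h^\top\mf b_\omega(\mf X_i)\int_0^1[\psi(\bar\varepsilon_i-\theta^\top\mf b_\omega-t h^\top\mf b_\omega)-\psi(\bar\varepsilon_i)]\,\mr dt$. Assumption \hyperref[(B3)]{(B3)}(iii), applied after a change of variables in $\nu(\cdot)$, then gives
\[
\sup_{\mathcal A_n}\|g_{\theta,h}(\bar\varepsilon_i,\mf X_i)\|^2\lesssim \sup_{|\delta|\le r_\theta+r_h,\;t(\delta)\le a_\theta+a_h}\nu(\delta)\lesssim (a_\theta+a_h)^\eta=:\sigma^2 .
\]
Plugging $v\asymp K$, $\sigma^2=(a_\theta+a_h)^\eta$, $\|F\|\lesssim\xi_{K,n}$ and $\|M_n\|\lesssim \xi_{K,n}(n/m)^{1/q}$ (or $\xi_{K,n}\log n$ when $\psi$ has exponential moments) into the maximal inequality gives, per subsequence,
\[
\sqrt{K}(a_\theta+a_h)^{\eta/2}\sqrt{\log n}+K\xi_{K,n}(n/m)^{-1/2}\log n .
\]
The second term is dominated by the first under $K^{1/2}\xi_{K,n}n^{-1/2}=o(1)$ and the polynomial lower size of $a_\theta+a_h$ in the regimes where the lemma is used. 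Averaging over the $m$ subsequences with the $\sqrt{n/m}$ normalization, and then adding the peeling and $m\asymp\log n$ factors, yields $K^{1/2}n^{-1/2}(a_\theta+a_h)^{\eta/2}\log^2 n$.

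We expect the variance step (c) to be the main obstacle. The difficulty is that $\nu(\delta)$ in (B3)(iii) is stated for the symmetric difference $\psi(\bar\varepsilon+\delta^\top\mf b_\omega)-\psi(\bar\varepsilon-\delta^\top\mf b_\omega)$. We will need to pass to the shifted and averaged difference $\psi(\bar\varepsilon-\theta^\top\mf b_\omega-th^\top\mf b_\omega)-\psi(\bar\varepsilon)$. We will also need the bound to survive conditioning on $\Upsilon_i^{(m)}$, which Jensen's inequality handles. Our first attempt is to use monotonicity of $\psi$ (convex $\rho$) to sandwich the difference between symmetric differences at $\delta=\pm(\theta+th)$. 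A second concern is the second-order term in $\|M_n\|$, which may force a mild extra logarithmic factor; this is absorbed in the stated $\log^2 n$.
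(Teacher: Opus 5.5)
Your route is the paper's: the same $\Upsilon_i^{(m)}$-projection with a coupling error of order $\xi_{K,n}\Delta_{K,n}^{\alpha}\chi^{\alpha m}$, Lemma~\ref{lem:modified maximal ineq uniform entropy} on the $m$ interleaved i.i.d.\ subsequences with $v\asymp K$ and $\sigma^2\asymp(a_\theta+a_h)^{\eta}$, and $m\asymp\log n$.

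\emph{The variance step (c).} The step you flagged does not close the way you propose. In (B3)(iii), $\nu(\delta)$ weights the squared symmetric difference by $|\delta^\top\mathbf b_\omega(\mathbf X_i)|^2/|\delta|^2$, i.e.\ in the direction of the shift. By contrast, $\|g_{\theta,h}\|^2$ has shift $\delta_t=\theta+th$ but weight $|h^\top\mathbf b_\omega(\mathbf X_i)|^2/|h|^2$. Since $\delta_t$ is not parallel to $h$ (at $t=0$ it equals $\theta$), no change of variables converts one weight into the other. Instead, do the monotone sandwich pointwise in $\mathbf X_i$. With $a=a_\theta+a_h\ge|\delta_t^\top\mathbf b_\omega(\mathbf X_i)|$ and $\psi$ nondecreasing,
\[
\sup_{0\le t\le 1}\bigl|\psi(\bar\varepsilon_i)-\psi\bigl(\bar\varepsilon_i-\delta_t^\top\mathbf b_\omega(\mathbf X_i)\bigr)\bigr|\le\psi(\bar\varepsilon_i+a)-\psi(\bar\varepsilon_i-a).
\]
Then use $\mathbb E\{[\psi(\bar\varepsilon_i+u)-\psi(\bar\varepsilon_i-u)]^2\mid\mathbf X_i\}\lesssim|u|^{\eta}$, which is the conditional form in which (B3)(iii) is actually verified for each loss in Section~\ref{sec:supp-verify B3}. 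Finish with (A2), $\mathbb E|h^\top\mathbf b_\omega(\mathbf X_i)|^2\le C|h|^2$. This is what the paper's one-line appeal to (A2) and (B3) amounts to.

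\emph{Small $|h|$.} Since $0<|h|\le r_h$, dyadic shells down to $0$ are infinitely many, and each contributes the same order after rescaling. Your single extra $\log n$ therefore presupposes a cutoff $|h|\ge n^{-C}$. The same cutoff is what your first step needs, because an envelope bound gives no decay in $k$. Apart from its $\psi(\bar\varepsilon_i)h^\top\mathbf b_\omega(\mathbf X_i)/|h|$ part (controlled by (B2)), $\widetilde R_i(\theta,h)/|h|$ is only $O(|h|^{-1})$-Lipschitz in $\bar\varepsilon_i$ when $\psi$ jumps. For quantile loss it tends to an indicator in $\bar\varepsilon_i$ as $|h|\downarrow0$. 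Already in the intercept-only case with, e.g., AR(1) errors, $\sup_{\mathcal A_n}|\widetilde R_i-\widetilde R_i^{(m)}|/|h|$ is at least about $1/2$ on $\{|\bar\varepsilon_i|<a_\theta\}$ for every $m$: conditioning smooths the indicator, but the supremum over thresholds sits at its jump. That is an approximation error of order $a_\theta$, far above the target. The paper's geometric bound for this term is also stated without a cutoff. With $|h|\ge n^{-C}$, the usual $\min\{\cdot,\cdot\}\le(\cdot)^{\alpha}$ device gives decay $n^{C\alpha}\chi^{\alpha k}$, absorbed by a larger constant in $m$. This uses geometric decay of $\|\bar\varepsilon_i-\bar\varepsilon_{i,k}\|_1$, as in the supplement's verification of (B2). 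The cutoff costs nothing in the applications, since on $\{|e_n|<n^{-C}\}$ the Bahadur bound is trivial.

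\emph{The envelope and the $\|M_n\|$ term.} The envelope of the normalized class is $\xi_{K,n}\sup_{|x-y|\le a}|\psi(x)-\psi(y)|$, because the $\psi(\bar\varepsilon_i)$ terms cancel; it is not $\xi_{K,n}(1+|\psi(\bar\varepsilon_i)|)$.
\begin{itemize}
\item For Lipschitz $\psi$ (Huber, least squares, expectile) this envelope is $O(\xi_{K,n}a)$, and the $\|M_n\|$ term is absorbed by $K^{1/2}\xi_{K,n}n^{-1/2}=o(1)$ alone. With your envelope it is not absorbed for $\eta=2$ in the refinement steps of Proposition~\ref{prop:Bahadur representation}, where $a\asymp\bar r_n$.
\item For quantile loss the maximum of $n$ envelopes really is of order $\xi_{K,n}$; the paper's bound $O(\xi_{K,n}a^{\eta/2})$ for it is too optimistic. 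So your regime restriction is genuinely needed there. It holds because $a\ge\bar r_n$ and, under that proposition's rate condition, $K\xi_{K,n}=o(n^{1/2})$, which gives $a\gtrsim K\xi_{K,n}^2/n$.
\end{itemize}
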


\begin{proof}
Recall
\[
\mathcal H_i^{(m)}=(\zeta_{i-m+1},\dots,\zeta_{i-1},\zeta_i),\qquad
\mathcal G_i^{(m)}=(\eta_{i-m+1},\dots,\eta_{i-1},\eta_i),
\]
and
\[
\Upsilon_i^{(m)}=(\gamma_{i-m+1},\dots,\gamma_{i-1},\gamma_i),
\qquad \gamma_i=(\zeta_i,\eta_i).
\]
Define the $m$-dependent approximation
\[
\widetilde R_i^{(m)}(\theta,h)
=:
\mb E\!\left(
\widetilde R_i(\theta,h)\mid \Upsilon_i^{(m)}
\right).
\]
Without loss of generality, suppose $n/m\in\mb Z$ and write $L=n/m$.
Then for each fixed $j=1,\dots,m$, the process
$\left\{\widetilde R_{m(i-1)+j}^{(m)}(\theta,h)\right\}_{i=1}^{L}$ is i.i.d. over $i$.
For $k\ge m$, define the coupled variables
$\mf X_{i,k}=H(\mathcal H_{i,k})$, $\bar\varepsilon_{i,k}=a_n(\mf X_{i,k})+G(\mathcal H_{i,k},\mathcal G_{i,k})$,and
\[
\widetilde R_{i,k}(\theta,h)
=:
L_{i,k}(\theta+h)-L_{i,k}(\theta)+\psi(\bar\varepsilon_{i,k})h^\top \mf b_\omega(\mf X_{i,k}),
\]
where $L_{i,k}(\beta)=\rho(\bar\varepsilon_{i,k}-\beta^\top\mf b_\omega(\mf X_{i,k}))-\rho(\bar\varepsilon_{i,k})$. Then by similar arguments in \eqref{eq:sum up over m},
\begin{align}
\mb E \sup_{(\theta,h)\in\mathcal A_n(r_\theta,r_h;a_\theta,a_h)}
\frac{|\widetilde R_i(\theta,h)-\widetilde R_i^{(m)}(\theta,h)|}{|h|}
&\le
\sum_{k\ge m}
\mb E \sup_{(\theta,h)\in\mathcal A_n(r_\theta,r_h;a_\theta,a_h)}
\frac{|\widetilde R_i(\theta,h)-\widetilde R_{i,k}(\theta,h)|}{|h|}.
\label{eq:centered sum up over m}
\end{align}
Notice that
\begin{align}
\widetilde R_i(\theta,h)
&=
h^\top\mf b_\omega(\mf X_i)
\int_0^1
\Big\{
\psi(\bar\varepsilon_i)-\psi\big(\bar\varepsilon_i-\theta^\top\mf b_\omega(\mf X_i)-t h^\top\mf b_\omega(\mf X_i)\big)
\Big\}\mr dt.
\label{eq:centered increment representation}
\end{align}
Therefore,
\begin{align}
&\frac{1}{|h|}
\left|
\widetilde R_i(\theta,h)-\widetilde R_{i,k}(\theta,h)
\right|
\notag\\
&\le
\left|
\int_0^1
\frac{h^\top}{|h|}
\Big[
\mf b_\omega(\mf X_i)
\Big\{
\psi(\bar \varepsilon_i)-\psi(\bar\varepsilon_i-\theta^\top\mf b_\omega(\mf X_i)-t h^\top\mf b_\omega(\mf X_i))
\Big\}
\right.\notag\\
&\hspace{3cm}\left.
-
\mf b_\omega(\mf X_{i,k})
\Big\{
\psi(\bar\varepsilon_{i,k})-\psi(\bar\varepsilon_{i,k}-\theta^\top\mf b_\omega(\mf X_{i,k})-t h^\top\mf b_\omega(\mf X_{i,k}))
\Big\}
\Big]\mr dt
\right|.
\notag
\end{align}
By Assumption \hyperref[(A3)]{(A3)}, Assumption \hyperref[(B2)]{(B2)}, \eqref{eq:bound L},
and Lemma \ref{lem:phd on bx}, there exists a constant $\chi\in(0,1)$ such that for any
$\alpha\in(0,1)$,
\begin{align}
\mb E \sup_{(\theta,h)\in\mathcal A_n(r_\theta,r_h;a_\theta,a_h)}
\frac{|\widetilde R_i(\theta,h)-\widetilde R_{i,k}(\theta,h)|}{|h|}
=
O\left(
\xi_{K,n}\Delta_{K,n}^\alpha \chi^{\alpha k}
\right).
\label{eq:centered remainder one k}
\end{align}
Combining \eqref{eq:centered sum up over m} and \eqref{eq:centered remainder one k}, we obtain
\begin{align}
\mb E \sup_{(\theta,h)\in\mathcal A_n(r_\theta,r_h;a_\theta,a_h)}
\frac{|\widetilde R_i(\theta,h)-\widetilde R_i^{(m)}(\theta,h)|}{|h|}
=
O\left(
\xi_{K,n}\Delta_{K,n}^\alpha \chi^{\alpha m}
\right).
\label{eq:centered remainder error for M-decomp}
\end{align}
Consider the function class
\[
\bar{\mathcal R}_{L}^{\,c}
=:
\left\{
\bar f_{\theta,h}(\varepsilon,\mf x)
=
\frac{h^\top\mf b_\omega(\mf x)}{|h|}
\int_0^1
\Big[
\psi(\varepsilon)-\psi\big(\varepsilon-\theta^\top\mf b_\omega(\mf x)-t h^\top\mf b_\omega(\mf x)\big)
\Big]\mr dt
:
(\theta,h)\in\mathcal A_n(r_\theta,r_h;a_\theta,a_h)
\right\}.
\]
Denote the envelope function of $\bar{\mathcal R}_{L}^{\,c}$ as
\[
\bar F_c(\varepsilon,\mf x)
=
\sup_{(\theta,h)\in\mathcal A_n(r_\theta,r_h;a_\theta,a_h)}
|\bar f_{\theta,h}(\varepsilon,\mf x)|.
\]
By Assumption \hyperref[(B1)]{(B1)} and \eqref{eq:condition psi}, there exist constants $M_1,M_2>0$ s.t. 
\begin{align}
\|\bar F_c(\bar\varepsilon_i,\mf X_i)\|
&\le
\sup_{\mf x}|\mf b_\omega(\mf x)|
\left\|
\sup_{(\theta,h)\in\mathcal A_n(r_\theta,r_h;a_\theta,a_h)}
\sup_{0\le t\le 1}
\left|
\psi(\bar\varepsilon_i)-\psi\big(\bar\varepsilon_i-\theta^\top\mf b_\omega(\mf X_i)-t h^\top\mf b_\omega(\mf X_i)\big)
\right|
\right\|
\notag\\
&\le
\sup_{\mf x}|\mf b_\omega(\mf x)|
\left\|
M_1+M_2
\sup_{(\theta,h)\in\mathcal A_n(r_\theta,r_h;a_\theta,a_h)}
\sup_{0\le t\le 1}
\left|
\theta^\top\mf b_\omega(\mf X_i)+t h^\top\mf b_\omega(\mf X_i)
\right|
\right\|
\notag\\
&=
O(\xi_{K,n}),
\label{eq:centered bound envelope}
\end{align}
where the last line in \eqref{eq:centered bound envelope} is obtained from
\[
\sup_{(\theta,h)\in\mathcal A_n(r_\theta,r_h;a_\theta,a_h)}
\sup_{0\le t\le 1}
\left|
\theta^\top\mf b_\omega(\mf X_i)+t h^\top\mf b_\omega(\mf X_i)
\right|
\le a_\theta+a_h=o(1).
\]
Moreover, by Assumptions \hyperref[(A2)]{(A2)} and \hyperref[(B3)]{(B3)},
\begin{align}
\left\|
\max_{1\le i\le n}\bar F_c(\bar\varepsilon_i,\mf X_i)
\right\|
&\le
\sup_{\mf x}|\mf b_\omega(\mf x)|
\sqrt{
\mb E
\max_{1\le i\le n}
\sup_{(\theta,h)\in\mathcal A_n(r_\theta,r_h;a_\theta,a_h)}
\sup_{0\le t\le 1}
\left|
\psi(\bar\varepsilon_i)-\psi\big(\bar\varepsilon_i-\theta^\top\mf b_\omega(\mf X_i)-t h^\top\mf b_\omega(\mf X_i)\big)
\right|^2
}
\notag\\
&=O\left(
\xi_{K,n}(a_\theta+a_h)^{\eta/2}
\right).
\label{eq:centered bound Mn}
\end{align}
Since $|\theta+t h|\le 2r_n$, by Assumptions \hyperref[(A2)]{(A2)} and \hyperref[(B3)]{(B3)}, for any $(\theta,h)\in\mathcal A_n(r_\theta,r_h;a_\theta,a_h)$, 
\begin{align}
\|\bar f_{\theta,h}(\bar\varepsilon_i,\mf X_i)\|^2
&\le
\mb E\left[
\frac{|h^\top\mf b_\omega(\mf X_i)|^2}{|h|^2}
\sup_{0\le t\le 1}
\left|
\psi(\bar\varepsilon_i)-\psi\big(\bar\varepsilon_i-\theta^\top\mf b_\omega(\mf X_i)-t h^\top\mf b_\omega(\mf X_i)\big)
\right|^2
\right].\notag\\
&=O((a_\theta+a_h)^\eta).
\label{eq:centered bound sigma}
\end{align}
By Assumption \hyperref[(A4)]{(A4)}, the class
$\bar{\mathcal R}_{L}^{\,c}$ satisfies 
\[
\log \sup_Q N\left(
\epsilon \|\bar F_c\|_{Q,2},
\bar{\mathcal R}_{L}^{\,c},
\|\cdot\|_{Q,2}
\right)
\lesssim
K\log(a/\epsilon),
\qquad 0<\epsilon\le 1,
\]
for some $a\gtrsim e$.

To obtain the localized rate, we further combine this entropy bound with
Lemma \ref{lem:modified maximal ineq uniform entropy}, using
\eqref{eq:centered bound envelope}, \eqref{eq:centered bound Mn}, and
\eqref{eq:centered bound sigma}. For fixed $j=1,\dots,m$, define
\[
\bar f_{\theta,h}^{(m)}(\bar\varepsilon_i,\mf X_i)
=:
\mb E\big(
\bar f_{\theta,h}(\bar\varepsilon_i,\mf X_i)\mid \Upsilon_i^{(m)}
\big).
\]
Since $\{\widetilde R_{m(i-1)+j}^{(m)}(\theta,h)\}_{i=1}^L$ is i.i.d. over $i$,
Lemma \ref{lem:modified maximal ineq uniform entropy} implies
\begin{align}
&\mb E
\left\{
\sup_{(\theta,h)\in\mathcal A_n(r_\theta,r_h;a_\theta,a_h)}
\left|
\frac{1}{\sqrt{L}}
\sum_{i=1}^{L}
\left(
\bar f_{\theta,h}^{(m)}(\bar\varepsilon_{m(i-1)+j},\mf X_{m(i-1)+j})
-
\mb E\bar f_{\theta,h}^{(m)}(\bar\varepsilon_{m(i-1)+j},\mf X_{m(i-1)+j})
\right)
\right|
\right\}
\notag\\
&=
O\left(
K^{\frac12}(a_\theta+a_h)^{\frac{\eta}{2}}\log n
+
\frac{K\xi_{K,n}(a_\theta+a_h)^{\frac{\eta}{2}}}{\sqrt{L}}\log n
\right),
\label{eq:centered maximal ineq block}
\end{align}
uniformly over $j=1,\dots,m$. Therefore, combining \eqref{eq:centered remainder error for M-decomp} and \eqref{eq:centered maximal ineq block}, we have
\begin{align}
&\mb E\sup_{(\theta,h)\in\mathcal A_n(r_\theta,r_h;a_\theta,a_h)}
\left|
\frac{1}{|h|}
\big(
\widetilde{\mb R}_n(\theta,h)-\widetilde{\mb R}(\theta,h)
\big)
\right|
\notag\\
&\le
\frac1n\sum_{i=1}^n
\mb E\sup_{(\theta,h)\in\mathcal A_n(r_\theta,r_h;a_\theta,a_h)}
\frac{|\widetilde R_i(\theta,h)-\widetilde R_i^{(m)}(\theta,h)|}{|h|}
\notag\\
&\quad+
\frac1m\sum_{j=1}^m
\mb E\sup_{(\theta,h)\in\mathcal A_n(r_\theta,r_h;a_\theta,a_h)}
\left|
\frac{1}{L}
\sum_{i=1}^L
\left(
\frac{\widetilde R_{m(i-1)+j}^{(m)}(\theta,h)}{|h|}
-
\mb E\frac{\widetilde R_{m(i-1)+j}^{(m)}(\theta,h)}{|h|}
\right)
\right|
\notag\\
&=
O\left(
\xi_{K,n}\Delta_{K,n}^\alpha\chi^{\alpha m}
+
K^{\frac12}(a_\theta+a_h)^{\frac{\eta}{2}}n^{-\frac12}\log^2 n
+
K\xi_{K,n}(a_\theta+a_h)^{\frac{\eta}{2}}n^{-1}\log^2 n
\right).
\label{eq:centered final combine}
\end{align}

Finally, choose an appropriate $m\asymp \log n$, for example
$m=\frac{\log n}{\alpha\log(1/\chi)},$
with sufficiently small $0<\alpha<\min\{\omega_0/\omega_1',1\}$. Then the approximation term $\xi_{K,n}\Delta_{K,n}^\alpha\chi^{\alpha m}$ in \eqref{eq:centered final combine} is negligible, and \eqref{eq:centered remainder bound update} follows by $K^{1/2}\xi_{K,n}n^{-1/2}=o(1)$. 
\end{proof}

\begin{lemma}\label{lem:sup_bZ}
Assume \hyperref[(A1)]{(A1)}–\hyperref[(A2)]{(A2)} and \hyperref[(B1)]{(B1)}–\hyperref[(B3)]{(B3)}. Then we have:
\begin{itemize}
    \item [(i)] Recall $\mf Z_n=n^{-1/2}\sum_{i=1}^n\mf z_i$ with $\mf z_i=\psi(\bar\varepsilon_i)\mf b_\omega(\mf X_i)$ in \eqref{eq:Z_n},
    \begin{equation} 
\sup_{\mf x\in \mathcal{D}_n}\Bigl|\frac{\mf b_\omega(\mf x)^\top \mf Z_n}{\sqrt n}\Bigr|
=O_p\!\Bigl(\xi_{K,n}\,n^{-1/2}\log n\Bigr).    \label{eq:sup_bZ}
\end{equation}
\item[(ii)] Recall $\beta_{Z}^{(j)}=\bar{Q}_n^{-1}\frac{1}{M}\sum_{i=j}^{j+M-1}\mf z_i$ in \eqref{eq:blockk Z}. Suppose $M\rightarrow\infty$ and $M= O(n^{\kappa})$ for some $\kappa\in(0,1)$, then for any given $\alpha\in(0,1)$,
\begin{equation}
    \max_{1\leq j\leq n-M+1}\sup_{\mf x\in\mathcal{D}_n}\left| \mf b_\omega(\mf x)^\top \beta_Z^{(j)} \right|=O_p\left( \Delta_{K,n}^{\alpha}\xi_{K,n}M^{-1/2}\log n \right).
\end{equation}
\end{itemize}
\end{lemma}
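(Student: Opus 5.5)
The plan is to reduce both assertions to a chaining bound for a Gaussian-like linear process indexed by $\mf x\in\mathcal D_n$, using the same $m$-dependent decomposition that drives Theorem \ref{thm:maximal inequality}. For (i), write $\mf b_\omega(\mf x)^\top\mf Z_n/\sqrt n=n^{-1}\sum_{i=1}^n \psi(\bar\varepsilon_i)\mf b_\omega(\mf x)^\top\mf b_\omega(\mf X_i)$. This is a process indexed by $\mf x$ in a compact set, with increments controlled by $\Delta_{K,n}$ from (A1). Since $\Delta_{K,n}$ and $\xi_{K,n}$ are at most polynomial in $n$, we first discretize $\mathcal D_n$ by a grid $\mathcal N_n$ of mesh $n^{-c}$ with $c$ large. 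The grid has polynomially many points, $|\mathcal N_n|\lesssim n^{c'}$. The discretization error is bounded deterministically by $n^{-c}\Delta_{K,n}|\mf Z_n|/\sqrt n$. Since $\|\mf Z_n\|=O(\Delta_{K,n}^\alpha\xi_{K,n})$ by \eqref{eq:bound for Z_n}, this error is negligible once $c$ is large.

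On the grid, we bound $\max_{\mf x\in\mathcal N_n}|n^{-1}\sum_i \psi(\bar\varepsilon_i)\mf b_\omega(\mf x)^\top\mf b_\omega(\mf X_i)|$. Each summand has mean zero by the sieve first-order condition $\mb E[\psi(\bar\varepsilon_i)\mf b_\omega(\mf X_i)]=0$. Its variance is $\mb E[\psi^2(\bar\varepsilon_i)(\mf b_\omega(\mf x)^\top\mf b_\omega(\mf X_i))^2]\lesssim \mf b_\omega(\mf x)^\top\mf B_K\mf b_\omega(\mf x)\lesssim\xi_{K,n}^2$, using (A2) and either a conditional second-moment bound on $\psi$ or boundedness of $\psi$. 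We replace each summand by its $m$-dependent projection onto $\Upsilon_i^{(m)}$ with $m\asymp\log n$. The projection error, uniformly in $\mf x$, is $O(\xi_{K,n}\cdot\xi_{K,n}\Delta_{K,n}^\alpha\chi^{\alpha m})$ in $L^1$, by the same argument as \eqref{eq:error bound for m-dependent-GA}, and is polynomially small. We then split the indices into $m$ interleaved i.i.d. subsequences of length $n/m$. After truncating $\psi$ at level $\log n$ if it is unbounded (the tail is handled by the moment condition from (B2)), we apply Bernstein's inequality to each fixed $\mf x$ and residue class. A union bound over the $n^{c'}$ grid points and the $m$ classes then gives a maximum of order $\xi_{K,n}\sqrt{m\log n/n}+\xi_{K,n}^2 m\log n/n$ with probability tending to one. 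With $m\asymp\log n$ the first term is $\xi_{K,n}n^{-1/2}\log n$. The second term is of smaller order provided $\xi_{K,n}\log n=o(\sqrt n)$, which the standing rate conditions imply.

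For (ii) we run the same argument with $M$ in place of $n$ and an additional union bound over the $T=n-M+1\le n$ blocks. For each block $j$ and grid point $\mf x$, $\mf b_\omega(\mf x)^\top\bar Q_n^{-1}M^{-1}\sum_{i=j}^{j+M-1}\mf z_i$ is an average of $M$ dependent terms, and $\bar Q_n^{-1}$ has bounded operator norm by (A2) and (B3)(ii). The Bernstein bound on each of the $m$ subsequences of length $M/m$ now yields deviations of order $\xi_{K,n}\sqrt{m\log n/M}$. Here $\log n$ rather than $\log M$ appears because the union is over polynomially many $(j,\mf x)$ pairs. The constraint $M=O(n^\kappa)$ keeps $\log n\asymp\log M$ up to constants and keeps the remainder term $\xi_{K,n}^2m\log n/M$ subdominant.

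The factor $\Delta_{K,n}^\alpha$ in the statement absorbs the $m$-decomposition error. That error is $\xi_{K,n}\Delta_{K,n}^\alpha\chi^{\alpha m}$ per term. Union bounding over $n$ blocks via Markov's inequality requires $m$ to be a slightly larger multiple of $\log n$, and leaves the $\Delta_{K,n}^\alpha$ factor in the final rate exactly as in Lemma \ref{lem:uniform Bahadur}.

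The main obstacle is the maximal inequality over blocks. Each block sum is dependent, and the events for overlapping blocks are correlated. The union bound therefore needs exponential tails for every block simultaneously, not just moment bounds, which is why the $m$-dependent coupling must be applied before the union bound. The coupling error is controlled only in $L^1$, so it must be made smaller than $n^{-1}$ times the target rate. We handle this by choosing $m=C\log n$ with $C$ large, depending on $\omega_1'$ and $\kappa$, as in the proof of Lemma \ref{lem:max beta-j}.
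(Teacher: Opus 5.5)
Your route is sound, but it differs from the paper's.

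\textbf{How the paper argues.} For (i) the paper does not discretize. It computes covering numbers of the kernel class $\{\mathbf{u}\mapsto\mathbf{b}_\omega(\mathbf{x})^\top\mathbf{b}_\omega(\mathbf{u}):\mathbf{x}\in\mathcal{D}_n\}$ from the same Lipschitz bound you use. After the $m$-dependent split it applies the Chernozhukov--Chetverikov--Kato uniform-entropy maximal inequality, and it absorbs unbounded $\psi$ through the envelope term $\|\max_i F_i\|\lesssim\xi_{K,n}^2n^{1/4}$. For (ii) it bounds the expected supremum on each interleaved subsequence, then applies Bousquet's inequality with a union bound over blocks and residue classes.

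\textbf{What your route buys.} You replace both tools by a polynomial-size grid, pointwise Bernstein bounds and a union bound. The index set is a compact subset of $\mathbb{R}^d$ and the target already carries a $\log n$, so chaining buys nothing here. Your bookkeeping (summing the $m$ classes to get $\xi_{K,n}\sqrt{m\log n/n}$) makes the origin of the $\log n$ transparent. It also shows that $\Delta_{K,n}^{\alpha}$ is not needed for the main term in (ii): with $m=C\log n$ and $C$ large, the coupling error is polynomially negligible, and Bernstein alone gives $\xi_{K,n}M^{-1/2}\log n$. The paper's route, in turn, gives an expectation bound in (i) and avoids truncation.

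\textbf{Two calibrations need repair.}

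First, truncating $\psi$ at level $\log n$ is too low when $\psi$ is unbounded. (B2) only yields $\|\psi(\bar\varepsilon_i)\|_4<\infty$, so the best moment bound on the discarded part is
$\xi_{K,n}^2\,\mathbb{E}\bigl[|\psi(\bar\varepsilon_i)|\mathbf{1}\{|\psi(\bar\varepsilon_i)|>\log n\}\bigr]\lesssim\xi_{K,n}^2\log^{-3}n$, far above $\xi_{K,n}n^{-1/2}\log n$. Truncate instead at $\tau_n\gg n^{1/4}$:
\begin{itemize}
\item then $\max_i|\psi(\bar\varepsilon_i)|\le\tau_n$ with probability tending to one, leaving only a bias $O(\xi_{K,n}^2\tau_n^{-3})$;
\item the Bernstein linear term becomes $\tau_n\xi_{K,n}^2\log^2n/n$, negligible when $\xi_{K,n}=o(n^{1/4}\log^{-2}n)$.
\end{itemize}
This is essentially the condition the paper needs to discard its own $\xi_{K,n}^2n^{-1/4}\log n$ term. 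For quantile and Huber losses no truncation is needed.

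Second, in (ii) the linear term $\xi_{K,n}^2m\log n/M$ is dominated by the target only if $\xi_{K,n}\log n=O(\Delta_{K,n}^{\alpha}\sqrt{M})$, with an extra factor $\tau_n$ on the left if you truncate. This is a \emph{lower} bound on $M$. The assumption $M=O(n^\kappa)$ supplies neither this nor $\log M\asymp\log n$, though you do not need the latter. The paper's Bousquet step, where $t/U$ must exceed a multiple of $\log n$, silently needs the same requirement. You should state it as an explicit condition rather than attribute it to $M=O(n^\kappa)$.
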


\begin{proof} 
Define the projection kernel
\[
K_\omega(\mf x,\mf u)=:\mf b_\omega(\mf x)^\top \mf b_\omega(\mf u),\qquad \mf x,\mf u\in \mathcal{D}_n,
\]
and note that, for each $\mf x\in \mathcal{D}_n$,
\begin{equation}\label{eq:kernel_rep_bZ}
\mf b_\omega(\mf x)^\top \mf Z_n
=\frac{1}{\sqrt n}\sum_{i=1}^n \psi(\bar\varepsilon_i)\,K_\omega(\mf x,\mf X_i).
\end{equation}
\paragraph{Proof of (i).}
Set $\mathcal K=:\{\mf u\mapsto K_\omega(\mf x,\mf u):\mf x\in \mathcal{D}_n\}$ and write
\[
\mathbb G_n(f)=:\frac{1}{\sqrt n}\sum_{i=1}^n\Bigl(\psi(\varepsilon_i+a_n(\mf X_i))f(\mf X_i)-E[\psi(\varepsilon_i+a_n(\mf X_i))f(\mf X_i)]\Bigr),
\qquad f\in\mathcal K.
\]
Because $\theta_{0,n}$ is the population sieve M-target, $\mb E[\psi(\bar\varepsilon_i)K_\omega(\mf x,\mf X_i)]=\mf b_\omega(\mf x)\mb E[\psi(\bar\varepsilon_i)\mf b_\omega(\mf X_i)]=0$,
so \eqref{eq:kernel_rep_bZ} becomes
\[
\mf b_\omega(\mf x)^\top\mf Z_n=\mathbb G_n(K_\omega(\mf x,\mf X_i)),\qquad \mf x\in \mathcal{D}_n,
\]
and therefore,
\begin{equation}\label{eq:goal_sup_emp}
\sup_{\mf x\in \mathcal{D}_n}|\mf b_\omega(x)^\top\mf Z_n|
=\sup_{f\in\mathcal K}|\mathbb G_n(f)|.
\end{equation}

Let $\mf X$ be a generic copy of $\mf X_i$ and define the semi-metric
\[
d(\mf x,\mf x')=:\|K_\omega(\mf x,\mf X)-K_\omega(\mf x',\mf X)\|.
\]
Note that
\[
\|K_\omega(\mf x,\mf X)-K_\omega(\mf x',\mf X)\|
=\bigl\|(\mf b_\omega(\mf x)-\mf b_\omega(\mf x'))^\top \mf b_\omega(\mf X)\bigr\|
\le |\mf b_\omega(\mf x)-\mf b_\omega(\mf x')|\,\|\mf b_\omega(\mf X)\|.
\]
Using Assumption \hyperref[(A1)]{(A1)} and \hyperref[(A2)]{(A2)}, the mean value theorem gives 
\begin{equation}\label{eq:L2_Lip_kernel}
d(\mf x,\mf x')\le (\xi_{K,n}\vee \sqrt{K})\Delta_{K,n}|\mf x-\mf x'|,\qquad \mf x,\mf x'\in \mathcal{D}_n.
\end{equation}
Since $\mathcal{D}_n$ is compact in $\mathbb R^d$, its Euclidean covering numbers satisfy
$N(\epsilon,\mathcal{D}_n,|\cdot|)\lesssim \epsilon^{-d}$ for $0<\epsilon\le 1$.
Combining with \eqref{eq:L2_Lip_kernel} yields, for all sufficiently small $\epsilon>0$,
\begin{equation}\label{eq:covering_kernel_class}
N(\epsilon,\mathcal K,d(\cdot,\cdot))
\le N\!\Bigl(\epsilon/(\xi_{K,n}\Delta_{K,n}),\mathcal{D}_n,|\cdot|\Bigr)
\lesssim\Bigl(\frac{(\xi_{K,n}\vee \sqrt{K})\Delta_{K,n}}{\epsilon}\Bigr)^d.
\end{equation}

Let $\Upsilon_i=(\ldots,\gamma_{i-1},\gamma_i)$ with $\gamma_i=(\zeta_i,\eta_i)$ as in
Appendix~\ref{sec:proof of thm:maximal inequality} and define the
length-$m$ window $\Upsilon_i^{(m)}=:(\gamma_{i-m+1},\ldots,\gamma_i)$.
For $f\in\mathcal K$, set
\[
g_{i,f}^{(m)}=:\mb E\!\left[\psi(\bar\varepsilon_i)f(\mf X_i)\mid \Upsilon_i^{(m)}\right],
\qquad 
\mathbb G_n^{(m)}(f)=:\frac{1}{\sqrt n}\sum_{i=1}^n\Bigl(g_{i,f}^{(m)}-\mb E[g_{i,f}^{(m)}]\Bigr).
\]
Then, for each fixed $f\in\mathcal K$, in the sense that
$\sigma(g_{i,f}^{(m)}: i\le t)$ is independent of $\sigma(g_{i,f}^{(m)}: i\ge t+m)$
for every $t\ge 1$. Equivalently, the subsequences
\[
\bigl\{g_{j+\ell m,f}^{(m)}\bigr\}_{\ell\ge 0},\qquad j=1,\ldots,m,
\]
are i.i.d.\ (and hence independent across $\ell$) under stationarity.
Consequently, for each $f\in\mathcal K$ we write
\[
\sum_{i=1}^n\bigl(g_{i,f}^{(m)}-\mb E[g_{i,f}^{(m)}]\bigr)
=\sum_{j=1}^m \sum_{\ell:\, j+\ell m\le n}\bigl(g_{j+\ell m,f}^{(m)}-\mb E[g_{j+\ell m,f}^{(m)}]\bigr).
\]
 By the same telescoping and physical-dependence argument used in Appendix~\ref{sec:proof of thm:maximal inequality} (e.g., the derivation around \eqref{eq:loss error for M-decomp}),
Assumption \hyperref[(B2)]{(B2)} implies that there exists $\chi\in(0,1)$ such that for any given $\alpha\in(0,1)$, 
\begin{equation}\label{eq:dep_approx_err}
\mb E\Bigl[\sup_{f\in\mathcal K}|\mathbb G_n(f)-\mathbb G_n^{(m)}(f)|\Bigr]
\lesssim \xi_{K,n}\Delta_{K,n}^{\alpha}\chi^{\alpha m}.
\end{equation}
Choosing appropriate $m\asymp \log n$ makes the right-hand side $o(\xi_{K,n}n^{-1/2})$ under Assumption \hyperref[(A1)]{(A1)}.

For each $j\in\{1,\ldots,m\}$ and write $I_j=:\{j,j+m,j+2m,\ldots\}$, with $|I_j|\asymp n/m$.
Conditioning on the $\sigma$-field generated by $\{\Upsilon_i^{(m)}: i\in I_j\}$,
the summands in $\mathbb G_n^{(m)}(f)$ over $i\in I_j$ are independent.
Consider the class
\[
\mathcal F_j=:\Bigl\{(\varepsilon,\mf x)\mapsto \psi(\varepsilon)K_\omega(\mf x_0,\mf x): \mf x_0\in \mathcal{D}_n\Bigr\}
\]
where the covering number is bounded by \eqref{eq:covering_kernel_class}.
Note that $|K_\omega(\mf x,\mf u)|\le \xi_{K,n}^2$, thus we have the envelope $F_i=|\psi(\bar \varepsilon_i)|\,\xi_{K,n}^2$ 
\begin{equation}
\| \max_{1\le i\le n}F_i\|\le \xi_{K,n}^2\,\|\max_{1\le i\le n}|\psi(\bar\varepsilon_i)|\|_2\leq \xi_{K,n}^2\,\left(\sum_{i=1}^n\mb E|\psi(\bar\varepsilon_i)|^4\right)^{1/4}\lesssim\xi_{K,n}^2\,n^{1/4}\|\psi(\bar\varepsilon_i)\|_4,    \label{eq:max envelope bound} 
\end{equation}
where the inequality follows from standard moment bounds for maxima and stationarity. For the variance proxy, by \hyperref[(A2)]{(A2)} and $\|\psi(\bar\varepsilon_i)\|_4<\infty$ from \hyperref[(B2)]{(B2)},
\begin{equation}
\sup_{\mf x}\mb E\bigl[\psi(\bar\varepsilon_i)^2K_\omega(\mf x,\mf X_i)^2\bigr]
=\sup_{\mf x}\mb E\!\Bigl[\psi(\bar \varepsilon_i)^2\,\mf b_\omega(\mf x)^\top \mf B_K \mf b_\omega(\mf x)\Bigr]
\lesssim \xi_{K,n}^2,   \label{eq:variance proxy} 
\end{equation}
where $\mf B_K=E[\mf b_\omega(\mf X_i)\mf b_\omega(\mf X_i)^\top]$ and $\lambda_{\max}(B_K)\le C$ by Assumption \hyperref[(A2)]{(A2)}.

Therefore, Lemma~\ref{lem:modified maximal ineq uniform entropy} combined with \eqref{eq:covering_kernel_class}, \eqref{eq:max envelope bound}, and \eqref{eq:variance proxy} yields
\begin{equation}\label{eq:iid_block_bound}
E\Bigl[\sup_{f\in\mathcal K}|\mathbb G_n^{(m)}(f)|\Bigr]
\lesssim \xi_{K,n}\sqrt{\log n}+\xi_{K,n}^2\,n^{-1/4}\log n.
\end{equation}
Combine \eqref{eq:goal_sup_emp} and the decomposition
\[
\sup_{f\in\mathcal K}|\mathbb G_n(f)|
\le \sup_{f\in\mathcal K}|\mathbb G_n(f)-\mathbb G_n^{(m)}(f)|
+\sup_{f\in\mathcal K}|\mathbb G_n^{(m)}(f)|,
\]
the dependence approximation bound \eqref{eq:dep_approx_err}, and \eqref{eq:iid_block_bound}
with $m\asymp \log n$. Since the second term
in \eqref{eq:iid_block_bound} is negligible compared to $\xi_{K,n}\log n$, we finally have
\[
\sup_{\mf x\in \mathcal{D}_n}|\mf b_\omega(\mf x)^\top \mf Z_n|
=\sup_{f\in\mathcal K}|\mathbb G_n(f)|
=O_p(\xi_{K,n}\log n),
\]
which yields \eqref{eq:sup_bZ}.

\paragraph{Proof of (ii).} By the definition of $\beta_{Z}^{(j)}$ in \eqref{eq:blockk Z}, we can write 
\[
\mf b_\omega(\mf x)^\top \beta_Z^{(j)} = \mf b_\omega(\mf x)^\top \bar{Q}_n^{-1} \frac{1}{M}\sum_{i=j}^{j+M-1} \mf z_i.
\]
By Assumption \hyperref[(A2)]{(A2)} and \hyperref[(B3)]{(B3)}, the smallest eigenvalues of $\bar{Q}_n$ is bounded above zero. Consequently, it suffices to bound the maximal uniform deviation of the block process
\[
\max_{1\le j\le n-M+1} \sup_{f\in\mathcal{K}} \left| \frac{1}{M}\sum_{i=j}^{j+M-1} \psi(\bar\varepsilon_i)f(\mf X_i) \right|.
\]
We inherit the $m$-dependent projection $g_{i,f}^{(m)} = \mb E[\psi(\bar\varepsilon_i)f(\mf X_i) \mid \Upsilon_i^{(m)}]$ and the window size $m \asymp \log n$ established in the proof of (i). 

Following the block-splitting notation used in Lemma \ref{lem:max beta-j}, for each rolling block $j \in \{1,\ldots,T\}$, we partition the index set $\{j, \ldots, j+M-1\}$ into $m$ interleaved subsets. For each $k \in \{1, \ldots, m\}$, define the index sequence $I_{j,k} = \bigl\{ j + k - 1 + \ell m : 0 \le \ell \le \lfloor (M-k)/m \rfloor \bigr\}.$
The cardinality of each subset $|I_{j,k}| \asymp M/m$. Crucially, for any fixed $j$ and $k$, the elements in $\{g_{t,f}^{(m)}\}_{t \in I_{j,k}}$ are separated by exactly $m$ time steps. Since $g_{t,f}^{(m)}$ depends only on $\Upsilon_t^{(m)}$, the sequence consists of strictly independent (and identically distributed under stationarity) functional variables indexed by $f \in \mathcal{K}$. 
For the fixed independent subset $I_{j,k}$, inheriting the expected supremum bound evaluated via the metric entropy integral in \eqref{eq:iid_block_bound}, we have 
\begin{equation}
\mb E\left[\sup_{f\in\mathcal{K}} |S_{j,k}(f)| \right]\leq C\sqrt{\frac{M}{m}}\xi_{K,n}\log n,    \label{eq:bound sup_S E}
\end{equation}
where the constant $C>0$ is independent of $j$ and $k$ due to stationarity.

Applying Lemma \ref{lem:Bousquet's inequality} with the result in \eqref{eq:variance proxy} and the fact that $\|F_i\|\lesssim \xi_{K,n}^2$, we have for any $t > 0$,
\begin{equation}
\mr P\bigg(\sup_{f\in\mathcal{K}} |S_{j,k}(f)|\ge \mb E[\sup_{f\in\mathcal{K}} |S_{j,k}(f)|] + t\bigg) \le \exp\left( -\frac{t^2}{2(\sigma_m^2 + 2U\mb E[\sup_{f\in\mathcal{K}} |S_{j,k}(f)|] + Ut/3)} \right),\label{eq:max bousquet beta_Z}
\end{equation}
where $U\lesssim \xi_{K,n}^2$ and $\sigma_m^2\lesssim M/m\xi_{K,n}^2$.

Note that
\begin{align}
&\mr P\bigg( \max_{1\le j\le n-M+1} \sup_{f\in\mathcal{K}} \frac{1}{M} \bigg| \sum_{i=j}^{j+M-1} \psi(\bar\varepsilon_i)f(\mf X_i) \bigg| > x \bigg) \notag \\
\le~& \mr P\bigg( \max_{1\le j\le n-M+1} \sup_{f\in\mathcal{K}} \frac{1}{M} \bigg| \sum_{i=j}^{j+M-1} \bigl(\psi(\bar\varepsilon_i)f(\mf X_i) - g_{i,f}^{(m)}\bigr) \bigg| > \frac{x}{2} \bigg)  + \mr P\bigg( \max_{1\le j\le n-M+1} \sup_{f\in\mathcal{K}} \frac{1}{M} \bigg| \sum_{k=1}^m S_{j,k}(f) \bigg| > \frac{x}{2} \bigg), \notag \\
\le~& \frac{2}{x M} \mb E\bigg[ \max_{1\le j\le n-M+1} \sup_{f\in\mathcal{K}} \bigg| \sum_{i=j}^{j+M-1} \bigl(\psi(\bar\varepsilon_i)f(\mf X_i) - g_{i,f}^{(m)}\bigr) \bigg| \bigg] + \sum_{j=1}^{n-M+1} \sum_{k=1}^m \mr P\bigg( \sup_{f\in\mathcal{K}} |S_{j,k}(f)| > \frac{Mx}{2m} \bigg), \label{eq:prob_decomposition}
\end{align}
where the first term in \eqref{eq:prob_decomposition} is bounded via Markov inequality and the second term is bounded by the union bound over all overlapping blocks and independent sub-sequences.

To bound the $m$-decomposition error in the first term of \eqref{eq:prob_decomposition}, applying the dependence approximation result in \eqref{eq:dep_approx_err} and the subadditivity of the expected maximum over $n-M+1 \le n$ blocks, we obtain
\begin{equation}
\mb E\bigg[ \max_{1\le j\le n-M+1} \sup_{f\in\mathcal{K}} \bigg| \sum_{i=j}^{j+M-1} \bigl(\psi(\bar\varepsilon_i)f(\mf X_i) - g_{i,f}^{(m)}\bigr) \bigg| \bigg] \lesssim nM \xi_{K,n}\Delta_{K,n}^{\alpha}\chi^{\alpha m}.\label{eq:decomp_error_bound}
\end{equation}
For the independent maxima in the second term of \eqref{eq:prob_decomposition}, we evaluate the tail probability at the target rate $x = C^* \Delta_{K,n}^{\alpha}\xi_{K,n}M^{-1/2}\log n$ for a sufficiently large constant $C^*>0$. This choice yields the threshold
\[
\frac{Mx}{2m} = \frac{C^*}{2m} \Delta_{K,n}^{\alpha}\xi_{K,n}M^{1/2}\log n.
\]
Choosing $m \asymp \log n$, it strictly dominates the expected supremum $2\mb E[\sup_{f\in\mathcal{K}} |S_{j,k}(f)|]$ by \eqref{eq:bound sup_S E} for sufficiently large $n$. Setting the deviation $t = \frac{Mx}{2m} - \mb E[\sup_{f\in\mathcal{K}} |S_{j,k}(f)|] \asymp \frac{Mx}{m}$ in \eqref{eq:max bousquet beta_Z}, using the fact $U\lesssim \xi_{K,n}^2$ and $\sigma_m^2\lesssim M/m\xi_{K,n}^2$, we have
\begin{align}\label{eq:bousquet_bound}
\mr P\bigg(\sup_{f\in\mathcal{K}} |S_{j,k}(f)| > \frac{Mx}{2m}\bigg) &\le \exp\left( - \frac{t^2}{2(\sigma_m^2 + 2U\mb E[\sup_{f\in\mathcal{K}} |S_{j,k}(f)|] + Ut/3)} \right) \notag\\
&\le \exp\left( - c_1 \min\bigg\{ \frac{t^2}{\sigma_m^2}, \, \frac{t}{U} \bigg\} \right) \le \exp\left( - c_2 \Delta_{K,n}^{2\alpha} \log n \right),
\end{align}
for some universal positive constants $c_1, c_2$. The last inequality holds because the heavy-tail envelope term $Ut$ dominates the denominator, utilizing $U \lesssim \xi_{K,n}^2$ and $\sigma_m^2 \lesssim (M/m)\xi_{K,n}^2$.

Combining \eqref{eq:decomp_error_bound} and \eqref{eq:bousquet_bound} into \eqref{eq:prob_decomposition}, the total target probability
\begin{align}
    &\mr P\bigg( \max_{1\le j\le n-M+1} \sup_{f\in\mathcal{K}} \frac{1}{M} \bigg| \sum_{i=j}^{j+M-1} \psi(\bar\varepsilon_i)f(\mf X_i) \bigg| > C^* \Delta_{K,n}^{\alpha}\xi_{K,n}M^{-1/2}\log n  \bigg)\notag\\
    &\lesssim \frac{n}{\Delta_{K,n}^{\alpha}\xi_{K,n}M^{-1/2}\log n} \xi_{K,n}\Delta_{K,n}^{\alpha}\chi^{\alpha m} + n m \exp(-c_2 \Delta_{K,n}^{\alpha}\log n).
\end{align}
Finally, choosing an appropriate $m\asymp \log n$ (e.g., 
$m=\frac{(1+\kappa/2)\log n}{\alpha\log(1/\chi)}$), both the $m$-decomposition error probability and the union bound tail probability $n m \exp(-c_2 \Delta_{K,n}^{\alpha}\log n)$ vanish to $o(1)$ by Assumption \hyperref[(A1)]{(A1)}. This concludes the proof of (ii).
\end{proof}

\subsection{Lemmas for dependence measure and moment results}
\label{sec:lemmas for physical dependence measure}
The following Lemmas show the physical dependence measure and moment results on loss functions of quantile, Huber's, expectile, and $\mathcal{L}^q$ ($1<q\leq 2$) regressions. Recall $\mf X_{i,k}=\mf H(\mathcal{H}_{i,k}),\varepsilon_{i,k}=G(\mathcal{H}_{i,k},\mathcal{G}_{i,k})$ where $\mathcal{H}_{i,k}=(\mathcal{H}_{i-k-1},\zeta_{i-k}^\prime,\zeta_{i-k+1},\dots,\zeta_i)$, $\mathcal{G}_{i,k}=(\mathcal{G}_{i-k-1},\eta_{i-k}^\prime,\eta_{i-k+1},\dots,\eta_i)$.

\begin{lemma}
    \label{lem:phd on bx} 
    For $q\geq 1$, denote
    \begin{equation}
        \delta_{\mf b}(k,q)=:\left\|\mf b_\omega(\mf X_i)-\mf b_\omega(\mf X_{i,k})\right\|_q. \label{eq:phd on bx def}
    \end{equation}
    If Assumptions \hyperref[(A1)]{(A1)}-\hyperref[(A4)]{(A4)} and \hyperref[(B1)]{(B1)}-\hyperref[(B3)]{(B3)} hold, for some constant $\chi\in(0,1)$, for any given $\alpha\in(0,1)$,
    \begin{equation}
        \label{eq:phd on bx}
         \delta_{\mf b}(k,q)= O(q\xi_{K,n}\Delta_{K,n}^\alpha\chi^{\alpha k}).
    \end{equation}
\end{lemma}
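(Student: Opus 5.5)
The bound follows from interpolating between two estimates for the coupled difference $\mf b_\omega(\mf X_i)-\mf b_\omega(\mf X_{i,k})$. The first is a crude uniform bound: by Assumption (A1), $|\mf b_\omega(\mf X_i)-\mf b_\omega(\mf X_{i,k})|\le 2\xi_{K,n}$ always. The second is a Lipschitz bound. Since $\mathcal D_n$ is convex and $\Delta_{K,n}=\sup_{\mf x\in\mathcal D_n}|\nabla\mf b_\omega(\mf x)|$, the mean value theorem gives $|\mf b_\omega(\mf X_i)-\mf b_\omega(\mf X_{i,k})|\le\Delta_{K,n}|\mf X_i-\mf X_{i,k}|$ whenever both points lie in $\mathcal D_n$. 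Combining the two pointwise, $|D|\le 2\xi_{K,n}\min\{1,\Delta_{K,n}|\mf X_i-\mf X_{i,k}|/\xi_{K,n}\}$. Then, for any $\alpha\in(0,1)$, the inequality $\min\{1,x\}\le x^\alpha$ yields
\[
|D|\le 2\xi_{K,n}^{1-\alpha}\Delta_{K,n}^\alpha|\mf X_i-\mf X_{i,k}|^\alpha\le 2\xi_{K,n}\Delta_{K,n}^\alpha|\mf X_i-\mf X_{i,k}|^\alpha,
\]
using $\xi_{K,n}\gtrsim 1$ (this follows from (A2), since $\mb E|\mf b_\omega(\mf X)|^2\ge Kc$).

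The main obstacle is the indicator $\omega_n$. When exactly one of $\mf X_i,\mf X_{i,k}$ lies in $\mathcal D_n$, the difference is $|\mf b(\cdot)|\le\xi_{K,n}$ but is not controlled by $|\mf X_i-\mf X_{i,k}|$. I would handle this by bounding $|D|\le\xi_{K,n}\mathbf 1\{\text{exactly one in }\mathcal D_n\}$ on that event. The event forces $\mf X_i$ to lie within distance $|\mf X_i-\mf X_{i,k}|$ of $\partial\mathcal D_n$. Bounding its probability by $\mr P(|\mf X_i-\mf X_{i,k}|>u)$ plus $\mr P(\operatorname{dist}(\mf X_i,\partial\mathcal D_n)\le u)\lesssim u$ (using a bounded density near the boundary of the convex set $\mathcal D_n$), and then optimizing $u$, gives a contribution $\xi_{K,n}\chi^{k/2}$. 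This is absorbed into the stated rate since $\alpha<1$ and $\Delta_{K,n}\gtrsim1$. If a density assumption is not available, I would instead read $\nabla\mf b_\omega$ in (A1) as a bound on the increments of $\mf b_\omega$ across $\mathcal D_n$, as the paper implicitly does, and apply the Lipschitz argument directly.

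Next comes the moment step. Taking $\|\cdot\|_q$ gives
\[
\delta_{\mf b}(k,q)\le 2\xi_{K,n}\Delta_{K,n}^\alpha\big\|\,|\mf X_i-\mf X_{i,k}|^\alpha\big\|_q=2\xi_{K,n}\Delta_{K,n}^\alpha\big(\mb E|\mf X_i-\mf X_{i,k}|^{\alpha q}\big)^{1/q}.
\]
If $\alpha q\le1$, Jensen's inequality bounds this by $\delta_{\mf H}(k,1)^{\alpha}=O(\chi^{\alpha k})$ directly. If $\alpha q>1$, I interpolate using (B1). Hölder's inequality gives
\[
\mb E|\Delta X|^{\alpha q}\le(\mb E|\Delta X|)^{1/2}(\mb E|\Delta X|^{2\alpha q-1})^{1/2},
\]
where $\Delta X=\mf X_i-\mf X_{i,k}$. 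The exponential moment $\mb E e^{t_0|\mf X_i|}<\infty$ makes $\|\Delta X\|_p\lesssim p$, so the second factor is at most $(C\alpha q)^{\alpha q}$. This gives $(\mb E|\Delta X|^{\alpha q})^{1/q}\lesssim q\,\chi^{k/(2q)}$.

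That exponent degrades with $q$, so the final step is to rename constants. The lemma allows $\chi$ to depend on the fixed $q$, so we replace $\chi$ by $\chi^{1/(2q)}$ (still in $(0,1)$) and absorb constants into the $O(\cdot)$, keeping the linear factor $q$ from the moment growth. This yields $\delta_{\mf b}(k,q)=O(q\,\xi_{K,n}\Delta_{K,n}^\alpha\chi^{\alpha k})$.
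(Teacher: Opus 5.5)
Your proof is correct and follows essentially the paper's route: a uniform bound $2\xi_{K,n}$ and a gradient bound $\Delta_{K,n}|\mathbf X_i-\mathbf X_{i,k}|$ combined via $\min\{1,x\}\le x^\alpha$, a H\"older interpolation between $\delta_{\mathbf H}(k,1)=O(\chi^k)$ and the sub-exponential moments from (B1), and a final renaming of $\chi$. The paper interpolates between the two $L^q$ bounds rather than pointwise, which makes no difference, and your explicit handling of the jump of $\omega_n$ across $\partial\mathcal D_n$ covers a point the paper skips by applying (A1) directly, so your fallback reading of $\Delta_{K,n}$ is exactly the paper's implicit assumption.
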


\begin{proof}
    For $q\geq 1$, by H\"older's inequality
    \begin{align}
        \left\| \mf X_i-\mf X_{i,k}\right\|_q &=\left\| |\mf X_i-\mf X_{i,k}|^{1-1/2q}|\mf X_i-\mf X_{i,k}|^{1/2q}  \right\|_q,\notag\\
        &\leq \left\| |\mf X_i-\mf X_{i,k}|^{1-1/2q}\right\|_{2q} \left\| |\mf X_i-\mf X_{i,k}|^{1/2q}  \right\|_{2q},\notag\\
        &\leq \|\mf X_i-\mf X_{i,k}\|_{2q-1}^{(2q-1)/2q}\|\mf X_i-\mf X_{i,k}\|_{1}^{1/2q},\notag\\
        &=O(q^{\frac{2q-1}{2q}}\chi^{k/2q}),\label{eq:phd on x}
    \end{align}
    where the last line is obtained from the moment property of sub-exponential random vectors (see Proposition E.4 of \cite{wuzhou2023multiscale}), i.e. $\|\mf X_i\|_q\lesssim q$, $\forall q\geq 1$ and $\delta_{\mf H}(k,1)=O(\chi^k)$ for some constant $\chi\in(0,1)$ in Assumption \hyperref[(B1)]{(B1)}. By \eqref{eq:phd on x} and Assumptions \hyperref[(A1)]{(A1)}, 
    \begin{equation}
        \left\|\mf b_\omega(\mf X_i)-\mf b_\omega(\mf X_{i,k})\right\|_q= O(\Delta_{K,n}\chi^{k/2q}).\label{eq:phd on bx q}
    \end{equation}
    On the other hand
    \begin{equation}
           \left\|\mf b_\omega(\mf X_i)-\mf b_\omega(\mf X_{i,k})\right\|_q\leq 2\sup_{\mf x\in\mathcal{X}}|\mf b_\omega(\mf x)|=O(\xi_{K,n}).\label{eq:phd sup bx}
    \end{equation}
    Using the fact $\min\{|x|,1\}\leq |x|^\alpha$ for any given $\alpha\in[0,1]$, \eqref{eq:phd on bx} holds by combining \eqref{eq:phd sup bx} and \eqref{eq:phd on bx q}. 
\end{proof}

\begin{lemma}
    \label{lem:phd on z} 
    For $q\geq 2$, denote
    \begin{equation}
        \delta_{\mf z}(k,q)=:\left\|\psi(\bar \varepsilon_i)\mf b_\omega(\mf X_i)-\psi(\bar \varepsilon_{i,k})\mf b_\omega(\mf X_{i,k})\right\|_q.\label{eq:phd on z def}
    \end{equation}
    Under Assumptions \hyperref[(A1)]{(A1)}-\hyperref[(A4)]{(A4)} and \hyperref[(B1)]{(B1)}-\hyperref[(B3)]{(B3)}, then for any given $q\in[2,4)$ and $\alpha\in(0,1)$,
    \begin{equation}
        \label{eq:phd on z-4}
        \delta_{\mf z}(k,q)= O(\xi_{K,n}\Delta_{K,n}^\alpha\chi^{\alpha k}),
    \end{equation}
\end{lemma}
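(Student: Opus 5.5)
The approach is to split the difference into a $\psi$-increment term and a basis-increment term, and to bound each by interpolating between a crude bound and a geometric-decay bound. Write
\[
\psi(\bar\varepsilon_i)\mf b_\omega(\mf X_i)-\psi(\bar\varepsilon_{i,k})\mf b_\omega(\mf X_{i,k})
=\big(\psi(\bar\varepsilon_i)-\psi(\bar\varepsilon_{i,k})\big)\mf b_\omega(\mf X_i)
+\psi(\bar\varepsilon_{i,k})\big(\mf b_\omega(\mf X_i)-\mf b_\omega(\mf X_{i,k})\big).
\]
By Minkowski's inequality, $\delta_{\mf z}(k,q)$ is at most the sum of the $\mathcal L^q$ norms of these two terms.

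For the first term, we use $|\mf b_\omega(\mf X_i)|\le \xi_{K,n}$ from Assumption (A1). This gives the bound $\xi_{K,n}\delta_\psi(k,q)\le \xi_{K,n}\delta_\psi(k,4)$, since $q<4$. By Assumption (B2) the latter is $O(\xi_{K,n}\chi^k)$. Because $\chi^k\le\chi^{\alpha k}$ and $\Delta_{K,n}^\alpha\gtrsim 1$ (or, if needed, simply absorbing the constant), this is $O(\xi_{K,n}\Delta_{K,n}^\alpha\chi^{\alpha k})$.

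For the second term, we apply Hölder's inequality with exponents $4/q$ and $4/(4-q)$:
\[
\|\psi(\bar\varepsilon_{i,k})(\mf b_\omega(\mf X_i)-\mf b_\omega(\mf X_{i,k}))\|_q\le \|\psi(\bar\varepsilon_{i})\|_4\,\delta_{\mf b}\big(k,\tfrac{4q}{4-q}\big),
\]
using that $\bar\varepsilon_{i,k}\overset{d}{=}\bar\varepsilon_i$. Finiteness of $\|\psi(\bar\varepsilon_i)\|_4$ follows from (B2), equivalently from the growth condition \eqref{eq:condition psi} together with the moment assumptions. Lemma \ref{lem:phd on bx} applied with $q'=4q/(4-q)<\infty$, which is fixed because $q<4$ strictly, gives $\delta_{\mf b}(k,q')=O(q'\xi_{K,n}\Delta_{K,n}^{\alpha}\chi^{\alpha k})$. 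Since $q'$ is a fixed constant, this is $O(\xi_{K,n}\Delta_{K,n}^\alpha\chi^{\alpha k})$.

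The only delicate point is the second term. Here the exponent $q'=4q/(4-q)$ blows up as $q\to4$, which is exactly why the statement restricts to $q\in[2,4)$. The interpolation in Lemma \ref{lem:phd on bx} itself hides the loss of rate: its geometric factor $\chi^{k/2q'}$ is traded against $\Delta_{K,n}$ via $\min\{x,1\}\le x^\alpha$. One must therefore check that reapplying Lemma \ref{lem:phd on bx} with the larger exponent still gives rate $\chi^{\alpha k}$ after redefining $\chi$ (replacing $\chi$ by $\chi^{1/(2q')}$). Since the lemma asserts existence of some $\chi\in(0,1)$, we take the larger of the two $\chi$'s arising from the two terms. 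Combining the two bounds then yields \eqref{eq:phd on z-4}.
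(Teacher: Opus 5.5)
Your proposal is correct and follows the paper's proof essentially step for step. It uses the same Minkowski split, the same bound $\xi_{K,n}\delta_\psi(k,4)$ on the score-increment term, and the same H\"older step with exponents $4/q$ and $4/(4-q)$ feeding $\delta_{\mathbf b}(k,4q/(4-q))$ into Lemma \ref{lem:phd on bx}. Your version is slightly more careful in two places: your splitting identity is exact (the paper's display pairs $\psi(\bar\varepsilon_i)$ with the basis increment, which is harmless only because the coupled copy has the same law), and you correctly note that the $\chi$ supplied by Lemma \ref{lem:phd on bx} depends on the exponent and must be adjusted.
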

\begin{proof}
    
    By Assumption \hyperref[(A1)]{(A1)} and $\delta_\psi(k,q)\leq \delta_\psi(k,4)=O(\chi^k)$ with $q<4$, we have $\|\psi(\bar\varepsilon_i)\|_4<\infty$ and
    $$
        \delta_{\mf z}(k,q)\leq 2\|\psi(\bar\varepsilon_i)\|_4\xi_{K,n}=O(\xi_{K,n}).
    $$
    On the other hand, by Lemma \ref{lem:phd on bx} and Assumption \hyperref[(B2)]{(B2)}, H\"older inequality yields
    \begin{align}
        \delta_{\mf z}(k,q)&\leq\left\|[\psi(\bar\varepsilon_i)-\psi(\bar\varepsilon_{i,k})]\mf b_\omega(\mf X_i)\right\|_q+\left\|[\psi(\bar\varepsilon_i)|\mf b_\omega(\mf X_i)-\mf b_\omega(\mf X_{i,k})|\right\|_q\notag\\
        &\leq \xi_{K,n}\delta_{\psi}(k,4)+\|\psi(\bar\varepsilon_i)\|_4 \delta_{\mf b}\left(k,\frac{4q}{4-q}\right),\notag\\
        &=O(\xi_{K,n} \Delta_{K,n}^{\alpha}\chi^{\alpha k} ),
    \end{align}
    for any given $q\in[2,4)$ and $\alpha\in(0,1)$.

\end{proof}

\begin{lemma}
\label{lem:phd on rho}
    If Assumptions \hyperref[(A1)]{(A1)}-\hyperref[(A4)]{(A4)} and \hyperref[(B1)]{(B1)}-\hyperref[(B3)]{(B3)} hold, for some constant $\chi\in(0,1)$ and any given $\alpha\in(0,1)$,Recall
\[
L_i(\beta)
=
\rho\!\left(\bar\varepsilon_i-\beta^\top \mf b_\omega(\mf X_i)\right)-\rho(\bar\varepsilon_i),\quad
L_{i,k}(\beta)
=
\rho\!\left(\bar\varepsilon_{i,k}-\beta^\top\mf b_\omega(\mf X_{i,k})\right)-\rho(\bar\varepsilon_{i,k}),
\]
Then for any fixed $\alpha\in(0,1)$ and $r_n\xi_{K,n}=o(1)$,
\[
\mb E\sup_{|\beta|\le r_n}\big|L_i(\beta)-L_{i,k}(\beta)\big|
=
O\!\left(r_n\,\xi_{K,n}\Delta_{K,n}^{\alpha}\chi^{\alpha k}\right).
\]
\end{lemma}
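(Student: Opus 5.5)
The plan is to work directly from the integral representation of the loss increment used in \eqref{eq:multiplier reduction}. Write $s_i(\beta)=\beta^\top\mf b_\omega(\mf X_i)$ and $s_{i,k}(\beta)=\beta^\top\mf b_\omega(\mf X_{i,k})$. Then
$$
L_i(\beta)=-s_i(\beta)\int_0^1\psi\bigl(\bar\varepsilon_i-ts_i(\beta)\bigr)\mr dt ,
$$
and similarly for $L_{i,k}(\beta)$. The difference splits into two pieces:
$$
L_i(\beta)-L_{i,k}(\beta)=-\beta^\top\bigl[\mf b_\omega(\mf X_i)-\mf b_\omega(\mf X_{i,k})\bigr]\int_0^1\psi\bigl(\bar\varepsilon_i-ts_i\bigr)\mr dt-s_{i,k}(\beta)\int_0^1\bigl[\psi(\bar\varepsilon_i-ts_i)-\psi(\bar\varepsilon_{i,k}-ts_{i,k})\bigr]\mr dt .
$$
Call these $I_1(\beta)$ and $I_2(\beta)$. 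Each will be bounded by $r_n$ times a per-$k$ geometric quantity. At the end, the trivial bound $\mb E\sup_{|\beta|\le r_n}|L_i-L_{i,k}|=O(r_n\xi_{K,n})$ will be combined with the geometric bound via $\min\{|x|,1\}\le|x|^\alpha$, exactly as in Lemma \ref{lem:phd on bx}. The trivial bound follows from \eqref{eq:multiplier reduction}, $\|\psi(\bar\varepsilon_i)\|_4<\infty$ and $r_n\xi_{K,n}=o(1)$.

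\emph{Term $I_1$.} By \eqref{eq:condition psi}, $\sup_{t,|\beta|\le r_n}|\psi(\bar\varepsilon_i-ts_i)|\le c_0+|\psi(\bar\varepsilon_i)|+M_2r_n\xi_{K,n}$. Since $\sup_{|\beta|\le r_n}|\beta^\top v|=r_n|v|$, H\"older's inequality gives
$$
\mb E\sup_{|\beta|\le r_n}|I_1(\beta)|\lesssim r_n\,\delta_{\mf b}(k,4/3)\,\bigl(1+\|\psi(\bar\varepsilon_i)\|_4\bigr).
$$
Lemma \ref{lem:phd on bx} then yields $O(r_n\xi_{K,n}\Delta_{K,n}^\alpha\chi^{\alpha k})$ directly.

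\emph{Term $I_2$.} Use $|s_{i,k}(\beta)|\le r_n\xi_{K,n}$, so it suffices to show
$$
\mb E\sup_{|\beta|\le r_n,\,t\in[0,1]}\bigl|\psi(\bar\varepsilon_i-ts_i)-\psi(\bar\varepsilon_{i,k}-ts_{i,k})\bigr|=O\bigl(\Delta_{K,n}^{\alpha}\chi^{\alpha k}\bigr)
$$
up to an interpolation that only costs the exponent. Since $\rho$ is convex, $\psi$ is nondecreasing. Set $D_{i,k}=|\bar\varepsilon_i-\bar\varepsilon_{i,k}|+r_n|\mf b_\omega(\mf X_i)-\mf b_\omega(\mf X_{i,k})|$. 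By monotonicity, $\psi(\bar\varepsilon_{i,k}-ts_{i,k})$ lies between $\psi(\bar\varepsilon_i-ts_i-D_{i,k})$ and $\psi(\bar\varepsilon_i-ts_i+D_{i,k})$. Hence the supremum is at most $\sup_{|v|\le r_n\xi_{K,n}}[\psi(\bar\varepsilon_i-v+D_{i,k})-\psi(\bar\varepsilon_i-v-D_{i,k})]$.

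I then truncate at a level $u_k$. On $\{D_{i,k}>u_k\}$, use the crude bound $2(c_0+|\psi(\bar\varepsilon_i)|)+O(1)$ together with $\mr P(D_{i,k}>u_k)\le\|D_{i,k}\|_{4/3}^{4/3}/u_k^{4/3}$, combined by H\"older. Here $\|D_{i,k}\|$ is geometric by (B1), (B2) and Lemma \ref{lem:phd on bx}; for $\bar\varepsilon$ this is its physical dependence measure, which I take to decay geometrically as in Lemma \ref{lem:phd on bx}. On $\{D_{i,k}\le u_k\}$, I need
$$
\mb E\sup_{|v|\le r_n\xi_{K,n}}\bigl[\psi(\bar\varepsilon_i-v+u_k)-\psi(\bar\varepsilon_i-v-u_k)\bigr]\lesssim u_k ,
$$
which I would obtain by conditioning on $\mf X_i$ and using that $\overline\Xi_n$ is differentiable with bounded derivative near $0$, by (B3)(i). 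Choosing $u_k=\chi^{\alpha' k}$ with $\alpha'$ slightly larger than $\alpha$, and absorbing the $\Delta_{K,n}$ factor carried by $\|D_{i,k}\|$, the two pieces together give $O(\Delta_{K,n}^\alpha\chi^{\alpha k})$ after reparametrizing $\alpha$.

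\emph{Main obstacle.} The hardest step is the displayed bound on $\{D_{i,k}\le u_k\}$: the supremum over the shift $v$ sits inside the expectation, and for discontinuous $\psi$ (quantile loss) this is not automatically $O(u_k)$. I would first try to discretize $v$ on a grid of mesh $u_k$ over $[-r_n\xi_{K,n},r_n\xi_{K,n}]$ and exploit monotonicity to reduce to increments of $\psi$ at widened shifts $\pm2u_k$. If the resulting grid union is too costly, the fallback is to keep the full $\psi$-increment and use $r_n\xi_{K,n}=o(1)$ to localize. In that case the supremum over $v$ is controlled by $\psi(\bar\varepsilon_i+r_n\xi_{K,n}+u_k)-\psi(\bar\varepsilon_i-r_n\xi_{K,n}-u_k)$, a single monotone increment whose expectation is $O(r_n\xi_{K,n}+u_k)$. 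I would then recover the geometric factor by interpolating this with the $\{D_{i,k}>u_k\}$ piece, at the price of a smaller $\alpha$, which the statement permits since $\alpha\in(0,1)$ is arbitrary.
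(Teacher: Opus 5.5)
You identified the right obstacle, but neither remedy resolves it. The underlying problem is that your reduction of $I_2$ is too lossy for the quantile score.

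\textbf{Where the argument fails.} Your bound for $I_1$ and the trivial bound are fine. For Lipschitz or H\"older scores (Huber, expectile, least squares, $L_q$) the $I_2$ step also works, because the supremum is then bounded by a constant times a power of $D_{i,k}$.

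For $\psi_\tau(v)=\tau-\mathbf 1\{v<0\}$, however, the quantity you need, $\mathbb E\sup_{|\beta|\le r_n,\,t\in[0,1]}|\psi(\bar\varepsilon_i-ts_i)-\psi(\bar\varepsilon_{i,k}-ts_{i,k})|$, does not decay in $k$ at all. Take $t=1$ and $\beta=c\,\mathbf b_\omega(\mathbf X_i)/|\mathbf b_\omega(\mathbf X_i)|^2$ with $|c|\le r_n|\mathbf b_\omega(\mathbf X_i)|$, and set $\lambda=\mathbf b_\omega(\mathbf X_i)^\top\mathbf b_\omega(\mathbf X_{i,k})/|\mathbf b_\omega(\mathbf X_i)|^2$. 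As $c$ passes through $\bar\varepsilon_i$, the first argument $\bar\varepsilon_i-c$ changes sign. The second argument, $\bar\varepsilon_{i,k}-\lambda c$, keeps its sign unless $\bar\varepsilon_{i,k}=\lambda\bar\varepsilon_i$.

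So the supremum equals $1$ on $\{|\bar\varepsilon_i|<r_n|\mathbf b_\omega(\mathbf X_i)|\}$, up to a typically null set, however close the coupled pair is. That event has probability of order $r_n\xi_{K,n}$, for example for the trigonometric basis with a conditional density bounded away from zero near $0$. Hence your route gives nothing better than $r_n^2\xi_{K,n}^2$ for $I_2$, uniformly in $k$. This exceeds the target once $\Delta_{K,n}^\alpha\chi^{\alpha k}\ll r_n\xi_{K,n}$, and it is not summable over $k\ge m$ in \eqref{eq:sum up over m}, which is exactly how the lemma is used.

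Neither fallback helps:
\begin{itemize}
\item A grid in $v$ only reproduces this union.
\item There is nothing to ``interpolate''. The $\{D_{i,k}>u_k\}$ piece is additive, not a second bound on the same quantity. The piece on $\{D_{i,k}\le u_k\}$, whose probability tends to one, already carries the non-decaying $r_n\xi_{K,n}$.
\end{itemize}
The decay in $k$ lives in the $t$-average: the set of $t$ on which the two arguments straddle $0$ is short. Pulling $\sup_t$ outside the integral destroys it.

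\textbf{The fix.} Substitute $u=ts$, which moves $\beta$ into the limits of integration. Then $L_i(\beta)=-\int_0^{s_i}\psi(\bar\varepsilon_i-u)\,\mathrm du$, and
\[
L_i(\beta)-L_{i,k}(\beta)=-\int_0^{s_i}\bigl[\psi(\bar\varepsilon_i-u)-\psi(\bar\varepsilon_{i,k}-u)\bigr]\mathrm du-\int_{s_{i,k}}^{s_i}\psi(\bar\varepsilon_{i,k}-u)\,\mathrm du .
\]
By \eqref{eq:condition psi}, the second term is at most $r_n|\mathbf b_\omega(\mathbf X_i)-\mathbf b_\omega(\mathbf X_{i,k})|\,(|\psi(\bar\varepsilon_{i,k})|+M_1+M_2r_n\xi_{K,n})$. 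It is handled exactly like your $I_1$.

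In the first term, enlarge the range of integration to $[-r_n\xi_{K,n},r_n\xi_{K,n}]$. The integrand is then free of $\beta$, so the supremum disappears, and Fubini gives
\[
\mathbb E\sup_{|\beta|\le r_n}\Bigl|\int_0^{s_i}\bigl[\psi(\bar\varepsilon_i-u)-\psi(\bar\varepsilon_{i,k}-u)\bigr]\mathrm du\Bigr|\le 2r_n\xi_{K,n}\sup_{|u|\le r_n\xi_{K,n}}\bigl\|\psi(\bar\varepsilon_i-u)-\psi(\bar\varepsilon_{i,k}-u)\bigr\|_1 .
\]
Your monotone sandwich and truncation now do the job, because they are applied at a fixed shift. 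For the check function, the norm on the right is $\mathrm P(u\text{ lies between }\bar\varepsilon_i\text{ and }\bar\varepsilon_{i,k})\le C\eta+\eta^{-1}\|\bar\varepsilon_i-\bar\varepsilon_{i,k}\|_1$, and you then optimize in $\eta$.

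In general this step needs a shift-uniform version of (B2). For the quantile loss it follows from a bounded conditional density and geometric decay of $\|\bar\varepsilon_i-\bar\varepsilon_{i,k}\|_1$. As you noted, that decay is not literally among (B1)--(B3). The paper's own proof handles the analogous term $A_{2,i}$ only by a brief appeal to (B2)--(B3), and it does not address this sup-inside-the-expectation issue either.
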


\begin{proof}
By the fundamental theorem of calculus,
\[
L_i(\beta)
=
\mf b_\omega(\mf X_i)\int_0^1
\Big\{
\psi(\bar\varepsilon_i)-\psi(\bar\varepsilon_i-t\,\mf b_\omega(\mf X_i))
\Big\}\,dt,
\]
\[
L_{i,k}(\beta)
=
\mf b_\omega(\mf X_{i,k})\int_0^1
\Big\{
\psi(\bar\varepsilon_{i,k})-\psi(\bar\varepsilon_{i,k}-t\,\mf b_\omega(\mf X_{i,k}))
\Big\}\,dt.
\]
Hence
\[
L_i(\beta)-L_{i,k}(\beta)=A_{1,i}(\beta)+A_{2,i}(\beta),
\]
where
\[
A_{1,i}(\beta)
=
\bigl(\mf b_\omega(\mf X_i)-\mf b_\omega(\mf X_{i,k})\bigr)
\int_0^1
\Big\{
\psi(\bar\varepsilon_i)-\psi(\bar\varepsilon_i-t\,\mf b_\omega(\mf X_i))
\Big\}\,dt,
\]
\[
A_{2,i}(\beta)
=
\mf b_\omega(\mf X_{i,k})\int_0^1
\Big[
\psi(\bar\varepsilon_i)-\psi(\bar\varepsilon_i-t\,\mf b_\omega(\mf X_i))
-\psi(\bar\varepsilon_{i,k})+\psi(\bar\varepsilon_{i,k}-t\,\mf b_\omega(\mf X_{i,k}))
\Big]dt.
\]
We first bound $A_{1,i}(\beta)$.
By Assumption \hyperref[(A1)]{(A1)}, \hyperref[(B3)]{(B3)}, and Lemma \ref{lem:phd on bx}, we have
\[
\mb E\sup_{|\beta|\le r_n}|A_{1,i}(\beta)|
\lesssim
\mb E\sup_{|\beta|\le r_n}|\mf b_\omega(\mf X_i)-\mf b_\omega(\mf X_{i,k})|
\lesssim
r_n\,\xi_{K,n}\Delta_{K,n}^{\alpha}\chi^{\alpha k}.
\]
Next consider $A_{2,i}(\beta)$. Notice that
\begin{align*}
&\psi(\bar\varepsilon_i)-\psi(\bar\varepsilon_i-t\,\mf b_\omega(\mf X_i))
-\psi(\bar\varepsilon_{i,k})+\psi(\bar\varepsilon_{i,k}-t\,\mf b_\omega(\mf X_{i,k})) \\
&=
\bigl[\psi(\bar\varepsilon_i)-\psi(\bar\varepsilon_{i,k})\bigr]
-
\bigl[\psi(\bar\varepsilon_i-t\,\mf b_\omega(\mf X_i))-\psi(\bar\varepsilon_{i,k}-t\,\mf b_\omega(\mf X_{i,k}))\bigr].
\end{align*}
Insert and subtract $\psi(\bar\varepsilon_{i,k}-t\,\mf b_\omega(\mf X_i))$ to obtain
\begin{align*}
&\bigl[\psi(\bar\varepsilon_i)-\psi(\bar\varepsilon_{i,k})\bigr]
-
\bigl[\psi(\bar\varepsilon_i-t\,\mf b_\omega(\mf X_i))-\psi(\bar\varepsilon_{i,k}-t\,\mf b_\omega(\mf X_{i,k}))\bigr] \\
&=
\Big(
\psi(\bar\varepsilon_i)-\psi(\bar\varepsilon_{i,k})
-\psi(\bar\varepsilon_i-t\,\mf b_\omega(\mf X_i))
+\psi(\bar\varepsilon_{i,k}-t\,\mf b_\omega(\mf X_i))
\Big) \\
&\quad+
\Big(
\psi(\bar\varepsilon_{i,k}-t\,\mf b_\omega(\mf X_i))
-\psi(\bar\varepsilon_{i,k}-t\,\mf b_\omega(\mf X_{i,k}))
\Big).
\end{align*}
For the first term above, Assumption \hyperref[(B2)]{(B2)} gives 
$\psi(\bar\varepsilon_i)-\psi(\bar\varepsilon_{i,k})$, while Assumption \hyperref[(B3)]{(B3)} controls the change
of the score under the local perturbation $t\,\mf b_\omega(\mf X_i)$, uniformly for $|\beta|\le r_n$. Using H\"older's inequality together with $\sup_{|\beta|\le r_n}|\mf b_\omega(\mf X_{i,k})|\lesssim r_n\xi_{K,n}$,
we obtain
\[
\mb E\sup_{|\beta|\le r_n}|A_{2,i}(\beta)|
\lesssim
r_n\,\xi_{K,n}\Delta_{K,n}^{\alpha}\chi^{\alpha k}
+
r_n^2\xi_{K,n}^2 (r_n\xi_{K,n})^{\eta/2}\chi^{\alpha k}.
\]
Combining the bounds for $A_{1,i}(\beta)$ and $A_{2,i}(\beta)$, using the fact $r_n\xi_{K,n}=o(1)$ and $\eta\in[1,2]$, we finally have
\[
\mb E\sup_{|\beta|\le r_n}|L_i(\beta)-L_{i,k}(\beta)|
=
O\!\left(
r_n\,\xi_{K,n}\Delta_{K,n}^{\alpha}\chi^{\alpha k}
+
r_n^2\xi_{K,n}^2 (r_n\xi_{K,n})^{\eta/2}\chi^{\alpha k}
\right)=O(r_n\,\xi_{K,n}\Delta_{K,n}^{\alpha}\chi^{\alpha k}).
\]
\end{proof}

\subsection{Lemmas for convex Gaussian approximation}

\begin{lemma}[Theorem 2.1 in \cite{Mies2022seq_high-dim}]
    \label{lem:strong GA in mies}
 Let $X_1, \ldots, X_n$ be independent, centered, $d$-variate random vectors which admit the bound $\left\|X_t\right\|_q \leq b_t, t=1, \ldots, n$, for some $q>2$. On a different probability space, there exist independent random vectors $\tilde{X}_t \stackrel{d}{=} X_t$, and independent Gaussian random vectors $Y_t \sim \mathcal{N}\left(0, \operatorname{Cov}\left(X_t\right)\right)$, such that for some universal $C>0$,
$$
\left\|\frac{1}{\sqrt{n}} \sum_{t=1}^n\left(\tilde{X}_t-Y_t\right)\right\| \leq \frac{C}{\sqrt{q-2} \wedge 1}\left(\frac{d}{n}\right)^{\frac{1}{2}-\frac{1}{q}} \sqrt{\log (n)} \sqrt{\frac{1}{n} \sum_{t=1}^n b_t^2} .
$$
\end{lemma}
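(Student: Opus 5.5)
This is Theorem~2.1 of \cite{Mies2022seq_high-dim}, so in the paper it can simply be quoted. If I had to reprove it, I would combine a truncation argument with a Wasserstein-$2$ central limit theorem for bounded summands. A coupling on an enlarged probability space then realizes the Wasserstein distance as the $L^2$ distance in the statement.

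\emph{Step 1 (truncation).} Fix a level $\tau_t>0$ for each $t$ and split $X_t=X_t^{\le}+X_t^{>}$, where
\begin{equation*}
X_t^{\le}=X_t\mathbf 1_{|X_t|\le\tau_t}-\mathbb E\big[X_t\mathbf 1_{|X_t|\le\tau_t}\big].
\end{equation*}
By independence and centering,
\begin{equation*}
\Big\|n^{-1/2}\sum_t X_t^{>}\Big\|^2\le n^{-1}\sum_t \mathbb E\big(|X_t|^2\mathbf 1_{|X_t|>\tau_t}\big)\le n^{-1}\sum_t b_t^q\tau_t^{2-q},
\end{equation*}
which is the same Markov-type estimate used in \eqref{eq:error bound for trucation-q2}.

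\emph{Step 2 (bounded CLT in $W_2$).} The summands $X_t^{\le}$ are independent, centered and satisfy $|X_t^{\le}|\le2\tau_t$. For such sums I would invoke the Zhai/Eldan--Mikulincer--Zhai bound, in its non-identically distributed form: the $W_2$ distance between the normalized sum and the Gaussian with matching covariance is $\lesssim \sqrt{d\log n}\,\big(n^{-1}\sum_t\tau_t^2\big)^{1/2}/\sqrt n$. The Gaussian limit here has the truncated covariances, so I would compare it with $N(0,\sum_t\operatorname{Cov}(X_t)/n)$. The covariance difference is controlled by the Step~1 tail quantity, and $W_2$ between two Gaussians is at most the Frobenius distance of the covariance square roots.

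\emph{Step 3 (balancing).} Taking $\tau_t=b_t(n/d)^{1/q}$ equates the two error terms. Each becomes $(d/n)^{1/2-1/q}\sqrt{\log n}\,\big(n^{-1}\sum_tb_t^2\big)^{1/2}$, which is exactly the claimed rate.

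\emph{Step 4 (coupling).} Optimal $W_2$ couplings exist. The gluing lemma, on a sufficiently rich probability space, then produces $\tilde X_t\stackrel{d}{=}X_t$ together with Gaussians realizing the bound.

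\textbf{Main obstacle.} The delicate point is requiring \emph{independent} individual Gaussians $Y_t\sim N(0,\operatorname{Cov}(X_t))$ coupled with \emph{independent} $\tilde X_t$, rather than a single Gaussian coupled to the sum. I would handle this sequentially, as Mies and Steland do, by coupling the increments blockwise and conditionally. A single truncation level is also not enough: the constant $1/(\sqrt{q-2}\wedge1)$ comes from summing the tail estimates over dyadic truncation levels $2^k\tau_t$. That geometric series converges only for $q>2$ and degenerates as $q\downarrow2$, which explains the factor.
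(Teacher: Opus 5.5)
The paper gives no argument of its own here; it only imports Theorem~2.1 of Mies and Steland, so your first sentence matches what the paper does. Your reproof sketch, however, fails at Step~2. The failure cannot be repaired, because the displayed inequality is not true when the $b_t$ are heterogeneous.

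The claimed bound $\sqrt{d\log n}\,(n^{-1}\sum_t\tau_t^2)^{1/2}/\sqrt n$ for independent bounded summands is false. Take $d=1$, $X_1=\tau\epsilon$ with $\epsilon$ Rademacher, and $X_t=0$ for $t\ge 2$, so that $\tau_1=\tau$ and $\tau_t\approx 0$ for $t\ge 2$. The normalized sum is $\tau\epsilon/\sqrt n$, whose $W_2$ distance to $N(0,\tau^2/n)$ equals $c_0\tau/\sqrt n$ with $c_0=W_2(\epsilon,N(0,1))=(2-2\sqrt{2/\pi})^{1/2}>0$. Your bound is only of order $\sqrt{\log n}\,\tau/n$. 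The Zhai and Eldan--Mikulincer--Zhai bounds are governed by a \emph{uniform} almost-sure bound $\beta$ on all summands, not by a quadratic mean of individual bounds.

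The same example, with $b_1=1$ and $b_t=0$ for $t\ge 2$, contradicts the lemma as displayed. Necessarily $\tilde X_t=Y_t=0$ for $t\ge 2$, so the left-hand side is at least $c_0n^{-1/2}$. The right-hand side is of order $n^{-1+1/q}\sqrt{\log n}$, which is smaller for large $n$ whenever $q>2$; tiny positive $b_t$ for $t\ge2$ change nothing. So no proof can deliver the factor $\sqrt{n^{-1}\sum_t b_t^2}$ with a prefactor that vanishes as $n\to\infty$.

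What your Steps~1--4 do prove, once Step~2 uses a uniform bound, is the version with $B_q=(n^{-1}\sum_t b_t^q)^{1/q}$ (hence also with $\max_t b_t$) in place of $\sqrt{n^{-1}\sum_t b_t^2}$. This grants, as you do, a non-identically distributed form of the Eldan--Mikulincer--Zhai bound.
\begin{itemize}
\item Use one common level $\tau=(n/d)^{1/q}B_q$. The truncation error is at most $(n^{-1}\sum_t b_t^q\tau^{2-q})^{1/2}=B_q(d/n)^{1/2-1/q}$.
\item The bounded-summand term is $\lesssim\sqrt{d\log n}\,\tau/\sqrt n=\sqrt{\log n}\,B_q(d/n)^{1/2-1/q}$.
\item For the Gaussian comparison, write $\Sigma=n^{-1}\sum_t\operatorname{Cov}(X_t)$ and $\Sigma^{\le}=n^{-1}\sum_t\operatorname{Cov}(X_t^{\le})$. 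Each $\operatorname{Cov}(X_t)-\operatorname{Cov}(X_t^{\le})$ is positive semidefinite with trace at most $2\,\mathbb E|X_t|^2\mathbf 1_{\{|X_t|>\tau\}}$. So $N(0,\Sigma)$ is $N(0,\Sigma^{\le})$ plus an independent $N(0,\Sigma-\Sigma^{\le})$, which gives $W_2\le\sqrt{\operatorname{tr}(\Sigma-\Sigma^{\le})}$ directly. Going through a Frobenius bound on $\Sigma-\Sigma^{\le}$ would lose a square root.
\end{itemize}
When $b_t\equiv b$ all these averages coincide. That is the only case the paper needs, since it applies the lemma to summands bounded by $2\pi_n$.

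Finally, your ``main obstacle'' is not one for this statement. The gluing in Step~4 (the law of $(X_1,\dots,X_n)$, an optimal coupling of the two normalized sums, and the law of $(Y_1,\dots,Y_n)$) already yields independent $\tilde X_t$ and independent $Y_t$. Blockwise conditional coupling, and presumably the factor $1/(\sqrt{q-2}\wedge1)$, belong to Mies and Steland's sequential version with a maximum over partial sums. For the unmaximized statement a single truncation level suffices.
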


\begin{lemma}\label{lem:basic_smoothing}
Let $U$ and $V$ be random vectors in $\mathbb{R}^K$.
For any Borel set $A \subset \mathbb{R}^K$ and any $\varepsilon>0$,
define the $\varepsilon$-enlargement of $A$ by
\[
A^{\varepsilon}
=: \bigl\{ \mf x\in\mathbb{R}^K :\operatorname{dist(\mf x,A)}\leq \varepsilon
 \bigr\},
\]
where $\operatorname{dist(\mf x,A)}=:\inf_{\mf a\in A} |\mf x-\mf  a|$.
Then for any $\varepsilon>0$,
\[
\mr{P}(U \in A)
\le
\mr{P}(V \in A^{\varepsilon})
+
\mr{P}(|U-V| \ge \varepsilon).
\]
Moreover, suppose $G$ is a nondegenerate Gaussian random vector, then we have for any $\varepsilon>0$,
\begin{equation}
    \mathcal{K}(U,G)\lesssim \mathcal{K}(V,G)+\mr P(|U-V|\geq \varepsilon)+K^{1/4}\varepsilon  \label{eq:basic smoothing}.
\end{equation}
\end{lemma}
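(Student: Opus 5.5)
The first inequality is a coupling statement and needs no structure on $U,V$. I would split on the event $\{|U-V|<\varepsilon\}$. On that event, $U\in A$ gives $\operatorname{dist}(V,A)\le|U-V|<\varepsilon$, so $V\in A^{\varepsilon}$. Hence
\[
\mr P(U\in A)\le \mr P(U\in A,\,|U-V|<\varepsilon)+\mr P(|U-V|\ge\varepsilon)\le \mr P(V\in A^{\varepsilon})+\mr P(|U-V|\ge\varepsilon).
\]

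For \eqref{eq:basic smoothing}, I fix a convex $A$ and bound $\mr P(U\in A)-\mr P(G\in A)$ from both sides. For the upper direction, the first inequality gives $\mr P(U\in A)\le \mr P(V\in A^{\varepsilon})+\mr P(|U-V|\ge\varepsilon)$. Since $A^{\varepsilon}$ is again convex (a closed enlargement of a convex set), I can replace $\mr P(V\in A^\varepsilon)$ by $\mr P(G\in A^\varepsilon)+\mathcal K(V,G)$. The remaining term is $\mr P(G\in A^{\varepsilon}\setminus A)$, which Gaussian anti-concentration over convex sets (Nazarov/Ball-type bound) controls by $\lesssim K^{1/4}\varepsilon$ for a nondegenerate $G$ with covariance eigenvalues bounded below. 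The constant depends on $\lambda_{\min}$ of $\operatorname{Cov}(G)$, which the paper absorbs into $\lesssim$.

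For the lower direction, I would apply the first inequality with the roles of $U$ and $V$ swapped and with the inner parallel set $A^{-\varepsilon}=\{\mf x:\operatorname{dist}(\mf x,A^c)>\varepsilon\}$, which is convex and satisfies $(A^{-\varepsilon})^{\varepsilon}\subset A$. This gives $\mr P(V\in A^{-\varepsilon})\le\mr P(U\in A)+\mr P(|U-V|\ge\varepsilon)$. Then $\mr P(V\in A^{-\varepsilon})\ge \mr P(G\in A^{-\varepsilon})-\mathcal K(V,G)$, and $\mr P(G\in A\setminus A^{-\varepsilon})\lesssim K^{1/4}\varepsilon$ again by anti-concentration, since $A\setminus A^{-\varepsilon}$ lies in the $\varepsilon$-boundary shell of $A$. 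Taking the supremum over $A\in\mathcal A$ gives the claim.

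I expect the main obstacle to be justifying the $K^{1/4}\varepsilon$ bound on Gaussian mass in $\varepsilon$-shells of convex sets with a non-identity covariance. I would first reduce to the standard normal by the linear map $\Sigma^{-1/2}$. This map sends convex sets to convex sets and $\varepsilon$-shells into $\varepsilon\lambda_{\min}^{-1/2}$-shells, after which Ball's Gaussian surface-area bound $\lesssim K^{1/4}$ applies. Some care is also needed for degenerate cases such as $A^{-\varepsilon}=\emptyset$, but those are trivial.
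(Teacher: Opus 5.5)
Your proof is correct. The first inequality and the upper direction match the paper's argument exactly: coupling on $\{|U-V|<\varepsilon\}$, convexity of $A^{\varepsilon}$, and the Ball-type bound $\mathrm{P}(G\in A^{\varepsilon}\setminus A)\lesssim K^{1/4}\varepsilon$.

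The difference is that the paper stops there. It bounds only $\mathrm{P}(U\in A)-\mathrm{P}(G\in A)$ and then ``takes the supremum''. That does not by itself control $\mathcal{K}(U,G)$, which is a supremum of absolute values over a class not closed under complements. Your inner parallel set $A^{-\varepsilon}$, together with the inner-shell estimate $\mathrm{P}(G\in A\setminus A^{-\varepsilon})\lesssim K^{1/4}\varepsilon$ (the companion statement in Bentkus (2003)), supplies the missing half. Your $\Sigma^{-1/2}$ reduction also makes explicit that the implied constant scales like $\lambda_{\min}(\operatorname{Cov}(G))^{-1/2}$. The paper leaves this inside the word ``nondegenerate''; it is harmless where the lemma is applied, since the Gaussian covariances there have eigenvalues bounded away from zero.

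One small correction. With the closed enlargement used in the lemma, the inclusion $(A^{-\varepsilon})^{\varepsilon}\subset A$ can fail. For $A$ the open ball of radius $2$ and $\varepsilon=1$, the set $(A^{-1})^{1}$ is the closed ball of radius $2$, and since $U$ is arbitrary it may put mass on $\partial A\setminus A$. Argue directly instead, which is what your first-step argument actually gives. On $\{|U-V|<\varepsilon\}$, $V\in A^{-\varepsilon}$ implies $\operatorname{dist}(U,A^{c})\ge \operatorname{dist}(V,A^{c})-|U-V|>0$, so $U\in A$. Hence $\mathrm{P}(V\in A^{-\varepsilon})\le \mathrm{P}(U\in A)+\mathrm{P}(|U-V|\ge\varepsilon)$, as you need.
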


\begin{proof}[Proof.]
Note that for any Borel set $A$,
\[
\{U\in A\}
\subset
\{V\in A^{\varepsilon}\}
\cup
\{|U-V|\ge \varepsilon\}.
\]
Suppose $U(\omega)\in A$ and $|U(\omega)-V(\omega)|<\varepsilon$, then
\[
\operatorname{dist}\bigl(V(\omega),A\bigr)
\le
|V(\omega)-U(\omega)|
<
\varepsilon,
\]
which implies $V(\omega)\in A^{\varepsilon}$.
Therefore, whenever $U\in A$ but $V\notin A^{\varepsilon}$,
it must hold that $|U-V|\ge \varepsilon$.
Taking probabilities yields
\[
\mr{P}(U \in A)
\le
\mr{P}(V \in A^{\varepsilon})
+
\mr{P}(|U-V| \ge \varepsilon).
\]
Moreover, for any convex set $A\in\mb R^K$, we further have
\begin{align}
    \mr{P}(U \in A)-\mr{P}(G \in A)
&\leq \mr{P}(V \in A^{\varepsilon})-\mr{P}(G \in A)+\mr{P}(|U-V| \ge \varepsilon),\notag\\
&=\mr{P}(V \in A^{\varepsilon})-\mr{P}(G \in A^\epsilon)+\mr P(G\in A^\varepsilon\setminus A)+\mr{P}(|U-V| \ge \varepsilon).\notag
\end{align}

Notice that $A^{\varepsilon}$ is also convex, taking the supremum over the class of all convex sets $\mathcal{A}$, we have
\begin{equation}
\mathcal{K}(U,G)\leq \mathcal{K}(V,G)+\mr{P}(|U-V| \ge \varepsilon)+\sup_{A\in \mathcal{A}}\mr P(G\in A^\varepsilon\setminus A).
\end{equation}
Note that $G$ is a nondegenerate Gaussian random vector, by Remark 2.6 of \cite{chen2011multivariate} (see also \cite{bentkus2003Essen-bound} and \cite{ball1993reverse}), $\sup_{A\in \mathcal{A}}\mr P(G\in A^\varepsilon\setminus A)=O(K^{1/4}\varepsilon)$, which yields \eqref{eq:basic smoothing}.

\end{proof}

\begin{lemma}[Lemma 5.3 in \cite{Fang2024Larege-dim}]
\label{lem:Fang2024}
For any d-dimensional random vector $\mf W$, for any $\epsilon>0$,
$$
\sup_{A\in\mathcal{A}}\left|\mr P\left(\mf W\in A\right)-\mr P\left(\mf Z\in A\right)\right| \leq 4 d^{1 / 4} \epsilon+\sup _{A \in \mathcal{A}}\left|\mb E h_{A, \epsilon}(\mf W)-\mb E h_{A, \epsilon}(\mf Z)\right| ,
$$
where $\mf Z$ is a d-dimensional Gaussian random vector with invertible covariance matrix and $\mathcal{A}$ is the collection of all the convex sets in $\mb{R}^d$.
\end{lemma}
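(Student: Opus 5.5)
The plan is the standard smoothing sandwich, with the Gaussian perimeter bound supplying the $4d^{1/4}\epsilon$ term. Fix a convex $A$ and $\epsilon>0$. By construction, $h_{A,\epsilon}(\omega)=1$ for $\omega\in A$ (there $\operatorname{dist}(\omega,A)=0$). It equals $0$ once $\operatorname{dist}(\omega,A)\ge\epsilon$, and it lies in $[0,1]$. Hence
\[
\mathbf 1_A\le h_{A,\epsilon}\le \mathbf 1_{A^{\epsilon}},
\qquad A^{\epsilon}=\{\omega:\operatorname{dist}(\omega,A)\le\epsilon\}.
\]
For the lower direction I will use the inner parallel set $A^{-\epsilon}=\{\omega\in A:\operatorname{dist}(\omega,A^c)\ge\epsilon\}$, which is again convex. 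When it is nonempty, $(A^{-\epsilon})^{\epsilon}\subset A$, so $\mathbf 1_{A^{-\epsilon}}\le h_{A^{-\epsilon},\epsilon}\le \mathbf 1_A$.

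For the upper bound,
\[
\mr P(\mf W\in A)-\mr P(\mf Z\in A)\le \mb E h_{A,\epsilon}(\mf W)-\mb E h_{A,\epsilon}(\mf Z)+\mr P(\mf Z\in A^{\epsilon}\setminus A).
\]
For the lower bound,
\[
\mr P(\mf W\in A)-\mr P(\mf Z\in A)\ge \mb E h_{A^{-\epsilon},\epsilon}(\mf W)-\mb E h_{A^{-\epsilon},\epsilon}(\mf Z)-\mr P(\mf Z\in A\setminus A^{-\epsilon}).
\]
If $A^{-\epsilon}=\emptyset$, I read $h_{\emptyset,\epsilon}\equiv0$, and the same inequality holds trivially. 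Since both $A$ and $A^{-\epsilon}$ belong to $\mathcal A$, the two smoothed differences are bounded by $\sup_{A\in\mathcal A}|\mb E h_{A,\epsilon}(\mf W)-\mb E h_{A,\epsilon}(\mf Z)|$. Taking the supremum over $A$ then reduces the lemma to a uniform bound on the Gaussian mass of the two $\epsilon$-shells:
\[
\sup_{A\in\mathcal A}\Big\{\mr P(\mf Z\in A^{\epsilon}\setminus A)+\mr P(\mf Z\in A\setminus A^{-\epsilon})\Big\}\le 4d^{1/4}\epsilon .
\]

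The shell bound is where the Gaussian surface-area estimate for convex sets enters. This is Ball's bound, refined by Bentkus, already cited in the proof of Lemma \ref{lem:basic_smoothing}. For standard Gaussian $\mf Z$, each shell has probability at most $\epsilon\,\gamma(\partial A)$-type quantities. Both shells are controlled by integrating the Gaussian surface measure of the convex parallel sets over the distance parameter in $[0,\epsilon]$. The Gaussian surface measure of any convex set in $\mb R^d$ is at most $\le 4\cdot d^{1/4}/2$ in Ball's normalization, so the two shells together give $4d^{1/4}\epsilon$.

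For a general invertible covariance $\Sigma$, I will transport the sets by $\Sigma^{-1/2}$, which maps convex sets to convex sets. The main obstacle is precisely this step. A Euclidean $\epsilon$-shell is mapped into a shell of width at most $\epsilon\,\lambda_{\min}(\Sigma)^{-1/2}$, so the constant $4d^{1/4}$ as written is obtained directly only when $\lambda_{\min}(\Sigma)\ge1$, in particular $\Sigma=I$ as in the cited source. In general the bound carries a factor $\lambda_{\min}(\Sigma)^{-1/2}$.

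In the applications here, $\Sigma_{\mf Z}$ has smallest eigenvalue bounded away from $0$, by the hypothesis of Theorem \ref{thm:Gaussian approximation} and by \eqref{eq: lower bound for cov-m}. That factor is therefore a constant and is absorbed into the $\lesssim$ used wherever this lemma is invoked. I would state the lemma with that normalization made explicit.
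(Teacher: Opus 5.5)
Your proof is the standard Bentkus smoothing sandwich, which is also how the cited Lemma 5.3 of Fang and Koike is proved; the paper itself gives no proof and just imports the lemma. You bound from above with $h_{A,\epsilon}$ and the outer shell, from below with $h_{A^{-\epsilon},\epsilon}$ and the inner shell, and control the shells by integrating Gaussian perimeters of convex parallel sets.

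\textbf{One step is wrong as written, though it is easily repaired.}
\begin{itemize}
\item You reduce to $\sup_{A}\{\mathrm{P}(\mathbf Z\in A^{\epsilon}\setminus A)+\mathrm{P}(\mathbf Z\in A\setminus A^{-\epsilon})\}\le 4d^{1/4}\epsilon$, and you justify it by a Gaussian surface-area bound of $2d^{1/4}$ attributed to Ball. Ball's bound is $4d^{1/4}$.
\item Integrated over the parallel sets, Ball's bound gives each shell at most $4d^{1/4}\epsilon$ (this is the pair of bounds recorded by Bentkus, 2003). So your sum is only bounded by $8d^{1/4}\epsilon$.
\item The sum is never needed. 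For a fixed $A$, your upper inequality involves only the outer shell and your lower inequality only the inner shell, so
\[
\bigl|\mathrm{P}(\mathbf W\in A)-\mathrm{P}(\mathbf Z\in A)\bigr|\le \sup_{A'\in\mathcal A}\bigl|\mathbb E h_{A',\epsilon}(\mathbf W)-\mathbb E h_{A',\epsilon}(\mathbf Z)\bigr|+\max\bigl\{\mathrm{P}(\mathbf Z\in A^{\epsilon}\setminus A),\,\mathrm{P}(\mathbf Z\in A\setminus A^{-\epsilon})\bigr\}.
\]
\item The two separate shell bounds then give exactly $4d^{1/4}\epsilon$, with no halving of Ball's constant.
\end{itemize}

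\textbf{Your covariance caveat is correct, and the statement as printed is in fact false without normalization.} The source lemma is for a standard Gaussian $\mathbf Z$. Take $d=1$, $\mathbf W\equiv\delta>0$, $\mathbf Z\sim N(0,\sigma^2)$ and $A=(-\infty,0]$.
\begin{itemize}
\item The left side is at least $1/2$.
\item Every $h_{A',\epsilon}$ is $2/\epsilon$-Lipschitz, so the right side is at most $4\epsilon+2(\delta+\sigma)/\epsilon$.
\item This is below $1/2$ for $\epsilon=1/16$ and $\delta+\sigma<1/128$.
\end{itemize}
Your transported version, with the shell term $4d^{1/4}\lambda_{\min}(\Sigma)^{-1/2}\epsilon$, is the right general statement.

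\textbf{A small precision on your last paragraph.} Where the paper invokes this lemma (the bootstrap proof), the Gaussian is $\bar Q_n^{-1}\mathbf G_n$, not $\mathbf G_n$. The relevant eigenvalue is therefore
\[
\lambda_{\min}(\bar Q_n^{-1}\Sigma_{\mathbf Z}\bar Q_n^{-1})\ge \lambda_{\min}(\Sigma_{\mathbf Z})/\lambda_{\max}(\bar Q_n)^2 .
\]
To keep this bounded away from zero you also need the upper bound on $\bar Q_n$ that the paper uses in the proof of Proposition \ref{prop:quadra approx}.
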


\begin{lemma}[Lemma B.1 in \cite{liu2025wasserstein}] 
\label{lem:liu2025}
Suppose $\operatorname{Cov}(\mf W)= I_d$ and $\mf Z$ is a standard $d$-dimensional Gaussian vector. For any $\epsilon>0$, we have
$$
\sup_{A\in\mathcal{A}}\left|\mr P\left(\mf W\in A\right)-\mr P\left(\mf Z\in A\right)\right|\leq 4 d^{1 / 4} \epsilon+\min \left\{\sup _{A \in \mathcal{A}}\left|\mathbb{E}\left[h_{A, \epsilon}(\mf W)-h_{A, \epsilon}(\mf Z)\right]\right|, \mr P(|\mf W-\mf Z|>\epsilon)\right\},
$$
$\mathcal{A}$ is the collection of all the convex sets in $\mb{R}^d$.
\end{lemma}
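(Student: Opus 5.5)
The statement is Lemma~\ref{lem:liu2025}. My plan is to prove the two entries inside the minimum separately and then combine them. Both arguments rest on one geometric input: for a standard Gaussian $\mf Z$ in $\mb R^d$ and any convex $A$, the $\epsilon$-shells $A^{\epsilon}\setminus A$ and $A\setminus A^{-\epsilon}$ have Gaussian mass at most $C d^{1/4}\epsilon$. Here $A^{-\epsilon}=\{\mf x:\operatorname{dist}(\mf x,A^c)>\epsilon\}$ is the inner parallel body. This is Ball's bound on the Gaussian surface area of convex sets (\cite{ball1993reverse}, also used in Lemma~\ref{lem:basic_smoothing}), with the constant tracked so that it can be absorbed into $4d^{1/4}\epsilon$ after halving.

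\textbf{Smoothing entry.} I would take $h_{A,\epsilon}$ as defined before Lemma~\ref{lem:liu2025}. It satisfies $\mathbf 1_A\le h_{A,\epsilon}\le \mathbf 1_{A^{\epsilon}}$, and $A^{\epsilon}$ is again convex. The upper bound then reads
\begin{align*}
\mr P(\mf W\in A)&\le \mb E h_{A,\epsilon}(\mf W)\\
&\le \mb E h_{A,\epsilon}(\mf Z)+\sup_{A'}|\mb E h_{A',\epsilon}(\mf W)-\mb E h_{A',\epsilon}(\mf Z)|\\
&\le \mr P(\mf Z\in A)+\mr P(\mf Z\in A^{\epsilon}\setminus A)+\sup_{A'}|\mb E h_{A',\epsilon}(\mf W)-\mb E h_{A',\epsilon}(\mf Z)|.
\end{align*}
For the lower bound I apply the same argument to $A^{-\epsilon}$, using $h_{A^{-\epsilon},\epsilon}\le \mathbf 1_A$, which holds because $(A^{-\epsilon})^{\epsilon}\subset A$. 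The shell term is then $\mr P(\mf Z\in A\setminus A^{-\epsilon})$. Both shells are bounded by the Gaussian perimeter estimate, so taking the supremum over $A$ gives the first bound. This is essentially Lemma~\ref{lem:Fang2024}, which I would simply invoke.

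\textbf{Coupling entry.} The upper bound uses $\{\mf W\in A\}\subset\{\mf Z\in A^{\epsilon}\}\cup\{|\mf W-\mf Z|>\epsilon\}$, exactly as in the proof of Lemma~\ref{lem:basic_smoothing}. For the lower bound I use the symmetric inclusion $\{\mf Z\in A^{-\epsilon}\}\subset\{\mf W\in A\}\cup\{|\mf W-\mf Z|>\epsilon\}$. Together with the two shell estimates this gives
\[
|\mr P(\mf W\in A)-\mr P(\mf Z\in A)|\le C d^{1/4}\epsilon+\mr P(|\mf W-\mf Z|>\epsilon).
\]

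\textbf{Combining.} Since each of the two bounds holds separately, so does their minimum. The hypothesis $\operatorname{Cov}(\mf W)=I_d$ is used only to fix $\mf Z$ as a standard Gaussian, which is what makes the isoperimetric constant dimension-explicit.

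The main obstacle is the sharp constant in the Gaussian surface-area bound for convex sets, and in particular obtaining the uniform $4d^{1/4}$ for both the outer shell and the inner shell, including degenerate cases where $A^{-\epsilon}$ is empty. My first approach would be to integrate Ball's surface bound $\gamma(\partial B)\le 4d^{1/4}$ over the parallel bodies $A^{t}$, $t\in[-\epsilon,\epsilon]$, via the coarea formula. When $A^{-\epsilon}=\emptyset$, the inner shell is all of $A$, and its mass is still controlled by the same integral over $t\in[-\epsilon,0]$, since the parallel sets shrink to empty within that range.
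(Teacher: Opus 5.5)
Your proof is correct and is the standard argument behind this lemma. The paper gives no proof of its own and only cites Liu--Yang--Zhou; in your proof the smoothing entry is exactly Lemma~\ref{lem:Fang2024} for standard $\mf Z$, the coupling entry is the two-sided version of the enlargement inclusion in the proof of Lemma~\ref{lem:basic_smoothing}, and both are closed by the Ball--Bentkus shell bounds $\mr P(\mf Z\in A^{\epsilon}\setminus A)\le 4d^{1/4}\epsilon$ and $\mr P(\mf Z\in A\setminus A^{-\epsilon})\le 4d^{1/4}\epsilon$. Two small clarifications: no ``halving'' is needed, because each one-sided deviation involves only one shell, so Ball's constant $4d^{1/4}$ enters directly; and the term $\mr P(|\mf W-\mf Z|>\epsilon)$ should be read as holding for any coupling of $\mf W$ and $\mf Z$ on a common probability space.
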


\begin{lemma}[Remark 2.2 in \cite{Fang2016CLT}]
\label{lem:Fang2016}
Let $\mf{W}=\sum_{i=1}^n \mf{X}_i$ be a sum of d-dimensional random vectors such that $\mb E\left(\mf{X}_i\right)=0$ and $\operatorname{Cov}(\mf{W})=\Sigma_w$. Suppose $\mf{W}$ can be decomposed as follows:
\begin{description}
    \item[1.] $\forall i \in[n], \exists N_i \subset[n]$, such that $i\in N_i$ and $\mf{W}-\mf{X}_{N_i}$ is independent of $\mf{X}_i$, where $[n]=$ $\{1, \cdots, n\}$.
    \item[2.] $\forall i \in[n], j \in N_i, \exists N_{i j} \subset[n]$, such that $N_i \subset N_{ij}$ and $\mf{W}-\mf{X}_{N_{i j}}$ is independent of $\left\{\mf{X}_i, \mf{X}_j\right\}$.
    \item[3.] $\forall i \in[n], j \in N_i, k \in N_{i j}, \exists N_{i j k} \subset[n]$ such that $N_{i j} \subset N_{ijk}$ and $\mf{W}-\mf{X}_{N_{i j k}}$ is independent of $\left\{\mf{X}_i, \mf{X}_j, \mf{X}_k\right\}$.
\end{description}
Suppose further that for each $i \in[n], j \in N_i, k \in N_{i j},\left|\mf{X}_i\right| \leqslant \kappa,\left|N_i\right| \leqslant n_1,\left|N_{i j}\right| \leqslant$ $n_2,\left|N_{i j k}\right| \leqslant n_3$. Then there exists a universal constant $C$ such that
$$
\mathcal{K}\left(\mf{W}, \Sigma_w^{1 / 2} \boldsymbol{Z}\right) \leqslant C d^{1 / 4} n\left|\Sigma_w^{-1 / 2}\right|^3 \kappa^3 n_1\left(n_2+\frac{n_3}{d}\right),
$$
where $\boldsymbol{Z}$ is a d-dimensional standard Gaussian random vector.
\end{lemma}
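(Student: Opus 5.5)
We prove the bound by Lindeberg-type telescoping after the usual reductions, feeding the dependency-neighbourhood structure of the $m$-dependent sum into the Stein-method bound of Fang and R\"ollin. The statement concerns $\mathbf W=\sum_{i=1}^n \mathbf X_i$ with a decomposable local dependence structure. It is exactly Remark 2.2 of \cite{Fang2016CLT}, so the proof amounts to specializing their multivariate Stein bound for convex sets to bounded summands.

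\emph{Step 1: standardization.} Replace $\mathbf W$ by $\Sigma_w^{-1/2}\mathbf W$. Convex sets map to convex sets under invertible linear maps, so $\mathcal K$ is invariant. The summands become $\Sigma_w^{-1/2}\mathbf X_i$, bounded in norm by $|\Sigma_w^{-1/2}|\kappa$, which produces the factor $|\Sigma_w^{-1/2}|^3\kappa^3$. It therefore suffices to take $\Sigma_w=I_d$.

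\emph{Step 2: smoothing.} By Lemma \ref{lem:Fang2024}, $\mathcal K\le 4d^{1/4}\epsilon+\sup_A|\mathbb E h_{A,\epsilon}(\mathbf W)-\mathbb E h_{A,\epsilon}(\mathbf Z)|$. For each $h=h_{A,\epsilon}$ we solve the Stein equation $\Delta f-x\cdot\nabla f=h-\mathbb E h(\mathbf Z)$ via the Ornstein--Uhlenbeck semigroup. This gives $\mathbb E h(\mathbf W)-\mathbb E h(\mathbf Z)=\mathbb E[\Delta f(\mathbf W)-\mathbf W\cdot\nabla f(\mathbf W)]$.

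\emph{Step 3: expansion.} Write $\mathbf W\cdot\nabla f(\mathbf W)=\sum_i \mathbf X_i\cdot\nabla f(\mathbf W)$ and Taylor expand around $\mathbf W-\mathbf X_{N_i}$, which is independent of $\mathbf X_i$. The zeroth-order term vanishes by centering. The first-order term is matched with $\Delta f$ by expanding again around $\mathbf W-\mathbf X_{N_{ij}}$. Here $\sum_i\sum_{j\in N_i}\mathbb E X_{i}X_{j}^\top=I_d$, and conditions 2 and 3 supply the successive independences. The resulting remainders are third-order terms of the form $\mathbb E[X_iX_jX_k\,\partial^3 f(\cdots)]$, summed over at most $n\,n_1 n_2$ (or $n\,n_1 n_3$) index triples, each bounded by $\kappa^3$.

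\emph{Step 4: third-derivative bound.} This is the main obstacle, since $h_{A,\epsilon}$ is only $C^{1,1}$, so $\partial^3 f$ is not uniformly bounded. The intended route is Fang--R\"ollin's smoothing-and-recursion argument. Bound the third-order remainder by an integral against the OU kernel, which yields $\epsilon^{-1}$ times the probability that $\mathbf W$ perturbed by a small variable lands in an $\epsilon$-shell $A^{\epsilon}\setminus A$. Controlling that probability by $\mathcal K+C d^{1/4}\epsilon$, via Gaussian isoperimetry (Ball, Bentkus), gives a recursive inequality of the form
\[
\mathcal K\le C d^{1/4}\epsilon+C n\kappa^3 n_1 n_2\,\epsilon^{-1}\bigl(d^{1/4}\epsilon+\mathcal K\bigr)
\]
(the form of their (4.23)), with the $n_3/d$ refinement arising from the finer accounting of $N_{ijk}$.

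\emph{Step 5: conclusion.} Choosing $\epsilon$ proportional to $n\kappa^3 n_1(n_2+n_3/d)$ and absorbing $\mathcal K$ into the left-hand side gives the stated bound. In the nonabsorbable regime the bound exceeds $1$ and is trivial.
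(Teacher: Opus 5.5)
Your Step~1 is right, and it is precisely what Fang's Remark~2.2 does. The map $A\mapsto\Sigma_w^{-1/2}A$ permutes convex sets, the three independence conditions survive the linear map, and $|\Sigma_w^{-1/2}\mathbf{X}_i|\le|\Sigma_w^{-1/2}|\kappa$ (operator norm). So the lemma follows from Fang's Theorem~2.1 with $\Sigma_w=I_d$, and that citation is the paper's entire proof.

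Steps~3--5, however, do not prove the standardized case as written. The recursion you display contains no $n_3$. If it held, taking $\epsilon\asymp n\kappa^3n_1n_2$ and absorbing $\mathcal{K}$ would give $\mathcal{K}\le Cd^{1/4}n\kappa^3n_1n_2$, which is stronger than Fang's theorem; your choice $\epsilon\propto n\kappa^3n_1(n_2+n_3/d)$ is not explained by it. Condition~3 is what lets $\mathbf{X}_i,\mathbf{X}_j,\mathbf{X}_k$ be decoupled from $\mathbf{W}-\mathbf{X}_{N_{ijk}}$, and the third-order terms are controlled through shell probabilities of that variable. Transferring those probabilities to $\mathbf{W}$ costs a shift of size $|\mathbf{X}_{N_{ijk}}|\le n_3\kappa$. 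The recursion is therefore the one this paper quotes from Fang's (4.23) in the proof of Theorem~\ref{thm:Gaussian approximation}:
\[
\sup_{A}\bigl|\mathbb{E}h_{A,\epsilon}(\mathbf{W})-\mathbb{E}h_{A,\epsilon}(\mathbf{Z})\bigr|
\le Cn\kappa^3n_1n_2\,\epsilon^{-1}\bigl[d^{1/4}(\epsilon+n_3\kappa)+\mathcal{K}\bigr].
\]
With $\epsilon\asymp n\kappa^3n_1n_2$ this leaves a term $d^{1/4}n_3\kappa$. The $n_3/d$ in the statement comes from an identity you never use: $\mathrm{Cov}(\mathbf{W})=I_d$ gives $d=\mathbb{E}|\mathbf{W}|^2=\sum_i\sum_{j\in N_i}\mathbb{E}\langle\mathbf{X}_i,\mathbf{X}_j\rangle\le nn_1\kappa^2$, hence $n_3\kappa\le nn_1n_3\kappa^3/d$. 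Relatedly, no remainder sum runs over $n\,n_1n_3$ triples; $N_{ijk}$ enters only through this shift.

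Step~4 also has a gap. You propose putting $|\nabla h_{A,\epsilon}|\le 2\epsilon^{-1}\mathbf{1}_{A^\epsilon\setminus A}$ inside the Ornstein--Uhlenbeck representation of $\partial^3 f$, which integrates the kernel $e^{-3t}/(1-e^{-2t})$ against a degree-two Hermite factor. This does not produce $\epsilon^{-1}$ times a shell probability. The kernel is not integrable at $t=0$. Even after switching to $|\nabla^2h_{A,\epsilon}|\lesssim\epsilon^{-2}$ for $t<\epsilon^2$, the range $t\ge\epsilon^2$ contributes $\int_{\epsilon^2}^{1}dt/t\asymp\log(1/\epsilon)$. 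The recursion then only yields the stated bound with an extra logarithmic factor. The log-free third-derivative estimate is the genuinely delicate part of Fang's argument, and you only gesture at it.

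So either stop after Step~1 and invoke Fang's Theorem~2.1, as the paper does, or supply that third-derivative estimate together with the $n_3\kappa$ shift and the trace bound above.
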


\subsection{Lemmas for covariance estimation}

\begin{lemma}
\label{lem: cov comparison-element}
    Under Assumptions \hyperref[(A1)]{(A1)}-\hyperref[(A4)]{(A4)} and \hyperref[(B1)]{(B1)}-\hyperref[(B3)]{(B3)}, we have for any given $\alpha\in(0,1)$,
    \begin{equation}
        \left\|\tilde{\boldsymbol{\Sigma}}_M-\mb E\tilde{\boldsymbol{\Sigma}}_M\right\|_F=O_p\left(\xi_{K,n}^2\Delta_{K,n}^{\alpha}\sqrt{\frac{M}{n}} \right).\label{eq: covariance comparison-element}
    \end{equation}
\end{lemma}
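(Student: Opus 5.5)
We need to bound $\|\tilde{\boldsymbol\Sigma}_M-\mb E\tilde{\boldsymbol\Sigma}_M\|_F$, where $\tilde{\boldsymbol\Sigma}_M=T^{-1}\sum_{j=1}^T \mf W(j)\mf W(j)^\top$, $T=n-M+1$, and $\mf W(j)=M^{-1/2}\sum_{i=j}^{j+M-1}\bar Q_n^{-1}\mf z_i$. Since $\bar Q_n^{-1}$ has bounded operator norm by (A2) and (B3), we have $\|\bar Q_n^{-1}\mf A\bar Q_n^{-1}\|_F\lesssim\|\mf A\|_F$. It therefore suffices to treat $\mf S_j=M^{-1/2}\sum_{i=j}^{j+M-1}\mf z_i$ and $\hat{\mf\Gamma}=T^{-1}\sum_j \mf S_j\mf S_j^\top$. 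The strategy is to bound $\mb E\|\hat{\mf\Gamma}-\mb E\hat{\mf\Gamma}\|_F^2=\sum_{a,b}\operatorname{Var}(\hat\Gamma_{ab})$ and then apply Chebyshev (Markov) to obtain the $O_p$ statement.

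\textbf{Step 1: physical dependence of the block summands.} For each entry, define $Y_j^{ab}=S_{j,a}S_{j,b}$ and write $\hat\Gamma_{ab}-\mb E\hat\Gamma_{ab}=T^{-1}\sum_j (Y_j^{ab}-\mb EY_j^{ab})$. We decompose with the projection operators $\mathcal P_k=\mb E(\cdot|\Upsilon_k)-\mb E(\cdot|\Upsilon_{k-1})$, as in \eqref{eq:bound for Z_n}:
$$
\Big\|\sum_{j}(Y_j^{ab}-\mb EY_j^{ab})\Big\|\le\sum_{l\ge0}\Big(\sum_j\|\mathcal P_{j+M-1-l}Y_j^{ab}\|^2\Big)^{1/2}.
$$
Instead of working entrywise, we sum over $(a,b)$ directly on the matrix level. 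This uses $\|\mathcal P_k(\mf S_j\mf S_j^\top)\|_F\le \|\mf S_j-\mf S_{j,\{k\}}\|\,\|\mf S_j\|+\dots$, where $\mf S_{j,\{k\}}$ is the coupled version with the innovation $\gamma_k$ replaced. By Hölder's inequality with $q=4$, $\|\mathcal P_k(\mf S_j\mf S_j^\top)\|_{F,L^1\text{-}L^2}\lesssim \|\mf S_j\|_4\,\|\mf S_j-\mf S_{j,\{k\}}\|_4$.

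\textbf{Step 2: moment bounds.} Lemma \ref{lem:phd on z} (extended to $q=4$, or taken at $q$ slightly below $4$ with $\psi$ bounded or having enough moments) gives $\|\mf S_j-\mf S_{j,\{k\}}\|_4\lesssim M^{-1/2}\sum_{i}\delta_{\mf z}(i-k,4)$. Here the sum runs over the at most $M$ indices $i\ge k$ in the block, so $\|\mf S_j-\mf S_{j,\{k\}}\|_4\lesssim M^{-1/2}\xi_{K,n}\Delta_{K,n}^\alpha\chi^{\alpha(\text{dist})}$, which is summable in the distance. Burkholder plus the argument of \eqref{eq:bound for Z_n} gives $\|\mf S_j\|_4\lesssim\xi_{K,n}\Delta_{K,n}^\alpha$. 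A given innovation $\gamma_k$ influences only blocks $j\in[k-M+1,k]$, that is, at most $M$ blocks. Hence
$$
\Big\|\sum_j\mathcal P_k \mf S_j\mf S_j^\top\Big\|\lesssim M\cdot M^{-1/2}\xi_{K,n}^2\Delta_{K,n}^\alpha .
$$
Summing the squares over $k$ (about $n+M$ terms) through Burkholder's inequality gives $\|\sum_j(\mf S_j\mf S_j^\top-\mb E)\|_F=O(\sqrt{n}\sqrt M\,\xi_{K,n}^2\Delta_{K,n}^\alpha)$. Dividing by $T\asymp n$ yields the claimed $O(\xi_{K,n}^2\Delta_{K,n}^\alpha\sqrt{M/n})$, and Markov's inequality converts this to $O_p$. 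Absorbing a power of $\Delta_{K,n}$ into a single exponent $\alpha$ is harmless because $\alpha$ is arbitrary.

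\textbf{Main obstacle.} The key point is getting the factor $\sqrt M$ rather than $M$ in Step 2. This requires that contributions of $\gamma_k$ to $\mf S_j\mf S_j^\top$ across the $M$ affected blocks are bounded using the exact normalization $M^{-1/2}$ of each $\mf S_j$, together with geometric decay in the lag within each block. Only a single factor $\|\mf S_j\|_4=O(\xi_{K,n})$, which is uniform in $M$, may be spent. The fourth-moment requirement on $\mf z_i$ at the boundary $q=4$ of Lemma \ref{lem:phd on z} must also be handled. To do this I would first try truncating $\psi(\bar\varepsilon_i)$ as in \eqref{eq:truncation z}, using (B2) and $\|\mf z_i\|_q=O(\xi_{K,n})$ for $q>4$ from Theorem \ref{thm:Gaussian approximation}.
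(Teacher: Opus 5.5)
Your proposal is correct and follows essentially the paper's argument: projections $\mathcal P_k$, the coupling bound $\|\mathcal P_k(\mathbf W(j)\mathbf W(j)^\top)\|\le 2\|\mathbf W(j)\|_4\|\mathbf W(j)-\mathbf W_{\{k\}}(j)\|_4$ controlled by $\delta_{\mathbf z}(\cdot,4)$, and Burkholder, with the factor $\sqrt{M}$ arising because each of the $M$ innovations inside a block contributes $O(M^{-1/2})$ (the paper fixes the lag and applies Burkholder over $j$, while your Step 2 fixes the innovation and uses orthogonality over $k$, which is only a change of bookkeeping). Two small remarks: $\gamma_k$ affects not only $M$ blocks but every block $j>k$, though with weight $\chi^{\alpha(j-k)}$, so this adds only $O(M^{-1/2})$; and the $q=4$ endpoint of Lemma~\ref{lem:phd on z} that you flag is also used without justification in the paper's proof, so invoking the $q>4$ moment condition of Theorem~\ref{thm:Gaussian approximation} is a sensible way to make that step rigorous.
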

\begin{proof}
     Recall $\mf W(j)=\frac{1}{2\sqrt{M}p(0)}\sum_{i=j}^{j+M-1} \mf z_{i}$, $\mf z_i=\psi(\bar\varepsilon_i)\mf b_\omega(\mf X_i)$, the causal representation \eqref{eq:causal representation}. Denote 
     $$
        \mf W_{\{j-l\}}(j)=\frac{1}{2\sqrt{M}p(0)}\sum_{i=j}^{j+M-1} \psi(\varepsilon_{i,\{j-l\}})\mf b_\omega(\mf X_{i,\{j-l\}}),
     $$
     where $\mf X_{i,\{k\}}=\mf H(\mathcal{H}_{i,\{k\}})$, $\varepsilon_{i,\{k\}}=G(\mathcal{H}_{i,\{k\}})$, and $\mathcal{H}_{i,\{k\}}=(\dots,\zeta_{k-1},\zeta^\prime_{k},\zeta_{k+1},\dots,\zeta_i)$, $\mathcal{G}_{i,\{k\}}=(\dots,\eta_{i-l-1},\eta^\prime_{k},\eta_{k+1},\dots,\eta_i)$ (If $k>i$, $\mathcal{H}_{i,\{k\}}=\mathcal{H}_i$, $\mathcal{G}_{i,\{k\}}=\mathcal{G}_i$). By \eqref{eq:phd on z-4} in Lemma \ref{lem:phd on z}, we have for any given $\alpha\in(0,1)$,
    \begin{align}
        \|\mf W(j)-\mf W_{\{j-l\}}(j)\|_4\lesssim \frac{1}{\sqrt{M}}\sum_{i=j}^{j+M-1}\delta_{\mf z}(i-(j-l),4)=O\left(\frac{1}{\sqrt{M}} \xi_{K,n}\Delta_{K,n}^{\alpha} \sum_{k=l}^{l+M-1}\chi^{\alpha k}\right).\label{eq:W norm4}
    \end{align}
    Then we have 
    \begin{align}
        &\left(\mb E\left\|\mf W(j)\mf W(j)^\top-\mf W_{\{j-l\}}(j)\mf W_{\{j-l\}}(j)^\top\right\|_F^2\right)^{1/2}\notag\\ 
        &\leq \left(\mb E\left\|\mf W(i)\left[\mf W(i)-\mf W_{\{j-l\}}(i)\right]^\top\right\|_F^2\right)^{1/2}+\left(\mb E\left\|\left[\mf W(j)-\mf W_{\{j-l\}}(j)\right]\mf W_{\{j-l\}}(j)^\top\right\|_F^2\right)^{1/2}  \notag\\
        &\leq 2 \|\mf W(j)\|_4 \|\mf W(j)-\mf W_{\{j-l\}}(j)\|_4
        .\label{eq: operator W_i,j bound}
    \end{align}
    Knowing that $\|\mf A\|_F=|\operatorname{vec}(\mf A)|$ for any matrix $\mf A$, \eqref{eq: operator W_i,j bound} and \eqref{eq:W norm4} imply that
    \begin{equation}
        \left\| \mathcal{P}_{j-l}\left[\operatorname{vec}\left(\mf W(j) \mf W(j)^\top\right)\right]\right\|=O\left(\frac{1}{\sqrt{M}} \xi_{K,n}^2\Delta_{K,n}^{\alpha} \sum_{k=l}^{l+M-1}\chi^{\alpha k}\right).\label{eq: operator f-norm bound}
    \end{equation}
    Note that $\mathcal{P}_{j-l}\left[\operatorname{vec}\left(\mf W(j) \mf W(j)^\top\right)\right]$ for $1\leq j\leq n-M+1$ is martingale difference sequence, using the fact, by Burkholder's and \eqref{eq: operator f-norm bound} we have
    \begin{align}
        \left\| \frac{1}{n-M+1}\sum_{j=1}^{n-M+1}\mathcal{P}_{j-l}\left[\operatorname{vec}\left(\mf W(j) \mf W(j)^\top\right)\right] \right\|^2 &\leq \frac{2}{(n-M+1)^2} \sum_{j=1}^{n-M+1}\left\| \mathcal{P}_{j-l}\left[\operatorname{vec}\left(\mf W(j) \mf W(j)^\top\right)\right]\right\|^2 \notag\\
        &= O\left\{ \frac{\xi_{K,n}^4\Delta_{K,n}^{2\alpha}}{(n-M+1)M} \left(\sum_{j=l}^{l+M-1}\chi^{\alpha k}\right)^2 \right\},\label{eq: sum operator W_i,j bound}
    \end{align}
    Therefore,
    \begin{align}
        \sqrt{\mb E\left( \left\|\tilde{\boldsymbol{\Sigma}}_M-\mr P\tilde{\boldsymbol{\Sigma}} \right\|_F^2\right)}&=\left\| \sum_{l=0}^\infty\frac{1}{n-M+1}\sum_{j=1}^{n-M+1} \mathcal{P}_{j-l}\left[\operatorname{vec}\left(\mf W(j) \mf W(j)^\top\right)\right] \right\|\notag\\
        &=O\left(  \xi_{K,n}^2\Delta_{K,n}^{\alpha}\sqrt{\frac{M}{n}} \right),\notag
    \end{align}
    which yields \eqref{eq: covariance comparison-element}.
    \end{proof}
\begin{lemma}
\label{lem: comparison between Pcov and cov}
    Under Assumptions \hyperref[(A1)]{(A1)}-\hyperref[(A4)]{(A4)} and \hyperref[(B1)]{(B1)}-\hyperref[(B3)]{(B3)}, we have for any given $\alpha\in(0,1)$,
    \begin{equation}
        \left\| \mb E\tilde{\boldsymbol{\Sigma}}_M-\boldsymbol{\Sigma}_n \right\|_F=O\left( \frac{\xi_{K,n}^2\Delta_{K,n}^\alpha}{M} \right).\label{eq: rate of Pcov-cov}
    \end{equation}
\end{lemma}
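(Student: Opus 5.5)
The bound reduces to comparing autocovariances. I will write $\mb E\tilde{\boldsymbol{\Sigma}}_M$ and $\boldsymbol{\Sigma}_n$ through the autocovariance matrices of $\{\mf z_i\}$. By stationarity, $\mb E[\mf W(j)\mf W(j)^\top]$ does not depend on $j$, so
$$
\mb E\tilde{\boldsymbol{\Sigma}}_M=\bar Q_n^{-1}\Big(\sum_{|k|<M}\big(1-\tfrac{|k|}{M}\big)\Gamma(k)\Big)\bar Q_n^{-1},
$$
where $\Gamma(k)=\mb E[\mf z_0\mf z_k^\top]$. Likewise, since $\mf G_n\sim N(0,\Sigma_{\mf Z})$ with $\Sigma_{\mf Z}=\operatorname{Cov}(\mf Z_n)$,
$$
\boldsymbol{\Sigma}_n=\bar Q_n^{-1}\Big(\sum_{|k|<n}\big(1-\tfrac{|k|}{n}\big)\Gamma(k)\Big)\bar Q_n^{-1}.
$$
Because $\lambda_{\min}(\bar Q_n)$ is bounded away from zero by (A2) and (B3), multiplying on both sides by $\bar Q_n^{-1}$ changes Frobenius norms by at most a constant. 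It therefore suffices to bound
$$
\Big\|\sum_{|k|<M}\tfrac{|k|}{M}\Gamma(k)\Big\|_F+\Big\|\sum_{|k|<n}\tfrac{|k|}{n}\Gamma(k)\Big\|_F+\Big\|\sum_{M\le |k|<n}\Gamma(k)\Big\|_F .
$$

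The key step is a geometric bound $\|\Gamma(k)\|_F\lesssim \xi_{K,n}^2\Delta_{K,n}^\alpha\chi^{\alpha|k|}$. I will use the projection operators $\mathcal P_l$ from the proof of Theorem \ref{thm:Gaussian approximation} and write $\mf z_i=\sum_{l\le i}\mathcal P_l\mf z_i$. Orthogonality of martingale differences then gives, for $k\ge0$,
$$
\Gamma(k)=\sum_{l\le 0}\mb E\big[(\mathcal P_l\mf z_0)(\mathcal P_l\mf z_k)^\top\big].
$$
Each summand is bounded by $\|\mathcal P_l\mf z_0\|\,\|\mathcal P_l\mf z_k\|$, and the standard contraction $\|\mathcal P_l\mf z_i\|\le\delta_{\mf z}(i-l,2)$ together with Lemma \ref{lem:phd on z} gives $\|\mathcal P_l\mf z_i\|\lesssim\xi_{K,n}\Delta_{K,n}^{\alpha}\chi^{\alpha(i-l)}$. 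For the first factor I will use the crude bound $\|\mathcal P_l\mf z_0\|\le 2\|\mf z_0\|\lesssim\xi_{K,n}$, which keeps a single factor $\Delta_{K,n}^\alpha$. Summing the geometric series over $l\le0$ then yields $\|\Gamma(k)\|_F\lesssim \xi_{K,n}^2\Delta_{K,n}^\alpha\chi^{\alpha k}$. Summability then finishes the argument: $\sum_k |k|\chi^{\alpha|k|}<\infty$ makes the first two terms $O(\xi_{K,n}^2\Delta^\alpha_{K,n}/M)$ and $O(\xi_{K,n}^2\Delta^\alpha_{K,n}/n)$, and the tail term is $O(\xi_{K,n}^2\Delta^\alpha_{K,n}\chi^{\alpha M})$, which is $o(1/M)$. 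This gives \eqref{eq: rate of Pcov-cov}.

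The main obstacle is the bookkeeping of the Frobenius norm versus the vector $L^2$ norm in the cross-covariance bound, and keeping exactly one $\Delta_{K,n}^\alpha$ factor. The product bound $\|\mb E[\mf u\mf v^\top]\|_F\le\|\mf u\|\,\|\mf v\|$ holds by Cauchy–Schwarz, since $\|\mf u\mf v^\top\|_F=|\mf u||\mf v|$. A second, minor point is the normalisation constant in $\mf W(j)$, which differs by the factor $2p(0)$ in the proof of Lemma \ref{lem: cov comparison-element}. I will match it consistently with the one used for $\bar Q_n^{-1}\mf G_n$, so that the leading terms cancel exactly and only the Bartlett-weight and truncation remainders survive.
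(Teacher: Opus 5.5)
Your proposal is correct and follows essentially the same route as the paper: both write $\mathbb{E}\tilde{\boldsymbol{\Sigma}}_M$ and $\boldsymbol{\Sigma}_n$ as Bartlett-weighted sums of autocovariances, reduce the difference to a $1/M$-weighted head plus a tail beyond lag $M$, and bound $\|\Gamma(k)\|_F$ geometrically through the projection decomposition $\mathcal P_l$ and Lemma \ref{lem:phd on z}. Your bookkeeping is in fact slightly tighter than the paper's. You keep the weight $|k|$ in the head sum, which the paper's displayed bound drops. You also bound one projection factor crudely by $\xi_{K,n}$ to obtain a single $\Delta_{K,n}^\alpha$, whereas the paper's product of two dependence measures really gives $\Delta_{K,n}^{2\alpha}$; that is harmless there only because $\alpha$ is arbitrary.
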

\begin{proof}
    Note that $\{\bar Q_n^{-1}\mf z_i\}_{i=1}^n$ is stationary with mean zero. Let $\Gamma(l)$ be its $l$th autocovariance, then we have
    \begin{align}
        &\mb E \left\{M\mf W(j)\mf W(j)^\top\right\}=\sum_{s=j}^{j+M-1}\sum_{t=j}^{j+M-1}\mb E\left\{ \bar Q_n^{-1}\mf z_{m(s-1)+j} \left( \bar Q_n^{-1}\mf z_{m(t-1)+j} \right)^\top\right\}=\sum_{l=1-M}^{M-1}\sum_{s=(1-M)\vee 1}^{(M-l)\wedge M}\Gamma(l),\label{eq: acf for tilde_Sigma}\\
        &\mb E \left\{n\mf W_n\mf W_n^\top\right\}=\sum_{s=1}^{n}\sum_{t=1}^{n}\mb E\left\{ \bar Q_n^{-1}\mf z_s \left( \bar Q_n^{-1}\mf z_t \right)^\top\right\}=\sum_{l=1-n}^{n-1}\sum_{s=(1-l)\vee 1}^{(n-l)\wedge n}\Gamma(l)\label{eq: acf for Sigma}.
    \end{align}
    Combining \eqref{eq:def tilde_Sigma}, \eqref{eq: acf for tilde_Sigma} and \eqref{eq: acf for Sigma}, we have
    \begin{align}
        \left\| \mb E\tilde{\boldsymbol{\Sigma}}_M-\boldsymbol{\Sigma}_n \right\|_F&= \left\| \frac{1}{(n-M+1)M}\sum_{j=1}^{n-M+1}\mb E \left\{M\mf W(j)\mf W(j)^{\top}\right\} - \frac{1}{n}\mb E \left\{n\mf W_n\mf W_n^\top\right\} \right\|_F,\notag\\
        &=\left\|  \frac{1}{M}\sum_{l=1-M}^{M-1}\sum_{s=(1-l)\vee 1}^{(M-l)\wedge M}\Gamma(l)-\frac{1}{n} \sum_{l=1-n}^{n-1}\sum_{s=(1-l)\vee 1}^{(n-l)\wedge n}\Gamma(l) \right\|_F,\notag\\
        &\leq 2\left\|   \sum_{l=0}^{M-1}\left(\frac{l}{n}-\frac{l}{M}\right)\Gamma(l)+\sum_{l=M}^{n-1} \frac{n-l}{n}\Gamma(l)   \right\|_F,\notag\\
        &\lesssim \frac{1}{M}\sum_{l=0}^{M-1}\|\Gamma(l)\|_F+\sum_{l\geq M }\|\Gamma(l)\|_F.\label{eq: comparison between Pcov and cov}
    \end{align}
    Note that $\mathcal{P}_k$ is orthogonal, by \eqref{eq:phd on z-4} in Lemma \ref{lem:phd on z}, we have
    \begin{align}
        \|\Gamma(l)\|_F&=\left\|\mb E\left\{\sum_{k\in \mb Z}  \left(\mathcal{P}_k\bar Q_n^{-1}\mf z_i\right) \left(  \mathcal{P}_k\bar Q_n^{-1}\mf z_{i+l} \right)^\top\right\} \right\|_F,\notag\\
        &\leq  \sum_{k\in\mb Z}\| \mathcal{P}_k \mf z_i \| \|\mathcal{P}_k \mf z_{i+l} \|, \notag\\
        &\leq \sum_{k\in \mb Z} \delta_{\mf z}(k,2)\delta_{\mf z}(k+l,2)=O( \xi_{K,n}^2\Delta_{K,n}^\alpha \chi^{\alpha l}).\label{eq: bound Gamma}
    \end{align}
    Combining \eqref{eq: comparison between Pcov and cov} and \eqref{eq: bound Gamma}, \eqref{eq: rate of Pcov-cov} holds.
\end{proof}

\section{Appendix}
\label{sec:appendix}

\subsection{Examples of time series models}
\label{sec:example of time series models}
We show that the following examples can all be rewritten in the form of \eqref{eq:causal representation}. 
\begin{example}[VARMA model]
Let $\mf e_t, t \in \mathbb{Z}$, be i.i.d. $d$-dimensional random vectors with mean 0 and $\|\mf e_t\|_q<\infty$; Let $\mf x_t$ be the $d$-variate ARMA process of orders $p$ and $q$, i.e.
\begin{equation}
    \mf A_0\mf x_t+\mf A_1\mf x_{t-1}+\dots+\mf A_p\mf x_{t-p}=\mf M_0\mf e_t+\mf M_1\mf e_{t-1}+\dots+\mf M_q\mf e_{t-q},\label{eq:VARMA}
\end{equation}
where $\mf A_0,\dots,\mf A_p,\mf M_0,\dots,\mf M_q$ are $d\times d$ matrices and $\mf A_0=\mf M_0=I_d$. The equation \eqref{eq:VARMA} can be written in summary notation as $\mf A(L)\mf x_t=\mf M(L)\mf e_t,$
where $L$ is the lag operator, which has the effect that $L \mf x_t=\mf x_{t-1}$, and where $\mf A(z)=\mf A_0+\mf A_1 z+\cdots+\mf A_p z^p$ and $\mf M(z)=\mf M_0+\mf M_1 z+\cdots+\mf M_q z^q$ are matrix-valued polynomials assumed to be of full rank. The $\{\mf x_t\}$ is stationary if and only if $\operatorname{det} (\mf A(z))\neq 0$ for all $z\in(-1,1)$; see \cite{hannan2012statistical}, \cite{LUTKEPOHL2006287}. Therefore, let $\mathcal{H}_t=(\dots,\mf e_{t-1},\mf e_{t})$, then the stationary $\{\mf x_t\}$ can be written as 
$$
\mf x_t=\mf H(\mathcal{H}_t),\quad \mf H(\mathcal{H}_t) = \Phi_0\mf e_t+\Phi_1 \mf e_{t-1}+\Phi_2 \mf e_{t-2}+\dots,
$$
wherein the matrices $\Phi_j$ are determined by the equation $\mf A(z)\Phi(z)=\mf M(z)$ with $\Phi(z)=\sum_{j=0}^\infty\Phi_j z^j$. Let $\{\mf e_t^\prime\}$ be the i.i.d. copy of $\{\mf e_t\}$, we can obtain the physical dependence measure of $\mf H(\mathcal{H}_t)$, i.e. 
$$
    \delta_{\mf H}(k,q)=\| \mf H(\mathcal{H}_0)-\mf H(\mathcal{H}_{0,k})  \|_q=\| \Phi_k\mf e_{-k}-\Phi_k\mf e_{-k}^\prime \|_q=\|\Phi_k(\mf e_1-\mf e_2)\|_q.
$$
\end{example}

\begin{example}[GARCH model]
Let $\varepsilon_t, t \in \mathbb{Z}$, be i.i.d. random variables with mean 0 and variance 1. Let $y_t=\sqrt{x_t} \varepsilon_t$ and $x_t=\alpha_0+\alpha_1 y_{t-1}^2+\cdots+\alpha_q y_{t-q}^2+\beta_1 x_{t-1}+\cdots+\beta_p x_{t-p}$ be the generalized autoregressive conditional heteroscedastic model $\operatorname{GARCH}(p, q)$, where $\alpha_0>0, \alpha_j \geqslant 0$ for $1 \leqslant j \leqslant q$ and $\beta_i \geqslant 0$ for $1 \leqslant i \leqslant p$. Then $\left\{y_t\right\}$ is stationary if $\sum_{j=1}^q \alpha_j+\sum_{i=1}^p \beta_i<1$; see \cite{bollerslev1986generalized}. Let $\mf z_t=\left(y_t^2, \ldots, y_{t-q+ 1}^2, x_t, \ldots, x_{t-p+1}\right)^\top, \quad \mf b_t=\left(\alpha_0 \varepsilon_t^2, 0, \ldots, 0, \alpha_0, 0, \ldots, 0\right)^\top$ and $\theta=\left(\alpha_1, \ldots, \alpha_q, \beta_1, \ldots, \beta_p\right)^{\mathrm{T}}$; let $\mf u_i=(0, \ldots, 0,1,0, \ldots, 0)^{\mathrm{T}}$ be the unit column vector with $i$ th element being $1,1 \leqslant i \leqslant p+q$. It is well known that GARCH models admit the following representation \cite{bougerol1992stationarity}:
\begin{equation}
    \mf z_t=\mf M_t \mf z_{t-1}+\mf b_t \text {, where } \mf M_t=\left(\theta \varepsilon_t^2, \mf u_1, \ldots, \mf u_{q-1}, \theta, \mf u_{q+1}, \ldots, \mf u_{p+q-1}\right)^\top.\label{eq:GARCH}
\end{equation}
Note that both $\mf M_t$ and $\mf b_t$ are measurable functions of $\varepsilon_t$, thus $\mf z_t$ and $y_t$ can be written as measurable functions of $\mathcal{G}_t=(\dots,\varepsilon_{t-1},\varepsilon_t)$, i.e.
$$
y_t=G(\mathcal{G}_t)=\sqrt{\mf u_{q+1}^\top \mf Z(\mathcal{G}_t)}\varepsilon_t,\quad \mf z_t=\mf Z(\mathcal{G}_t)=\mf b_t+\Phi_1\mf b_{t-1}+\Phi_2\mf b_{t-2}+\dots,
$$
where $\Phi_j=\prod_{i=t-j+1}^t\mf M_i$. The physical dependence measure of $y_t$ can be specified and bounded by the largest eigenvalue of $(\mf M^\top \mf M)^{1/2}$; see \cite{wu2005linear}, \cite{Wu2011AsymptoticTF}, and \cite{Wu2011gaussian} for details.
\end{example}

\subsubsection{Verification of the dependence Assumption \hyperref[(B2)]{(B2)}}
In this section, we give a loss-specific explanation for why it is reasonable to assume Assumption \hyperref[(B2)]{(B2)}. Recall the form of \eqref{eq:causal representation}, the time series $\{\bar\varepsilon_i\}$ can be written as $\bar\varepsilon_i=\bar G(\mathcal{H}_i,\mathcal{G}_i)$ where $\bar G(\cdot,\cdot)$ is an unknown measurable function. 

\paragraph{Quantile loss.} For $\psi_\tau(v)=\tau-1(v<0)$,
$\delta_{\psi}(k,4)^4 = \mr P\bigl(\bar\varepsilon_i\bar\varepsilon_{i,k}<0\bigr).$
Hence, for any $\eta>0$,
\begin{align*}
\mr P(\bar\varepsilon_i\bar\varepsilon_{i,k}<0)&\le \mr P(|\bar\varepsilon_i|\le \eta)
+ \mr P(|\bar\varepsilon_i-\bar\varepsilon_{i,k}|>\eta) \le C\eta + \eta^{-1}\|\bar\varepsilon_i-\bar\varepsilon_{i,k}\|_1,
\end{align*}
where the first term uses boundedness of the shifted conditional density near $0$ and $\bar\varepsilon_{i,k}=\bar G(\mathcal{H}_{i,k},\mathcal{G}_{i,k})$.
If $\|\bar\varepsilon_i-\bar\varepsilon_{i,k}\|_1=O(\chi^k)$, i.e., $\delta_{\bar G}(k,1)=: \|\bar G(\mathcal{H}_i,\mathcal{G}_i)-\bar G(\mathcal{H}_{i,k},\mathcal{G}_{i,k})\|_1=O(\chi^k)$, we can choose
$\eta\asymp \chi^{k/2}$ to obtain $\delta_{\psi}(k,4)=O(\tilde\chi^k)$ for some $\tilde\chi\in(0,1)$. Thus Assumption \hyperref[(B2)]{(B2)} is natural for quantile regression.

\paragraph{Huber loss.}
Since the Huber score is globally $1$-Lipschitz, $|\psi_c(\bar\varepsilon_i)-\psi_c(\bar\varepsilon_{i,k})|
\le |\bar\varepsilon_i-\bar\varepsilon_{i,k}|.$ Therefore $\delta_{\psi}(k,4)
\le \|\bar\varepsilon_i-\bar\varepsilon_{i,k}\|_4.$
Thus $\|\bar G(\mathcal{H}_i,\mathcal{G}_i)-\bar G(\mathcal{H}_{i,k},\mathcal{G}_{i,k})\|_4=O(\chi^k)$ immediately
implies Assumption \hyperref[(B2)]{(B2)}.

\paragraph{Expectile loss.}
The expectile score $\psi(\cdot)$ is globally Lipschitz with constant $2\max\{\tau,1-\tau\}$, and hence $\delta_{\psi}(k,4)
\le 2\max\{\tau,1-\tau\}\,\|\bar\varepsilon_i-\bar\varepsilon_{i,k}\|_4.$ Therefore Assumption \hyperref[(B2)]{(B2)} follows from the same geometric dependence $\|\bar G(\mathcal{H}_i,\mathcal{G}_i)-\bar G(\mathcal{H}_{i,k},\mathcal{G}_{i,k})\|_4=O(\chi^k)$.
\paragraph{$L_q$ loss.}
For $1<q\le2$, the score satisfies the H\"older inequality
$|\psi_q(x)-\psi_q(y)|\le C_q |x-y|^{q-1}$. Consequently,
\begin{align*}
\delta_{\psi}(k,4)^4
\le C_q \mb E|\bar\varepsilon_i-\bar\varepsilon_{i,k}|^{4(q-1)} 
\le C_q \Bigl(\mb E|\bar\varepsilon_i-\bar\varepsilon_{i,k}|^4\Bigr)^{q-1},
\end{align*}
where the last line follows from Lyapunov's inequality. Therefore
$\delta_{\psi}(k,4)
\le C_q \|\bar\varepsilon_i-\bar\varepsilon_{i,k}\|_4^{q-1}.$ Again, a geometric physical dependence bound for $\bar G(\mathcal{H}_i,\mathcal{G}_i)$ implies Assumption \hyperref[(B2)]{(B2)}.

\subsection{Verification of Assumption \hyperref[(B3)]{(B3)}}
\label{sec:supp-verify B3}
In this section, we take the quantile, Huber, expectile, and $L_q$ $(1<q\leq 2)$ regressions as examples to verify Assumption \hyperref[(B3)]{(B3)}. For notational convenience, we write
$$
\bar F_i(u)=:\mr P(\bar\varepsilon_i\leq u|\mf X_i),\quad \bar f_i(u)=:\frac{\partial}{\partial u}\bar F_i(u),
$$
whenever the conditional density exists. Recall $|\bar\varepsilon_i-\varepsilon_i|\leq a_n$ and $a_n\to 0$ thanks to the sieve approximation. Hence, all local conditions below can be evaluated on a uniformly $o(1)$-shifted neighborhood. Specifically, we check the following three generic conditions. Suppose that for all
$|x|,|u|\le \delta_0$,
\begin{align}
\bar \Xi_n(x+u\mid \mf X_i)-\bar\Xi_n(x\mid \mf X_i)-\bar\Xi_n^{(1)}(x\mid \mf X_i)u &= O(u^2),
\tag{B.3a}\label{eq:B3a-shifted}\\
\bar\Xi_n(\theta^\top\mf b_\omega(\mf X_i)\mid \mf X_i)-\bar\Xi_n^{(1)}(0\mid \mf X_i)\theta^\top\mf b_\omega(\mf X_i)
&= O\bigl((\theta^\top\mf b_\omega(\mf X_i))^2\bigr),
\tag{B.3b}\label{eq:B3b-shifted}\\
\bar\Xi_n^{(1)}(\theta^\top\mf b_\omega(\mf X_i)\mid \mf X_i)-\bar\Xi_n^{(1)}(0\mid \mf X_i)
&= O\bigl(\omega(t(\theta))\bigr),
\tag{B.3c}\label{eq:B3c-shifted}
\end{align}
where $t(\theta)=\sup_x|\theta^\top\mf b_\omega(x)|$ and $\omega(\cdot)$ is a modulus function.
Then Assumption \hyperref[(B3)]{(B3)}(i) follows immediately from Assumption \hyperref[(A2)]{(A2)}, because
\[
\mb E\bigl|h^\top\mf b_\omega(\mf X_i)\bigr|^2 \lesssim |h|^2,
\qquad
\mb E\bigl|\theta^\top\mf b_\omega(\mf X_i)\bigr|^2 \lesssim |\theta|^2,
\]
and therefore
\begin{align*}
&\mb E\Bigl[\bigl\{\bar\Xi_n(\theta^\top\mf b_\omega(\mf X_i)\mid \mf X_i)-\bar\Xi_n^{(1)}(0\mid \mf X_i)\theta^\top\mf  b_\omega(\mf X_i)\bigr\}
h^\top\mf b_\omega(\mf X_i)\Bigr] \\
&\qquad\lesssim \mb E\Bigl[|\theta^\top\mf b_\omega(\mf X_i)|^2\,|h^\top\mf b_\omega(\mf X_i)|\Bigr] \\
&\qquad\le t(\theta)
\Bigl(\mb E|\theta^\top\mf b_\omega(\mf X_i)|^2\Bigr)^{1/2}
\Bigl(\mb E|h^\top\mf b_\omega(\mf X_i)|^2\Bigr)^{1/2}
\lesssim t(\theta)|\theta||h|,
\end{align*}
and likewise
\[
\mb E\Bigl[\bigl\{\bar\Xi_n^{(1)}(\theta^\top\mf b_\omega(\mf X_i)\mid \mf X_i)-\bar\Xi_n^{(1)}(0\mid \mf X_i)\bigr\}
(h^\top\mf b_\omega(\mf X_i))^2\Bigr]
\lesssim \omega(t(\theta))|h|^2.
\]
Thus, in the loss-specific calculations below, it suffices to prove
\eqref{eq:B3a-shifted}--\eqref{eq:B3c-shifted} for verifying \hyperref[(B3)]{(B3)}(i).

\paragraph{Quantile regression.}
For the $\tau$-quantile regression $\psi_\tau(v)=\tau-1(v<0)$, we have
\[
\bar\Xi_n(x\mid \mf X_i)
= E\bigl[\psi_\tau(\bar\varepsilon_i+x)-\psi_\tau(\bar\varepsilon_i)\mid \mf X_i\bigr]
= \bar F_i(0)-\bar F_i(-x),
\]
\[
\bar\Xi_n^{(1)}(x\mid \mf X_i)=\bar f_i(-x)=f_{\varepsilon_i\mid \mf X_i}(-x-r_i),
\]
whenever the conditional density exists with $r_i=\bar\varepsilon_i-\varepsilon_i$. Assume that there exists $\delta_0>0$ such that, for all sufficiently large $n$, the conditional density of $\varepsilon_i$ given $\mf X_i$ satisfies
\begin{enumerate}
\item[(Q1)]\phantomsection\label{(Q1)} $f_{\varepsilon_i\mid \mf X_i}(u)$ exists on $[-\delta_0,\delta_0]$ almost surely,
\item[(Q2)]\phantomsection\label{(Q2)} $f_{\varepsilon_i\mid \mf X_i}(u)$ is bounded away from $0$ and $\infty$ on interval $[-\delta_0,\delta_0]$,
uniformly almost surely,
\item[(Q3)]\phantomsection\label{(Q3)} $f_{\varepsilon_i\mid \mf X_i}(u)$ is locally Lipschitz on interval $[-\delta_0,\delta_0]$, uniformly almost surely.
\end{enumerate}
Since $a_n\to 0$, for all sufficiently large $n$, the shifted arguments $r_i+x$ with $|x|\leq \delta_0/2$ remain in $[-\delta_0,\delta_0]$. 
Then, for $|x|,|u|\le \delta_0/2$ and all sufficiently large $n$,
\begin{align*}
&\bar\Xi_n(x+u\mid \mf X_i)-\bar\Xi_n(x\mid \mf X_i)-\bar\Xi_n^{(1)}(x\mid \mf X_i)u \\
&\qquad=
\int_0^u \bigl\{f_{\varepsilon_i\mid \mf X_i}(-x-r_i-s)-f_{\varepsilon_i\mid \mf X_i}(-x-r_i)\bigr\}\,\mr ds.
\end{align*}
By the local Lipschitz continuity of $f_{\varepsilon_i\mid \mf X_i}$, the integrand is
$O(|s|)$ uniformly over $|r_i|\le a_n$, hence
\[
\bar\Xi_n(x+u\mid \mf X_i)-\bar\Xi_n(x\mid \mf X_i)-\bar\Xi_n^{(1)}(x\mid \mf X_i)u = O(u^2),
\]
which proves \eqref{eq:B3a-shifted}. Likewise, by Taylor expansion around $0$,
\[
\bar\Xi_n(\theta^\top\mf b_\omega(\mf X_i)\mid \mf X_i)-\bar\Xi_n^{(1)}(0\mid \mf X_i)\theta^\top\mf b_\omega(\mf X_i)
= O\bigl((\theta^\top\mf b_\omega(\mf X_i))^2\bigr),
\]
which is \eqref{eq:B3b-shifted}. Finally,
\[
\bar\Xi_n^{(1)}(\theta^\top\mf b_\omega(\mf X_i)\mid \mf X_i)-\bar\Xi_n^{(1)}(0\mid \mf X_i)
= f_{\varepsilon_i\mid \mf X_i}(-r_i-\theta^\top\mf b_\omega(\mf X_i)) - f_{\varepsilon_i\mid \mf X_i}(-r_i)
= O\bigl(t(\theta)\bigr),
\]
which proves \eqref{eq:B3c-shifted} with $\omega(t)=t$.

For Assumption \hyperref[(B3)]{(B3)}(ii), note that $\bar\Xi_n^{(1)}(0\mid X_i)=f_{\varepsilon_i\mid X_i}(-r_i),$
which is bounded away from $0$ almost surely.
For Assumption \hyperref[(B3)]{(B3)}(iii), let $|u|\le t$. Since
\[
\bigl[\psi_\tau(\bar\varepsilon_i+u)-\psi_\tau(\bar\varepsilon_i-u)\bigr]^2
= 1\{-u\le \bar\varepsilon_i < u\}
\le 1\{|\bar\varepsilon_i|\le |u|\},
\]
we have, conditionally on $\mf X_i$,
\begin{align*}
\mb E\Bigl(\bigl[\psi_\tau(\bar\varepsilon_i+u)-\psi_\tau(\bar\varepsilon_i-u)\bigr]^2\mid X_i\Bigr)\le \mr  P(|\bar\varepsilon_i|\le |u|\mid \mf X_i)= \bar F_i(|u|)-\bar F_i(-|u|)
\le 2\sup_{|v|\le |u|}\bar f_i(v)\,|u|,
\end{align*}Taking the supremum over $|u|\le t$, since $f_{\varepsilon_i|\mf X_i}(v)$ is bounded away from 0 and $\infty$ in \hyperref[(Q2)]{(Q2)}, it yields  $\eta=1$ in Assumption \hyperref[(B3)]{(B3)}.

\paragraph{Huber regression.}
For Huber loss with threshold $c>0$,
$\psi_c(v)=v1(|v|\le c)+c\,\mathrm{sign}(v)1(|v|>c)$ and $\bar \Xi_n(x\mid \mf X_i)=\mb E\bigl[\psi_c(\bar\varepsilon_i+x)-\psi_c(\bar\varepsilon_i)\mid\mf  X_i\bigr].$
For every $x$ at which the derivative exists, we have
\[
\bar\Xi_n^{(1)}(x\mid \mf X_i)
= \mr P(|\bar\varepsilon_i+x|\le c\mid X_i)
= \bar F_i(c-x)-\bar F_i(-c-x).
\]
Assume that there exists $\delta_0>0$ such that, for all sufficiently large $n$,
\begin{enumerate}
\item[(H1)]\phantomsection\label{(H1)} $f_{\varepsilon_i\mid \mf X_i}(u)$ exists and is locally Lipschitz on the two neighborhoods of $\pm c$, i.e., $[c-\delta_0,c+\delta_0]$ and $[-c-\delta_0,-c+\delta_0]$.
\item[(H2)]\phantomsection\label{(H2)}
$\mr P\bigl(-c-r_i \le \varepsilon_i \le c-r_i\mid X_i\bigr) \ge c_0$ for some constant $c_0>0$ almost surely.
\end{enumerate}
Then
\[
\bar\Xi_n^{(1)}(x+u\mid \mf X_i)-\bar \Xi_n^{(1)}(x\mid \mf X_i)
=
\{\bar F_i(c-x-u)-\bar F_i(c-x)\} - \{\bar F_i(-c-x-u)-\bar F_i(-c-x)\},
\]
which is $O(|u|)$ uniformly over $|x|,|u|\le \delta_0/2$ by local Lipschitz continuity of the
density near $\pm c$. Therefore
\[
\bar\Xi_n(x+u\mid \mf X_i)-\bar\Xi_n(x\mid \mf X_i)-\bar\Xi_n^{(1)}(x\mid \mf X_i)u = O(u^2),
\]
which proves \eqref{eq:B3a-shifted}. The same local Lipschitz property yields
\[
\bar\Xi_n(\theta^\top \mf b_\omega(\mf X_i)\mid \mf X_i)-\bar\Xi_n^{(1)}(0\mid \mf X_i)\theta^\top \mf b_\omega(\mf X_i)
= O\bigl(|\theta^\top \mf b_\omega(\mf X_i)|^2\bigr),
\]
and
\[
\bar \Xi_n^{(1)}(\theta^\top \mf b_\omega(\mf  X_i)\mid\mf X_i)-\bar\Xi_n^{(1)}(0\mid \mf X_i)
= O\bigl(t(\theta)\bigr).
\]
Thus \eqref{eq:B3b-shifted} and \eqref{eq:B3c-shifted} follow with $\omega(t)=t$. Assumption \hyperref[(B3)]{(B3)}(ii) follows from \hyperref[(H2)]{(H2)} by the fact
$$
\bar\Xi_n^{(1)}(0\mid X_i)
= \mr P(|\bar\varepsilon_i|\le c\mid X_i)
= \mr P(-c-r_i\le \varepsilon_i \le c-r_i\mid X_i)\ge c_0.
$$
For Assumption \hyperref[(B3)]{(B3)}(iii), note that $\psi(\cdot)$ is globally $1$-Lipschitz. Therefore,
for every $u\in\mathbb R$,
\[
|\psi_c(\bar\varepsilon_i+u)-\psi_c(\bar\varepsilon_i-u)| \le 2|u|,
\]
which implies $\eta=2$.

\paragraph{Expectile regression.}
For the expectile loss at level $\tau\in(0,1)$, $\psi_\tau(v)=2\tau v\,1(v\ge0) + 2(1-\tau)v\,1(v<0)$. Then
\[
\bar \Xi_n(x\mid \mf X_i)=\mb E\bigl[\psi_\tau(\bar\varepsilon_i+x)-\psi_\tau(\bar\varepsilon_i)\mid\mf X_i\bigr],
\]
and differentiation gives
\[
\bar \Xi_n^{(1)}(x\mid \mf X_i)
= 2\tau + 2(1-2\tau)\bar F_i(-x)
= 2\tau + 2(1-2\tau)F_{\varepsilon_i\mid X_i}(-x-r_i).
\]
Assume that $f_{\varepsilon_i\mid \mf X_i}(u)$ is locally bounded on a neighborhood of
$[-\delta_0,\delta_0]$, uniformly almost surely. Then
$\bar \Xi_n^{(2)}(x\mid \mf X_i) = -2(1-2\tau)\bar f_i(-x)$
is locally bounded for $|x|\le \delta_0/2$, and hence Taylor's theorem yields
\[
\bar \Xi_n(x+u\mid \mf X_i)-\bar \Xi_n(x\mid \mf X_i)-\bar \Xi_n^{(1)}(x\mid \mf X_i)u = O(u^2),
\]
which proves \eqref{eq:B3a-shifted}. Moreover,
\[
\bar \Xi_n(\theta^\top \mf b_\omega(\mf X_i)\mid \mf X_i)-\bar\Xi_n^{(1)}(0\mid \mf X_i)\theta^\top \mf b_\omega(\mf X_i)
= O\bigl((\theta^\top \mf b_\omega(\mf X_i))^2\bigr),
\]
and
\[
\bar \Xi_n^{(1)}(\theta^\top \mf b_\omega(\mf X_i)\mid\mf X_i)-\bar\Xi_n^{(1)}(0\mid \mf X_i)
= O\bigl(t(\theta)\bigr),
\]
so \eqref{eq:B3b-shifted} and \eqref{eq:B3c-shifted} follow with $\omega(t)=t$.

For Assumption \hyperref[(B3)]{(B3)}(ii), observe that
$\bar\Xi_n^{(1)}(0\mid \mf X_i)=2\tau + 2(1-2\tau)\bar F_i(0).$
Since $\bar F_i(0)\in[0,1]$, we have the uniform bound
$2\min\{\tau,1-\tau\}
\le \Xi_n^{(1)}(0\mid X_i)
\le 2\max\{\tau,1-\tau\}$, which implies the required positivity.

For Assumption \hyperref[(B3)]{(B3)}(iii), the expectile score is globally Lipschitz with constant$2\max\{\tau,1-\tau\}$. Hence
\[
|\psi_\tau(\bar\varepsilon_i+u)-\psi_\tau(\bar\varepsilon_i-u)|
\le 4\max\{\tau,1-\tau\}|u|,
\]
and therefore the expectile case also corresponds to $\eta=2$.

\paragraph{$L_q$ regression.}
For the $L_q$ regression $\psi_q(v)=q|v|^{q-1}\,\mathrm{sign}(v)$,
$1<q\le 2$. When $q=2$, this is ordinary least squares and all the conclusions below are immediate. Therefore it suffices to consider $1<q<2$. Then
\[
\bar \Xi_n(x\mid \mf X_i)
= q\int
\Bigl(|x-v|^{q-1}\mathrm{sign}(x-v)-|-v|^{q-1}\mathrm{sign}(-v)\Bigr)
\bar f_i(v)\mr dv,
\]
and $\Xi_n^{(1)}(x\mid X_i)
= q(q-1)\int_{\mathbb R}|x-v|^{q-2}\bar f_i(v)\,\mr dv.$ Suppose that
\begin{enumerate}
\item[(L1)]\phantomsection\label{(L1)} $\bar f_i(v)$ is bounded on $\mathbb R$, uniformly almost surely,
\item[(L2)]\phantomsection\label{(L2)} $\bar f_i(v)$ is locally Lipschitz on a neighborhood of $0$, uniformly almost surely.
\end{enumerate}
Since $\bar f_i(v)=f_{\varepsilon_i\mid X_i}(v-r_i)$ and $|r_i|\le a_n=o(1)$, conditions
\hyperref[(L1)]{(L1)}-\hyperref[(L2)]{(L2)} follow, for all sufficiently large $n$, from the corresponding conditions on $f_{\varepsilon_i\mid X_i}$ on a fixed neighborhood of $0$.
To control the variation of $\bar \Xi_n^{(1)}$, notice that
\[
\bar \Xi_n^{(2)}(x\mid \mf X_i)
= c_q\int_0^{\infty} r^{q-3}
\bigl\{\bar f_i(x+r)-\bar f_i(x-r)\bigr\}\mr dr,
\]
where $c_q$ is a finite constant depending only on $q$. The odd difference inside the
integral is crucial. For small $r$, local Lipschitz continuity gives $|\bar f_i(x+r)-\bar f_i(x-r)| \le 2Lr$, so the integrand is bounded by $Cr^{q-2}$, which is integrable near $0$ because $q>1$. For large $r$, boundedness of $\bar f_i$ implies integrability because $q<2$. Hence
$\bar \Xi_n^{(2)}(x\mid\mf X_i)$ is locally bounded near $0$, uniformly almost surely. It follows that
$\bar \Xi_n^{(1)}(x\mid\mf X_i)$ is locally Lipschitz near $0$, and consequently
\[
\bar\Xi_n(x+u\mid\mf X_i)-\bar \Xi_n(x\mid \mf X_i)-\bar \Xi_n^{(1)}(x\mid \mf X_i)u = O(u^2),
\]
which proves \eqref{eq:B3a-shifted}. The same local Lipschitz continuity yields
\[
\bar \Xi_n(\theta^\top \mf b_\omega(\mf X_i)\mid \mf X_i)-\bar \Xi_n^{(1)}(0\mid \mf X_i)\theta^\top\mf b_\omega(\mf X_i)
= O\bigl((\theta^\top \mf b_\omega(\mf X_i))^2\bigr),
\]
and
\[
\bar \Xi_n^{(1)}(\theta^\top \mf b_\omega(\mf X_i)\mid \mf X_i)-\bar \Xi_n^{(1)}(0\mid \mf X_i)
= O\bigl(t(\theta)\bigr).
\]
Thus \eqref{eq:B3b-shifted} and \eqref{eq:B3c-shifted} also hold with $\omega(t)=t$.

For Assumption \hyperref[(B3)]{(B3)}(ii), notice that $\bar \Xi_n^{(1)}(0\mid \mf X_i)
= q(q-1)\int_{\mathbb R}|v|^{q-2}\bar f_i(v)\,\mr dv.$
Since $q>1$, the kernel $|v|^{q-2}$ is locally integrable around $0$. Therefore
$\bar \Xi_n^{(1)}(0\mid \mf X_i)$ is finite under \hyperref[(L1)]{(L1)}. It is strictly positive whenever the conditional law of $\bar\varepsilon_i$ given $\mf X_i$ is not degenerate. If a uniform positive lower bound is desired, it suffices to assume that $\bar f_i(v)$ is bounded below by a positive constant on a fixed neighborhood of $0$.

For Assumption \hyperref[(B3)]{(B3)}(iii), fix $u\in\mathbb R$ and split the event according to $|\bar\varepsilon_i|\le 2|u|$ and $|\bar\varepsilon_i|>2|u|$. On the local region $|\bar\varepsilon_i|\le 2|u|$, using the fact $|\psi_q(\bar\varepsilon_i+u)-\psi_q(\bar\varepsilon_i-u)|
\le C_q |u|^{q-1}$, we have
\[
\mb E\Bigl(\bigl[\psi_q(\bar\varepsilon_i+u)-\psi_q(\bar\varepsilon_i-u)\bigr]^2
1\{|\bar\varepsilon_i|\le 2|u|\}\Bigm|\mf X_i\Bigr)
\le C_q |u|^{2q-2} \mr P(|\bar\varepsilon_i|\le 2|u|\mid \mf X_i)
= O(|u|^{2q-1}).
\]
On the complement $|\bar\varepsilon_i|>2|u|$, using the fact
$|\psi_q(\bar\varepsilon_i+u)-\psi_q(\bar\varepsilon_i-u)|
\le C_q |u|\,|\bar\varepsilon_i|^{q-2}$, we can imply that
$$
\mb E\Bigl(\bigl[\psi_q(\bar\varepsilon_i+u)-\psi_q(\bar\varepsilon_i-u)\bigr]^2
1\{|\bar\varepsilon_i|>2|u|\}\Bigm|\mf X_i\Bigr) \le C_q u^2\mb E\bigl(|\bar\varepsilon_i|^{2q-4}1\{|\bar\varepsilon_i|>2|u|\}\mid \mf X_i\bigr).
$$
If $3/2<q\le2$, then $2q-4>-1$, and boundedness of $\bar f_i$ near $0$ implies that the last
conditional expectation is bounded uniformly for small $u$. Hence the contribution of the
outer region is $O(u^2)$. If $1<q\le3/2$, then $2q-4\le -1$, and a direct integral calculation shows that
\[
\mb E\bigl(|\bar\varepsilon_i|^{2q-4}1\{|\bar\varepsilon_i|>2|u|\}\mid\mf  X_i\bigr)
= O(|u|^{2q-3}),
\]
so the outer contribution is $O(|u|^{2q-1})$. Combining the two regions and taking the supremum over $|u|\le t$, we can bound the exponent in Assumption \hyperref[(B3)]{(B3)}(iii) as
\[
\eta=
\begin{cases}
2, & 3/2<q\le2,\\[4pt]
2q-1, & 1<q\le3/2.
\end{cases}
\]

\subsection{Verification of Assumption \hyperref[(A4)]{(A4)}}
\label{sec:example of loss}
In this section, we take the Huber's, quantile, expectile, and $L_q$ $(1<q\leq 2)$ regressions as examples to verify Assumption \hyperref[(A4)]{(A4)}. We first introduce the definition of covering number and \textit{VC}-dimension for a function class as follows.
\begin{definition}[Covering number]
\label{def:covering}
    Let $(T,d)$ be an arbitrary semi-metric space where $T\subset \Xi$ and $d(\cdot,\cdot)$ is defined on the space $\Xi$. A $\varepsilon$-cover ($\varepsilon>0$) of the set $T$ with respect to semi-metric $d$ is a set $\{T_1,\dots,T_N\}\subset \Xi$ such that for any $T_0\in T$, there exists some $i\in\{1,\dots,N\}$ with $d(T_0,T_i)\leq \varepsilon$. Then the $\varepsilon$-covering number of $T$ with respect to $d(\cdot,\cdot)$ is
the minimal number of balls $B(x;\varepsilon)=\{y\in T:d(x,y)\leq \varepsilon\}$ of radius $\varepsilon$ needed to cover the set $T$, i.e.,
        $N(\varepsilon,T,d)=:\inf_N\{N\in\mb N:\text{there exists a $\varepsilon$-cover $\{T_1,\dots,T_N\}\subset T$ } \}.$
\end{definition}

\begin{definition}
Let $\mathcal{C}$ be a collection of subsets of a set $\mathcal{X}$. Let $\left\{x_1, \ldots, x_n\right\} \subset \mathcal{X}$ be an arbitrary set of $n$ points. Say that $\mathcal{C}$ picks out a certain subset $A$ of $\left\{x_1, \ldots, x_n\right\}$ if $A$ can be expressed as $C \cap\left\{x_1, \ldots, x_n\right\}$ for some $C \in \mathcal{C}$. The collection $\mathcal{C}$ is said to shatter $\left\{x_1, \ldots, x_n\right\}$ if each of its $2^n$ subsets can be picked out in this manner.
The \textit{VC}-dimension $V(\mathcal{C})$ of the class $\mathcal{C}$ is the largest $n$ such that some set of size $n$ is shattered by $\mathcal{C}$.

The subgraph of $f: \mathcal{X} \rightarrow \mathbb{R}$ is a subset of $\mathcal{X} \times \mathbb{R}$ defined as
$$
C_f=:\{(x, t) \in \mathcal{X} \times \mathbb{R}: t<f(x)\}.
$$
For the collection $\mathcal{F}$ of measurable functions on $\mathcal{X}$, the \textit{VC}-dimension $V(\mathcal{F})$ of the function class $\mathcal{F}$ is defined as the \textit{VC}-dimension of the class $\{C_f:f\in \mathcal{F}\}$.
\end{definition}

To facilitate maximal inequality in the empirical process, we shall check whether our function family is VC-class and specify its size accompanied with $K$ so we can bound the covering number by following lemmas. We introduce Lemma \ref{lem: bound cover number}, \ref{lem: VC vec space}, \ref{lem: VC operation}, \ref{lem: bound plus cover number} to bound the covering number with \textit{VC}-dimension. Lemma \ref{lem:maximal ineq uniform entropy} shows how this covering number bound works in the maximal inequality. Consider a basic function family
$$
\mathcal{F}_1=\{|\varepsilon-\theta^\top\mf b_\omega(\mf x)|: \theta\in \mb R^K\}.
$$
By Lemma \ref{lem: VC vec space} and Lemma \ref{lem: VC operation}, we can have $V(\mathcal{F}_1)\leq K+1$. For function family in Assumption \hyperref[(A4)]{(A4)}
$$
\mathcal{F}_\rho=\{ f_\theta(\varepsilon,\mf x)=\rho(\varepsilon-\theta^\top\mf b_\omega(\mf x)):\theta\in\Theta \},
$$
where function $f_\theta:\mb R^{d+1}\rightarrow\mb R$ is indexed by $\theta$ and $\rho(\cdot)$ is the loss function including Huber's, quantile, expectile, and $L_q$ $(1<q\leq 2)$ regression. By the strictly monotonicity of $\rho(x),x\geq 0$, for any $n$ points $\{(\varepsilon_i,\mf x_i)\}_{i=1}^n$ and any values $t_1,\dots,t_n\in \mb R$, we have
$$
f_\theta(\varepsilon_i,\mf x_i)\leq t_i \iff \rho(\varepsilon_i-\theta^\top\mf b_\omega(\mf x_i))\leq t_i\iff |\varepsilon_i-\theta^\top\mf b_\omega(\mf x_i)|\leq t_i.
$$
By the definition of \textit{VC}-dimension, we have $V(\mathcal{F}_\rho)\lesssim K$, which ensures Assumptions \hyperref[(A4)]{(A4)}.

\begin{lemma}[Theorem 7.12. in \cite{sen2018gentle}]
    \label{lem: bound cover number}
     For a $V C$ class of functions $\mathcal{F}$ with measurable envelope function $F$ and \textit{VC}-dimension $V(\mathcal{F})$, one has for any probability measure $Q$ with $\|F\|_{Q, r}>0$, $r \geq 1$,
    $$
    N\left(\epsilon\|F\|_{Q, r}, \mathcal{F}, \|\cdot\|_{Q,r}\right) \leq C V(\mathcal{F})(4 e)^{V(\mathcal{F})}\left(\frac{2}{\epsilon}\right)^{r V(\mathcal{F})},
    $$
    for a universal constant $C>0$ and $0<\epsilon<1$.
\end{lemma}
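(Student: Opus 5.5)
This statement is imported from the literature (Theorem 7.12 in \cite{sen2018gentle}), so the plan is to reproduce the standard VC covering argument rather than anything specific to this paper. The reduction is to indicator classes: one shows that the subgraph class $\{C_f: f\in\mathcal{F}\}$ has VC dimension $V(\mathcal{F})$, and then transfers an $L_1$ packing bound for sets into an $L_r$ covering bound for functions. Since $\|F\|_{Q,r}>0$, we may normalize. Define the probability measure $\tilde Q$ on $\mathcal{X}\times\mathbb{R}$ by $d\tilde Q(x,t)=\frac{F(x)^{r-1}}{2\|F\|_{Q,r}^r}\,\mathbf 1\{|t|\le F(x)\}\,dQ(x)\,dt$. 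This rescaling is the device that absorbs the envelope into the measure.

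\textbf{Step 1: reduce $L_r(Q)$ distances to $\tilde Q$-measures of subgraph differences.} For $f,g\in\mathcal{F}$ with $|f|,|g|\le F$, the elementary inequality $|f-g|^r\le |f-g|\,(2F)^{r-1}$ gives
$$\|f-g\|_{Q,r}^r\le 2^{r-1}\int |f-g|F^{r-1}\,dQ = 2^r\|F\|_{Q,r}^r\,\tilde Q(C_f\triangle C_g).$$
This uses $\int |\mathbf 1\{t<f(x)\}-\mathbf 1\{t<g(x)\}|\,dt=|f(x)-g(x)|$, with the integral taken over $|t|\le F(x)$. Consequently, an $\epsilon\|F\|_{Q,r}$-separated set in $\mathcal{F}$ yields subgraphs that are $(\epsilon/2)^r$-separated in $L_1(\tilde Q)$.

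\textbf{Step 2: packing bound for a VC class of sets.} I would invoke Haussler's (or Dudley's) bound: a class $\mathcal{C}$ of VC dimension $V$ satisfies $D(\delta,\mathcal{C},L_1(P))\le CV(4e)^V\delta^{-V}$ for every probability $P$. The standard proof draws $m$ i.i.d. points from $P$. If $D$ sets are pairwise $\delta$-separated, then with positive probability all pairs remain distinguished on the sample once $m\approx 2\log D/\delta$. Sauer's lemma then gives $D\le (em/V)^V$, and solving for $D$ yields a bound of this form (Haussler's refinement removes the logarithm). I expect this step to be the main obstacle. In the write-up I would cite it rather than re-derive it, or else accept the weaker Dudley version with an extra $\log(1/\delta)$ factor.

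\textbf{Step 3: combine.} Apply Step 2 with $\delta=(\epsilon/2)^r$ and $P=\tilde Q$. This gives
$$N(\epsilon\|F\|_{Q,r},\mathcal{F},L_r(Q))\le D\le CV(4e)^V(2/\epsilon)^{rV},$$
using that a covering number is at most the corresponding packing number, and $V=V(\mathcal{F})$. This matches the stated bound.
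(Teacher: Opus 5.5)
Your argument is correct and is the standard proof behind this result, which the paper imports from Sen (2018) without proof: reweight by $F^{r-1}$, pass to subgraphs under the normalized measure $\tilde Q$, and apply Haussler's packing bound for VC classes of sets (the van der Vaart--Wellner route). Your Step 1 correctly gives $(\epsilon/2)^r$-separation in $L_1(\tilde Q)$, and with $\delta=(\epsilon/2)^r$ Step 3 reproduces the stated $CV(4e)^V(2/\epsilon)^{rV}$ exactly; note, however, that your fallback to Dudley's weaker bound would leave an extra logarithmic factor and so would not yield the statement as written, so Haussler's bound must actually be invoked.
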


\begin{lemma}[Lemma 7.15 in \cite{sen2018gentle}]
\label{lem: VC vec space}
    Any finite-dimensional vector space $\mathcal{F}$ of measurable functions $f: \mathcal{X} \rightarrow \mathbb{R}$ is $V C$ subgraph of dimension smaller than or equal to $\operatorname{dim}(\mathcal{F})+1$.
\end{lemma}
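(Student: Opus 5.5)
We verify the lemma by working directly from the definition of VC-subgraph dimension; we do not route through covering numbers. Let $\mathcal{F}$ be a vector space of measurable functions with $\dim(\mathcal{F})=p<\infty$. We show that no set of $p+2$ points in $\mathcal{X}\times\mathbb{R}$ is shattered by the subgraph class $\{C_f:f\in\mathcal{F}\}$. This gives $V(\mathcal{F})\le p+1$, where we read the lemma's dimension convention as ``largest shattered size'' as in the definition above. The same argument in fact yields the sharper bound $p$ in that convention, so the stated inequality follows in either convention.

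\textbf{Step 1 (linear-algebra reduction).} Fix points $(x_1,t_1),\dots,(x_{m},t_{m})$ with $m=p+1$. Consider the linear map $T:\mathcal{F}\to\mathbb{R}^{m}$ defined by $f\mapsto (f(x_1)-t_1,\dots,f(x_m)-t_m)$. Its image is an affine set $A=L-\mathbf{t}$, where $L=\{(f(x_i))_i:f\in\mathcal{F}\}$ is a linear subspace of dimension at most $p<m$. We enlarge to the linear span $L'=\mathrm{span}(L\cup\{\mathbf{t}\})$, which has dimension at most $p+1=m$. To obtain a proper subspace, we instead work with $m=p+2$ points, so that $\dim L'\le p+1<m$. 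Since $L'$ is proper, there is a nonzero vector $\gamma\in\mathbb{R}^m$ orthogonal to $L'$. Consequently $\sum_i\gamma_i(f(x_i)-t_i)=0$ for every $f\in\mathcal{F}$.

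\textbf{Step 2 (Radon-type contradiction).} Replacing $\gamma$ by $-\gamma$ if necessary, we may assume the set $I=\{i:\gamma_i>0\}$ is nonempty. Suppose the subset $\{(x_i,t_i):i\in I\}$ could be picked out by some $C_f$. Then $f(x_i)-t_i>0$ for $i\in I$, while $f(x_i)-t_i\le 0$ for $i\notin I$. Every term of $\sum_i\gamma_i(f(x_i)-t_i)$ is then nonnegative: for $i\in I$ both factors are positive, and for $i\notin I$ we have $\gamma_i\le 0$ and $f(x_i)-t_i\le0$. The terms indexed by $I$ are strictly positive, so the sum is strictly positive, contradicting Step 1. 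Hence the points are not shattered, and $V(\mathcal{F})\le p+1$.

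\textbf{Main obstacle.} The only delicate point is handling the constants $t_i$ so that the orthogonal vector $\gamma$ kills them. This is exactly why we absorb $\mathbf{t}$ into the span $L'$, at the cost of one extra dimension, rather than working with $L$ alone. That cost is what produces the bound $\dim(\mathcal{F})+1$ instead of $\dim(\mathcal{F})$ in the lemma's form.
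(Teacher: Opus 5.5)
Your main argument is correct and is the standard proof behind the cited result (van der Vaart and Wellner, Lemma 2.6.15, reproduced as Lemma 7.15 in Sen's notes; the paper itself only cites it): for $\dim(\mathcal{F})+2$ points, a nonzero $\gamma$ orthogonal to $\mathrm{span}(L\cup\{\mathbf{t}\})$ prevents $\{i:\gamma_i>0\}$ from being picked out, which under the paper's ``largest shattered set'' convention gives exactly $V(\mathcal{F})\le\dim(\mathcal{F})+1$. You should remove or repair the aside that ``the same argument'' yields the sharper bound $\dim(\mathcal{F})$, which is not needed here: with $p+1$ points $\mathrm{span}(L\cup\{\mathbf{t}\})$ can be all of $\mathbb{R}^{p+1}$, as you note yourself in Step 1, and your last paragraph contradicts the aside. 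The sharper bound is in fact true, but it needs a different step: take $\gamma\perp L$ only, so that $\sum_i\gamma_i(f(x_i)-t_i)=-\gamma^\top\mathbf{t}=:c$ for every $f$. Then rule out $I=\{i:\gamma_i<0\}$ when $c>0$, and $I=\{i:\gamma_i>0\}$ when $c<0$, or when $c=0$ after flipping the sign of $\gamma$ so that this set is nonempty.
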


\begin{lemma}[Lemma 7.19 in \cite{sen2018gentle}]
\label{lem: VC operation}
Let $\mathcal{F}$ and $\mathcal{G}$ be VC subgraph classes of functions on a set $\mathcal{X}$ and $g: \mathcal{X} \rightarrow$ $\mathbb{R}, \phi: \mathbb{R} \rightarrow \mathbb{R}$, and $\psi: \mathcal{Z} \rightarrow \mathcal{X}$ fixed functions. Then, (i) $\mathcal{F} \wedge \mathcal{G}=\{f \wedge g: f \in \mathcal{F} ; g \in \mathcal{G}\}$ is VC subgraph;
(ii) $\mathcal{F} \vee \mathcal{G}$ is $V C$ subgraph;
(iii) $\{\mathcal{F}>0\}=:\{\{f>0\}: f \in \mathcal{F}\}$ is $V C$;
(iv) $-\mathcal{F}$ is $V C$;
(v) $\mathcal{F}+g=:\{f+g: f \in \mathcal{F}\}$ is VC subgraph;
(vi) $\mathcal{F} \cdot g=\{f g: f \in \mathcal{F}\}$ is VC subgraph;
(vii) $\mathcal{F} \circ \psi=\{f(\psi): f \in \mathcal{F}\}$ is VC subgraph;
(viii) $\phi \circ \mathcal{F}$ is VC subgraph for monotone $\phi$.
\end{lemma}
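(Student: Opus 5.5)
The plan is to reduce every item of the statement to two facts about VC classes of sets, both proved from the Sauer--Shelah lemma. The first fact is that if $\mathcal C$ and $\mathcal D$ are VC classes, then $\mathcal C\sqcap\mathcal D=\{C\cap D\}$, $\mathcal C\sqcup\mathcal D=\{C\cup D\}$ and $\{C^c\}$ are VC. For the intersection, the number of subsets of an $n$-point set picked out by $\mathcal C\sqcap\mathcal D$ is at most the product of the two growth functions. Sauer--Shelah bounds this product by $O(n^{V(\mathcal C)+V(\mathcal D)})$, which is eventually $<2^n$. Complements pick out exactly the complementary traces, and unions follow by De Morgan. The second fact is that images and preimages under injective point maps preserve shattering: if $\Phi$ is any map and $\mathcal C$ is VC, then $\{\Phi^{-1}(C)\}$ is VC. Indeed, points shattered by the preimages have distinct images, and those images are shattered by $\mathcal C$.

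With these in hand I treat the items via subgraphs $C_f=\{(x,t):t<f(x)\}$.
\begin{itemize}
\item[(i), (ii)] $C_{f\wedge g}=C_f\cap C_g$ and $C_{f\vee g}=C_f\cup C_g$, so the first fact applies.
\item[(iii)] $\{f>0\}=\Phi^{-1}(C_f)$ with $\Phi(x)=(x,0)$, so the second fact applies.
\item[(v)] $C_{f+g}=\Phi^{-1}(C_f)$ with the bijection $\Phi(x,t)=(x,t-g(x))$.
\item[(vii)] $C_{f\circ\psi}=\Phi^{-1}(C_f)$ with $\Phi(z,t)=(\psi(z),t)$.
\item[(vi)] Split $\mathcal X$ into $\{g>0\}$, $\{g<0\}$ and $\{g=0\}$. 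On $\{g>0\}$ the subgraph of $fg$ is the preimage of $C_f$ under $(x,t)\mapsto(x,t/g(x))$. On $\{g<0\}$ it is the preimage of the complement of the closed subgraph $\{t\le f(x)\}$. On $\{g=0\}$ it is the fixed set $\{t<0\}$. So $C_{fg}$ is a finite union of intersections of such sets with three fixed sets, and it is VC by the first fact.
\end{itemize}

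The recurring technical point, which I expect to be the main obstacle, is the equivalence between the open subgraph class $\{t<f(x)\}$ and the closed one $\{t\le f(x)\}$. It is needed for (iv), for the $g<0$ piece of (vi), and for (viii). For (iv), $C_{-f}=\{(x,t):t<-f(x)\}$ is the image of the complement of the closed subgraph of $f$ under $(x,t)\mapsto(x,-t)$. To show that closed subgraphs of a VC subgraph class form a VC class, I will argue directly on finite point sets. Given $(x_1,t_1),\dots,(x_n,t_n)$ shattered by closed subgraphs, finitely many witnessing functions suffice, so one can choose $\delta>0$ small enough that $t_i\le f(x_i)$ iff $t_i-\delta<f(x_i)$ for each of these functions. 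Then $(x_i,t_i-\delta)$ are shattered by open subgraphs, and the VC dimensions coincide.

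Finally, for (viii), take $\phi$ nondecreasing (the nonincreasing case follows by composing with (iv)). Fix points $(x_i,t_i)$ shattered by $\{C_{\phi\circ f}\}$. For each $i$ the set $\{s:\phi(s)>t_i\}$ is an up-ray, either $(s_i,\infty)$ or $[s_i,\infty)$. Hence $\phi(f(x_i))>t_i$ iff $f(x_i)>s_i$, or iff $f(x_i)\ge s_i$, according to the type of ray. The shattered configuration therefore transfers to the points $(x_i,s_i)$ for a mixed open/closed subgraph criterion. By the $\delta$-perturbation argument above this becomes a pure open-subgraph shattering, with $s_i$ replaced by $s_i-\delta$ at the closed indices. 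Points with infinite $s_i$ cannot be shattered, so they do not arise. This shows $V(\phi\circ\mathcal F)\le V(\mathcal F)$.
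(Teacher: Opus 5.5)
Your proof is correct. The paper supplies no argument of its own; it only cites Sen's Lemma 7.19, which is van der Vaart and Wellner's Lemma 2.6.18. Your route is essentially that standard proof: subgraph identities, closure of VC set classes under intersections, unions, complements and preimages, the open/closed-subgraph equivalence for (iv) and the $g<0$ part of (vi), and a threshold transfer for (viii). Your handling of (viii), via up-rays followed by a $\delta$-perturbation, differs only cosmetically from the textbook's direct choice of $s_i$ as the largest witnessing value $f_j(x_i)$ with $\phi(f_j(x_i))\le t_i$.
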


\begin{lemma}[Lemma 7.21.(i). in \cite{sen2018gentle}]
\label{lem: bound plus cover number}
Fix $r \geq 1$. Suppose that $\mathcal{F}$ and $\mathcal{G}$ are classes of measurable functions with envelopes $F$ and $G$ respectively. Then, for every $0<\epsilon<1$, the covering number
$$
N\left(2 \epsilon\|F+G\|_{Q, r}, \mathcal{F}+\mathcal{G}, \|\cdot\|_{Q,r}\right) \leq N\left(\epsilon\|F\|_{Q, r}, \mathcal{F}, \|\cdot\|_{Q,r}\right) \cdot N\left(\epsilon\|G\|_{Q, r}, \mathcal{G}, \|\cdot\|_{Q,r}\right).
$$
\end{lemma}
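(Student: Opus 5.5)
The plan is a direct product-of-covers argument; no earlier result of the paper is needed beyond the definition of covering number in Definition \ref{def:covering}. Fix $r\ge1$, a probability measure $Q$ and $0<\epsilon<1$, and write $N_F=N(\epsilon\|F\|_{Q,r},\mathcal F,\|\cdot\|_{Q,r})$ and $N_G=N(\epsilon\|G\|_{Q,r},\mathcal G,\|\cdot\|_{Q,r})$. We may assume both are finite, since otherwise there is nothing to prove. Choose minimal covers $\{f_1,\dots,f_{N_F}\}\subset\mathcal F$ and $\{g_1,\dots,g_{N_G}\}\subset\mathcal G$ at the stated radii. The candidate cover of $\mathcal F+\mathcal G$ is the collection $\{f_i+g_j\}$, which has at most $N_F N_G$ elements and lies inside $\mathcal F+\mathcal G$.

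Covering step. Take any $h=f+g\in\mathcal F+\mathcal G$. Pick $i$ with $\|f-f_i\|_{Q,r}\le\epsilon\|F\|_{Q,r}$ and $j$ with $\|g-g_j\|_{Q,r}\le\epsilon\|G\|_{Q,r}$. Minkowski's inequality in $L_r(Q)$, valid because $r\ge1$, gives
\[
\|h-(f_i+g_j)\|_{Q,r}\le \epsilon\big(\|F\|_{Q,r}+\|G\|_{Q,r}\big).
\]

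Radius comparison. This is the only point needing care, and it is where I would be careful about the envelopes. Envelopes are nonnegative, because $F\ge\sup_{f\in\mathcal F}|f|$ and likewise for $G$. Hence $0\le F\le F+G$ and $0\le G\le F+G$ pointwise, and monotonicity of the $L_r(Q)$ norm yields $\|F\|_{Q,r}\le\|F+G\|_{Q,r}$ and $\|G\|_{Q,r}\le\|F+G\|_{Q,r}$. Combining these with the covering step, every element of $\mathcal F+\mathcal G$ lies within $2\epsilon\|F+G\|_{Q,r}$ of some $f_i+g_j$. Therefore
\[
N\big(2\epsilon\|F+G\|_{Q,r},\mathcal F+\mathcal G,\|\cdot\|_{Q,r}\big)\le N_F N_G,
\]
which is the claimed bound.

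I do not expect a genuine obstacle. The only subtlety is whether the cover centers are required to lie in the class; they do here, since each $f_i+g_j$ belongs to $\mathcal F+\mathcal G$.
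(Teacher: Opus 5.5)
Your proof is correct. The product net $\{f_i+g_j\}$, Minkowski's inequality in $L_r(Q)$, and the bound $\|F\|_{Q,r}+\|G\|_{Q,r}\le 2\|F+G\|_{Q,r}$ (from nonnegativity of the envelopes) give exactly the claimed inequality, and in fact for every $\epsilon>0$. The paper gives no proof of its own; it imports the result from Lemma 7.21(i) of Sen (2018), whose proof is this same product-of-nets construction, so your route is the standard one.
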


\subsection{Verification of Assumption \hyperref[(A5)]{(A5)} under H\"older smoothness}
\label{sec:supp-verify A5}
In this section, we verify Assumption \hyperref[(A5)]{(A5)} by showing that one may take \(\varsigma\leq p/d\) when \(Q_0\) belongs to a \(p\)-smooth H\"older class and the sieve basis is chosen from the standard families considered in Section \ref{sec:examples of sieve basis}. For simplicity, suppose that the covariate support is \(\mathcal X=[0,1]^d\) and take \(\mathcal D_n=\mathcal X\), so that \(\mf b_\omega(x)=\mf b(x)\). Assumption \hyperref[(A5)]{(A5)} requires
\[
  \sup_{\mf x\in[0,1]^d}|Q_{0,n}(\mf x)-Q_0(\mf x)|=O(K^{-\varsigma})
\]
for some \(\varsigma>0\). Let \(p=s+\alpha\), where \(s\) is a nonnegative integer and
\(\alpha\in(0,1]\). Assume that \(Q_0\in\mathcal H^p([0,1]^d,L)\), namely,
all mixed partial derivatives \(D^aQ_0\) with \(|a|\le s\) are uniformly
bounded, and the derivatives of order \(s\) satisfy
\[
  \sup_{\mf x\ne \mf x'}
  \frac{|D^aQ_0(\mf x)-D^aQ_0(\mf x')|}{|\mf x-\mf x'|^\alpha}
  \le L,\qquad |a|=s .
\]
Under this condition, there exists \(q_K^*(\mf x)=\theta_K^{*\top}\mf b(\mf x)\) such that 
\begin{equation}
      \sup_{\mf x\in[0,1]^d}|q_K^*(\mf x)-Q_0(\mf x)|\lesssim K^{-p/d},\label{eq:best approximation}
\end{equation}
if $\mf b(\mf x)$ is the tensor product of sieve bases such as Legendre polynomials, orthogonal wavelets, or Fourier series if it is periodic; see \cite{timan2014theory}, \cite{meyer1992ondelettes}, and \cite{chen2007large}. Write $e_K(\mf x)=q_K^*(\mf x)-Q_0(\mf x)$, $\nu_K(\mf x)=Q_{0,n}(\mf x)-q_K^*(\mf x)$ and denote $\|\cdot\|_\infty$ denotes $\|e_K\|_\infty=\sup_{\mf x\in[0,1]^d}|e_K(\mf x)|$. 

Define the local curvature
$$
\lambda_0(s)=:-\frac{\partial}{\partial u}\mb E[\psi(\varepsilon_i-u)|\mf X_i=s)]\Big|_{u=0}.
$$
Under mild condition, we can have $0<c_\lambda\leq \lambda_0(s)\leq C_\lambda<\infty$ uniformly over $s\in[0,1]^d$ and for all sufficiently small $|u|$,
\begin{equation}
\mb E[\psi(\varepsilon_i-u)|\mf X_i=s)]=-\lambda_0(s)u+r_0(s,u),\quad \sup_{s\in[0,1]^d}r_0(s,u)=O(u^2).\label{eq:unshifted local expand}    
\end{equation}
As discussed in Section \ref{sec:supp-verify B3}, this condition holds for quantile regression under a uniformly bounded and Lipschitz conditional density near zero, and for the Huber, expectile, least-squares, and $L_q$ losses under the corresponding local differentiability conditions used in Section \ref{sec:supp-verify B3}. 

Let $R(q)=\mb E[\rho(Y_i-q(\mf X_i))]$. Combining \eqref{eq:unshifted local expand}, \eqref{eq:best approximation}, and Assumption \hyperref[(A2)]{(A2)}, by similar arguments in the proof of Proposition \ref{prop:quadra approx}, we can have
\begin{align}
       R(q)-R(Q_0)&=\mb E \int_0^{u_q(\mf X_i)} \lambda_0(\mf X_i)t-r_0(\mf X_i,t)\mr dt,  \notag\\
       &=\frac{1}{2}\mb E[\lambda_0(\mf X_i)u_q(\mf X_i)^2]+o(\|u_q(\mf X_i)\|^2),\label{eq:quadra approx Q0}
\end{align}
where $u_q(\mf x)=q(\mf x)-Q_0(\mf x)$. Using the fact $R(Q_{0,n})\leq R(q^*_K)$ and $\lambda_0(s)\geq c_\lambda>0$, \eqref{eq:quadra approx Q0} with \eqref{eq:best approximation} yields
\begin{equation}
    \|Q_{0,n}(\mf X_i)-Q_0(\mf X_i)\|\lesssim \|q_K^*(\mf X_i)-Q_0(\mf X_i)\|=O(K^{-p/d}).
\end{equation}
Therefore, $\|\nu_K\|_\infty=O(\xi_{K,n}K^{-p/d})=o(1)$ for typical $\xi_{K,n}\lesssim \sqrt{K}$ and $p/d>1/2$.

Since $Q_{0,n}$ is the population sieve target as defined in \eqref{eq:sieve basis}, the first-order condition implies that
$$
\mb E[\mf b(\mf X_i)\psi(Y_i-Q_{0,n}(\mf X_i))]=0.
$$
By \eqref{eq:basic model}, we have
\begin{equation}
\mb E[\mf b(\mf X_i)\psi(\varepsilon_i-u_K(\mf X_i))]=0,\label{eq:first-order uk}    
\end{equation}
where $u_K(\mf X_i)=e_K(\mf X_i)+\nu_K(\mf X_i)$. Combining \eqref{eq:first-order uk} and \eqref{eq:unshifted local expand}, we have
\begin{equation}
    \mb E[\lambda_0(\mf X_i)\mf b(\mf X_i)u_K(\mf X_i)]=\mb E(\mf b(\mf X_i)r_0(\mf X_i,u_K(\mf X_i))).\label{eq:first-order expand uk}
\end{equation}
Let $Q_\lambda=\mb E[\lambda_0(\mf X_i)\mf b(\mf X_i)\mf b(\mf X_i)^\top]$. By Assumption \hyperref[(A2)]{(A2)}, the eigenvalues of $Q_\lambda$ are bounded away from zero and infinity. Define the actual weighted population projection
$$
\Pi_{K,\lambda} g(s)=: \mf b(s)^\top Q_\lambda^{-1}\mb E[\lambda_0(\mf X_i)\mf b(\mf X_i)g(\mf X_i)].
$$
Assume the projection $\Pi_{K,\lambda}$ is stable from $L^\infty$ to $L^\infty$, i.e.,
\begin{equation}
\sup_{\|g\|_{\infty}\leq 1} \|\Pi_{K,\lambda} g\|_{\infty}\leq L_K.\label{eq:stable projection}    
\end{equation}
Combining \eqref{eq:stable projection} and \eqref{eq:first-order expand uk}, with the lower bound on $\lambda_0$, it yields
\begin{equation}
    \|\nu_K\|_\infty\lesssim L_K\|e_K\|_\infty+L_K(\|e_K\|_\infty+\|\nu_K\|_\infty)^2.
\end{equation}
If $L_K=O(1)$ (or $L_K$ grows only logarithmically), using the fact $\|e_K\|_\infty=O(K^{-p/d})$ and $L_K\|\nu_K\|_\infty=o(1)$, we finally have
$$
\sup_{\mf x\in[0,1]^d}|Q_0(\mf x)-Q_{0,n}(\mf x)|\le \|e_K\|_\infty+\|\nu_K\|_\infty=O(L_KK^{-p/d}),
$$
and Assumption \hyperref[(A5)]{(A5)} holds for every $\varsigma\leq p/d$ (or $\varsigma <p/d$).

The verification of \eqref{eq:stable projection} and specific $\varsigma$ for the listed basis families is as follows:
\begin{itemize}
  \item \textbf{Legendre polynomial partitions.}
  Partition \([0,1]^d\) into \(J^d\) quasi-uniform cells and use, on each cell,
a tensor-product polynomial basis of fixed degree \(r\), represented by
Legendre polynomials on a reference cell. This is a local polynomial partition basis written
in an orthogonal polynomial form. Since the projection is cellwise and the
dimension of the polynomial space on each cell is fixed, Assumption \hyperref[(A2)]{(A2)} and the uniform bounds on \(\lambda_0(\mf x)\) imply that the weighted
\(L^2\)-projection has a uniformly bounded \(L^\infty\)-operator norm. Hence $L_K=O(1)$ and the standard local polynomial approximation bound gives
  $\|e_K\|_\infty\lesssim J^{-\min(p,r+1)}
  \asymp K^{-\min(p,r+1)/d}$. Therefore, if \(r+1\ge p\), Assumption \hyperref[(A5)]{(A5)} holds with $\varsigma=p/d$.
  \item \textbf{Compactly supported wavelet bases}. For boundary-corrected compactly supported wavelets on \([0,1]^d\), or periodized wavelets in the periodic case, the \(L^2\)-projection is sup-norm stable under the usual bounded-design conditions. This is the bounded Lebesgue-constant condition used in \cite{CHEN2015uniformsieve}. Therefore $L_K=O(1)$. If the wavelet regularity and the number of vanishing moments exceed \(p\),
  then Assumption \hyperref[(A5)]{(A5)} holds with $\varsigma=p/d$.
  \item \textbf{Fourier/trigonometric polynomial bases.}
  In the periodic constant-weight case, the \(L^2\)-projection onto
\(
  \mathcal T_m=\mathrm{span}\{e^{i\ell t}:|\ell|\le m\}
\)
is the ordinary Fourier partial-sum operator
\(
  S_m g(t)=\frac{1}{2\pi}\int_{-\pi}^{\pi}D_m(t-z)g(z)\,dz,
\)
where
\(
  D_m(t)=\sum_{|\ell|\le m}e^{i\ell t}
\)
is the Dirichlet kernel. 
The classical Lebesgue-constant bound for Fourier partial sums gives 
\(
  \|S_m g\|_\infty
  \lesssim
  \log m\|g\|_\infty
\); see details in \cite{zygmund2002trigonometric}. Therefore, for tensor-product Fourier bases with \(K\asymp m^d\),
$L_K\lesssim(\log m)^d\lesssim(\log K)^d$. Combining this with the direct approximation bound
\(\|e_K\|_\infty\lesssim K^{-p/d}\) for periodic H\"older functions gives
$
  \sup_{\mf x\in[0,1]^d}|Q_{0,n}(\mf x)-Q_0(\mf x)|
  =
  O\{K^{-p/d}(\log K)^d\}
$. Since \((\log K)^d=O(K^\epsilon)\) for every fixed \(\epsilon>0\),
Assumption \hyperref[(A5)]{(A5)} holds for every \(\varsigma<p/d\).
\end{itemize}

\subsection{Examples of sieve basis}
\label{sec:examples of sieve basis}
In this section, we introduce the commonly used sieve basis and verify the condition \eqref{eq:critical value condition} in Proposition \ref{prop:critical value}. For simplicity, we only verify $\int_\mathcal{X}\lambda_{\min}^{d/2}(\mf M^\top \mf M)\mr d\mf x\gtrsim n^{\underline{c}}$ in condition \eqref{eq:critical value condition}. 
We refer to Assumption 4 of \cite{CHEN2015uniformsieve} and Example 1-2 in \cite{quan2024JASA} for the rest of the polynomial rate conditions in \eqref{eq:critical value condition}. 
We verify condition \eqref{eq:critical value condition} using trigonometric basis functions (see Example \ref{eg:fourier}) as a representative example. A similar procedure applies to other types of basis functions. 
Suppose that $d=1, \mathcal{S}=[-\pi,\pi]$, the number of basis functions is $ K=2\tilde K+1, \tilde K\ge 1$, and $\Phi(s)=(1,\sin(s),\cos(s),\cdots,\sin(\tilde Ks),\cos(\tilde Ks))^\top$. Then $|\Phi(s)|=\sqrt{\tilde K+1}$, and
$$
\mf M(s)=(0,\cos(s),-\sin(s),\cdots,\tilde K\cos(\tilde Ks),-\tilde K\sin(\tilde Ks))^\top/\sqrt{\tilde K+1},
$$ 
with $\mf M(s)^\top \mf M(s)=\sum_{i=1}^{\tilde K} i^2/(\tilde K+1)\gtrsim {\tilde K}^2$. As a result, we have $\int_{\mathcal{S}}\lambda_{min}(\mf M^\top(s)\mf M(s))\mr ds\gtrsim  K\gtrsim n^{\bar{\omega}}$ for some $\bar{\omega}>0$ s.t. $n^{\bar \omega}\ll K$.

    Now consider the case where $d=2$ and $\mathcal{S}=[-\pi,\pi]^2$. Suppose that the number of basis functions is $ K=(2\tilde K+1)^2$. Then $\Phi(s)=\phi(s_1)\otimes\phi(s_2)$ where 
    $$
    \phi(s)=(1,\sin(s),\cos(s),\cdots,\sin(\tilde Ks),\cos(\tilde Ks))^\top.
    $$
    and $M_1(s)=\psi(s_1)\otimes\phi(s_2)/(\tilde K+1)$, $M_2(s)=\phi(s_1)\otimes\psi(s_2)/(\tilde K+1)$
    where $\psi(s)=(0,\cos(s),-\sin(s),\cdots,\tilde K\cos(\tilde Ks),-\tilde K\sin(\tilde Ks))^\top$. 
    \begin{align*}
        \mf M^\top \mf M&=\frac{1}{(\tilde K+1)^2}\begin{pmatrix}
            \psi(s_1)^\top\psi(s_1)\phi(s_2)^\top\phi(s_2) & \psi(s_1)^\top\phi(s_1)\phi(s_2)^\top\psi(s_2)\\
            \psi(s_2)^\top\phi(s_2)\phi(s_1)^\top\psi(s_1)& \phi(s_1)^\top\phi(s_1)\psi(s_2)^\top\psi(s_2)
        \end{pmatrix}\\
        &=\frac{1}{(\tilde K+1)^2}\frac{1}{6}\tilde K(\tilde K+1)^2(2\tilde K+1)\begin{pmatrix}
            1 & 0\\
            0&  1
        \end{pmatrix}=\frac{1}{6}\tilde K(2\tilde K+1)\begin{pmatrix}
            1 & 0\\
            0&  1
        \end{pmatrix}.
    \end{align*}
    Then we have $\int_{\mathcal{S}}\lambda_{min}(\mf M^\top\mf M)\mr ds\gtrsim K\gtrsim n^{\bar \omega}$.
\begin{example}[Legendre]
\label{eg:legendre}
    Define Legendre polynomials 
    $$
        P_j(x)=\frac{1}{2^j j!}\frac{\mr d^j}{\mr d x^j} (x^2-1)^j,\quad x\in[-1,1].
    $$
    Then continuous function $f(x)$ on $[-1,1]$ can be written as
    $$
    f(x)=\sum_{j=0}^\infty a_jP_j(x),\quad a_j=(j+\frac{1}{2})\int_{-1}^1 P_j(x)f(x)\mr dx.
    $$
\end{example}

\begin{example}[Fourier] 
\label{eg:fourier}
Consider real-valued function $f(x)\in L_2[-1,1]$ i.e. $\int_{-1}^1 f(x)\mr dx<\infty$. By Fourier transformation, $f(x)$ can be written as 
$$
f(x)=\sum_{j=-\infty}^\infty a_j \phi_j(x),\quad a_j=\int_{\mb -1}^1 \phi_j(x)f(x)\mr d x
$$
where $\{\phi_j(x)\}_{j=-\infty}^\infty=\{\left(\cos(j\pi x)+i\sin(j\pi x)\right)/\sqrt{2}\}_{j=-\infty}^\infty$ forms an orthonormal basis for $L_2[-1,1]$.
\end{example}

\begin{example}[Haar wavelet]
\label{eg:Haar wavelet}
The Haar sequence proposed in \cite{Haar1910ZurTD} gives an example of an orthonormal system for the space of square-integrable functions. For every pair $n, k$ of integers in $\mathbb{Z}$, the Haar function $h_{n, k}$ is defined on the real line $\mathbb{R}$ by the formula
$$
h_{n, k}(t)=2^{n / 2} h\left(2^n t-k\right),
$$
where $h(t)$ is the Haar wavelet's mother wavelet function
$$
h(t)= \begin{cases}1 & 0 \leq t<\frac{1}{2} \\ -1 & \frac{1}{2} \leq t<1 \\ 0 & \text { otherwise }\end{cases}
$$
The Haar system on the real line is the set of functions
$$
\left\{h_{n, k}(t): n \in \mathbb{Z}, k \in \mathbb{Z}\right\},
$$
which is an orthonormal basis.
\end{example}

\begin{example}[Daubechies wavelet]
For $N \in \mathbb{N}$, a Daubechies mother wavelet of class Daubechies-$N$ is a function $\phi \in L_2(\mathbb{R})$ defined by
$$
\phi(x)=:\sqrt{2} \sum_{k=1}^{2 N-1}(-1)^k h_{2 N-1-k} \varphi(2 x-k),
$$
where $h_0, h_1, \cdots, h_{2 N-1} \in \mathbb{R}$ are constant and satisfy $\sum_{k=0}^{N-1} h_{2 k}=\frac{1}{\sqrt{2}}=\sum_{k=0}^{N-1} h_{2 k+1}$, as well as, for $l=0,1, \cdots, N-1$,
$$
\sum_{k=2 l}^{2 N-1+2 l} h_k h_{k-2 l}= \begin{cases}1, & l=0 \\ 0, & l \neq 0\end{cases}
$$
The $\varphi(x)$ is the scaling wavelet function supported on $[0,2 N-1)$ and satisfies the recursion equation $\varphi(x)=\sqrt{2} \sum_{k=0}^{2 N-1} h_k \varphi(2 x-k)$, as well as the normalization $\int_{\mathbb{R}} \varphi(x) d x=1$, $\int_{\mathbb{R}} \varphi(2 x-k) \varphi(2 x-l) d x=0, k \neq l$. As listed in \cite{Daubechies1992wavelet}, the filter coefficients $h_0,\dots,h_{2N-1}$ can be efficiently computed. The order $N$ decides the support $[0,2 N-1)$ and provides the regularity condition
$$
\int_{\mathbb{R}} x^j \phi(x) \mr d x=0, j=0, \cdots, N.
$$
The Haar wavelet as introduced above can be regarded as a special Daubechies wavelet with $N=1$. In our simulations and data analysis, we employ Daubechies wavelet with a sufficiently high order $N$ to construct a sequence of orthogonal sieve basis as proposed in \cite{daubechies1988orthonormal}. For a given $J_n$ and $J_0$, we consider the following periodized wavelets on $[0,1]$
$$
\begin{gathered}
\left\{\varphi_{J_0 k}(x), 0 \leq k \leq 2^{J_0}-1 ; \phi_{j k}(x), J_0 \leq j \leq J_n-1,0 \leq k \leq 2^j-1\right\}, \text { where } \\
\varphi_{J_0 k}(x)=2^{J_0 / 2} \sum_{l \in \mathbb{Z}} \varphi\left(2^{J_0} x+2^{J_0} l-k\right), \phi_{j k}(x)=2^{j / 2} \sum_{l \in \mathbb{Z}} \psi\left(2^j x+2^j l-k\right)
\end{gathered}
$$
or equivalently, by \cite{meyer1992ondelettes},
$$
\left\{\varphi_{J_n k}(x), 0 \leq k \leq 2^{J_n-1}\right\} .
$$
The $2^J_n$ equals to our basis number $K$. Additionally, we refer to \cite{chen2007large} for a more general example of orthogonal wavelets.
\end{example}

\begin{footnotesize}
\spacingset{0.5}
\putbib
\end{footnotesize}
\end{bibunit}

@article{zhao2014asymptotics,
  title={Asymptotics of nonparametric L-1 regression models with dependent data},
  author={Zhao, Zhibiao and Wei, Ying and Lin, Dennis KJ},
  journal={Bernoulli},
  volume={20},
  number={3},
  pages={1532},
  year={2014}
}

@article{Wu2017quantile,
author = {Weichi Wu and Zhou Zhou},
title = {Nonparametric Inference for Time-Varying Coefficient Quantile Regression},
journal = {Journal of Business \& Economic Statistics},
volume = {35},
number = {1},
pages = {98-109},
year = {2017},
publisher = {Taylor & Francis},
}

@article{article,
author = {Hjort, Nils and Pollard, David},
year = {2011},
month = {07},
pages = {},
title = {Asymptotics for Minimisers of Convex Processes}
}

@misc{sen2018gentle,
  title={A gentle introduction to empirical process theory and applications},
  author={Sen, Bodhisattva},
  year={2018},
  publisher={Apr}
}

@article{dahlhaus2019towards,
  title={Towards a general theory for nonlinear locally stationary processes},
  author={Dahlhaus, Rainer and Richter, Stefan and Wu, Wei Biao},
  journal={Bernoulli},
  volume={25},
  number={2},
  pages={1013--1044},
  year={2019},
  publisher={Bernoulli Society for Mathematical Statistics and Probability}
}

@article{wu2005nonlinear,
  title={Nonlinear system theory: Another look at dependence},
  author={Wu, Wei Biao},
  journal={Proceedings of the National Academy of Sciences},
  volume={102},
  number={40},
  pages={14150--14154},
  year={2005},
  publisher={National Acad Sciences}
}

@article{zhou2010Non,
author = {Zhou Zhou},
title = {{Nonparametric inference of quantile curves for nonstationary time series}},
volume = {38},
journal = {The Annals of Statistics},
number = {4},
publisher = {Institute of Mathematical Statistics},
pages = {2187 -- 2217},
year = {2010},
}

@article{Sun&Loader,
author = {Jiayang Sun and Clive R. Loader},
title = {{Simultaneous Confidence Bands for Linear Regression and Smoothing}},
volume = {22},
journal = {The Annals of Statistics},
number = {3},
publisher = {Institute of Mathematical Statistics},
pages = {1328 -- 1345},
year = {1994}
}

@article{wuzhou2023multiscale,
  title={Multiscale jump testing and estimation under
complex temporal dynamics
},
  author={Wu, W and Zhou, Z},
  journal={Bernoulli, to appear},
  year={2023}
}

@article{vogt2012nonparametric,
  title={NONPARAMETRIC REGRESSION FOR LOCALLY STATIONARY TIME SERIES},
  author={Vogt, Michael},
  journal={The Annals of Statistics},
  volume={40},
  number={5},
  pages={2601--2633},
  year={2012}
}

@article{Wu2011gaussian,
 title = {Gaussian Approximations for Non-stationary Multiple Time Series},
 author = {Wei Biao Wu and Zhou Zhou},
 journal = {Statistica Sinica},
 number = {3},
 pages = {1397--1413},
 volume = {21},
 year = {2011}
}

@article{karmakar2022simultaneous,
  title={Simultaneous inference for time-varying models},
  author={Karmakar, Sayar and Richter, Stefan and Wu, Wei Biao},
  journal={Journal of Econometrics},
  volume={227},
  number={2},
  pages={408--428},
  year={2022},
  publisher={Elsevier}
}

@article{Mies2022seq_high-dim,
author = {Fabian Mies and Ansgar Steland},
title = {Sequential Gaussian approximation for nonstationary time series in high dimensions},
volume = {29},
journal = {Bernoulli},
number = {4},
publisher = {Bernoulli Society for Mathematical Statistics and Probability},
pages = {3114 -- 3140},
year = {2023},
doi = {10.3150/22-BEJ1577},
}

@inproceedings{nazarov2003maximal,
  title={On the Maximal Perimeter of a Convex Set in $\mathbb{R}^{n}$ with Respect to a Gaussian Measure},
  author={Nazarov, Fedor},
  booktitle = {Geometric Aspects of Functional Analysis},
  pages={169--187},
  year={2003},
  publisher={Springer}
}

@book{gine2016mathematical,
  title={Mathematical Foundations of Infinite-Dimensional Statistical Models},
  author={Gin{\'e}, Evarist and Nickl, Richard},
  volume={40},
  year={2016},
  publisher={Cambridge University Press}
}

@inproceedings{pollard1990empirical,
  title={Empirical processes: theory and applications},
  author={Pollard, David},
  year={1990},
  organization={Ims}
}

@article{pollard1991asymptotics,
  title={Asymptotics for least absolute deviation regression estimators},
  author={Pollard, David},
  journal={Econometric Theory},
  volume={7},
  number={2},
  pages={186--199},
  year={1991},
  publisher={Cambridge University Press}
}

@misc{van1996weak,
  title={Weak convergence and empirical processes: with applications to statistics},
  author={Van Der Vaart, Aad W and Wellner, Jon A},
  year={1996},
  publisher={Springer}
}

@article{chen2007large,
  title={Large sample sieve estimation of semi-nonparametric models},
  author={Chen, Xiaohong},
  journal={Handbook of econometrics},
  volume={6},
  pages={5549--5632},
  year={2007},
  publisher={Elsevier}
}

@article{ding2021simultaneous,
  title={Simultaneous Sieve Inference for Time-Inhomogeneous Nonlinear Time Series Regression},
  author={Ding, Xiucai and Zhou, Zhou},
  journal={arXiv preprint arXiv:2112.08545},
  year={2021}
}

@book{meyer1992ondelettes,
author = {Meyer Yves},
address = {Paris},
booktitle = {Ondelettes et opérateurs . I . Ondelettes},
isbn = {2-7056-6125-5},
language = {fre},
publisher = {Hermann},
series = {Actualités mathématiques},
title = {Ondelettes et opérateurs . I, Ondelettes / Yves Meyer},
year = {1989},
}

@book{Daubechies1992wavelet,
author = {Daubechies, Ingrid},
title = {Ten Lectures on Wavelets},
publisher = {Society for Industrial and Applied Mathematics},
year = {1992},
eprint = {https://epubs.siam.org/doi/pdf/10.1137/1.9781611970104}
}

@article{Haar1910ZurTD,
  title={Zur Theorie der orthogonalen Funktionensysteme},
  author={Alfred Haar},
  journal={Mathematische Annalen},
  year={1910},
  volume={69},
  pages={331-371},
  url={https://api.semanticscholar.org/CorpusID:120024038}
}

@article{liu2024self,
      title={Self-convolved Bootstrap for M-regression under Complex Temporal Dynamics}, 
      author={Miaoshiqi Liu and Zhou Zhou},
      year={2024},
      journal={arXiv preprint arXiv:2310.11724}
}

@article{bentkus2003Essen-bound,
title = {On the dependence of the {Berry}–{Esseen} bound on dimension},
journal = {Journal of Statistical Planning and Inference},
volume = {113},
number = {2},
pages = {385-402},
year = {2003},
issn = {0378-3758},
author = {V. Bentkus},
}

@Article{Fang2016CLT,
  author={Xiao Fang},
  title={{A Multivariate CLT for Bounded Decomposable Random Vectors with the Best Known Rate}},
  journal={Journal of Theoretical Probability},
  year=2016,
  volume={29},
  number={4},
  pages={1510-1523},
  month={December}
}

@article{Wang2012Ultra-high-quantreg,
  title={Quantile regression for analyzing heterogeneity in ultra-high dimension},
  author={Wang, Lan and Wu, Yichao and Li, Runze},
  journal={Journal of the American Statistical Association},
  volume={107},
  number={497},
  pages={214--222},
  year={2012},
  publisher={Taylor \& Francis}
}

@article{Fang2024Larege-dim,
author = {Xiao Fang and Yuta Koike},
title = {{Large-dimensional central limit theorem with fourth-moment error bounds on convex sets and balls}},
volume = {34},
journal = {The Annals of Applied Probability},
number = {2},
publisher = {Institute of Mathematical Statistics},
pages = {2065 -- 2106},
year = {2024},
doi = {10.1214/23-AAP2014}
}

@article{quan2024JASA,
author = {Mingxue Quan and Zhenhua Lin},
title = {Optimal One-Pass Nonparametric Estimation Under Memory Constraint},
journal = {Journal of the American Statistical Association},
volume = {119},
number = {545},
pages = {285--296},
year = {2024},
publisher = {Taylor \& Francis},
doi = {10.1080/01621459.2022.2115374},
}

@article{CHEN2015uniformsieve,
title = {Optimal uniform convergence rates and asymptotic normality for series estimators under weak dependence and weak conditions},
journal = {Journal of Econometrics},
volume = {188},
number = {2},
pages = {447-465},
year = {2015},
issn = {0304-4076},
author = {Xiaohong Chen and Timothy M. Christensen}
}

@article{SU2016sieveIV,
title = {Sieve instrumental variable quantile regression estimation of functional coefficient models},
journal = {Journal of Econometrics},
volume = {191},
number = {1},
pages = {231-254},
year = {2016},
issn = {0304-4076},
author = {Liangjun Su and Tadao Hoshino}
}

@article{zhang2020non,
  title={On the non-asymptotic and sharp lower tail bounds of random variables},
  author={Zhang, Anru R and Zhou, Yuchen},
  journal={Stat},
  volume={9},
  number={1},
  pages={e314},
  year={2020},
  publisher={Wiley Online Library}
}

@article{politis2004automatic,
  title={Automatic block-length selection for the dependent bootstrap},
  author={Politis, Dimitris N and White, Halbert},
  journal={Econometric reviews},
  volume={23},
  number={1},
  pages={53--70},
  year={2004},
  publisher={Taylor \& Francis}
}

@article{chao2017confidence,
  title={Confidence corridors for multivariate generalized quantile regression},
  author={Chao, Shih-Kang and Proksch, Katharina and Dette, Holger and H{\"a}rdle, Wolfgang Karl},
  journal={Journal of Business \& Economic Statistics},
  volume={35},
  number={1},
  pages={70--85},
  year={2017},
  publisher={Taylor \& Francis}
}

@article{wu2018gradient,
author = {Weichi Wu and Zhou Zhou},
title = {{Gradient-based structural change detection for nonstationary time series M-estimation}},
volume = {46},
journal = {The Annals of Statistics},
number = {3},
publisher = {Institute of Mathematical Statistics},
pages = {1197 -- 1224},
year = {2018},
doi = {10.1214/17-AOS1582},
}

@article{hardle2010confidence,
  title={Confidence bands in quantile regression},
  author={H{\"a}rdle, Wolfgang K and Song, Song},
  journal={Econometric Theory},
  volume={26},
  number={4},
  pages={1180--1200},
  year={2010},
  publisher={Cambridge University Press}
}

@book{fan2008nonlinear,
  title={Nonlinear time series: nonparametric and parametric methods},
  author={Fan, Jianqing and Yao, Qiwei},
  year={2008},
  publisher={Springer Science \& Business Media}
}

@article{liu2010simultaneous,
 author = {Weidong Liu and Wei Biao Wu},
 journal = {The Annals of Statistics},
 number = {4},
 pages = {2388--2421},
 publisher = {Institute of Mathematical Statistics},
 title = {Simultaneous nonparametric inference of time series},
 urldate = {2024-09-09},
 volume = {38},
 year = {2010}
}

@article{cai2002regression,
  title={Regression quantiles for time series},
  author={Cai, Zongwu},
  journal={Econometric theory},
  volume={18},
  number={1},
  pages={169--192},
  year={2002},
  publisher={Cambridge University Press}
}

@article{tu2022quantile,
title = {Nonparametric inference for quantile cointegrations with stationary covariates},
journal = {Journal of Econometrics},
volume = {230},
number = {2},
pages = {453-482},
author = {Yundong Tu and Han-Ying Liang and Qiying Wang},
year = {2022},
issn = {0304-4076},
doi = {https://doi.org/10.1016/j.jeconom.2021.06.002}
}

@article{kong2010uniform,
  title={Uniform Bahadur representation for local polynomial estimates of M-regression and its application to the additive model},
  author={Kong, Efang and Linton, Oliver and Xia, Yingcun},
  journal={Econometric Theory},
  volume={26},
  number={5},
  pages={1529--1564},
  year={2010},
  publisher={Cambridge University Press}
}

@article{alman2022expectile,
author = {Almanjahie, Ibrahim M. and  Bouzebda, Salim and Zoulikha Kaid and  Laksaci, Ali},
title = {Nonparametric estimation of expectile regression in functional dependent data},
journal = {Journal of Nonparametric Statistics},
volume = {34},
number = {1},
pages = {250--281},
year = {2022},
publisher = {Taylor \& Francis},
doi = {10.1080/10485252.2022.2027412},
}

@article{chen2014seiveM,
title = {Sieve M inference on irregular parameters},
journal = {Journal of Econometrics},
volume = {182},
number = {1},
pages = {70-86},
year = {2014},
note = {Causality, Prediction, and Specification Analysis: Recent Advances and Future Directions},
issn = {0304-4076},
author = {Xiaohong Chen and Zhipeng Liao}
}

@article{bollerslev1986generalized,
  title={Generalized autoregressive conditional heteroskedasticity},
  author={Bollerslev, Tim},
  journal={Journal of econometrics},
  volume={31},
  number={3},
  pages={307--327},
  year={1986},
  publisher={Elsevier}
}

@incollection{LUTKEPOHL2006287,
title = {Chapter 6 Forecasting with VARMA Models},
editor = {G. Elliott and C.W.J. Granger and A. Timmermann},
series = {Handbook of Economic Forecasting},
publisher = {Elsevier},
volume = {1},
pages = {287-325},
year = {2006},
issn = {1574-0706},
author = {Helmut Lütkepohl},
}

@book{hannan2012statistical,
  title={The statistical theory of linear systems},
  author={Hannan, Edward James and Deistler, Manfred},
  year={2012},
  publisher={SIAM}
}

@article{bougerol1992stationarity,
  title={Stationarity of GARCH processes and of some nonnegative time series},
  author={Bougerol, Philippe and Picard, Nico},
  journal={Journal of econometrics},
  volume={52},
  number={1-2},
  pages={115--127},
  year={1992},
  publisher={Elsevier}
}

@article{wu2005linear,
  title={On linear processes with dependent innovations},
  author={Wu, Wei Biao and Min, Wanli},
  journal={Stochastic Processes and their Applications},
  volume={115},
  number={6},
  pages={939--958},
  year={2005},
  publisher={Elsevier}
}

@article{Wu2011AsymptoticTF,
  title={Asymptotic theory for stationary processes},
  author={Wei Biao Wu},
  journal={Statistics and Its Interface},
  year={2011},
  volume={4},
  pages={207-226},
}

@book{timan2014theory,
  title={Theory of approximation of functions of a real variable},
  author={Timan, Aleksandr Filippovich},
  year={2014},
  publisher={Elsevier}
}

@article{newey1997convergence,
  title={Convergence rates and asymptotic normality for series estimators},
  author={Newey, Whitney K},
  journal={Journal of econometrics},
  volume={79},
  number={1},
  pages={147--168},
  year={1997},
  publisher={Elsevier}
}

@article{wu2007m,
  title={M-estimation of linear models with dependent errors},
  author={Wu, Wei Biao},
  journal={Annals of statistics},
  volume={35},
  number={2},
  pages={495--521},
  year={2007}
}

@article{politis2001taper,
  title={Tapered block bootstrap},
  author={Paparoditis, Efstathios and Politis, Dimitris N},
  journal={Biometrika},
  volume={88},
  number={4},
  pages={1105--1119},
  year={2001},
  publisher={Oxford University Press}
}

@article{chapman2000short,
  title={Is the short rate drift actually nonlinear?},
  author={Chapman, David A and Pearson, Neil D},
  journal={The Journal of Finance},
  volume={55},
  number={1},
  pages={355--388},
  year={2000},
  publisher={Wiley Online Library}
}

@article{welsh1989m,
  title={On M-processes and M-estimation},
  author={Welsh, AH},
  journal={The Annals of Statistics},
  pages={337--361},
  year={1989},
  publisher={JSTOR}
}

@article{he2000parameters,
  title={On parameters of increasing dimensions},
  author={He, Xuming and Shao, Qi-Man},
  journal={Journal of multivariate analysis},
  volume={73},
  number={1},
  pages={120--135},
  year={2000},
  publisher={Elsevier}
}

@article{sun2020adaptive,
  title={Adaptive huber regression},
  author={Sun, Qiang and Zhou, Wen-Xin and Fan, Jianqing},
  journal={Journal of the American Statistical Association},
  volume={115},
  number={529},
  pages={254--265},
  year={2020},
  publisher={Taylor \& Francis}
}

@article{daubechies1988orthonormal,
  title={Orthonormal bases of compactly supported wavelets},
  author={Daubechies, Ingrid},
  journal={Communications on pure and applied mathematics},
  volume={41},
  number={7},
  pages={909--996},
  year={1988},
  publisher={Wiley Online Library}
}

@article{baur2019quantile,
  title={A quantile regression approach to estimate the variance of financial returns},
  author={Baur, Dirk G and Dimpfl, Thomas},
  journal={Journal of Financial Econometrics},
  volume={17},
  number={4},
  pages={616--644},
  year={2019},
  publisher={Oxford University Press}
}

@article{koenker2006quantile,
  title={Quantile autoregression},
  author={Koenker, Roger and Xiao, Zhijie},
  journal={Journal of the American statistical association},
  volume={101},
  number={475},
  pages={980--990},
  year={2006},
  publisher={Taylor \& Francis}
}

@article{baur2012stock,
  title={Stock return autocorrelations revisited: A quantile regression approach},
  author={Baur, Dirk G and Dimpfl, Thomas and Jung, Robert C},
  journal={Journal of Empirical Finance},
  volume={19},
  number={2},
  pages={254--265},
  year={2012},
  publisher={Elsevier}
}

@article{Chernouzhukov2014EP,
 author = {Victor Chernozhukov and Denis Chetverikov and Kengo Kato},
 journal = {The Annals of Statistics},
 number = {4},
 pages = {1564--1597},
 publisher = {Institute of Mathematical Statistics},
 title = {GAUSSIAN APPROXIMATION OF SUPREMA OF EMPIRICAL PROCESSES},
 urldate = {2024-12-19},
 volume = {42},
 year = {2014}
}

@article{xiao2004uqar,
title = "Unit root quantile autoregression inference",
author = "Roger Koenker and Zhijie Xiao",
year = "2004",
doi = "10.1198/016214504000001114",
language = "English (US)",
volume = "99",
pages = "775--787",
journal = "Journal of the American Statistical Association",
issn = "0162-1459",
publisher = "Taylor and Francis Ltd.",
number = "467",
}

@article{schwarz1978estimating,
  title={Estimating the dimension of a model},
  author={Schwarz, Gideon},
  journal={The annals of statistics},
  pages={461--464},
  year={1978},
  publisher={JSTOR}
}

@article{rissanen1978modeling,
  title={Modeling by shortest data description},
  author={Rissanen, Jorma},
  journal={Automatica},
  volume={14},
  number={5},
  pages={465--471},
  year={1978},
  publisher={Elsevier}
}

@article{chen2021multinon,
    author = {Chen, Likai and Smetanina, Ekaterina and Wu, Wei Biao},
    title = {Estimation of nonstationary nonparametric regression model with multiplicative structure},
    journal = {The Econometrics Journal},
    volume = {25},
    number = {1},
    pages = {176-214},
    year = {2021},
    month = {06},
    issn = {1368-4221},
    doi = {10.1093/ectj/utab018},
}

@article{liu2025wasserstein,
  title={Wasserstein and Convex Gaussian Approximations for Non-stationary Time Series of Diverging Dimensionality},
  author={Liu, Miaoshiqi and Yang, Jun and Zhou, Zhou},
  journal={arXiv preprint arXiv:2506.08723},
  year={2025}
}

@article{phandoidaen2022empirical,
  title={Empirical process theory for locally stationary processes},
  author={Phandoidaen, Nathawut and Richter, Stefan},
  journal={Bernoulli},
  volume={28},
  number={1},
  pages={453--480},
  year={2022},
  publisher={Bernoulli Society for Mathematical Statistics and Probability}
}

@article{wu2005bahadur,
  title={On the Bahadur representation of sample quantiles for dependent sequences},
  author={Wu, Wei Biao},
  journal={The Annals of Statistics},
  volume={33},
  number={4},
  pages={1934},
  year={2005},
  publisher={Institute of Mathematical Statistics}
}

@article{dehling2009new,
  title={New techniques for empirical processes of dependent data},
  author={Dehling, Herold and Durieu, Olivier and Volny, Dalibor},
  journal={Stochastic Processes and their Applications},
  volume={119},
  number={10},
  pages={3699--3718},
  year={2009},
  publisher={Elsevier}
}

@article{doukhan2007probability,
  title={Probability and moment inequalities for sums of weakly dependent random variables, with applications},
  author={Doukhan, Paul and Neumann, Michael H},
  journal={Stochastic Processes and their Applications},
  volume={117},
  number={7},
  pages={878--903},
  year={2007},
  publisher={Elsevier}
}

@article{ball1993reverse,
  title={The reverse isoperimetric problem for Gaussian measure},
  author={Ball, Keith},
  journal={Discrete \& Computational Geometry},
  volume={10},
  number={4},
  pages={411--420},
  year={1993},
  publisher={Springer-Verlag Berlin, Heidelberg}
}

@article{chen2011multivariate,
  title={Multivariate normal approximation by Stein's method: The concentration inequality approach},
  author={Chen, Louis HY and Fang, Xiao},
  journal={arXiv preprint arXiv:1111.4073},
  year={2011}
}

@article{lee2019martingale,
  title={Martingale decomposition and approximations for nonlinearly dependent processes},
  author={Lee, Ji Hyung},
  journal={Statistics \& Probability Letters},
  volume={152},
  pages={35--42},
  year={2019},
  publisher={Elsevier}
}

@article{zhang2021high,
  title={High-quantile regression for tail-dependent time series},
  author={Zhang, Ting},
  journal={Biometrika},
  volume={108},
  number={1},
  pages={113--126},
  year={2021},
  publisher={Oxford University Press}
}

@techreport{cattaneo2025uniform,
  title={Uniform Estimation and Inference for Nonparametric Partitioning-Based M-Estimators},
  author={Cattaneo, Matias and Feng, Yingjie and Shigida, Boris},
  year={2025},
  institution={arXiv. org}
}

@book{boucheron2013concentration,
  title={Concentration Inequalities: A Nonasymptotic Theory of Independence},
  author={Boucheron, S. and Lugosi, G. and Massart, P.},
  isbn={9780199535255},
  lccn={2012277339},
  year={2013},
  publisher={OUP Oxford}
}

@book{zygmund2002trigonometric,
  title={Trigonometric Series},
  author={Zygmund, A.},
  number={j. 1},
  isbn={9780521890533},
  lccn={2002067363},
  series={Cambridge Mathematical Library},
  year={2002},
  publisher={Cambridge University Press}
}
\end{document}